\numberwithin{equation}{section}
\newsavebox{\@brx}
\newcommand{\llangle}[1][]{\savebox{\@brx}{\(\m@th{#1\langle}\)}%
	\mathopen{\copy\@brx\kern-0.5\wd\@brx\usebox{\@brx}}}
\newcommand{\rrangle}[1][]{\savebox{\@brx}{\(\m@th{#1\rangle}\)}%
	\mathclose{\copy\@brx\kern-0.5\wd\@brx\usebox{\@brx}}}
\DeclareMathOperator{\Tr}{Tr}
\newcommand{\norm}[1]{\left\lVert#1\right\rVert}
\newcommand{\m}{\vect{m}}
\newcommand{\supp}[1]{\operatorname{supp}(#1)}
\DeclareMathOperator{\im}{Im}
\DeclareMathOperator{\re}{Re}
\DeclareMathOperator{\Expv}{\mathbb{E}}
\DeclareMathOperator{\sign}{sign}
\newcommand{\Gap}[1]{\operatorname{Gap}\left(#1\right)}
\newcommand{\diag}[1]{\operatorname{diag}\left(#1\right)}
\newcommand{\other}[1]{\widetilde{#1}}
\newcommand{\vv}{\vect{v}}
\newcommand{\eigB}{\beta}
\newcommand{\vect}[1]{\mathbf{#1}}
\newcommand{\I}{i}
\newcommand{\stab}{\mathcal{B}}
\DeclareFontFamily{OT1}{pzc}{}
\DeclareFontShape{OT1}{pzc}{m}{it}{ <-> s*[1.1] pzcmi7t }{}
\DeclareMathAlphabet{\mathpzc}{OT1}{pzc}{m}{it}
\newcommand{\M}[1]{%
	M(#1)
}
\newcommand{\Brwn}{\mathfrak{B}}
\newcommand{\bulk}{\mathbb{D}^{\mathrm{bulk}}}
\newcommand{\n}{\mathrm{n}}
\newcommand{\chain}{\mathcal{X}}
\newcommand{\size}{\mathfrak{s}}
\newtheorem{theorem}{Theorem} %
\numberwithin{theorem}{section} 
\newtheorem{lemma}[theorem]{Lemma} %
\newtheorem{Def}[theorem]{Definition} %
\newtheorem{prop}[theorem]{Proposition} %
\newtheorem{remark}[theorem]{Remark} %
\begin{document}
	
	\begin{minipage}{0.85\textwidth}
		\vspace{2.5cm}
	\end{minipage}
	\begin{center}
		\large\bf Eigenstate Thermalization Hypothesis for Wigner-type Matrices
	\end{center}
	\vspace{0.75cm}
	
	\renewcommand{\thefootnote}{\fnsymbol{footnote}}

	\noindent
	\mbox{}%
	\hfill%
	\begin{minipage}{0.21\textwidth}
		\centering
		{L\'aszl\'o Erd\H{o}s}\footnotemark[1]~\orcidlink{0000-0001-5366-9603}\\
		\footnotesize{\textit{lerdos@ist.ac.at}}
	\end{minipage}
	\hfill%
	\begin{minipage}{0.21\textwidth}
		\centering
		{Volodymyr Riabov}\footnotemark[1]~\orcidlink{0009-0007-4989-7524}\\
		\footnotesize{\textit{vriabov@ist.ac.at}}
	\end{minipage}
	\hfill%
	\mbox{}%
	\footnotetext[1]{Institute of Science and Technology Austria, Am Campus 1, 3400 Klosterneuburg, Austria.  Supported by the ERC Advanced Grant "RMTBeyond" No.~101020331.
	}
	
	\renewcommand*{\thefootnote}{\arabic{footnote}}
	\vspace{0.75cm}
	
	\begin{center}
		\begin{minipage}{0.91\textwidth}\footnotesize{
				{\bf Abstract.}}
				 	We prove the Eigenstate Thermalization Hypothesis for general Wigner-type matrices in the bulk of the self-consistent spectrum, with optimal control on the fluctuations for observables of arbitrary rank. 
				 	As the main technical ingredient, we prove rank-uniform optimal local laws for one and two resolvents of a Wigner-type matrix with regular observables.  
				 	Our results hold under very general conditions on the variance profile, even allowing many vanishing entries, demonstrating that Eigenstate Thermalization occurs robustly across a diverse class of random matrix ensembles, for which the underlying quantum system has a non-trivial spatial structure.  
		\end{minipage}
	\end{center}
	
	\vspace{0.8cm}
	
	{\small
		\footnotesize{\noindent\textit{Date}: \today}\\
		\footnotesize{\noindent\textit{Keywords and phrases}: Eigenstate Thermalization Hypothesis, Quantum Unique Ergodicity, Local Law, Wigner-type matrix}\\
		\footnotesize{\noindent\textit{2020 Mathematics Subject Classification}: 60B20, 15B52}
	}
	
	\vspace{10mm}
	
	\thispagestyle{headings}

%
%
%
%

\section{Introduction}
The \textit{Quantum Unique Ergodicity}, also known as the \textit{Eigenstate Thermalization Hypothesis (ETH)} in the physics literature, asserts that the eigenfunctions of a sufficiently disordered quantum system are uniformly distributed in the phase space. This concept was first formalized by Deutsch \cite{Deutsch91} and Srednicki \cite{Srednicki94}  for general interacting quantum systems. 
For closed chaotic systems, Quantum (Unique) Ergodicity goes back to the more general Bohigas-Giannoni-Schmit \cite{BGS} conjecture in the physics literature and to Shnirelman's theorem \cite{Shn74} in the mathematics literature, see also \cite{ Sarnak, CdV, Zelditch}
for extensions.  In more recent physics literature, the ETH ansatz \cite{dabelow2022thermalization, reimann2021refining} and its refinement for higher-order correlation functions of the eigenvector overlaps \cite{foini2019eigenstate, pappalardi2022eigenstate, pappalardi2023general} were set as a prime framework for understanding thermalization and other approach-to-equilibrium phenomena (such as the growth of out-of-time-order correlators \cite{murthy2019bounds}) for a very general class of chaotic quantum systems.

In the context of random matrix theory, ETH for Wigner matrices was first conjectured by Deutsch \cite{Deutsch91}.
 In particular, this version of ETH states that the orthonormal eigenvectors $\vect{u}_j$ of a standard Wigner\footnote{
	A standard  $N\times N$ Wigner matrix $W = W^*$ has independent identically distributed entries (up to the symmetry constraint) with zero mean and variance $N^{-1}$.
} matrix $W$ tested against a deterministic \textit{observable} (matrix) $A$ converge to the statistical average of the bilinear form $A$, given (in this case) by the normalized trace $\langle A\rangle := \frac{1}{N}\Tr [A]$, that is
\begin{equation} \label{eq:WigETH_norm}
	\max_{j,k}\bigl\lvert	\langle \vect{u}_j, A \vect{u}_k \rangle - \langle A \rangle \delta_{jk}\bigr\rvert \lesssim N^{-1/2+\varepsilon}\norm{A}.
\end{equation}
In this form, ETH was recently proven in \cite{Cipolloni21ETH}. The result was improved upon to capture the optimal rate of convergence  first for the special case when $A$ is a projection in \cite{Benigni_2022, Benigni2023}, then for observables $A$ of arbitrary rank  in \cite{Cipolloni2022RankUnif} (for indices $j,k$ in the \textit{bulk}) and finally in \cite{Cipolloni2023Edge} for general observables and uniformly in the spectrum. In this stronger form, ETH asserts that, with very high probability,
\begin{equation}\label{eq:WigETH_hs}
	\max_{j,k}\bigl\lvert	\langle \vect{u}_j, A \vect{u}_k \rangle - \langle A \rangle \delta_{jk}\bigr\rvert \lesssim N^{-1/2+\varepsilon}\bigl\langle |A-\langle A\rangle|^2 \bigr\rangle^{1/2}.
\end{equation}
Note that instead of the operator norm $\norm{A}$ as in \eqref{eq:WigETH_norm}, the error term in \eqref{eq:WigETH_hs} is controlled in terms of the Hilbert-Schmidt norm of the traceless part of $A$, the latter being substantially smaller for observables $A$ of low rank. Furthermore, $N^{-1/2}$  is the optimal convergence rate; in fact  the limiting distribution of rescaled fluctuations  $N^{1/2} [ \langle \vect{u}_j, A \vect{u}_k \rangle - \langle A \rangle \delta_{jk}] $ is Gaussian for each $j, k$ with variance proportional to $\langle |A - \langle A \rangle|^2 \rangle$ as was shown in \cite{Cipolloni2022RankUnif}.
Beyond standard Wigner matrices, eigenstate thermalization in the form \eqref{eq:WigETH_norm} was also proved for a restricted class of generalized\footnote{
	The entries of the generalized Wigner matrix $W=W^*$ are centered but need not be identically distributed, as in the standard Wigner case. Instead, the matrix of variances $S_{jk} := \mathrm{Var}[W_{jk}]$ is assumed to be flat ($S_{jk}\sim N^{-1}$) and stochastic ($\sum_{k}S_{jk} = 1$).
} Wigner matrices \cite{Adhikari23}, and for deformed random matrix ensembles \cite{Cipolloni2023Overlap, Cipolloni2023EquiPart}. For a more detailed overview of the Eigenstate Thermalization Hypothesis, we direct the reader to the introduction of \cite{Cipolloni21ETH}. 

In the main Theorem \ref{th:ETH} of the present paper, we prove the ETH in the bulk of general \textit{Wigner-type} matrices with optimal error term controlled by the natural Hilbert-Schmidt norm of the observable.
Introduced in \cite{Ajanki2016Univ}, a Wigner-type matrix $H = H^*$ has independent but not necessarily identically 
distributed  entries with possibly nonzero expected values $\Expv[H_{jk}] = \delta_{jk}\mathfrak{a}_j$  on the
diagonal. The matrix of  variances, $S_{jk}:=\mathrm{Var}[H_{jk}]$, satisfies the natural \textit{flatness} and \textit{uniform primitivity} assumptions (see Definition \ref{def:WT} below). Wigner-type matrices serve as a natural generalization of both standard Wigner (with $S_{jk} = N^{-1}$) and generalized Wigner matrices (with $S$ being stochastic) but they go well beyond them, even allowing many zero matrix elements.
Compared to standard Wigner ensemble, random matrices of general Wigner type encompass non-trivial spatial variations in the underlying quantum model. 
Therefore our results rigorously show that the ETH phenomenon is not restricted to the simplest homogeneous situation; it is robustly present
in a quite general class of quantum Hamiltonians with a spatial structure. 

%
Unlike in the aforementioned standard and generalized Wigner ensembles, the limiting density of the eigenvalues of a Wigner-type random matrix $H$ is no longer given by the semicircular law. Instead, the self-consistent spectral density $\rho$ is recovered from the unique solution $\m := (m_j)_{j=1}^N$ of the corresponding \textit{vector Dyson equation} (formerly referred to as the \textit{quadratic vector equation} in \cite{Ajanki2019QVE}) 
with a data pair $(\bm{\mathfrak{a}},S)$ and a spectral parameter $z\in \mathbb{C}\setminus \mathbb{R}$,
\begin{equation} \label{eq:VDEz} 
	-\frac{1}{m_j(z)} = z - \mathfrak{a}_{j} + \sum_{k=1}^N S_{jk} m_k(z), \quad (\im z) \im m_j(z) >0,
\end{equation}
via the Stieltjes inversion formula applied to the function  $z\mapsto N^{-1}\sum_{j=1}^N \im m_j(z)$.

As is the case for many mean-field random matrices, the resolvent $G(z) := (H-z)^{-1}$ concentrates around a deterministic matrix called the \textit{self-consistent resolvent} $M(z)$ which can be computed from  the  general (matrix) Dyson equation.
In the Wigner-type setup, the matrix Dyson equation reduces to the vector equation~\eqref{eq:VDEz} and $\M{z} := \diag{\m(z)}$ is given by a diagonal matrix with the entries of the solution vector $\m(z)$ on the main diagonal. 
In general, the self-consistent resolvent for Wigner-type matrices is not proportional to the identity matrix, as is the case for the standard and generalized Wigner matrices that correspond to the scalar Dyson equation $-1/m_{\mathrm{sc}}(z) = z + m_{\mathrm{sc}}(z)$. The main consequence of the non-trivial spatial structure for ETH is that the normalized trace $\langle A \rangle$  in \eqref{eq:WigETH_norm} is replaced with a considerably more complicated energy-dependent quantity, namely
\begin{equation}
	\langle \vect{u}_j, A\vect{u}_k \rangle \approx \delta_{jk}\frac{\bigl\langle \im\M{\lambda_j} A\bigr\rangle}{\pi\rho(\lambda_j)},
\end{equation}
where $\lambda_j$ is the eigenvalue that corresponds to the eigenvector $\vect{u}_j$ of the Wigner-type matrix $H$. 
In fact, this effect was already observed in \cite{Cipolloni2023Overlap, Cipolloni2023EquiPart}, where the corresponding matrix Dyson equations also produced non-trivial self-consistent resolvents, but in these papers the spatial inhomogeneity solely stemmed from the non-trivial matrix of expectations while the variance profile was still fully homogeneous (so-called \emph{deformed Wigner matrices}).

Likewise, energy dependence enters the concept of observable \textit{regularity}, which is crucial for the proof of ETH.
To see this, note that by spectral decomposition of $H$ we have the identity
\begin{equation*}
   \frac{1}{N}\sum_{j,k}  \frac{\im z_1}{|\lambda_j-z_1|^2}\frac{\im z_2}{|\lambda_k-z_1|^2}
    |\langle \vect{u}_j, A\vect{u}_k \rangle|^2 =  \langle \im G(z_1)A\im G(z_2)A\rangle,
\end{equation*}
thus, along the proof, we detect the size of $\langle \vect{u}_j, A\vect{u}_k \rangle$ via a good upper bound on the right-hand side. It turns out that both the deterministic approximation and the fluctuating part of   $\langle G(z_1) A_1 G(z_2) A_2 \rangle$ are much smaller if the observables $A_1, A_2$ belong to a special one-dimensional subspace; these
are called \textit{regular observables}.
Roughly speaking, regular matrices are orthogonal to the principal eigenprojector of the \textit{two-body stability operator} $1 - \M{z_1}\M{z_2}S $ corresponding to the vector Dyson equation \eqref{eq:VDEz}, see Definition \ref{def:reg_A} below for more details. 
In the standard and in the generalized Wigner case, traceless matrices are the regular ones, hence the concept of regularity is independent of the energy, which greatly simplifies their analysis.

Now we summarize the new ideas in our proof.
Our main achievement is the optimal local laws for two resolvents of a Wigner-type matrix, interlaced with regular observables, with spectral parameters in the bulk of the self-consistent spectrum, Theorem \ref{th:locallaws}.
The major conceptual difficulty that arises in the setting of Wigner-type matrices,
 in contrast to all previous works, is associated with the general non-factorizability of the matrix of variances $S$. To explain this, consider the \textit{self-energy operator} $\mathcal{S}$ defined by its action on deterministic $N\times N$ matrices $B$,
\begin{equation} \label{eq:self-energy}
	\mathcal{S}[B] := \Expv\bigl[(H -\Expv[H])B(H -\Expv[H])\bigr], 
\end{equation}
which appears naturally in the second-order renormalization of resolvent chains \cite{Cipolloni21ETH}. Note that for Wigner-type matrices $\mathcal{S}_{jk}^{ab} = \delta_{jk}\delta_{ab}S_{ja}$ and for standard and deformed Wigner matrices, $S_{jk} = N^{-1}$, i.e.  $\mathcal{S}$ 
is given simply by $\langle \cdot \rangle I$, where $I$ is the identity matrix\footnote{
	Here, for simplicity, we consider only the \textit{diagonal part} of the self-energy operator $\mathcal{S}$. In the sequel, we denote the diagonal part of $\mathcal{S}$ by $\mathscr{S}$ (see \eqref{eq:superS_def} below). The off-diagonal part of $\mathcal{S}$ (denoted by $\mathscr{T}$ in \eqref{eq:superT_def} below) vanishes identically, for example, for the complex Hermitian random matrices with $\Expv [(H_{jk})^2] = 0$, $j \neq k$. In the proof, however, we treat both the real symmetric and complex Hermitian cases with no restriction on $\Expv[(H_{jk})^2]$, see Section \ref{sec:realT} below.
}.
The proof of the local laws is naturally  reduced to estimating the action of the quadratic form\footnote{
	We equip the space of $N\times N$ matrices with the Hilbert-Schmidt inner product $\langle X,Y\rangle := \langle X^*Y \rangle = N^{-1}\Tr[X^*Y]$.
}
of $\mathcal{S}$ on two resolvent chains, that emerges   
from the cumulant expansion formula.
For example, along the analysis of a two-resolvent chain,  
one encounters quantities of the form
\begin{equation} \label{ex:quad_term}
	\bigl\langle G A \mathcal{S}[G A G] \bigr\rangle = \frac{1}{N} \sum_{j,k} S_{jk} (GAG)_{kk} (GA)_{jj}, 
\end{equation}
that  involve \emph{three} resolvents. This potentially leads to an infinite hierarchy of equations
for longer and longer resolvent chains that would be impossible to close.  If $S_{jk} = N^{-1}$, then the  sum  in \eqref{ex:quad_term}  factorizes into a product of two averaged traces of resolvent chains of length at most two  and thus the hierarchy can be closed and one can 
still benefit from the fluctuation averaging in both summation indices simultaneously.  
In the general Wigner-type setting, however, the quadratic form of $\mathcal{S}$ does not factorize,
thus closing the hierarchy requires estimating
one of the chains in such  quadratic term by using an \emph{isotropic law}, giving up a
gain from \emph{fluctuation averaging}. 
This effectively results in a loss of $\sqrt{N|\im z|}$ factor.  
For example, compare the averaged and the isotropic laws for a single resolvent:
\begin{equation}
	\bigl\lvert\langle G(z) - M(z) \rangle\bigr\rvert \lesssim  N^\varepsilon(N|\im z|)^{-1} \quad \text{ and }  \quad \bigl\lvert \bigl(G(z)-M(z)\bigr)_{jj} \bigr\rvert \lesssim N^\varepsilon(N|\im z|)^{-1/2}.
\end{equation}
This loss is not affordable. 
The non-trivial structure of $\mathcal{S}$ also prevents the use of any other algebraic relations, such as
the cyclicity of trace and various resolvent identities, to effectively reduce the length of the emergent chains.
In fact, the same phenomena were encountered in the setting of generalized Wigner matrices in \cite{Adhikari23}, which is the only prior work that has proved ETH for a matrix $S$ that is not the trivial $S_{jk}=N^{-1}$.
However, the authors of \cite{Adhikari23} avoided the  
 non-factorization issue by explicitly assuming that the matrix of variances factorizes, $S=\widetilde S\widetilde S$, with a matrix $\widetilde S$ that is also flat. Note that $S$ is entry-wise positive, but in general, it is not positive-definite as a matrix, and even if it is, there is no natural reason to expect that $S_{jk}\sim N^{-1}$ should imply $\widetilde S_{jk}\sim N^{-1}$.  

Our approach  resolves this key difficulty in full generality without placing any additional conditions on $S$,
by performing a two-stage bootstrap inside the framework of the \textit{characteristic flow method} that we generalize to account for the features of the Wigner-type ensemble.
First, we prove the local laws for one and two resolvents with general observables, i.e., without any improvement from regularity, (Lemma \ref{lemma:1Flaw} and \ref{lemma:2G_weak}). Afterwards, we use these weaker estimates as an input to construct a closed hierarchy of master inequalities for resolvent chains with regular observables (Proposition \ref{prop:masters}). We close the hierarchy by employing two auxiliary control quantities designed to take advantage of the smoothing properties of $\mathcal{S}$ and account for chains containing a mixture of regular and general observables. For further details, see the proof strategy laid out in Section \ref{sec:proof_strat} below.
%
%
%

The characteristic flow method has been used to great effect to obtain local laws for a single resolvent \cite{Huang2019Rigidity, AdhikariHuang, Landon2021Wignertype, Adhikari2023locallaw, Landon2022, Bourgade21, Lee2015}
as well as multi-resolvent local laws in \cite{Bourgade22, Cipolloni23nonHerm, Cipolloni2023Edge, Cipolloni23Gumbel, stone2023random}
for various models, 
but it has not yet been utilized for Wigner-type matrices. 
%
It is a dynamical approach that combines a diffusion process in the space of matrices (the Ornstein-Uhlenbeck process) with a conjugate differential equation (\textit{characteristic flow}) that drives the spectral parameters from the global regime, where the local laws are easier to prove, into the local regime of interest. The characteristic flow is carefully chosen so that the simultaneous effect of the two flows results in a crucial algebraic cancellation.
In particular, we construct the Ornstein-Uhlenbeck process (see \eqref{eq:OUflow} below) in such a way that the first two moments of the matrix entries remain invariant. 

This allows us to study general Wigner-type ensembles with non-trivial spacial structure that satisfies the \textit{uniform primitivity} assumption (see \eqref{as:S_flat} below). Compared to the usual mean-field assumption of $S_{jk} \sim N^{-1}$, uniform primitivity allows large blocks of matrix elements to vanish
and thus it can mimic more realistic physical models.
%
The trade-off, however, is that the corresponding conjugate differential equation \eqref{eq:z_evol} produces \emph{vector-valued spectral parameters}, therefore along the flow one has to work with \textit{generalized resolvents} $G(\vect{z}) := (H - \diag{\vect{z}})^{-1}$  (see the discourse in Section \ref{sec:locallaws_proof} below).
In all previous applications of the characteristic flow,  the spectral parameters remained scalar; 
developing the theory for the vector-valued case is one of our two main 
methodological novelties.

%
To close the system of master inequalities, one inevitably needs to bound longer resolvent chains in terms of shorter ones. 
In all prior works (e.g., \cite{Cipolloni2022Optimal, Cipolloni2022RankUnif, Cipolloni2023Edge, Cipolloni23Gumbel}), 
such estimates, coined as \textit{reduction inequalities} in \cite{Cipolloni2022Optimal}, 
were proved using the spectral decomposition of the resolvent $G$. 
However, this approach is not applicable in the present setting because we work with generalized resolvents $G(\vect{z})$ whose eigenbases depend on the vector $\vect{z}$. We provide a more robust
proof of the reduction inequalities (Lemma \ref{lemma:reds}) using the submultiplicativity of the trace functional for positive-definite matrices. To express an arbitrary resolvent chain as a product of positive-definite matrices, we derive a new versatile integral representation for the resolvent $G$ in terms of its sign-definite imaginary part (Lemma \ref{lemma:imF_rep}). Crucially, our representation allows us to keep the spectral parameter in the resolvent localized and hence keep our analysis restricted to the bulk of the self-consistent spectrum (see Remark \ref{rem:im_g_int} for further detail). Proving reduction inequalities using this new integral
representation is our other key methodological novelty.
%

%
%
%
%
%
%
%

\subsection{Notations}
We denote the complex upper and lower half-planes by $\mathbb{H} := \{z\in\mathbb{C}: \im z >0\}$, and $\mathbb{H}^* := \{z\in\mathbb{C}: \im z < 0\}$.
For vectors $\vect{x} := (x_j)_{j=1}^N, \vect{y} := (y_j)_{j=1}^N \in \mathbb{C}^N$, we denote their entry-wise product by $\vect{xy} := (x_jy_j)_{j=1}^N$, and write $1/\vect{x} := (1/x_j)_{j=1}^N$ for the entry-wise multiplicative inverse of $\vect{x}$. 
 Moreover, we denote the diagonal matrix with the entries of the vectors $\vect{x}$ on the main diagonal by $\diag{\vect{x}} := (\delta_{jk} x_j)_{j,k=1}^N$. We use the following conventions for the scalar product and the $\ell^p$-norms with $p \in \{1, 2,\infty\}$,
\begin{equation*}
	\langle \vect{x}, \vect{y} \rangle := \sum_j \overline{x_j}y_j, \quad \norm{\vect{x}}_1 := \sum_j |x_j|, \quad \norm{\vect{x}}_2 := \langle \vect{x}, \vect{x} \rangle^{1/2}, \quad \norm{\vect{x}}_\infty := \max_{j}|x_j|.
\end{equation*}
Here and in the sequel, all unrestricted summations run over the set $\{1,\dots,N\}$.  

\section{Main Results}
We work in the setting of Wigner-type matrices originally introduced in \cite{Ajanki2016Univ}.
\begin{Def}[Wigner-type Matrices] \label{def:WT} 
	Let $H = (H_{jk})_{j,k=1}^N$ be an $N\times N$ random matrix with independent entries up to the symmetry constraint $H=H^*$, that satisfy
	\begin{equation} \label{eq:vect_a}
		\Expv [H_{jk}] = \delta_{jk} \mathfrak{a}_j, \quad \bm{\mathfrak{a}} := \bigl(\mathfrak{a}_j\bigr)_{j=1}^N\in\mathbb{R}^N, \quad \norm{\bm{\mathfrak{a}}}_\infty \le C_{\mathfrak{a}},
	\end{equation}
	for some positive constant $C_{\mathfrak{a}}$.
	We consider both real symmetric and complex Hermitian Wigner-type matrices. 

	\textbf{Assumption (A).}
	Let $S$ denote the matrix of variances $S:=(S_{jk})_{j,k=1}^N$,  $S_{jk} := \Expv |H_{jk}-\delta_{jk}\mathfrak{a}_j|^2$. We assume that $S$ satisfies the \textit{uniform primitivity}\footnote{
		Uniform primitivity, i.e., the first bound in \eqref{as:S_flat}, was used in \cite{Ajanki2019QVE} to derive various properties of the vector Dyson equation \eqref{eq:VDEz} and its solution $\m$ that we use throughout our proof, otherwise only the upper bound in \eqref{as:S_flat},  the flatness property, 
		 is used directly in the proof of the local laws.
	}
	and \textit{flatness}  conditions, i.e., there exists an integer $L$ such that  
	\begin{equation} \tag{A} \label{as:S_flat}
		 \bigl(S^L\bigr)_{jk} \ge \frac{c_{\inf}}{N},  \quad  S_{jk} \le \frac{C_{\mathrm{sup}}}{N},
	\end{equation}
	for some $N$-independent strictly positive constants $c_{\inf}, C_{\mathrm{sup}}$, and all $j,k\in \{1,\dots,N\}$. 
	
	\textbf{Assumption (B).} Furthermore, we assume that all higher centered moments of $\sqrt{N}H_{jk}$ are uniformly bounded in $N$, that is, for all $p\in\mathbb{N}$, there exists a positive constant $C_p$ such that for all $j,k \in \{1,\dots, N\}$,
	\begin{equation} \tag{B} \label{as:moments}
		\Expv\bigl[ |H_{jk}-\delta_{jk}\mathfrak{a}_j|^p \bigr] \le \frac{C_p}{N^{p/2}}.
	\end{equation}
	
	\textbf{Assumption (C).} We assume\footnote{
		General condition on the deformation $\bm{\mathfrak{a}}$ and the matrix of variances $S$ for the Assumption \eqref{as:m_bound} were obtained in Chapter 6 of \cite{Ajanki2019QVE}.   For example, given \eqref{as:S_flat}, the uniform bound in Assumption \eqref{as:m_bound} holds if the entries of the expectation 
		vector $\mathfrak{a}_j = \mathfrak{a}(j/N)$ and the elements of the matrix of variances $S_{jk} = \tfrac{1}{N} S(j/N, k/N)$ are sampled from a pair of fixed piece-wise $1/2$-H\"{o}lder regular functions $\mathfrak{a} : [0,1] \to \mathbb{R}$ and $S:[0,1]^2 \to \mathbb{R}_+$ (see Remark 6.5 in \cite{Ajanki2019QVE}).
	} that the unique (Theorem 2.1 in \cite{Ajanki2019QVE}) solution $\m \equiv \m_N := (m_j)_{j=1}^N$ of the vector Dyson equation \eqref{eq:VDEz} with data pair $(\bm{\mathfrak{a}},S) \in \mathbb{R}^N \times \mathbb{R}_{\ge0}^{N\times N}$ satisfies the bound
	\begin{equation} \tag{C} \label{as:m_bound}
		\norm{\m(z)}_\infty \le C_\mathrm{m}, \quad  z \in \mathbb{C}, 
	\end{equation}
	uniformly in $N$ for some positive constant $C_\mathrm{m}$. 
\end{Def}

We remark that the uniform primitivity Assumption \eqref{as:S_flat} allows large blocks of the entries of $H$ to vanish. For example, our model encompasses random band matrices, albeit with band width comparable to the size of the matrix. A special version of Quantum Unique Ergodicity (for quite
specific observables) for such band matrices was proved in \cite{bourgade2016universality} under the assumption that $S$ is stochastic and $S_{jk} \sim N^{-1}$ inside the band.

Let $\M{z}$ denote the self-consistent resolvent given by
\begin{equation} \label{eq:Mz_def}
	M(z) := \diag{\m(z)}.
\end{equation}
By Theorem 4.1 in \cite{Ajanki2016Univ}, the solution vector $\m(z)$ admits a uniformly $1/3$-H\"{o}lder regular extension to the closed upper half-plane $\overline{\mathbb{H}}$, and the \textit{self-consistent density of states} $\rho \equiv \rho_N$ is defined using the Stieltjes inversion formula
\begin{equation} \label{eq:rho_def}
	\rho(E) := \frac{1}{\pi} \lim\limits_{\eta \to +0} \im m(E+\I\eta), \quad \text{with}\quad m(z) := \bigl\langle \M{z} \bigr\rangle = \frac{1}{N}\sum\limits_{j} m_j(z).
\end{equation}
We denote the $j/N$-quantiles of the density $\rho$ by $\gamma_j$, that is
\begin{equation} \label{eq:quantile_gamma}
	\int_{-\infty}^{\gamma_j} \rho(x)\mathrm{d}x = \frac{j}{N}.
\end{equation}

The estimate in our main result is obtained in the sense of \textit{stochastic domination}.
\begin{Def}[Stochastic domination] \label{def:prec}
	Let $X := X^{(N)}(u)$ and $Y := Y^{(N)}(u)$ be two families of random variables depending on a parameter $u \in U^{(N)}$. We say that $Y$ stochastically dominates $X$ uniformly in $u$ if for any $\varepsilon >0$ and $D>0$, there exists $N_0(\varepsilon,D)$ such that for all integers $N \ge N_0(\varepsilon,D)$,
	\begin{equation}
		\sup_{u\in U^{(N)}} \mathbb{P}\biggl[X^{(N)}(u) \ge N^{\varepsilon}Y^{(N)}(u)\biggr] < N^{-D}.
	\end{equation}
	We denote this relation by $X\prec Y$. For complex valued $X$ satisfying $|X| \prec Y$, we write $X = \mathcal{O}_\prec(Y)$.
\end{Def} 
We can now state our main result.
\begin{theorem}(Eigenstate Thermalization Hypothesis for Wigner-type Matrices ) \label{th:ETH}
	Let $H$ be an $N\times N$ random matrix of Wigner-type as in Definition \ref{def:WT}, and let $\lambda_1 \le \dots \le \lambda_N$ and $\vect{u}_1,\dots, \vect{u}_N$ denote its ordered eigenvalues and the corresponding orthonormal eigenvectors, respectively. Let $\rho_{\min} > 0$ be a positive $N$-independent constant, then for any deterministic matrix $B$, the estimate
	\begin{equation} \label{eq:ETH}
		\biggl \lvert \langle \vect{u}_j, B\,\vect{u}_k \rangle - \delta_{jk}\frac{\bigl\langle \im\M{\gamma_j}B\bigr\rangle}{\pi\rho(\gamma_j)} \biggr \rvert \prec \frac{\langle |B|^2\rangle^{1/2}}{\sqrt{N}},
	\end{equation}
	holds uniformly in indices $j,k$ satisfying $\rho(\gamma_j), \rho(\gamma_k) \ge \rho_{\min}$, where $\gamma_j$ are defined by \eqref{eq:quantile_gamma}.
\end{theorem}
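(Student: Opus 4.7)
The plan is to deduce Theorem \ref{th:ETH} from the two-resolvent local law for regular observables (Theorem \ref{th:locallaws}) together with eigenvalue rigidity from the single-resolvent local law (Lemma \ref{lemma:1Flaw}). The bridge is the standard spectral identity
\begin{equation*}
\bigl\langle \im G(z_1)\, B\, \im G(z_2)\, B^* \bigr\rangle = \frac{\eta^2}{N} \sum_{i,\ell} \frac{|\langle \vect{u}_i, B \vect{u}_\ell \rangle|^2}{|\lambda_i - z_1|^2\, |\lambda_\ell - z_2|^2}, \qquad z_\ell := E_\ell + \I\eta.
\end{equation*}
Choosing $E_1 := \gamma_j$, $E_2 := \gamma_k$, and $\eta := N^{-1+\xi}$ for arbitrarily small $\xi>0$, and invoking rigidity $|\lambda_i - \gamma_i|\prec N^{-1}$ to guarantee $|\lambda_j - z_1|^2, |\lambda_k - z_2|^2 \prec \eta^2$ with overwhelming probability, extracting the single $(j,k)$-summand on the right yields the pointwise reduction
\begin{equation*}
|\langle \vect{u}_j, B \vect{u}_k\rangle|^2 \prec N \eta^2\, \bigl\langle \im G(z_1)\, B\, \im G(z_2)\, B^*\bigr\rangle.
\end{equation*}

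Next I would decompose $B$ at the relevant energies into a principal and a regular part. Because the self-energy operator of a Wigner-type matrix acts trivially on off-diagonal matrices, only the diagonal of $B$ enters the regularity condition. Concretely, I set
\begin{equation*}
B = \alpha(E)\, \diag{\bm{\psi}(E)} + B^{\mathrm{reg}}(E),
\end{equation*}
where $\bm{\psi}(E)$ is the principal right eigenvector of the two-body stability operator $1 - \M{E}\M{E^*} S$ (proportional to $\im \m(E)$ in the bulk), and $\alpha(E)$ is the unique scalar for which the diagonal of $B^{\mathrm{reg}}(E)$ is orthogonal to the corresponding left principal eigenvector. A direct computation from this orthogonality produces the explicit formula $\alpha(E) = N \langle \im \M{E}\, B\rangle$ (up to a normalization of $\bm{\psi}$) and makes $B^{\mathrm{reg}}(E)$ regular in the sense of Definition \ref{def:reg_A}. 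Applied with $A_1 := B^{\mathrm{reg}}(E_1)$ and $A_2 := B^{\mathrm{reg}}(E_2)^*$, Theorem \ref{th:locallaws} gives, uniformly for $z_1, z_2$ in the bulk,
\begin{equation*}
\bigl\langle \im G(z_1)\, A_1\, \im G(z_2)\, A_2\bigr\rangle \prec \langle |B|^2\rangle,
\end{equation*}
crucially with \emph{no} $\eta^{-1}$ blow-up --- the defining benefit of regularity. Feeding this into the reduction with $\eta = N^{-1+\xi}$ bounds the regular-part contribution to $|\langle \vect{u}_j, B \vect{u}_k\rangle|^2$ by $\prec N^{-1+2\xi}\langle|B|^2\rangle$, and after a square root, arbitrariness of $\xi$ matches the target $\prec N^{-1/2}\langle|B|^2\rangle^{1/2}$.

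The principal contribution $\alpha(E)\, \langle \vect{u}_j, \diag{\bm{\psi}(E)}\vect{u}_k\rangle$ is treated separately. For $j = k$, Quantum Unique Ergodicity applied to the single eigenvector $\vect{u}_j$ --- itself a consequence of the same two-resolvent law specialized to $B_1 = B_2 = \diag{\bm{\psi}(\gamma_j)}$ --- gives $\langle \vect{u}_j, \diag{\bm{\psi}(\gamma_j)}\vect{u}_j\rangle \approx 1/(\pi N \rho(\gamma_j))$, which combined with $\alpha(\gamma_j) = N\langle \im \M{\gamma_j} B\rangle$ reconstructs exactly the deterministic shift $\langle \im \M{\gamma_j} B\rangle/(\pi \rho(\gamma_j))$ in \eqref{eq:ETH}. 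For $j \ne k$, orthogonality of the eigenvectors together with the same QUE-type bound yields $|\alpha(E_j)\, \langle \vect{u}_j, \diag{\bm{\psi}(E_j)} \vect{u}_k\rangle| \prec N^{-1/2}\langle|B|^2\rangle^{1/2}$.

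The main obstacle beyond Theorem \ref{th:locallaws} itself is twofold. First, the principal/regular split is energy-dependent, so for $j \ne k$ with $\gamma_j \ne \gamma_k$ one cannot use a single decomposition of $B$; the four cross terms between $\alpha(E_1) \diag{\bm{\psi}(E_1)}$, $\alpha(E_2) \diag{\bm{\psi}(E_2)}^*$, $B^{\mathrm{reg}}(E_1)$, and $B^{\mathrm{reg}}(E_2)^*$ must be controlled using the $1/3$-H\"older regularity of $\m(E)$ from \cite{Ajanki2016Univ} together with a Cauchy--Schwarz argument. Second, the Hilbert--Schmidt scaling $\langle|B|^2\rangle^{1/2}$ rather than an operator-norm scaling in the final bound is available only because Theorem \ref{th:locallaws} is \emph{rank-uniform}: it retains the correct error size when one of the observables is effectively a rank-one projector, as needed for the principal-contribution overlap.
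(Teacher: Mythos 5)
Your reduction step and the treatment of the regular part are sound, but the handling of the principal (non-regular) contribution has a genuine gap that the paper sidesteps with a different choice of decomposition.

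You split $B$ at each relevant energy into $B = \alpha(E)\,\diag{\bm{\psi}(E)} + B^{\mathrm{reg}}(E)$, where $\bm{\psi}(E) \propto \im\m(E)$ spans the destabilizing direction. To evaluate the principal contribution you then need $\langle \vect{u}_j, \diag{\bm{\psi}(\gamma_j)}\vect{u}_j\rangle$, and you claim this follows from ``the same two-resolvent law specialized to $B_1 = B_2 = \diag{\bm{\psi}(\gamma_j)}$''. This is circular: $\diag{\bm{\psi}(\gamma_j)}$ is by construction the \emph{maximally non-regular} observable (its diagonal lies precisely in the range of the destabilizing eigenprojector $\Pi$), so the regular two-resolvent law \eqref{eq:2G_av_reg} does not apply. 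The general-observable law only yields an error of size $(N\eta_1\eta_2\eta)^{-1/2}$, which at the scale $\eta=N^{-1+\xi}$ is of order $N^{1-3\xi/2}$ and gives no control. Evaluating $\langle\vect{u}_j,\diag{\bm{\psi}}\vect{u}_j\rangle$ to leading order is therefore exactly the ETH statement you are trying to prove for the worst observable.

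The paper's proof avoids this by decomposing with respect to the identity instead: $B = A(z_j,z_k) + b(z_j,z_k)\,I$ with $A$ regular with respect to the pair $(z_j,z_k)$ (this is Lemma \ref{lemma:B_renorm} at $t=T$, where the vector $\diag{\widehat{\vect{z}}_{1,T}-\vect{z}_{2,T}}$ collapses to a scalar multiple of $I$). Since $I$ is not regular --- $\Pi[\m(\bar z_1)\m(z_2)]\neq 0$ by \eqref{eq:Pi_separation} --- it spans the same one-dimensional complement of the regular subspace as $\diag{\bm{\psi}}$, so the decomposition exists. The point is that $\langle\vect{u}_j, I\,\vect{u}_k\rangle=\delta_{jk}$ requires no probabilistic input whatsoever; the deterministic shift then comes out directly as $\delta_{jk}\,b(z_j,z_k)$, and the Lipschitz continuity of $\Pi$ in \eqref{eq:Pi_continuity} together with the explicit formula \eqref{eq:Pi_barzz} gives $b(z_j,z_k)=(\pi\rho(\gamma_j))^{-1}\langle\im M(\gamma_j)B\rangle + \mathcal{O}(\eta)\langle|B|^2\rangle^{1/2}$ when $j=k$. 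This choice also collapses the cross-term bookkeeping you anticipate: one uses a single $(z_j,z_k)$-dependent decomposition rather than two single-energy decompositions, so the off-diagonal case $j\neq k$ is handled by the same line, and the $1/3$-H\"older argument you sketch is replaced by the sharper Lipschitz bound on the eigenprojector.
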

\begin{remark}	
	In the setting of standard Wigner matrices, the analog of the estimate \eqref{eq:ETH} was previously obtained in \cite{Cipolloni2022RankUnif, Cipolloni2023Edge} (in the bulk and uniformly in the spectrum, respectively). In an earlier work \cite{Cipolloni21ETH}, the result was proved uniformly in the spectrum but with the Hilbert-Schmidt norm $\langle |B|^2\rangle^{1/2}$ on the right-hand side of \eqref{eq:ETH} replaced by the operator norm $\norm{B - \langle B\rangle}$.
	
	Eigenstate Thermalization \eqref{eq:ETH} with fluctuations controlled only by $\norm{B}$ was also proved in the setting of deformed Wigner (Theorem 2.7 in \cite{Cipolloni2023EquiPart}, bulk spectrum), non-Hermitian random matrices (Theorem 2.2 in \cite{Cipolloni2023Overlap}, bulk spectrum), and a restricted class of generalized Wigner matrices (Theorem 2.3 in \cite{Adhikari23}, uniformly in the spectrum).  

\end{remark}

\subsection{Two-Resolvent Local Laws}
The key input for proving the Eigenstate Thermalization in Theorem \ref{th:ETH} and our main technical result are the local laws for two resolvents of a Wigner-type matrix.
Our analysis is restricted to the bulk of the self consistent spectrum, which we define by
\begin{equation}\label{eq:Dbulk}
	\bulk \equiv \bulk_{\rho_*,\eta_*} := \{z\in\mathbb{C}\backslash\mathbb{R} : \rho(\re z) \ge \rho_*, |\im z|\le \eta_* \},
\end{equation}
for positive constants $\rho_*,\eta_*>0$.

We refer to the $N$-independent constants $C_{\mathfrak{a}}$, $L$, $c_{\inf}$, $C_{\mathrm{sup}}$, $\{C_p\}_{p\in\mathbb{N}}$, $C_\mathrm{m}$ in \eqref{eq:vect_a}, Assumptions \eqref{as:S_flat}--\eqref{as:m_bound}, and $\rho_*, \eta_*$ in \eqref{eq:Dbulk} as \textit{model parameters}.
For two positive quantities $X$ and $Y$, we write $X \lesssim Y$ if $X \le C Y$ for some constant $C>0$ that depends only on the model parameters. We write $X \sim Y$ if both $X\lesssim Y$ and $Y\lesssim X$ hold.

Let $\mathscr{S}$ denote the \textit{diagonal component} of the self-energy operator $\mathcal{S}$ associated with the random matrix $H$, defined in \eqref{eq:self-energy}. The action of $\mathscr{S}$ on deterministic matrices $B\in\mathbb{C}^{N\times N}$ is defined by
\begin{equation} \label{eq:superS_def}
	\mathscr{S}[B] := \Expv\bigl[ (H-\Expv[H]) \diag{\vect{b}^{\mathrm{diag}}} (H-\Expv[H]) \bigr] 
	=  \diag{S[\vect{b}^{\mathrm{diag}}]}, \quad \vect{b}^{\mathrm{diag}} := (B_{jj})_{j=1}^N.
\end{equation}
We denote the \textit{off-diagonal component}
 of the self-energy operator $\mathcal{S}$ by $\mathscr{T}$, with its action given by
\begin{equation} \label{eq:superT_def}
	\mathscr{T}[B] := \Expv\bigl[ (H-\Expv[H]) B^{\mathrm{od}} (H-\Expv[H]) \bigr] 
	=  \mathcal{T}\odot B^\mathfrak{t}, \quad \mathcal{T}_{jk} := \delta_{j\neq k}\Expv\bigl[(H_{jk})^2\bigr],
\end{equation}
where $B^{\mathrm{od}} := B - \diag{\vect{b}^{\mathrm{diag}}}$ is the off-diagonal part of a deterministic matrix $B$, $\odot$ denotes the  entry-wise Hadamard product, and $(\cdot)^\mathfrak{t}$ denotes transpose. In this notation, the self-energy operator $\mathcal{S}$ defined in \eqref{eq:self-energy} admits the decomposition $\mathcal{S} = \mathscr{S} + \mathscr{T}$.
Following \cite{Erdos2018CuspUF}, we place no further assumptions on the matrix $\mathcal{T}$ (see Eq. (3.2) and Remark 2.9 in \cite{Erdos2018CuspUF}). In particular, the matrix $\mathcal{T}$ allows us to interpolate between the real symmetric and complex Hermitian settings. 

For a deterministic matrix $B$ (often called an \textit{observable} in this context) and a pair of spectral parameters $z_1,z_2$ in the bulk of the spectrum $\bulk$, the deterministic approximation to the resolvent chain $G(z_1)BG(z_2)$ is given by
\begin{equation} \label{eq:M_def}
	\M{z_1,B,z_2} := \bigl(1- \M{z_1}\M{z_2}\mathscr{S}\bigr)^{-1}\bigl[\M{z_1}B\M{z_2}\bigr],
\end{equation}
where $\mathscr{S}$ is defined in \eqref{eq:superS_def}.
In particular, the deterministic approximation $\M{z_1,B,z_2}$ admits the expression
\begin{equation} \label{eq:M_offdiag}
	\M{z_1,B,z_2} = \M{z_1}B^{\mathrm{od}}\M{z_2} + \diag{\stab_{z_1,z_2}^{-1}[\m(z_1)\vect{b}^{\mathrm{diag}}\m(z_2)]},
\end{equation} 
where $B^{\mathrm{od}} := B - \diag{\vect{b}^{\mathrm{diag}}}$, and  $\stab_{z_1,z_2} : \mathbb{C}^N \to \mathbb{C}^N$ is the non-Hermitian \textit{two-body stability operator},
\begin{equation} \label{eq:stab_def}
	\stab_{z_1,z_2} := 1 - \M{z_1}\M{z_2}S.
\end{equation}
In general, we have the identity,
\begin{equation} \label{eq:d-od_stab_identity}
	\bigl(1- \M{z_1}\M{z_2}\mathscr{S}\bigr)^{-1}[B] =  B^{\mathrm{od}} + \diag{\stab_{z_1,z_2}^{-1}[\vect{b}^{\mathrm{diag}}]}.
\end{equation}

The key properties of the two-body stability operator are collected in the following lemma.
\begin{lemma}[Stability Operator\footnote{
	In parts of Lemma \ref{lemma:stab_lemma}, we refer to the results from \cite{Landon2021Wignertype} and \cite{R2023bulk}, that were proved under the stronger \textit{flatness} assumption $S_{jk} \ge c N^{-1}$. However, a careful examination the proofs in \cite{Landon2021Wignertype}, \cite{R2023bulk}, together with Proposition 5.4 in \cite{Ajanki2019QVE}, reveal that the same stability operator analysis holds under the weaker Assumption \eqref{as:S_flat}. }] \label{lemma:stab_lemma}
	For all $z_1, z_2 \in \mathbb{H}$ satisfying $\rho(\re z_j) \ge \tfrac{1}{2}\rho_*$ and $\im z_j \le \eta_*$, the stability operators $\mathcal{B}_{z_1,z_2}$ and $\mathcal{B}_{\bar{z}_1,z_2}$ satisfy the uniform bounds (Proposition 4.6 and Lemma 4.7 in \cite{Landon2021Wignertype})
	\begin{equation} \label{eq:scalar_stab_bounds}
		\norm{\mathcal{B}_{z_1,z_2}^{-1}}_* \lesssim 1, \quad \norm{\mathcal{B}_{\bar{z}_1,z_2}^{-1}}_* \lesssim \bigl(|\im z_1| + |\im z_2|\bigr)^{-1},
	\end{equation}
	where $\norm{\cdot}_*$ denotes the operator norm induced by either $\ell^2$ or $\ell^\infty$ vector norm. The second estimate in \eqref{eq:scalar_stab_bounds} can be improved to
	a uniform bound in $z_1,z_2 \in \bulk\cap\mathbb{H}$,
	\begin{equation} \label{eq:far_stability}
		\norm{\mathcal{B}_{\bar{z}_1,z_2}^{-1}}_* \lesssim |\bar{z}_1-z_2|^{-1}.
	\end{equation}

	Furthermore, there exists a threshold $\delta \sim 1$ and a small constant $c \sim 1$, such that for all $z_1, z_2 \in \bulk\cap \mathbb{H}$, satisfying $|z_1-z_2| \le \delta$, the operator  $\mathcal{B}_{\bar{z}_1,z_2}$ has a single isolated eigenvalue $\eigB_{\bar{z}_1,z_2}$ in the disk $\{\zeta \in \mathbb{C} : |\zeta| < c/2\}$, and (Claims 6.4 -- 6.6 in \cite{R2023bulk})
	\begin{equation} \label{eq:stab_gap}
		\norm{\stab_{\bar{z}_1,z_2}^{-1}\bigl(1-\Pi_{\bar{z}_1,z_2}\bigr)}_* \lesssim 1, \quad  \norm{(\zeta - \stab_{\bar{z}_1, z_2})^{-1}}_* \lesssim 1, \quad c/2 \le |\zeta|\le 2c.
	\end{equation}
	 Here, $\Pi_{\bar{z}_1,z_2}$ is the rank one eigenprojector corresponding to the eigenvalue of $\stab_{\bar{z}_1,z_2}$ with the smallest modulus defined via the contour integral
	\begin{equation} \label{eq:Pi_int}
		\Pi_{\bar{z}_1,z_2} := \frac{1}{2\pi\I}\oint_{|\zeta| = c} \bigl(\zeta - \stab_{\bar{z}_1,z_2}\bigr)^{-1} \mathrm{d}\zeta.
	\end{equation}
	The projector $\Pi_{\bar{z}_1,z_2}$ satisfies, uniformly in $z_1, z_2\in\bulk\cap\mathbb{H}$ with $|z_1 - z_2| \le \delta$, (Claim 6.7 in \cite{R2023bulk})
	\begin{equation} \label{eq:Pi_separation}
		\bigl\lvert \Pi_{\bar z_1, z_2}[\m(\bar z_1)\m(z_2)] \bigr\rvert \sim \bigl\lvert \Pi_{\bar z_1, z_2}^\mathfrak{t}[\vect{1}] \bigr\rvert \sim \vect{1},
	\end{equation}
	where $(\cdot)^\mathfrak{t}$ denotes the transpose of a matrix.
	Finally, the projector $\Pi_{\bar{z}_1,z_2}$ satisfies the norm bound
	\begin{equation} \label{eq:Pi_bound}
		\norm{\Pi_{\bar z_1, z_2}}_{\ell^1\to\ell^\infty} + \norm{\Pi_{\bar z_1, z_2}^\mathfrak{t}}_{\ell^1\to\ell^\infty} \lesssim N^{-1},
	\end{equation}
	and Lipschitz-continuity property, 
	\begin{equation} \label{eq:Pi_continuity}
		\norm{\Pi_{\bar z_1, z_2} - \Pi_{\bar z_1, z_3}}_{\ell^1\to\ell^\infty} + \norm{\Pi_{\bar z_1, z_2}^\mathfrak{t}- \Pi_{\bar z_1, z_3}^\mathfrak{t}}_{\ell^1\to\ell^\infty} \lesssim N^{-1}|z_2-z_3|,
	\end{equation}
	uniformly in $z_1,z_2, z_3 \in \bulk\cap\mathbb{H}$ with $|z_1-z_2|, |z_1-z_3| \le \delta$.
	Moreover, in the special case $z_2 = z_1 = E + \I0 \in \overline{\bulk}$, the eigenprojector $\Pi_{\bar{z}_1,z_2}$ can be computed explicitly (Eq. (5.52) in \cite{Ajanki2019QVE}),
	\begin{equation} \label{eq:Pi_barzz}
		\Pi_{E-\I0, E+\I0}\bigl[\cdot\bigr] = \im\m(E ) \frac{\bigl\langle |\m(E )|^{-2} \im\m(E ), \, \cdot \,\bigr\rangle}{\bigl\lVert|\m(E )|^{-1} \im\m(E )\bigr\rVert_2^2},
	\end{equation}
	where $\m(E) := \m(E+\I0)$.
\end{lemma}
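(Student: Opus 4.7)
The lemma is a compendium of structural estimates for the two-body stability operator $\stab_{z_1,z_2} = 1 - M(z_1)M(z_2)S$. Most items are quoted verbatim from \cite{Landon2021Wignertype, R2023bulk, Ajanki2019QVE}, so my plan has two pieces: verify that those proofs carry through under the weaker uniform primitivity Assumption \eqref{as:S_flat}, and then independently prove the two $\ell^1 \to \ell^\infty$ estimates \eqref{eq:Pi_bound} and \eqref{eq:Pi_continuity}.

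\textbf{Adaptation of prior results.} Proposition 5.4 of \cite{Ajanki2019QVE} shows that, under uniform primitivity, the solution $\m(z)$ of the vector Dyson equation is $1/3$-H\"older on $\overline{\mathbb{H}}$ with $|m_j|\sim 1$ and $\im m_j \sim \rho$ in the bulk. The stability analyses in \cite{Landon2021Wignertype, R2023bulk} use only these two properties of $\m$ together with the upper bound $S_{jk}\lesssim N^{-1}$; consequently \eqref{eq:scalar_stab_bounds}, \eqref{eq:far_stability}, \eqref{eq:stab_gap}, and \eqref{eq:Pi_separation} transfer without change. The coincident formula \eqref{eq:Pi_barzz} comes from taking the imaginary part of the vector Dyson equation at $z = E + \I 0$: this identifies $\im\m(E)$ as the null vector of $1 - |\m|^2 S$ in the weighted inner product with weight $|\m|^{-2}$, and the spectral projection in that inner product yields the stated formula.

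\textbf{Rank-one structure and \eqref{eq:Pi_bound}.} The projector $\Pi_{\bar z_1, z_2}$ associated with the isolated simple eigenvalue $\beta_{\bar z_1, z_2}$ admits the rank-one factorization
\begin{equation*}
\Pi_{\bar z_1, z_2}[\vect{x}] = \frac{\vect{v}\, \langle \vect{w}, \vect{x}\rangle}{\langle \vect{w}, \vect{v}\rangle}, \qquad \stab_{\bar z_1, z_2}\vect{v} = \beta \vect{v}, \qquad \stab_{\bar z_1, z_2}^\mathfrak{t}\vect{w} = \bar\beta \vect{w}.
\end{equation*}
At the coincident point \eqref{eq:Pi_barzz} identifies $\vect{v} = \im\m(E)$ and $\vect{w} = |\m(E)|^{-2}\im\m(E)$; both have $\ell^\infty$-norm of order one, and $\langle \vect{w}, \vect{v}\rangle = \sum_j |m_j|^{-2}(\im m_j)^2 \sim N$ in the bulk. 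Continuity of $\stab$ together with the gap estimate \eqref{eq:stab_gap} propagates the order-one $\ell^\infty$ control of $\vect{v}, \vect{w}$ and the $N$-scaling of $\langle \vect{w}, \vect{v}\rangle$ to a neighborhood of the coincident point. Consequently, the matrix entries of $\Pi_{\bar z_1, z_2}$ in the canonical basis are $(\Pi)_{jk} = v_j \overline{w_k}/\langle \vect{w}, \vect{v}\rangle = \mathcal{O}(N^{-1})$, giving \eqref{eq:Pi_bound}; the transpose bound follows by swapping the roles of $\vect{v}$ and $\vect{w}$.

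\textbf{Lipschitz continuity \eqref{eq:Pi_continuity} and main obstacle.} Using the factorization above for both $\Pi_{\bar z_1, z_2}$ and $\Pi_{\bar z_1, z_3}$, the entry-wise difference of their matrix representations is a linear combination of differences of $\vect{v}$'s, $\vect{w}$'s, and normalization overlaps. Each such difference is Lipschitz in $z_2$ (in the pertinent norm): the Lipschitz constants for $\vect{v}$ and $\vect{w}$ come from the contour representation \eqref{eq:Pi_int} combined with the second-resolvent identity
\begin{equation*}
(\zeta - \stab_{\bar z_1, z_2})^{-1} - (\zeta - \stab_{\bar z_1, z_3})^{-1} = (\zeta - \stab_{\bar z_1, z_2})^{-1} M(\bar z_1)\bigl(M(z_3) - M(z_2)\bigr) S\, (\zeta - \stab_{\bar z_1, z_3})^{-1},
\end{equation*}
the Lipschitz continuity of $M$ on the bulk, and the uniform resolvent bound \eqref{eq:stab_gap}; the overlap Lipschitz estimate follows from these together with order-one $\ell^\infty$ control of $\vect{v}, \vect{w}$. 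Combining these inputs gives $(\Pi_{\bar z_1, z_2} - \Pi_{\bar z_1, z_3})_{jk} = \mathcal{O}(|z_2 - z_3|/N)$, which is precisely \eqref{eq:Pi_continuity}. The main obstacle is the careful propagation of the rank-one scaling and the sharp $N^{-1}$ gain through each term in this decomposition, in particular ensuring that the $S$ factor does not introduce any spurious $N$-growth; once the right factorization is chosen, the rest is standard perturbation theory.
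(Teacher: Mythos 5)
Your proposal has a genuine gap in its treatment of \eqref{eq:far_stability}. You lump this bound into the results that ``transfer without change'' from \cite{Landon2021Wignertype, R2023bulk}, but those references only establish the improved stability bound in the perturbative regime $|z_1 - z_2| \le \delta \sim 1$ (see the remark immediately following the lemma in the paper, which explicitly states that the long-range regime $|z_1 - z_2| \gtrsim 1$ is new). Without the long-range bound, the second estimate in \eqref{eq:scalar_stab_bounds} only gives $\norm{\stab_{\bar z_1, z_2}^{-1}}_* \lesssim (|\im z_1| + |\im z_2|)^{-1}$, which blows up near the real axis even when $z_1$ and $z_2$ are far apart; this is not good enough for the subsequent analysis (e.g.\ the deterministic bound \eqref{eq:M_bound}). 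The paper supplies the missing argument using the factorization $\stab = |M_1 M_2|^{1/2} U (U^* - F) |M_1 M_2|^{-1/2}$ with the saturated self-energy operator $F_{\bar z_1, z_2}$, the rotation-inversion lemma of \cite{Ajanki2019QVE} which gives $\norm{\stab^{-1}}_* \lesssim (1 - \norm{F} + \norm{(1-U)\vect{v}}_2^2)^{-1}$, and the key lower bound $\norm{(1-U)\vect{v}}_2 + 1 - \norm{F} \gtrsim |\bar z_1 - z_2|$, which it derives by pairing the identity $(U^* - F)[|\m_1 \m_2|^{-1/2}\Delta\m] = (\bar z_1 - z_2)|\m_1\m_2|^{1/2}$ with the principal eigenvector $\vect{v}$ of $F$; none of this appears in your sketch.

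Your treatment of \eqref{eq:Pi_bound} and \eqref{eq:Pi_continuity} takes essentially the same route as the paper: rank-one factorization of $\Pi$, the explicit coincident-point formula \eqref{eq:Pi_barzz} to identify the correct $N$-scaling of the left/right eigenvectors and the normalization $\langle \vect{w}, \vect{v}\rangle \sim N$, the contour representation combined with a resolvent identity to extract Lipschitz control, and propagation to a neighborhood. The one place where the paper is more careful is the order of operations: it first establishes the $\ell^\infty \to \ell^\infty$ boundedness and Lipschitz continuity of $\Pi$ directly from the contour integral \eqref{eq:Pi_int} and the resolvent bound in \eqref{eq:stab_gap}, and only then uses these to show that the specific test vectors $\vect{r} = \Pi[|\m_1|\vv]$ and $\bm{\ell} = \Pi^*[|\m_1|^{-1}\vv]$ remain comparable to $N^{-1/2}\vect{1}$ and have order-one overlap for $|z_1 - z_2| \le \delta$ with a suitably small $\delta$; this removes the vagueness in your ``continuity propagates'' step. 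But the underlying mechanism is the same, so no new idea is missing there.
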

\begin{remark}
	We note that in prior works \cite{Landon2021Wignertype, KN23}, the bound \eqref{eq:far_stability} on the stability operator $\stab_{\bar{z}_1, z_2}$ was obtained only in the perturbative regime $|z_1 - z_2| \le \delta \sim 1$ for $z_1,z_2$ in the bulk of the spectrum. For $z_1,z_2$  in the vicinity of the small local minima of the self-consistent density, but still in the perturbative regime,  $|z_1 - z_2| \le \delta_{*} \sim 1$, the two-body stability operators $\stab_{z_1,z_2}$ and $\stab_{\bar{z}_1, z_2}$ were fully analyzed in \cite{R2023cusp}.
 	
	Here we prove the bound in the complementary long range regime $|z_1-z_2| \gtrsim 1$. In fact, our proof holds uniformly in the spectrum, i.e., for all $z_1,z_2 \in \mathbb{C}\backslash\mathbb{R}$ satisfying $|z_j| \le C$ and $|z_1 - z_2| \gtrsim 1$, thus completing the analysis of the two-body stability operator for Wigner-type matrices. 
\end{remark}

We prove the long-range stability estimate \eqref{eq:far_stability} and the bounds \eqref{eq:Pi_bound}, \eqref{eq:Pi_continuity} of Lemma \ref{lemma:stab_lemma} in Appendix \ref{sec:stab_section}. 
In particular, Lemma \ref{lemma:stab_lemma} implies that for a fixed pair $(z_1,z_2)$ of spectral parameters in the bulk domain $\bulk$, there exists a codimension one subspace of observables $A$ in $\mathbb{C}^{N\times N}$ such that the size of the corresponding deterministic approximation $\M{z_1,A,z_2}$ is smaller than typical (see \eqref{eq:ex_regA} and \eqref{eq:ex_genB} below). We call such observables \textit{regular}, and they play a key role in our analysis.
\begin{Def}[Regular Observables]  \label{def:reg_A}
	Let $(z_1,z_2)$ be an ordered pair of spectral parameters in $\bulk$, and let $\delta$ be the threshold introduced in Lemma \ref{lemma:stab_lemma}.
	In the regime $\min\{|z_1-z_2|, |\bar{z}_1-z_2|\} \le \tfrac{1}{2}\delta$,
	 we say that a deterministic matrix (observable) $A$ is regular with respect to $(z_1,z_2)$  (or $(z_1,z_2)$-regular for short) if and only if
	\begin{equation}
		\Pi_{z_1^-,z_2^+}[\m(z_1^-)\vect{a}^{\mathrm{diag}}\,\m(z_2^+)] = 0, \quad  \vect{a}^\mathrm{diag} := \bigl(A_{jj}\bigr)_{j=1}^N,
	\end{equation}
	where $z_j^\pm := \re z_j \pm \I|\im z_j|$. 
	In the complementary regime $\min\{|z_1-z_2|, |\bar{z}_1-z_2|\} > \tfrac{1}{2}\delta$, we consider all observables in $\mathbb{C}^{N\times N}$ regular.
\end{Def}  
\begin{remark}[Some remarks about regularity] 
	First note that for standard and generalized Wigner matrices, i.e., if the expectation vector $\bm{\mathfrak{a}} = 0$ and the matrix of variances $S$ is stochastic, the concept of regularity reduces to tracelessness $\langle A \rangle = 0$, and does not depend on the spectral parameters $z_1, z_2$.
	
	Second, we remark that 
	the Definition \ref{def:reg_A} implies that if $A$ is $(z_1, z_2)$-regular, then $A^*$ is $(z_2,z_1)$-regular.
	Moreover, the concepts of regularity with respect to $(z_1, z_2)$, $(\bar z_1, z_2)$, $(z_1, \bar z_2)$, and $(\bar z_1, \bar z_2)$ are mutually equivalent.
\end{remark}

In particular, \eqref{eq:M_offdiag}, Assumption \eqref{as:m_bound} and \eqref{eq:stab_gap} of Lemma \ref{lemma:stab_lemma} imply that for all $z_1,z_2 \in \bulk\cap\mathbb{H}$,
\begin{equation} \label{eq:ex_regA}
	\langle |\M{\bar{z}_1,A,z_2}|^2 \rangle^{1/2} \lesssim \langle |A|^2\rangle^{1/2},
\end{equation}
for any observable $A$ regular with respect to $(z_1,z_2)$ in the sense of Definition \ref{def:reg_A}, while for a general observable $B$, we have only a weaker estimate from \eqref{eq:scalar_stab_bounds},
\begin{equation} \label{eq:ex_genB}
	\langle |\M{\bar{z}_1,B,z_2}|^2 \rangle^{1/2} \lesssim  (|\im z_1| + |\im z_2|)^{-1}\langle|B|^2\rangle^{1/2},
\end{equation}
with both bounds potentially saturating when $z_2$  is close to $\bar{z}_1$.
In the sequel, we adhere to the convention that $A$'s denote observables that are regular with respect to the spectral parameters of the adjacent resolvents, and $B$'s denote general observables.

As it was previously observed in \cite{Cipolloni2022Optimal}, \cite{Cipolloni2023EquiPart} in the setting of standard and deformed Wigner matrices, the regularity of the observables impacts not only the size of the deterministic approximation itself, but also the size of the fluctuations of the corresponding resolvent chain. This constitutes the main technical result of the present paper, contained in the following theorem.
\begin{theorem}[One- and Two-Resolvent Local Laws with Regular Observables] \label{th:locallaws}
	Let $H$ be an $N\times N$ random matrix of Wigner-type as in Definition \ref{def:WT}. Consider spectral parameters $z_1, z_2 \in \mathbb{C}\backslash\mathbb{R}$, and the corresponding resolvent $G_j = G(z_j) := (H-z_j)^{-1}$, for $j \in \{1,2\}$. Let $A_1$ be a $(z_1,z_2)$-regular, and $A_2$ be a $(z_2, z_1)$-regular observable in the sense of Definition \ref{def:reg_A}, let $\eta := \min_j|\im z_j|$, then the averaged law
	\begin{equation} \label{eq:2G_av_reg}
		\bigl\lvert \bigl\langle \bigl(G_1A_1G_2 - \M{z_1,A_1,z_2}\bigr)A_2 \bigr\rangle \bigr\rvert \prec \frac{\langle |A_1|^2\rangle^{1/2}\langle |A_2|^2\rangle^{1/2}}{\sqrt{N\eta}},
	\end{equation}
	holds uniformly in observables $A_1, A_2$, spectral parameters $z_1,z_2\in\bulk$ satisfying $\min_j|\im z_j| \ge N^{-1+\varepsilon}$, for some constants $\rho_*,\eta_*,\varepsilon>0$.
	
	Under the same assumptions, the isotropic law
	\begin{equation} \label{eq:2G_iso_reg}
		\bigl\lvert \bigl\langle \vect{x}, \bigl(G_1A_1G_2 - \M{z_1,A_1,z_2}\bigr)\vect{y} \bigr\rangle \bigr\rvert \prec \frac{\langle |A_1|^2\rangle^{1/2}\norm{\vect{x}}_2\norm{\vect{y}}_2}{\sqrt{\eta}},
	\end{equation}
	holds uniformly in $A_1$, deterministic vectors $\vect{x},\vect{y}$ and in $z_1,z_2\in\bulk$ satisfying $\min_j|\im z_j| \ge N^{-1+\varepsilon}$, for some constants $\rho_*,\eta_*,\varepsilon  > 0$.
	
	Furthermore, we obtain the following optimal averaged local law for the resolvent
	\begin{equation} \label{eq:1G_av_reg}
		\bigl\lvert \bigl\langle \bigl(G(z) - \m(z)\bigr)A \bigr\rangle \bigr\rvert \prec \frac{\langle |A|^2\rangle^{1/2}}{N\sqrt{|\im z|}},
	\end{equation}
	uniformly in $(\bar z, z)$-regular observables $A$, and in $z \in \bulk$ satisfying $|\im z| \ge N^{-1+\varepsilon}$.
\end{theorem}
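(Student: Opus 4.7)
The plan is to prove all three local laws simultaneously via the characteristic flow method, adapted to the Wigner-type setting with vector-valued spectral parameters. Concretely, I would embed the original matrix $H$ into an Ornstein-Uhlenbeck process $H_t$ whose generator preserves both the expectation vector $\bm{\mathfrak{a}}$ and the full variance matrix $S$ (so the model class is invariant along the flow). Paired with this, I would introduce a characteristic ODE for a \emph{vector} of spectral parameters $\vect{z}_t\in\mathbb{C}^N$ of the form $\partial_t \vect{z}_t = \m(\vect{z}_t)$ (componentwise), so that the generalized resolvent $G_t(\vect{z}_t) := (H_t - \diag{\vect{z}_t})^{-1}$ has the property that, after applying Itô's formula to traces and isotropic entries of $G_t$, the dangerous $\mathscr{S}$-generated second-order drift exactly cancels the characteristic drift up to a manageable remainder. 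At $t=0$ one starts with $|\im \vect{z}_0|$ order one, where a global law is easily obtained; at the stopping time $t_*$ one recovers the original matrix $H$ with spectral parameters pushed into the bulk regime with $|\im z|\gtrsim N^{-1+\varepsilon}$.

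The first stage of the bootstrap is to prove \emph{weak} local laws for one and two resolvents against arbitrary observables (without any regularity gain), which the introduction names as Lemmas \ref{lemma:1Flaw} and \ref{lemma:2G_weak}. I would run the flow argument directly on the quantities $\langle(G_1-M_1)B\rangle$ and $\langle(G_1B_1G_2 - M^{z_1,B_1,z_2})B_2\rangle$, using \eqref{eq:scalar_stab_bounds}--\eqref{eq:far_stability} of Lemma \ref{lemma:stab_lemma} to invert the linearized stability operator $1-M_1 M_2\mathscr{S}$ along the characteristics. Here the estimates saturate as $\eta\to 0$, but they are sufficient to serve as an input for the second stage: in particular they provide $\ell^\infty$ control on $G_{jj}$ and on isotropic entries $\langle \vect{x},G\vect{y}\rangle$ that will be needed to bound terms where regularity cannot be exploited.

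The second and decisive stage is to set up a closed system of \textbf{master inequalities} for the normalized errors in \eqref{eq:2G_av_reg}, \eqref{eq:2G_iso_reg}, \eqref{eq:1G_av_reg} together with auxiliary quantities tracking chains that mix a regular $A$ with a general $B$ (needed because differentiating a regular chain along the OU flow produces, via cumulant expansion, cubic terms with one non-regular slot). Regularity enters by projecting away the unstable direction $\Pi_{\bar z_1, z_2}$ of $\stab_{\bar z_1,z_2}$: the defining condition in Definition \ref{def:reg_A} guarantees that the inverse of $1-M_1M_2\mathscr{S}$ applied to $M_1 A M_2$ remains of size $\langle |A|^2\rangle^{1/2}$ rather than blowing up like $\eta^{-1}$. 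To close the hierarchy, I would use the \textbf{reduction inequalities}: for a four-resolvent chain such as $\langle GAGG^*A^*G^*\rangle$, the integral representation of $G$ in terms of its sign-definite imaginary part (Lemma \ref{lemma:imF_rep}) together with submultiplicativity of trace on positive matrices yields a bound in terms of two-resolvent chains with the same observables, which is crucial because the eigenbasis of the generalized $G(\vect{z})$ is $\vect{z}$-dependent and spectral decomposition is unavailable.

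The main obstacle is exactly the non-factorization of $S$ highlighted around \eqref{ex:quad_term}: the cubic term $\langle GA\mathscr{S}[GAG]\rangle$ appearing in the cumulant expansion cannot be split into a product of two averaged traces. My way around it is to treat the inner chain $GAG$ via the two-resolvent bound \eqref{eq:2G_iso_reg} \emph{applied entrywise}, combined with the smoothing property of $\mathscr{S}$ (which maps $\ell^\infty$ to $\ell^\infty$ with norm $\mathcal{O}(N^{-1})$ by \eqref{as:S_flat}) to recover the averaging gain that in the stochastic $S_{jk}=N^{-1}$ case comes for free. This forces the bootstrap to be run with the regular-isotropic estimate and the regular-averaged estimate \emph{in tandem}, rather than sequentially; Gronwall's inequality along the characteristics then yields all three bounds in Theorem \ref{th:locallaws} at the stopping time $t_*$. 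The single-resolvent law \eqref{eq:1G_av_reg} is extracted as a byproduct of the same flow by specializing one of the two spectral parameters to $\infty$ (equivalently, replacing one $G$ by the identity) and using that $\stab_{\bar z,\infty}^{-1}$ retains the regularity cancellation through $\Pi_{\bar z, z}$.
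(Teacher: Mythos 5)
Your high-level strategy -- OU flow conjugated with a vector-valued characteristic flow, a first bootstrap for general observables, a closed system of master inequalities for regular observables, and reduction inequalities via a positivity-preserving integral representation -- is essentially the paper's plan, and the reasons you give for needing vector-valued spectral parameters and for avoiding spectral decomposition of generalized resolvents are correct. However, there are two substantive errors and one structural omission.

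First, the characteristic flow is wrong. You propose $\partial_t\vect{z}_t = \m(\vect{z}_t)$, but the cancellation in \eqref{eq:GBevol} requires the drift term $\bigl\langle G_t B G_t\diag{\partial_t\vect{z}_t + \tfrac12\vect{z}_t - \tfrac12\bm{\mathfrak{a}} + S[\m(\vect{z}_t)]}\bigr\rangle$ to vanish, which forces the flow \eqref{eq:z_evol}, namely $\partial_t\vect{z}_t = -S[\m(\vect{z}_t)] - \tfrac12(\vect{z}_t - \bm{\mathfrak{a}})$. The presence of the variance matrix $S$ is structural: the term $\langle M\mathscr{S}[G B G]\rangle = \langle GBG\,\diag{S[\m]}\rangle$ created by Itô's formula is exactly what the characteristic drift must cancel. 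Writing the flow without $S$ destroys the identity, and then the whole flow method does not close.

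Second, the statement ``at the stopping time $t_*$ one recovers the original matrix $H$'' is false, and the omission it hides is fatal: running \eqref{eq:OUflow} from $H_0=H$ to $H_T$ injects an order-one Gaussian component, so at $t=T$ one has proved the local laws for $H_T$, not $H$. The paper closes this gap with Proposition~\ref{prop:GFT}: one first constructs a matching initial condition $\widehat H$ so that $\widehat H_T$ has the same first three moments as $H$, proves the local laws for $\widehat H_T$ via the flow, and then transfers them to $H$ by a Green function comparison with a separate induction through isotropic three-resolvent chains (hence Proposition~\ref{prop:2iso}). Your proposal contains no comparison step at all.

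Third, your route to the one-resolvent law \eqref{eq:1G_av_reg} -- ``specializing one of the two spectral parameters to $\infty$'' and ``using that $\stab_{\bar z,\infty}^{-1}$ retains the regularity cancellation through $\Pi_{\bar z,z}$'' -- does not work. As $z_2\to\infty$, $\m(z_2)\to 0$ and $\stab_{\bar z,z_2}\to I$; the operator is trivially invertible, has no small eigenvalue and no nontrivial eigenprojector, so there is nothing for the regularity condition to cancel. The paper instead derives a separate master inequality for $\Phi_1(t)$ directly from the evolution of $\langle(G_t-M_t)A\rangle$; the $(\bar z,z)$-regularity enters because Itô's formula produces the two-resolvent chain $G_tAG_t$ with \emph{both} spectral parameters equal to $\vect{z}_t$, whose deterministic approximation involves $\stab_{\vect{z}_t,\vect{z}_t}$, not a stability operator at infinity.

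Finally, a smaller inaccuracy: the claim that $\mathscr{S}$ maps $\ell^\infty$ to $\ell^\infty$ with norm $\mathcal{O}(N^{-1})$ is not correct (with $\sum_k S_{jk}\sim 1$ the $\ell^\infty\to\ell^\infty$ norm is order one); the smoothing input the paper uses is $\norm{\mathscr{S}[X]}\lesssim\langle|X|^2\rangle^{1/2}$, i.e.\ Hilbert--Schmidt in, operator norm out. More importantly, the rows $S^{(p)}$ are themselves not regular observables, so the paper must decompose $S = N^{-1}(\mathring S + \vect{s}\vect{1}^*)$ (Lemma~\ref{lemma:S_decomp}) into a regular part plus rank one, and has to introduce \emph{two} auxiliary control quantities ($\Phi_2^{\mathrm{op}}$ controlled by the operator norm of the second observable, and $\Phi_{(1,1)}$ for a regular/general pair) to close the iteration. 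Your sketch gestures at the mixed regular/general chains but does not identify the Hilbert--Schmidt vs.\ operator-norm dichotomy for $\Phi_2$, which is the device that actually supplies the missing $(N\eta)^{-c}$ gain; without it the master inequality for $\Phi_2^{\mathrm{hs}}$ stays quadratic and is not iterable.
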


\begin{remark}[General Observables]
	Along the proof we also establish the analogs of \eqref{eq:2G_av_reg} and \eqref{eq:2G_iso_reg} for non-regular observables. More precisely, under the assumptions and notation of Theorem \ref{th:locallaws}, let $\eta_j := |\im z_j|$, then the two-resolvent averaged and isotropic local laws
	\begin{equation}
		\begin{split}
			\bigl\lvert \bigl\langle \bigl(G_1B_1G_2 - \M{z_1,B_1,z_2}\bigr)B_2 \bigr\rangle \bigr\rvert &\prec \frac{\langle |B_1|^2\rangle^{1/2}\langle |B_2|^2\rangle^{1/2}}{\sqrt{N\eta_1\eta_2\eta}},\\
			\bigl\lvert \bigl\langle \vect{x}, \bigl(G_1B_1G_2 - \M{z_1,B_1,z_2}\bigr)\vect{y} \bigr\rangle \bigr\rvert &\prec \frac{\norm{\vect{x}}_2\norm{\vect{y}}_2\langle |B_1|^2\rangle^{1/2}}{\sqrt{\eta_1\eta_2}},
		\end{split}		
	\end{equation}
	hold uniformly in all deterministic matrices $B_1, B_2$, deterministic vectors $\vect{x}, \vect{y}$, and in spectral parameters $z_1,z_2\in\bulk$ satisfying $\min_j|\im z_j| \ge N^{-1+\varepsilon}$ for some constants $\varepsilon > 0$.
\end{remark}

\begin{remark}[Optimality]
	The local laws of Theorem \ref{th:locallaws} are optimal in both $N$ and $\eta$ as long as one uses exclusively the Hilbert-Schmidt norm of the observables $\langle |A_j|^2 \rangle^{1/2}$ to control the error terms. However, if one consider higher Schatten norms, the $N\eta$ dependence can be improved, see \cite{Cipolloni2024out}.
\end{remark}
\begin{remark}[Small Local Minima]
	One can consider spectral parameters $z_1, z_2$ lying in the vicinity of the small local minima of the self-consistent density of states, e.g., spectral edges (as was done in \cite{Cipolloni2023Edge}) and cusps. Since our main objective is to deal with a non-trivial self-energy operator $\mathcal{S}$, we do not pursue the optimal $\rho$ dependence in the present paper, but explain the difficulties in Remark \ref{rem:small_rho}.
\end{remark}

\subsection{Proof of Eigenstate Thermalization}
\begin{proof}[Proof of Theorem \ref{th:ETH}]
	Fix $\varepsilon > 0$, and let $\eta := N^{-1 + \varepsilon}$. Let $B$ be a deterministic observable, and let $j,k$ be two bulk indices, i.e., $\rho(\gamma_j), \rho(\gamma_k) \ge \rho_{\min}$.
	From the Definition \ref{def:reg_A} of observable regularity and the properties of the stability operator in Lemma \ref{lemma:stab_lemma}, we conclude that any deterministic matrix $B$ admits the decomposition (for a detailed derivation, see the proof of the more general Lemma \ref{lemma:B_renorm} with $t = T$ in Section \ref{sec:A_reg_proofs} below)
	\begin{equation} \label{eq:ETH_decomp}
		B = A(z_j, z_k) + b(z_j,z_k)\,I,
	\end{equation}
	where $z_j := \gamma_j + \I\eta$, $z_k := \gamma_k + \I\eta$, and the observable $A$ is $(z_j,z_k)$-regular. Moreover, using the Lipschitz continuity of $\Pi_{\cdot,\cdot}$ from \eqref{eq:Pi_continuity} and the expression for $\Pi_{\gamma_j,\gamma_j}$ in \eqref{eq:Pi_barzz}, we deduce that
	\begin{equation} \label{eq:ETH_continuity}
		b(z_j,z_k) = \bigl(\pi\rho(\gamma_j)\bigr)^{-1}\bigl\langle \im\M{\gamma_j}B \bigr\rangle + 
		\langle |B|^2 \rangle^{1/2} \mathcal{O}(|\gamma_j - \gamma_k| + \eta).
	\end{equation}
	In particular, we obtain
	\begin{equation}
		\langle \vect{u}_j, B\, \vect{u}_k\rangle = \bigl\langle\vect{u}_j, A(z_j,z_k)\, \vect{u}_k\bigr\rangle + \delta_{jk}\bigl(\pi\rho(\gamma_j)\bigr)^{-1}\bigl\langle \im\M{\gamma_j}B \bigr\rangle + 
		\langle |B|^2 \rangle^{1/2}\mathcal{O}(\eta).
	\end{equation}
	Next, using the spectral decomposition of $\im G$ together with the eigenvalue rigidity $|\lambda_i - \gamma_i| \prec N^{-1}$ (see Corollary 1.11 in \cite{Ajanki2016Univ}), we obtain (c.f. Lemma 3.5 in \cite{Cipolloni2023Overlap})
	\begin{equation}
		\bigl\lvert\bigl\langle\vect{u}_j, A(z_j,z_k)\, \vect{u}_k\bigr\rangle\bigr\rvert^2 \prec	N\eta^2\bigl\langle \im G(z_j) A(z_j,z_k) \im G(z_k) \bigl(A(z_j, z_k)\bigr)^* \bigr\rangle \prec N^{-1 +2\varepsilon}\langle |A(z_j,z_k)|^2 \rangle,
	\end{equation}
	where we used the estimate \eqref{eq:ex_regA} and averaged local law \eqref{eq:2G_av_reg} of Theorem \ref{th:locallaws} to conclude the last estimate. Since $\varepsilon > 0$ was arbitrary small, the $N^{2\varepsilon}$ factor can be absorbed into $\prec$ by Definition \ref{def:prec}. A simple estimate $\langle |A(z_j,z_k)|^2\rangle = \mathcal{O}(
	\langle |B|^2 \rangle)$, that follows immediately from Assumption \eqref{as:m_bound}, \eqref{eq:ETH_decomp}, and \eqref{eq:ETH_continuity}, thus concludes the proof of Theorem \ref{th:ETH}.
\end{proof}

\section{Proof of The Local Laws} \label{sec:locallaws_proof}
\subsection{Characteristic Flow for Wigner-type Matrices} \label{sec:char_flow}
We prove Theorem \ref{th:locallaws} using a generalized version of the \textit{characteristic flow} method with vector-valued spectral parameters.
In particular, for a given scalar spectral parameter $z\in \mathbb{C}\backslash\mathbb{R}$, we consider the time evolution in time governed by the system of differential equations (flow)
\begin{equation} \label{eq:z_evol}
	\partial_t \vect{z}_t = - S[\m(\vect{z}_t)] -\frac{1}{2}(\vect{z}_t-\bm{\mathfrak{a}}),  \quad \vect{z}_t : t\in [0,T] \to \mathbb{H}^N\cup(\mathbb{H}^*)^N,
\end{equation}
with the final condition $\vect{z}_T = z\vect{1}$ at some fixed terminal time $T \sim 1$. Here, $\vect{1} := (1,\dots, 1)\in\mathbb{C}^N$, and $\m(\vect{z})$ denotes the solution to the (generalized) vector Dyson equation 
with a vector-valued spectral variable $\vect{z} \in \mathbb{H}^N$,
\begin{equation} \label{eq:VDE}
	-\frac{1}{\m(\vect{z})} = \vect{z} - \bm{\mathfrak{a}} + S[\m(\vect{z})],
\end{equation}
that is  uniquely defined for all $\vect{z} \in \mathbb{H}^N$ under the constraint $\m(\vect{z}) \in \mathbb{H}^N$ by a simple fixed point argument (see Lemma 4.2 in \cite{Ajanki2019QVE}), and can be extended to $\vect{z} \in (\mathbb{H}^*)^N$ by $\m(\overline{\vect{z}}) := \overline{\m(\vect{z})}$. The characteristic flow satisfies the following properties that we prove in Appendix \ref{sec:eta_sec}.
\begin{lemma}[Properties of the Characteristic Flow] \label{lemma:flow_exists}
	For any terminal time $T > 0$ and any $z\in \mathbb{C}\backslash\mathbb{R}$, the flow \eqref{eq:z_evol} admits a unique solution $\vect{z}_t := \vect{z}_t(z)$ to the terminal value problem $\vect{z}_T = z\vect{1}$. Moreover, the solution $\vect{z}_t$ satisfies
	\begin{equation} \label{eta_t_lower}
		\sign \im \vect{z}_t = \sign(\im z) \vect{1}, \quad 
		\text{and} \quad |\im \vect{z}_t| \ge |\im z|\, \vect{1}, \quad t \in [0,T].
	\end{equation}
	Furthermore, along the trajectory $\vect{z}_t$, the solution to the vector Dyson equation \eqref{eq:VDE} satisfies
	\begin{equation} \label{eq:m_evol}
		\partial_t \m(\vect{z}_t) = \frac{1}{2}\m(\vect{z}_t), \quad t \in [0,T].
	\end{equation}
\end{lemma}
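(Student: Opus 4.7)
The plan is to turn \eqref{eq:m_evol} into an \emph{ansatz} that yields an explicit closed-form solution. Heuristically, the flow \eqref{eq:z_evol} is engineered so that, along its trajectories, $\m(\vect{z}_t)$ evolves as the simple linear ODE $\partial_t \m = \m/2$. I would accept this tentatively, use the terminal condition $\vect{z}_T = z\vect{1}$ to obtain $\m(\vect{z}_t) = e^{(t-T)/2}\m(z)$ (with $\m(z)$ the solution of \eqref{eq:VDEz}), and then recover $\vect{z}_t$ algebraically from the vector Dyson equation \eqref{eq:VDE}. Once constructed, the pair $(\vect{z}_t,\,\m_t := \m(\vect{z}_t))$ automatically satisfies \eqref{eq:m_evol}, and the remaining claims reduce to short checks.

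Concretely, define
\begin{equation*}
	\m_t := e^{(t-T)/2}\m(z), \qquad \vect{z}_t := \bm{\mathfrak{a}} - \frac{1}{\m_t} - S[\m_t], \qquad t\in[0,T].
\end{equation*}
By construction the pair $(\vect{z}_t,\m_t)$ satisfies \eqref{eq:VDE}, and $\m_t$ inherits its imaginary-part sign from $\m(z)$; uniqueness of solutions to the Dyson equation under the constraint $\m \in \mathbb{H}^N\cup(\mathbb{H}^*)^N$ then forces $\m_t = \m(\vect{z}_t)$. Differentiating $\vect{z}_t$ yields $\partial_t\vect{z}_t = \frac{1}{2\m_t} - \frac{1}{2}S[\m_t]$, which matches $-S[\m_t] - \frac{1}{2}(\vect{z}_t - \bm{\mathfrak{a}})$ upon substituting $\vect{z}_t - \bm{\mathfrak{a}} = -1/\m_t - S[\m_t]$; this verifies both \eqref{eq:z_evol} and \eqref{eq:m_evol}.

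The imaginary-part estimate \eqref{eta_t_lower} is a one-line componentwise calculation. Assume $\im z > 0$ (the other case is symmetric) and set $a_j := \im m_j(z)/|m_j(z)|^2$ and $b_j := (S[\im\m(z)])_j$; both are positive, and taking the imaginary part of \eqref{eq:VDEz} gives $a_j - b_j = \im z$. Substituting into the explicit formula for $\vect{z}_t$ produces
\begin{equation*}
	\im(\vect{z}_t)_j = e^{(T-t)/2}\,a_j - e^{(t-T)/2}\,b_j,
\end{equation*}
which equals $\im z$ at $t=T$ and whose derivative in the variable $T-t$ equals $\frac{1}{2}(a_j+b_j) > 0$; hence $\im(\vect{z}_t)_j \ge \im z$ for all $t\in[0,T]$, delivering both the sign identity and the lower bound in \eqref{eta_t_lower} simultaneously.

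For uniqueness, suppose $\widetilde{\vect{z}}_t$ is any solution of \eqref{eq:z_evol} with $\widetilde{\vect{z}}_T = z\vect{1}$ lying in $\mathbb{H}^N\cup(\mathbb{H}^*)^N$. Differentiating $\widetilde{\m}_t := \m(\widetilde{\vect{z}}_t)$ via the chain rule applied to \eqref{eq:VDE} and using \eqref{eq:z_evol} to eliminate $\widetilde{\vect{z}}_t - \bm{\mathfrak{a}}$ recovers the ansatz $\partial_t\widetilde{\m}_t = \widetilde{\m}_t/2$; the terminal condition then forces $\widetilde{\m}_t = \m_t$, and \eqref{eq:VDE} yields $\widetilde{\vect{z}}_t = \vect{z}_t$. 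The only subtle point in the whole argument is maintaining the trajectory inside $\mathbb{H}^N\cup(\mathbb{H}^*)^N$ so that $\m(\vect{z}_t)$ stays well-defined, but this is secured \emph{a posteriori} by the uniform lower bound $|\im\vect{z}_t|\ge|\im z| > 0$; thus no Gr\"onwall-type closure is needed, which is the one place I would expect readers to want the most care.
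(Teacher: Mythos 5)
Your proof is correct and takes a genuinely different, more direct route than the paper. The paper proves existence by studying the time-reversed flow, establishing crude $\ell^\infty$ and Lipschitz a-priori bounds on $\m(\vect{z})$ for $\vect{z}\in\mathbb{H}^N$, applying Picard--Lindel\"of locally on subdomains $\Omega_{h_0}$, deriving $\partial_t\m(\vect{z}_t)=-\tfrac12\m(\vect{z}_t)$ as a consequence, and then closing with a Gr\"onwall estimate to extend the solution to $[0,T]$. You instead elevate the identity $\m(\vect{z}_t)=e^{(t-T)/2}\m(z)$ to an ansatz, recover $\vect{z}_t$ algebraically from the vector Dyson equation, and verify everything by direct differentiation and a componentwise imaginary-part computation --- no fixed-point iteration, no Gr\"onwall. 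This buys a shorter, self-contained existence argument and delivers the explicit formula (the paper's \eqref{eq:flow_explicit}) immediately rather than as a byproduct. The trade-off is that your uniqueness step, which differentiates $\m(\widetilde{\vect{z}}_t)$ along a competitor trajectory, silently uses invertibility of the stability operator $1-\m^2 S$ on $\mathbb{H}^N$ to isolate $\partial_t\widetilde{\m}_t$; the paper flags this explicitly (via its bound \eqref{eq:F_crude_norm}), and you should too, though it is a standard fact about the Dyson equation so the argument is not actually incomplete. One harmless slip: the $\tau$-derivative of $e^{\tau/2}a_j-e^{-\tau/2}b_j$ (with $\tau:=T-t$) is $\tfrac12(e^{\tau/2}a_j+e^{-\tau/2}b_j)$, not $\tfrac12(a_j+b_j)$; since it is still positive the monotonicity conclusion is unaffected.
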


Note that for a general data pairs $(\bm{\mathfrak{a}},S)$, the vector $S[\m]$ is typically not proportional to the vectors of ones $\vect{1}$. Therefore, the flow \eqref{eq:z_evol} can produce genuine vector-valued spectral parameters, and one has to consider generalized resolvents $G(X,\vect{z})$ defined for $X=X^* \in \mathbb{C}^{N\times N}$ and $\vect{z} \in \mathbb{H}^N \cup (\mathbb{H}^*)^N$ by
\begin{equation} \label{eq:Gvect}
	G(X,\vect{z}) := \bigl(X - \diag{\vect{z}}\bigr)^{-1}.
\end{equation}

Since the main difficulty of the proof lies in dealing with the non-trivial structure of $\mathscr{S}$, we first present the proof in the complex Hermitian setting under the assumption that the off-diagonal part of the self-energy operator defined in \eqref{eq:superT_def} vanishes identically, that is $\mathscr{T} = 0$. We explain how to lift this constraint in Section \ref{sec:realT} below. 

We run the evolution of the spectral parameters governed by \eqref{eq:z_evol} simultaneously with the evolution of the Wigner-type matrix $H$ along the Ornstein-Uhlenbeck flow,
\begin{equation} \label{eq:OUflow}
	\mathrm{d}H_t = -\frac{1}{2}\bigl(H_t-\diag{\bm{\mathfrak{a}}}\bigr)\mathrm{d}t + \widehat{S}\odot
	\mathrm{d}\Brwn_t, \quad H_0 = H.
\end{equation}
Here, $\widehat{S}$ denotes the entry-wise square root of $S$, i.e., $\widehat{S}_{j k} := \sqrt{S_{jk}}$, $\odot$ denotes the Hadamard product, and $\Brwn_t$ is the standard matrix-valued Brownian motion in the same symmetry class as $H$. 
Note that the first two moments of the entries of $H_t$ are preserved along the flow \eqref{eq:OUflow}. 

We then study the evolution of traces of alternating chains $G_{t}B$ and $G_{1,t}B_1G_{2,t}B_2$ of deterministic matrices sandwiched between generalized time-dependent resolvents $G_{t}$ and $G_{j,t}$ that are defined, as in \eqref{eq:Gvect}, by
\begin{equation} \label{eq:Gt}
	G_{t} := G(H_t,\vect{z}_{t}) = \bigl(H_t - \diag{\vect{z}_{t}}\bigr)^{-1}, \quad G_{j,t} := G(H_t,\vect{z}_{j,t}).
\end{equation}
 Following the convention of \eqref{eq:Mz_def}, we denote the deterministic approximations the generalized resolvents $G_t$, $G_{j,t}$, respectively, by 
\begin{equation} \label{eq:Mt_def}
	M_t \equiv \M{\vect{z}_t} := \diag{\m(\vect{z}_t)}, \quad M_{j,t} \equiv \M{\vect{z}_{j,t}} := \diag{\m(\vect{z}_{j,t})}. 
\end{equation} 
Using It\^{o}'s formula together with the definition \eqref{eq:Gt}, and denoting the complex derivative in the direction of matrix element $H_{jk,t}$ by $\partial_{jk}$, we obtain
\begin{equation} \label{eq:GBevol}
	\begin{split}
		\mathrm{d}\langle G_{t}B\rangle =&~ \frac{1}{2}\sum_{j,k}\partial_{jk}\bigl\langle G_{t}B\bigr\rangle\sqrt{S_{jk}}\mathrm{d}\Brwn_{jk,t}
		+ \bigl\langle G_{t} B G_{t}  \diag{\partial_t\vect{z}_{t} +\tfrac{1}{2}\vect{z}_{t}-\tfrac{1}{2}\bm{\mathfrak{a}}+S[\m_{t}]}\bigr\rangle \mathrm{d}t\\ 
		&+	\frac{1}{2} \bigl\langle G_{t}B\bigr\rangle\mathrm{d}t
		+\bigl\langle \bigl(G_{t}-M_t\bigr) \mathscr{S}[ G_{t}BG_{t}]\bigr\rangle\mathrm{d}t.
	\end{split}
\end{equation}
The term $\bigl\langle G_{t} B G_{t}\diag{\partial_t\vect{z}_{t} +\tfrac{1}{2}\vect{z}_{t}-\tfrac{1}{2}\bm{\mathfrak{a}}+S[\m_{t}] } \bigr\rangle$ in the first line on the right-hand side of \eqref{eq:GBevol} vanishes identically by \eqref{eq:z_evol}. Similarly, using \eqref{eq:z_evol}, we obtain
\begin{equation} \label{eq:GBGBevol}
	\begin{split}
		\mathrm{d}\bigl\langle G_{1,t}B_1 G_{2,t}B_2 \bigr\rangle =&~ 
		\frac{1}{2}\sum_{j,k}\partial_{jk}\bigl\langle G_{1,t}B_1 G_{2,t} B_2 \bigr\rangle \sqrt{S_{jk}}\mathrm{d}\Brwn_{jk,t} + 
		\bigl\langle G_{1,t}B_1 G_{2,t} B_2 \bigr\rangle\mathrm{d}t \\
		&+ \bigl\langle G_{1,t} B_1 G_{2,t}\, \mathscr{S}[G_{2,t} B_2 G_{1,t}]\bigr\rangle\mathrm{d}t +\bigl\langle\mathscr{S}[G_{1,t}-M_{1,t}] G_{1,t} B_1 G_{2,t} B_2 G_{1,t} \bigr\rangle\mathrm{d}t \\
		&+\bigl\langle\mathscr{S}[G_{2,t}-M_{2,t}] G_{2,t} B_2 G_{1,t} B_1 G_{2,t} \bigr\rangle\mathrm{d}t,
	\end{split}
\end{equation}
where we recall $M_{j,t} := \diag{\m(\vect{z}_{j,t})}$ from \eqref{eq:Mt_def}.
The algebraic cancellation in the time-differentials $\mathrm{d}\langle G_{t}B \rangle$ and $\mathrm{d}\langle G_{1,t}B_1G_{2,t}B_2 \rangle$ resulting from the combined effects of the evolutions \eqref{eq:z_evol} and \eqref{eq:OUflow} (the analog of which was first observed in \cite{Cipolloni23nonHerm}, in the setting of non-Hermitian random matrices) is the key insight of the characteristic flow method.

Recall the set $\bulk$ defined in \eqref{eq:Dbulk}, and let $\mathcal{D} \subset \mathbb{C}$ denote the domain
\begin{equation} \label{eq:calD_def}
	\mathcal{D} \equiv \mathcal{D}_{\varepsilon,\rho_*,\eta_*} := \bulk_{\rho_*,\eta_*}\cap \{z\in\mathbb{C} : |\im z| \ge N^{-1+\varepsilon}\}, \quad \varepsilon > 0.
\end{equation}
We now state the propositions containing the three main steps of the proof. 
\begin{prop}[Global Laws] \label{prop:global_laws}
	Let $H$ be a Wigner-type matrix as in Definition \ref{def:WT}, and let $H_t$ satisfy \eqref{eq:OUflow}.
	There exists a terminal time $T\sim 1$ which depends only on the model parameters and the constants $\rho_*$, $\eta_*$, such that the following is true. 
	
	For a pair of spectral parameters $z_1,z_2 \in \bulk$, let $\vect{z}_{j,t}$, $j \in \{1,2\}$ denote the solutions to the flow \eqref{eq:z_evol} satisfying $\vect{z}_{j,T} = z_j\vect{1}$ at the terminal time $T$. Denote $G_{j,t} := G(\vect{z}_{j,t})$ as in \eqref{eq:Gt}, $\eta_{j,t} := |\langle \im \vect{z}_{j,t} \rangle|$ for $j\in \{1,2\}$, and $\eta_t := \min_j \eta_{j,t}$. Then the two-resolvent averaged and isotropic global laws
	\begin{equation} \label{eq:Global2av}
		\bigl\lvert \bigl\langle \bigl(G_{1,0} B_1 G_{2,0} - \M{\vect{z}_{1,0},B_1,\vect{z}_{2,0}}\bigr) B_2\bigr\rangle \bigr\rvert \prec \frac{\norm{B_1}\langle |B_2|^2\rangle^{1/2}}{N},
	\end{equation}
	\begin{equation} \label{eq:Global1iso}
		\bigl\lvert \bigl\langle\vect{x},\bigl(G_{1,0} B_1 G_{2,0} - \M{\vect{z}_{1,0},B_1,\vect{z}_{2,0}}\bigr) \vect{y} \bigr\rangle\bigr\rvert \prec  \frac{\norm{B_1}\norm{\vect{x}}_2\norm{\vect{y}}_2}{\sqrt{N}},
	\end{equation}
	hold uniformly in deterministic matrices $B_1,B_2$, deterministic vectors $\vect{x},  \vect{y}$, and in spectral parameters $z_1,z_2 \in \mathcal{D}_{\varepsilon,\rho_*,\eta_*}$.
	Here, similarly to \eqref{eq:M_def}, the matrix $\M{\vect{z}_{1,t},B_1,\vect{z}_{2,t}}$ defined by
	\begin{equation} \label{eq:M_t_def}
		\M{\vect{z}_{1,t},B_1,\vect{z}_{2,t}} := (1-M_{1,t}M_{2,t}\mathscr{S})^{-1}\bigl[M_{1,t} B_1M_{2,t}\bigr],
	\end{equation}
	denotes the deterministic approximation to the chain $G_{1,t}B_1G_{2,t}$, and we recall $M_{j,t} := \diag{ \m(\vect{z}_{j,t})}$.
	
	Furthermore, the averaged singe-resolvent global law
	\begin{equation}\label{eq:Global1av}
		\bigl\lvert \bigl\langle \bigl(G_{1,0}-M_{1,0}\bigr)B \bigr\rangle \bigr\rvert \prec\frac{\langle|B|^2\rangle^{1/2}}{N},
	\end{equation}
	holds uniformly in $B \in \mathbb{C}^{N\times N}$ and $z_1 \in \mathcal{D}$.
\end{prop}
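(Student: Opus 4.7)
The plan is to reduce all three bounds to their standard analogues at $t=0$, where the characteristic flow has pushed the imaginary parts of the spectral parameters to order one, and then prove the latter by cumulant expansion in the entries of $H$. The first step is to establish that, for a sufficiently large terminal time $T \sim 1$, one has $|\im\vect{z}_{j,0}| \gtrsim \vect{1}$ uniformly in $z_j \in \mathcal{D}$. Integrating \eqref{eq:m_evol} backwards from $t=T$ gives $\m(\vect{z}_{j,0}) = e^{-T/2}\m(z_j\vect{1})$, so $\norm{\m(\vect{z}_{j,0})}_\infty \lesssim e^{-T/2}$. Substituting this into the vector Dyson equation \eqref{eq:VDE} and using $\im(-1/m) = \im m/|m|^2$ together with the bulk lower bound $\im \m(z_j\vect{1}) \gtrsim \rho_* \vect{1}$ from \cite{Ajanki2019QVE}, one obtains $|\im\vect{z}_{j,0}| \gtrsim e^{T/2}\rho_*\vect{1}$, which is uniformly bounded away from zero once $T$ is fixed appropriately. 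Consequently $\norm{G_{j,0}}\lesssim 1$ deterministically, and by Lemma \ref{lemma:stab_lemma} both the one- and two-body stability operators $1-M_{j,0}^2\mathscr{S}$ and $1-M_{1,0}M_{2,0}\mathscr{S}$ are invertible with $\bigO{1}$ operator norm on $\ell^2$ and $\ell^\infty$.

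Given this $N$-independent a priori control, each of the three bounds reduces to a cumulant-expansion computation of the type used in the scalar-parameter global laws of \cite{Ajanki2016Univ, Erdos2018CuspUF}. For \eqref{eq:Global1av} I would start from the identity $HG_{1,0} = I + \diag{\vect{z}_{1,0}}G_{1,0}$, apply the cumulant expansion formula to $\Expv\langle (H-\Expv H)G_{1,0}B\rangle$, and invert the resulting self-consistent equation by the bounded one-body stability operator. The second-cumulant term yields an expectation error of order $1/N$; higher cumulants are subleading by the moment bound \eqref{as:moments} and $\norm{G_{1,0}}\lesssim 1$; a $p$-th-moment version of the same argument applied to $|\langle(G_{1,0}-M_{1,0})B\rangle|^{2p}$ upgrades this to the high-probability statement in the sense of $\prec$. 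The two-resolvent estimates \eqref{eq:Global2av}--\eqref{eq:Global1iso} follow by the same scheme applied to $\langle HG_{1,0}B_1G_{2,0}B_2\rangle$ and $\langle\vect{x},HG_{1,0}B_1G_{2,0}\vect{y}\rangle$, with the two-body stability operator \eqref{eq:stab_def} extracting the deterministic approximation \eqref{eq:M_t_def}; the factor $\norm{B_1}$ is absorbed through the trivial bound $\norm{G_{1,0}B_1G_{2,0}} \lesssim \norm{B_1}$, so no regularity of $B_1$ is used at this stage.

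The main obstacle I expect is bookkeeping rather than a genuinely new estimate: the standard Wigner-type global-law proofs are written for a scalar spectral parameter, whereas here one must work with the generalized resolvent $G(\vect{z})$. However, the basic algebraic identity $HG(\vect{z}) = I + \diag{\vect{z}}G(\vect{z})$ holds verbatim for vector $\vect{z}$, and the required stability theory at vector-valued spectral parameters is already supplied by \cite{Ajanki2019QVE} and Lemma \ref{lemma:stab_lemma}. Together these make the adaptation of the existing scalar-parameter cumulant-expansion arguments essentially routine once Step 1 has placed us in the global regime $|\im\vect{z}_{j,0}|\sim 1$.
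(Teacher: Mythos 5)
Your Step 1 and the overall reduction are sound and essentially match the paper: the explicit solution \eqref{eq:flow_explicit} together with the lower bound $S[\im\m(z)]\gtrsim\rho(z)\vect{1}$ (which the paper derives from uniform primitivity in the proof of Lemma~\ref{lemma:imz}) gives $\im\vect{z}_{j,0}\sim\vect{1}$ entrywise for $T\sim 1$, hence $\norm{G_{j,0}}\lesssim 1$ deterministically and both stability operators are bounded; the rest is a cumulant-expansion argument at a spectral parameter that stays global. The main organizational difference from the paper is how the single-resolvent input is obtained. You propose to redo the cumulant expansion at the vector parameter $\vect{z}_0$. The paper instead proves an a~priori single-resolvent global law (Lemma~\ref{lemma:1G_global_old}) by a rescaling conjugation: writing $G(H,\vect{z}_0)=\bm{\nu}\,\other G(\I\eta_0\sign z)\,\bm{\nu}$ with $\bm{\nu}:=\diag{(\eta_0^{-1}|\im\vect{z}_0|)^{-1/2}}\sim 1$, where $\other G$ is the ordinary resolvent of the rescaled Wigner-type matrix $\bm{\nu}(H-\re\vect{z}_0)\bm{\nu}$ at a scalar spectral parameter, one can import the known scalar global laws of \cite{Ajanki2016Univ} directly rather than re-deriving them. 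Both routes work; the rescaling trick is cleaner because it avoids re-proving the fluctuation averaging for the vector-parameter generalized resolvent from scratch.

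One step in your sketch is glossed over in a way worth flagging. For a general observable $B$ the self-consistent equation you get after renormalization is not directly invertible by the one-body stability operator acting on the scalar $\langle(G-M)B\rangle$; the term $\langle M\mathscr{S}[G-M]B\rangle$ rearranges into $\langle(G-M)\diag{\vect{w}}\rangle$ with $\vect{w}=S^{\mathfrak{t}}[(BM)^{\mathrm{diag}}]$, a \emph{different} diagonal observable, so what is really required is an a~priori averaged law for diagonal observables with an $\ell^\infty$ bound $|\langle(G-M)\diag{\vect{w}}\rangle|\prec\norm{\vect{w}}_\infty/N$. This is exactly what the paper's Lemma~\ref{lemma:1G_global_old} supplies (combined with $\norm{\vect{w}}_\infty\lesssim\langle|B|^2\rangle^{1/2}$ from flatness). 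If you go the direct cumulant-expansion route you would need to establish that diagonal-observable averaged law first, essentially as a base case before treating general $B$. With that base case in hand, your induction in chain length (and the rank-one trick $B_k:=N\vect{y}\vect{x}^*$ to pass from averaged to isotropic) reproduces the paper's argument; note also that for the later Green-function-comparison step the paper actually needs the corresponding three-resolvent averaged global law as well, which your ``same scheme'' applies to, but should be stated.
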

We prove Proposition \ref{prop:global_laws} in Appendix \ref{sec:global_laws}.   
In view of the inequalities $\langle |B|^2 \rangle^{1/2} \le \norm{B} \le \sqrt{N}\langle |B|^2 \rangle^{1/2}$, the mixed-norm global laws \eqref{eq:Global2av}, \eqref{eq:Global1iso} imply the estimates in terms of pure Hilbert-Schmidt norms,
\begin{equation} \label{eq:global_laws-HS}
	\begin{split}
	\bigl\lvert \bigl\langle \bigl(G_{1,0} B_1 G_{2,0} - \M{\vect{z}_{1,0},B_1,\vect{z}_{2,0}}\bigr) B_2\bigr\rangle \bigr\rvert &\prec \frac{\langle |B_1|^2\rangle^{1/2}\langle |B_2|^2\rangle^{1/2}}{\sqrt{N}},\\	
	\bigl\lvert \bigl\langle\vect{x},\bigl(G_{1,0} B_1 G_{2,0} - \M{\vect{z}_{1,0},B_1,\vect{z}_{2,0}}\bigr) \vect{y} \bigr\rangle\bigr\rvert &\prec  \langle |B_1|^2\rangle^{1/2}\norm{\vect{x}}_2\norm{\vect{y}}_2.
	\end{split}
\end{equation}
Nevertheless, we state \eqref{eq:Global2av} and \eqref{eq:Global1iso} in the mixed-norm sense, because their counterparts time-propagated into the local regime (see Lemma \ref{lemma:2G_weak} below), with the additional $\sqrt{N\eta_t}$ smallness coming from the use of operator norm, are an essential stepping stone in the proof of Theorem \ref{th:locallaws}. We remark that the  global laws of Proposition \ref{prop:global_laws} are formulated for arbitrary $B_1, B_2$ because, as we show below, $\eta_0 \sim 1$ and hence the regularity of the observables does not improve the error bound.

In the second step, we show that global laws \eqref{eq:global_laws-HS} with $B_1$, $B_2$ replaced by a $(z_1,z_2)$-regular $A_1$  and a $(z_2,z_1)$-regular $A_2$, respectively, can be propagated along the combined flow \eqref{eq:z_evol} and \eqref{eq:OUflow} into the local regime $\eta_T \sim N^{-1+\varepsilon}$.
\begin{prop}[Local Laws with Regular Observables along the Flow] \label{prop:flow_local_laws}
	Fix $\varepsilon>0$, then under the assumptions and notation of Proposition \ref{prop:global_laws}, the averaged and isotropic two-resolvent local laws
	\begin{equation} \label{eq:2G_av_t}
		\bigl\lvert \bigl\langle \bigl(G_{1,t} A_1 G_{2,t} - \M{\vect{z}_{1,t},A_1,\vect{z}_{2,t}}\bigr) A_2\bigr\rangle \bigr\rvert \prec \frac{\langle |A_1|^2\rangle^{1/2}\langle |A_2|^2\rangle^{1/2}}{\sqrt{N\eta_t}},
	\end{equation}
	\begin{equation} \label{eq:2G_iso_t}
		\bigl\lvert \bigl\langle\vect{x},\bigl(G_{1,t} A_1 G_{2,t} - \M{\vect{z}_{1,t},A_1,\vect{z}_{2,t}}\bigr) \vect{y} \bigr\rangle\bigr\rvert \prec  \frac{\langle |A_1|^2\rangle^{1/2}\norm{\vect{x}}_2\norm{\vect{y}}_2}{\sqrt{\eta_t}},
	\end{equation}
	hold uniformly in $t\in[0,T]$, in deterministic observables $A_1,A_2$ regular with respect to $(z_1,z_2)$ and $(z_2,z_1)$, respectively, as in Definition \ref{def:reg_A}, in deterministic vectors $\vect{x}, \vect{y}$, and in spectral parameters $z_1,z_2 \in \mathcal{D}_{\varepsilon,\rho_*,\eta_*}$.
	
	Furthermore, the averaged single-resolvent local law
	\begin{equation} \label{eq:1G_av_t}
		\bigl\lvert \bigl\langle \bigl(G_{1,t} - M_{1,t}\bigr) A \bigr\rangle\bigr\rvert \prec  \frac{\langle |A|^2\rangle^{1/2}}{N\sqrt{\eta_{1,t}}},
	\end{equation}
	holds uniformly in $t\in[0,T]$, deterministic observables $A$ regular with respect to $\{\bar z_1,z_1\}$, and in spectral parameter $z_1\in \mathcal{D}_{\varepsilon,\rho_*,\eta_*}$.
\end{prop}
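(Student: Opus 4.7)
The plan is to propagate the global laws of Proposition \ref{prop:global_laws} into the local regime by running a Gronwall/bootstrap argument on high even moments of normalized fluctuation quantities along the coupled evolution \eqref{eq:z_evol}--\eqref{eq:OUflow}. Introduce the normalized control quantities
\begin{equation*}
	\Psi^{\mathrm{av},2}_t := \sqrt{N\eta_t}\,\frac{\bigl\lvert\bigl\langle (G_{1,t}A_1G_{2,t}-\M{\vect{z}_{1,t},A_1,\vect{z}_{2,t}})A_2\bigr\rangle\bigr\rvert}{\langle|A_1|^2\rangle^{1/2}\langle|A_2|^2\rangle^{1/2}},
\end{equation*}
and analogously $\Psi^{\mathrm{iso},2}_t$ and $\Psi^{\mathrm{av},1}_t$ carrying the scalings $\sqrt{\eta_t}$ and $N\sqrt{\eta_{1,t}}$ respectively, each supremized over admissible regular observables, unit vectors, and $z_j\in\mathcal{D}_{\varepsilon,\rho_*,\eta_*}$. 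By \eqref{eq:global_laws-HS}--\eqref{eq:Global1av} these satisfy $\Psi^{\bullet}_0\prec 1$ at the initial time (where $\eta_0\sim 1$), using \eqref{eq:ex_regA} to control $\M{\vect{z}_{1,0},A_1,\vect{z}_{2,0}}$ by $\langle|A_1|^2\rangle^{1/2}$.

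Next I would differentiate $\Expv[(\Psi^{\mathrm{av},2}_t)^{2p}]$ for large $p$ using It\^{o}'s formula on \eqref{eq:GBGBevol}, after verifying via \eqref{eq:m_evol} and \eqref{eq:M_t_def} that $\partial_t\M{\vect{z}_{1,t},A_1,\vect{z}_{2,t}}$ exactly cancels the $\tfrac{1}{2}\langle\cdot\rangle\mathrm{d}t$ term together with the $\mathscr{S}$-renormalizations evaluated at $G=M$. The Brownian contribution is controlled via Burkholder-Davis-Gundy and a cumulant expansion of the stochastic derivative, producing a quadratic variation dominated by products of shorter-chain fluctuations times $(\Psi^{\bullet}_t)^{2p-1}$. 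What remains are the genuine 3-resolvent drift terms
\begin{equation*}
	\bigl\langle G_{1,t}A_1G_{2,t}\,\mathscr{S}[G_{2,t}A_2G_{1,t}]\bigr\rangle, \quad \bigl\langle\mathscr{S}[G_{j,t}-M_{j,t}]\,G_{j,t}A_1G_{k,t}A_2G_{j,t}\bigr\rangle,
\end{equation*}
together with their single-resolvent analogue coming from \eqref{eq:GBevol}, and these 3-resolvent chains are the heart of the analysis.

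The hard step, and precisely the obstacle highlighted in the introduction, is that for a general Wigner-type $\mathscr{S}$ the sum $\sum_{jk}S_{jk}(GAG)_{kk}(GA)_{jj}$ does \emph{not} factorize into a product of two averaged traces, as it would if $\mathscr{S}=\langle\cdot\rangle I$. My plan is a two-pronged reduction: (a) use the reduction inequalities of Lemma \ref{lemma:reds}, via the integral representation of the imaginary part (Lemma \ref{lemma:imF_rep}), to bound pointwise 2-resolvent objects such as $(G_{1,t}A_1G_{2,t})_{jj}$ by square roots of averaged 2-resolvent traces with $A_1A_1^*$-type regular observables, so that chain length stays at $2$; and (b) for terms in which one factor must be kept as a non-regular 2-resolvent chain, invoke the general-observable two-resolvent local laws of Lemma \ref{lemma:2G_weak}, which one obtains as a first-stage bootstrap from \eqref{eq:Global2av}--\eqref{eq:Global1iso} along the same flow. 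The critical balancing is that the general-observable isotropic law costs a factor of $\sqrt{N\eta_t}$, which must be absorbed by an $(N\eta_t)^{-1/2}$ gain from a regular observable sitting on the other side of the $\mathscr{S}$-vertex; verifying that the accounting closes for every term that appears is the main technical obstacle.

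With the 3-resolvent terms thus bounded in terms of the control quantities $\Psi^{\bullet}_t$ and of the already-established general-observable laws, one obtains a coupled differential inequality of the form $\partial_t\Expv[(\Psi^{\bullet}_t)^{2p}]\le C\Expv[(\Psi^{\bullet}_t)^{2p}]+\mathcal{O}(1)$. Integrating from $0$ to $t$ with $\Psi^{\bullet}_0\prec 1$ yields $\Psi^{\bullet}_t\prec 1$ uniformly in $t\in[0,T]$, which is exactly \eqref{eq:2G_av_t}--\eqref{eq:1G_av_t}. Finally, a standard grid argument in $(z_1,z_2)$ combined with the Lipschitz continuity of the chains in the spectral parameters upgrades the pointwise-in-$z$ high-moment bounds to the uniform-in-$\mathcal{D}$ stochastic-domination bounds claimed in the proposition.
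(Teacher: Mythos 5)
There is a genuine gap at the crux of the argument, and the student is tantalizingly close to identifying it. The high-level skeleton—propagate the global laws along the combined OU/characteristic flow using renormalized control quantities, show they close under a bootstrap, integrate over $t$—is exactly the paper's approach (cf.\ \eqref{eq:Phi_def}--\eqref{eq:Phi(1,1)_def} and Proposition~\ref{prop:masters}), and identifying Lemma~\ref{lemma:reds}/Lemma~\ref{lemma:imF_rep} for the reduction to $2$-chains is correct. However, the claimed endgame
\begin{equation*}
	\partial_t\Expv\bigl[(\Psi^{\bullet}_t)^{2p}\bigr]\le C\Expv\bigl[(\Psi^{\bullet}_t)^{2p}\bigr]+\mathcal{O}(1)
\end{equation*}
is \emph{not} what the naive accounting produces. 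When the $\mathscr{S}$-quadratic term $\langle G_{[1,2]}\,\mathscr{S}[G_{[2,1]}]\rangle$ is split as an average in $p$ of $\langle S^{(p)}(G_{[1,2]}-M_{[1,2]})\rangle\cdot(G_{[2,1]}-M_{[2,1]})_{pp}$ and the diagonal entry is controlled via the isotropic version of $\Psi^{\mathrm{av},2}$ (essentially \eqref{eq:1iso_identity}), both factors are estimated by the \emph{same} Hilbert–Schmidt-normalized quantity, and you obtain a term of size $(\Psi^{\mathrm{av},2}_t)^2/\sqrt{N}$, i.e.\ \emph{quadratic} in the unknown. In the $2p$-moment language that produces $\Expv[(\Psi^{\bullet}_t)^{2p+2}]/\sqrt{N\eta_t}$, which does not close to a linear Gronwall inequality, and in the iteration language (Lemma~\ref{lemma:iter}) a right-hand side $x^2/\sqrt{N}$ cannot be iterated down from a trivial a-priori bound; this is precisely what the discussion around \eqref{ex:Phi_bad1} warns about.

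The resolution in the paper requires two ideas you did not identify: (i) the rows of $S$ satisfy $\lVert S^{(p)}\rVert\sim\langle|S^{(p)}|^2\rangle^{1/2}\sim 1$ (see \eqref{eq:S_norms}), so the factor $\langle S^{(p)}(G_{[1,2]}-M_{[1,2]})\rangle$ can be controlled by an \emph{operator-norm} control quantity $\Phi_2^{\mathrm{op}}$ at no cost, and its master inequality in turn gains an extra $(N\eta_t)^{-1/2}$ because one may use the cruder isotropic law \eqref{eq:Weak1iso} on a general observable—this is what breaks the $\phi^2$; and (ii) since $S^{(p)}$ is \emph{not} $(z_2,z_1)$-regular, one must decompose $S=N^{-1}(\mathring S+\vect{s}\vect{1}^*)$ per Lemma~\ref{lemma:S_decomp} and track the rank-one remainder with a separate mixed quantity $\Phi_{(1,1)}$ (regular/general pair). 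Without these two auxiliaries and the $S$-decomposition, the system of differential (or iteration) inequalities does not close; "verifying that the accounting closes" is not a technical verification but the central missing lemma. Also worth noting: the paper dispenses with a separate isotropic control quantity via Lemma~\ref{lemma:isotropic}, which reduces the isotropic law to the averaged ones because all errors are measured in Hilbert–Schmidt norm—this is a small but genuine simplification your plan does not exploit.
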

We prove Proposition \ref{prop:flow_local_laws} in Section \ref{sec:flow_laws_proof}.
Observe that $\vect{z}_{j,T} = z_j \vect{1}$ by construction, hence the estimates \eqref{eq:2G_av_t}, \eqref{eq:2G_iso_t} and \eqref{eq:1G_av_t} at time $t=T$ imply the local laws of Theorem \ref{th:locallaws} for the Wigner-type matrix $H_T$, which differs from the initial matrix $H$ by an order one Gaussian component. In the third and final step, we remove this Gaussian component using Green function comparison argument while preserving the local laws \eqref{eq:2G_av_reg}, \eqref{eq:2G_iso_reg} and \eqref{eq:1G_av_reg}.
\begin{prop}[Green function comparison] \label{prop:GFT}
	Let $H$ and $W$ be two $N\times N$ Wigner-type matrices as in Definition \ref{def:WT} with matching moments up to the third order, that is, for all $j,k \in \{1,\dots,N\}$,
	\begin{equation}
		\Expv[H_{jk}^p H_{kj}^{q-p}] = \Expv[W_{jk}^p W_{kj}^{q-p}], \quad p,q \in \{0,1,2, 3\}, \quad p\le q.
	\end{equation}
	Assume that the local laws of  Theorem \ref{th:locallaws} hold for resolvents of the Wigner-type matrix $H$, i.e., with $G(\zeta) := (H-\zeta)^{-1}$. Then Theorem \ref{th:locallaws} also holds for the Wigner-type matrix $W$, i.e., with $G(\zeta) := (W-\zeta)^{-1}$.
\end{prop}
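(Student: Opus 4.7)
The plan is to carry out a standard Lindeberg-type Green function comparison, upgraded to produce a stochastic-domination statement via high-moment estimates. Denote generically by $\Phi(X)$ any one of the three centred random variables appearing on the left-hand sides of \eqref{eq:2G_av_reg}, \eqref{eq:2G_iso_reg}, and \eqref{eq:1G_av_reg}, normalised by its stated upper bound, so that the desired local law reads $|\Phi(X)| \prec 1$. To transfer this from $X=H$ to $X=W$ it suffices, by Markov's inequality applied to arbitrarily large even moments and the standard characterisation of $\prec$ through moments, to establish a moment comparison of the form $\E{|\Phi(W)|^{2p}} \le \E{|\Phi(H)|^{2p}} + N^{-c_p}$ for every fixed $p\in\mathbb{N}$ and some $c_p>0$.

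\textbf{Interpolation and cancellation by three-moment matching.} I would enumerate the $K \sim N^2$ independent matrix entries (respecting the Hermitian symmetry) in some order and introduce the interpolating sequence $H = H^{(0)}, H^{(1)}, \dots, H^{(K)} = W$ by swapping a single entry (and its Hermitian partner) at a time. For each swap at position $(j,k)$, write the difference $\Delta := H^{(\gamma)} - H^{(\gamma-1)}$ as a rank-two perturbation and Taylor-expand $|\Phi|^{2p}$ to fourth order in the swapped entry around the matrix with that entry set to zero. Taking expectation and invoking the hypothesis that the joint moments of $(H_{jk},H_{kj})$ and $(W_{jk},W_{kj})$ coincide up to order three, all contributions of orders $1,2,3$ cancel exactly, leaving only the fourth-order term weighted by the difference of the fourth moments, which is $\mathcal{O}(N^{-2})$ by Assumption \eqref{as:moments}.

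\textbf{Bounding the fourth-order remainder.} Each derivative $\partial_{jk}$ produces one extra resolvent factor and an insertion of a matrix unit $E_{jk}$. Four such derivatives acting on a quantity already bilinear (or linear) in $G$, raised to the $2p$-th power, yield a sum of products of resolvent chains of total length at most $4p+4$, each with observables $A, A_1, A_2$ and at most four rank-one insertions. The key point is that after using the isotropic single- and two-resolvent local laws assumed for $H$ and the reduction inequality Lemma \ref{lemma:reds} to collapse long chains into products of short ones, every such factor is $\Oprec(1)$ in its natural scaling. Combined with the $N^{-2}$ from the fourth moment this gives a per-swap contribution of order $N^{-2-c}$ for some $c>0$, and summation over the $K\sim N^2$ swaps yields a total error of order $N^{-c}$, which is more than enough to close the moment comparison.

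\textbf{Main obstacle.} The most delicate point is that the assumed local laws are a priori available only for $H$, whereas the Taylor expansion requires control on the resolvents of every intermediate matrix $H^{(\gamma)}$. This is handled by a short bootstrap: since $H^{(\gamma)}$ differs from $H$ only by a sum of at most $K$ rank-two perturbations of size $\Oprec(N^{-1/2})$ each, the resolvent identity and the spectral regime $\min_j|\im z_j| \ge N^{-1+\varepsilon}$ imposed in Theorem \ref{th:locallaws} ensure that the averaged and isotropic laws are stable under these perturbations; more precisely, one first establishes a weaker polynomial a priori bound on resolvents of $H^{(\gamma)}$ by a deterministic Neumann expansion, then upgrades it swap by swap, reusing the comparison estimate just proved at the previous step. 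Once uniform a priori bounds along the chain are in place, the same template applies to each of the three target quantities and delivers \eqref{eq:2G_av_reg}--\eqref{eq:1G_av_reg} for $W$.
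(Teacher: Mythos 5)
Your outline follows the right general template — a Lindeberg-type moment comparison with cancellation from the first three matching moments — and the paper's proof is also a Green function comparison argument (imported from Section~5 of~\cite{Cipolloni2023Edge}). But there is a genuine gap in what you propose to use to bound the fourth-order remainder.

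When you take $\partial_{jk}$-derivatives of the averaged quantity $\langle (G_1 A_1 G_2 - M)A_2\rangle$, each derivative inserts a matrix unit $E_{jk}$ and one extra resolvent, and after taking expectation over the swapped entry the resulting terms are products of \emph{isotropic} evaluations $(G_1 A_1 G_2 A_2 G_1)_{kj}$, $(G_1 A_1 G_2)_{kj}$, etc.\ at coordinate vectors. To close the comparison for the averaged two-resolvent law \eqref{eq:2G_av_reg} you therefore need an \emph{isotropic} local law for symmetric chains of \emph{three} resolvents, with the sharp $N\langle|A_1|^2\rangle^{1/2}\langle|A_2|^2\rangle^{1/2}/\sqrt{N\eta}$ error — this is exactly Proposition~\ref{prop:2iso}, which the paper has to prove from scratch (it occupies the bulk of Section~\ref{sec:GFT}) and which is not a consequence of Theorem~\ref{th:locallaws}. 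Your proposal instead reaches for Lemma~\ref{lemma:reds} to "collapse long chains", but this cannot substitute: the reduction inequalities are \emph{averaged} bounds, formulated for generalized resolvents along the characteristic flow under the standing hypothesis \eqref{eq:phi_assume}, and even if one specialises them to $t=T$ and plugs $B:=N\vect{e}_j\vect{e}_k^*$, one obtains $|(G_1A_1G_2A_2G_1)_{kj}|\prec (N/\eta)\langle|A_1|^2\rangle^{1/2}\langle|A_2|^2\rangle^{1/2}$, which is worse by a factor $\sqrt{N/\eta}$ than what Proposition~\ref{prop:2iso} supplies and is too lossy to beat the $N^2$ entries in the telescoping sum. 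So the missing idea is precisely the three-resolvent isotropic law. Your "Main Obstacle" bootstrap through intermediate matrices $H^{(\gamma)}$ is a reasonable alternative to the induction-on-chain-length bookkeeping that the paper imports from~\cite{Cipolloni2023Edge} via the control quantities $\Psi_k^{\mathrm{iso}}$, $\Psi_k^{\mathrm{av}}$, but both variants founder without the extra isotropic input.
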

We defer the proof of Proposition \ref{prop:GFT} to Section \ref{sec:GFT}. 
Armed with Propositions \ref{prop:global_laws}--\ref{prop:GFT}, we are ready to prove Theorem \ref{th:locallaws}.
\begin{proof}[Proof of Theorem \ref{th:locallaws}]
	First, given a Wigner-type matrix $H$, using the complex moment-matching lemma (Lemma A.2 in \cite{Cipolloni2023Edge}), we construct the initial condition matrix $\widehat{H}$ of Wigner-type, such that the first three moments of the entries $\widehat{H}_{jk,T}$ match those of $H_{jk}$. Here, $\widehat{H}_T$ denotes the result of running the Ornstein-Uhlenbeck flow \eqref{eq:OUflow} with the initial condition $\widehat{H}_0 = \widehat{H}$ up to the terminal time $T$ given by Proposition \ref{prop:global_laws}.
	
	Next, using Propositions \ref{prop:global_laws} and \ref{prop:flow_local_laws}, we show that the statement of Theorem \ref{th:locallaws} holds for the Wigner-type matrix $\widehat{H}_T$. Finally, we remove the Gaussian component added to $\widehat{H}_T$ by the Ornstein-Uhlenbeck flow using Proposition \ref{prop:GFT}. Therefore, we conclude the proof of Theorem \ref{th:locallaws} for the desired Wigner-type ensemble $H$.
\end{proof}
The remainder of this section is dedicated to the proof of Proposition \ref{prop:flow_local_laws}.
\subsection{Local Laws along the Flow. Proof of Proposition \ref{prop:flow_local_laws}} \label{sec:flow_laws_proof}
Fix $T\sim 1$ to be the terminal time provided by Proposition \ref{prop:global_laws}.
Recall that for a spectral parameter $z_j \in \bulk$, the vector  $\vect{z}_{j,t}$ denotes the solution to the characteristic flow equation \eqref{eq:z_evol} with the terminal condition $\vect{z}_{j,T} = z_j\vect{1}$, and let $\eta_{j,t} := |\langle \im \vect{z}_t \rangle|$. Finally, we adhere to the notation $G_{j,t} := (H_t -\diag{\vect{z}_{j,t}})^{-1}$ for the generalized resolvents as in \eqref{eq:Gt}, and $M_{j,t} := M_{j,t}$ for the self-consistent resolvents as in \eqref{eq:Mt_def}.

For all $0\le t \le T$ and all $z_1,z_2\in\mathcal{D}$, defined in \eqref{eq:calD_def}, we consider the following control quantities,
\begin{equation} \label{eq:Phi_def}
	\begin{split}
		\Phi_{1}(t) \equiv \Phi_{1}(z_1,t, A) &:=
		\frac{N \sqrt{\eta_{1,t}}}{\langle|A|^2 \rangle^{1/2}} \bigl\lvert \bigl\langle (G_{1,t}-M_{1,t}) A \bigr\rangle \bigr\rvert,\\
		\Phi_{2}^{\mathrm{hs}}(t) \equiv \Phi_{2}^{\mathrm{hs}}(z_1,z_2,A_1,A_2,t) &:= 		\frac{\sqrt{N\eta_{t}}}{\langle|A_1|^2 \rangle^{1/2}\langle|A_2|^2 \rangle^{1/2}} \bigl\lvert \bigl\langle (G_{1,t}A_1G_{2,t}-\M{\vect{z}_{1,t},A_1,\vect{z}_{2,t}}) A_2 \bigr\rangle \bigr\rvert,\\
		%
		%
	\end{split}	
\end{equation}
and two auxiliary control quantities $\Phi_{2}^{\mathrm{op}}(t)$ and $\Phi_{(1,1)}(t)$, defined as
\begin{equation} \label{eq:Phi(1,1)_def}
	\begin{split}
		\Phi_{2}^{\mathrm{op}}(t) \equiv \Phi_{2}^{\mathrm{op}}(z_1,z_2,A_1,A_2,t) &:= 		\frac{\sqrt{N\eta_{t}}}{\langle|A_1|^2 \rangle^{1/2}\norm{A_2}} \bigl\lvert \bigl\langle (G_{1,t}A_1G_{2,t}-\M{\vect{z}_{1,t},A_1,\vect{z}_{2,t}}) A_2 \bigr\rangle \bigr\rvert,\\
		\Phi_{(1,1)}(t) \equiv \Phi_{(1,1)}(z_1,z_2,A_1,B,t) &:= 
		\frac{\sqrt{N\eta_t} \sqrt{\eta_t}}{\langle|A_1|^2 \rangle^{1/2}\norm{B}} \bigl\lvert \bigl\langle (G_{1,t}A_1G_{2,t}-\M{\vect{z}_{1,t},A_1,\vect{z}_{2,t}}) B \bigr\rangle \bigr\rvert,
	\end{split}
\end{equation}
where $\eta_t := \min\{\eta_{1,t}, \eta_{2,t}\}$. Here, the superscripts $\mathrm{hs}$ and $\mathrm{op}$ signal the use of the Hilbert-Schmidt or the operator norm of the observable $A_2$, respectively, and the subscript $(1,1)$ is to denote the presence of one regular and one arbitrary observable in \eqref{eq:Phi(1,1)_def}. The prefactors in \eqref{eq:Phi_def}--\eqref{eq:Phi(1,1)_def} are the reciprocals of the target bounds for the fluctuations of the traces of the respective resolvent chains. In particular, our goal is to show that $\Phi$'s are stochastically dominated by $1$\footnote{
	The bounds $\Phi_{1}, \Phi_2^\mathrm{hs} \prec 1$ are optimal, but the true size of $\Phi_2^\mathrm{op}$ and $\Phi_{(1,1)}$ is $(N\eta_t)^{-1/2}$ (c.f. Theorem 2.5 in \cite{Cipolloni2022Optimal}), as signaled by \eqref{eq:Weak2av} below, since the general heuristic is that each regular observable improves the error bound by a factor of $\sqrt{\eta_t}$. The bound on $\Phi_2^\mathrm{op}$ can be amended by considering resolvents chains of length up to $4$. The optimal bound on $\Phi_{(1,1)}$ can be obtained by using the stochastic Gronwall estimate (see Lemma \ref{lemma:Gronwall} below).  However, we do not pursue the sharper bounds on $\Phi_2^\mathrm{op}, \Phi_{(1,1)}$, since they are not necessary for the proof of Proposition \ref{prop:flow_local_laws}.}.

We only introduce control quantities tailored for proving the averaged local law \eqref{eq:2G_av_t}. Indeed this is sufficient as the corresponding isotropic local law can be deduced from the averaged laws since the error terms are controlled by the Hilbert-Schmidt norm of the observables. This is formulated in the following lemma, that we prove in Section \ref{sec:A_reg_proofs}.
\begin{lemma}[Isotropic Lemma] \label{lemma:isotropic}
	Under the assumptions and notation of Proposition \ref{prop:global_laws}, for any $z_1,z_2\in\mathcal{D}$ and any pair of deterministic vectors $\vect{x},\vect{y}\in \mathbb{C}^N$, there exists a matrix $A_2 := A_2(z_1,z_2,\vect{x},\vect{y})$ regular with respect to $(z_2,z_1)$ in the sense of Definition \ref{def:reg_A}, and a complex number $a:= a(z_1,z_2,\vect{x},\vect{y})$, such that for all $0\le t \le T$,
	\begin{equation}
		\bigl\langle\vect{x},\bigl(G_{1,t} A_1 G_{2,t} - M_{[1,2],t}\bigr) \vect{y} \bigr\rangle = \sqrt{N} \bigl\langle\bigl(G_{1,t} A_1 G_{2,t} - M_{[1,2],t}\bigr) A_2 \bigr\rangle + a \, \bigl\langle G_{1,t} A_1 G_{2,t} - M_{[1,2],t} \bigr\rangle,
	\end{equation}
	where $M_{[1,2],t} := \M{\vect{z}_{1,t},A_1,\vect{z}_{2,t}}$. Moreover, for all  $z_1,z_2\in \mathcal{D}$, the matrix $A_2$ and the number $a$ satisfy
	\begin{equation}
		\langle |A_2|^2 \rangle^{1/2}+|a| \lesssim \norm{\vect{x}}_2\norm{\vect{y}}_2.
	\end{equation}
\end{lemma}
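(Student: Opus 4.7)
The approach converts the isotropic quantity into an averaged trace by using the elementary identity
\begin{equation*}
\langle \vect{x}, X\vect{y}\rangle = \Tr(X\,\vect{y}\vect{x}^*) = N\bigl\langle X\, \vect{y}\vect{x}^*\bigr\rangle,
\end{equation*}
valid for any matrix $X \in \mathbb{C}^{N\times N}$. Applying this with $X := G_{1,t}A_1G_{2,t} - M_{[1,2],t}$ reduces the problem to a purely deterministic decomposition of the rank-one matrix $Y_0 := \vect{y}\vect{x}^*$ into a $(z_2,z_1)$-regular piece plus a multiple of the identity, independent of $t$.

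I would perform this decomposition exactly as in the proof of Theorem \ref{th:ETH} (see the splitting \eqref{eq:ETH_decomp} there). In the perturbative regime $\min\{|z_1-z_2|,|\bar z_1-z_2|\}\le\delta/2$, the stability projector $\Pi_{z_2^-,z_1^+}$ is rank one by Lemma \ref{lemma:stab_lemma}; writing it as $\Pi_{z_2^-,z_1^+}[\,\cdot\,] = \vect{r}\,(\vect{\ell}^*\,\cdot\,)$, the $(z_2,z_1)$-regularity condition for $A'_2 := Y_0 - c\,I$ reduces to a single scalar equation in $c$, uniquely solved by
\begin{equation*}
c := \frac{\vect{\ell}^*\bigl[\m(z_2^-)\,(\vect{y}\odot \bar{\vect{x}})\,\m(z_1^+)\bigr]}{\vect{\ell}^*\bigl[\m(z_2^-)\,\m(z_1^+)\bigr]}.
\end{equation*}
The denominator is of order one by the separation estimate \eqref{eq:Pi_separation} combined with the lower bound $|\vect{r}_j|\sim 1$ it implies. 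In the complementary, non-perturbative regime I simply take $c := 0$, since Definition \ref{def:reg_A} designates every observable as regular there. Setting $A_2 := \sqrt{N}(Y_0 - cI)$ and $a := Nc$ then yields the identity of the lemma directly from
\begin{equation*}
N\langle X Y_0\rangle = N\langle X(Y_0-cI)\rangle + Nc\langle X\rangle = \sqrt{N}\langle X A_2\rangle + a\langle X\rangle.
\end{equation*}

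It remains to verify the size bounds. A direct computation gives $\sqrt{N}\langle|Y_0|^2\rangle^{1/2} = \norm{\vect{x}}_2\norm{\vect{y}}_2$, so by the triangle inequality it suffices to show $|c| \lesssim N^{-1}\norm{\vect{x}}_2\norm{\vect{y}}_2$, which simultaneously yields $|a|=N|c|\lesssim\norm{\vect{x}}_2\norm{\vect{y}}_2$. The $\ell^1\to\ell^\infty$ bound \eqref{eq:Pi_bound} on $\Pi_{z_2^-,z_1^+}$, together with $|\vect{r}_j|\sim 1$ extracted from \eqref{eq:Pi_separation}, forces $\norm{\vect{\ell}}_\infty \lesssim N^{-1}$. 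Combined with $\norm{\m}_\infty \lesssim 1$ from \eqref{as:m_bound} and the Cauchy--Schwarz inequality $\sum_j|\bar x_j y_j| \le \norm{\vect{x}}_2\norm{\vect{y}}_2$, this shows that the numerator of $c$ is bounded by $\lesssim N^{-1}\norm{\vect{x}}_2\norm{\vect{y}}_2$, completing the verification. No substantive obstacle is anticipated: the lemma is a clean algebraic rewriting of the isotropic quantity exploiting the rank-one structure of the two-body stability projector, and the construction of $A_2$ and $a$ is manifestly $t$-independent, since it depends only on the fixed data $z_1,z_2,\vect{x},\vect{y}$.
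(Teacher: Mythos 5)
Your proof is correct and follows essentially the same route as the paper's: rewrite the isotropic quantity as $N\langle X\,\vect{y}\vect{x}^*\rangle$, split $\vect{y}\vect{x}^*$ into a $(z_2,z_1)$-regular part plus a scalar multiple of the identity using the rank-one structure of $\Pi_{z_2^-,z_1^+}$, and bound the scalar $a=Nc$ via the $\ell^1\to\ell^\infty$ estimate \eqref{eq:Pi_bound} and Cauchy--Schwarz. The only cosmetic difference is that you work with an explicit factorization $\Pi=\vect{r}\,\vect{\ell}^*$ (with a chosen normalization $|\vect{r}_j|\sim 1$), whereas the paper expresses the same scalar as a ratio of parallel vectors; the two are equivalent.
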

In particular Lemma \ref{lemma:isotropic} implies the uniform bound 
\begin{equation} \label{eq:1iso_identity}
	\frac{\sqrt{\eta_t}\bigl\lvert\bigl\langle\vect{x},\bigl(G_{1,t} A_1 G_{2,t} - M_{[1,2],t}\bigr) \vect{y} \bigr\rangle \bigr\rvert}{\langle |A_1|^2 \rangle^{1/2}\norm{\vect{x}}_2\norm{\vect{y}}_2}  \lesssim \Phi_{2}^{\mathrm{hs}}(z_1,z_2,A_1,A_2,t) + \frac{\Phi_{(1,1)}(z_1,z_2,A_1,I,t)}{\sqrt{N\eta_t}},
\end{equation}
where the matrix $A_2 := A_2(z_1,z_2,\vect{x},\vect{y})$ is given in Lemma \ref{lemma:isotropic}.
Therefore, to establish Proposition \ref{prop:flow_local_laws}, it suffices to show that
\begin{equation} \label{eq:Phi_goal}
	\Phi_{1} (t)\prec 1,\quad  \Phi_{2}^{\mathrm{hs}}(t) \prec 1,\quad  \Phi_{(1,1)}(t) \prec 1,
\end{equation}
uniformly in time $0\le t\le T$, in $z_1,z_2\in\mathcal{D}$, and \textit{uniformly in observables}, that is, in deterministic matrices $A$ regular with respect to $(\bar z_1,z_1)$, in $(z_1,z_2)$-regular $A_1$, $(z_2,z_1)$-regular $A_2$, and in deterministic matrices $B\in\mathbb{C}^{N\times N}$, respectively (in the sequel we write "uniformly in observables" implying the precise notion described above).

To this end, we first assume that the estimates
\begin{equation} \label{eq:phi_assume}
		\Phi_{1} (t)\prec \phi_{1},\quad  \Phi_{2}^{\mathrm{op}}(t) \prec \phi_{2}^{\mathrm{op}},\quad \Phi_{2}^{\mathrm{hs}}(t) \prec \phi_{2}^{\mathrm{hs}},\quad  \Phi_{(1,1)}(t) \prec \phi_{(1,1)},
\end{equation}
hold uniformly in $t\in [0,T]$, in observables, and in $z_1,z_2\in\mathcal{D}$, for some deterministic $t$-independent control parameters $\phi_1, \phi_2^{\mathrm{op}},\phi_2^{\mathrm{hs}}, \phi_{(1,1)}$. Given this input, we show that the quantities $\Phi_{\dots}(t)$ satisfy an improved system of inequalities in terms of the parameters $\phi_{\dots}$.
\begin{prop}[Master Inequalities] \label{prop:masters}
	Assume that the estimates  \eqref{eq:phi_assume} hold uniformly in $t\in [0,T]$, in observables, and in $z_1,z_2 \in \mathcal{D}$. Then the following upper bounds
	\begin{equation} \label{eq:Master1av}
		\Phi_{1}(t) \prec 1+\frac{\sqrt{\phi_{2}^{\mathrm{hs}}}}{(N\eta_{1,t})^{1/4}}+ \phi_{(1,1)},
	\end{equation}
	\begin{equation} \label{eq:Master2av_HS}
		\Phi_{2}^{\mathrm{hs}}(t) \prec 1+ \frac{\phi_{2}^{\mathrm{hs}}}{\sqrt{N\eta_t}} + \phi_{2}^{\mathrm{op}} + \frac{\phi_{(1,1)}^2}{\sqrt{N\eta_t}},
	\end{equation}
	\begin{equation} \label{eq:Master2av}
		\Phi_{2}^{\mathrm{op}}(t) \prec 1+ \frac{\phi_{2}^{\mathrm{hs}}}{\sqrt{N\eta_t}} + \frac{\phi_{2}^{\mathrm{op}}}{\sqrt{N\eta_t}} + \frac{\phi_{(1,1)}^2}{\sqrt{N\eta_t}},
	\end{equation}
	\begin{equation} \label{eq:Master(1,1)av}
		\Phi_{(1,1)}(t) \prec 1+ \frac{\sqrt{\phi_{2}^{\mathrm{hs}}}}{(N\eta_t)^{1/4}} + \frac{\phi_1}{\sqrt{N\eta_t}}  + \frac{\phi_{(1,1)}}{\sqrt{N\eta_t} },
	\end{equation}
	hold uniformly in $t\in [0,T]$, in observables, and $z_1,z_2\in\mathcal{D}$.
\end{prop}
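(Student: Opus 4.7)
The strategy is to treat each of \eqref{eq:Master1av}--\eqref{eq:Master(1,1)av} as the output of the same three-step dynamical scheme: (i) compute the It\^o differential of the centered chain whose size is measured by $\Phi_\bullet$, using the characteristic flow \eqref{eq:z_evol} and the key identity \eqref{eq:m_evol} to cancel the leading transport terms against the time-derivative of the deterministic approximation; (ii) integrate in time from $t=0$, where Proposition \ref{prop:global_laws} (via \eqref{eq:global_laws-HS}) provides an $\mathcal{O}_\prec(1)$ bound on every $\Phi_\bullet(0)$, up to the current time; and (iii) bound the resulting martingale and forcing terms using the a priori hypotheses \eqref{eq:phi_assume}, isotropic single-resolvent bounds, and a stochastic Gronwall argument to absorb the linear self-feedback.

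Concretely, for \eqref{eq:Master2av_HS} I would differentiate $\langle(G_{1,t}A_1G_{2,t} - \M{\vect{z}_{1,t},A_1,\vect{z}_{2,t}})A_2\rangle$ via \eqref{eq:GBGBevol}. Thanks to the defining identity \eqref{eq:M_t_def} and \eqref{eq:m_evol}, the transport contributions $\langle G_1A_1G_2A_2\rangle\mathrm{d}t$ and $\langle G_1A_1G_2\,\mathscr{S}[G_2A_2G_1]\rangle\mathrm{d}t$ appearing in \eqref{eq:GBGBevol} are exactly matched by $\partial_t \M{\vect{z}_{1,t},A_1,\vect{z}_{2,t}}$ up to a linear self-feedback in $\Phi_2^{\mathrm{hs}}(t)$ itself (closed by Gronwall) and remainders quadratic in $G_{j,t}-M_{j,t}$. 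What survives is a Brownian integral plus cubic terms of the form $\langle\mathscr{S}[G_{j,t}-M_{j,t}]\,G_{1,t}A_1G_{2,t}A_2G_{1,t}\rangle$ and its index-swapped counterpart. I would bound the martingale by Burkholder-Davis-Gundy, whose quadratic variation $\sum_{j,k}S_{jk}|\partial_{jk}\langle G_1A_1G_2A_2\rangle|^2$ evaluates after differentiating to a combination of traces of chains of length at most four; using \eqref{eq:phi_assume} these are controlled by $\phi_2^{\mathrm{hs}}$, $\phi_2^{\mathrm{op}}$ and $\phi_{(1,1)}$ and account for the $\phi_\bullet/\sqrt{N\eta_t}$ terms in \eqref{eq:Master2av_HS}. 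The cubic forcing terms are treated by resolving $\mathscr{S}$ coordinate-wise via \eqref{eq:superS_def}, splitting the inner chain into $M_{[1,2],t}$ plus its fluctuation, and bounding the diagonal entries of the shorter chains isotropically. The derivations of \eqref{eq:Master1av}, \eqref{eq:Master2av} and \eqref{eq:Master(1,1)av} follow the same template, differing only in which norm (Hilbert-Schmidt versus operator) is used on each observable slot, which is what ultimately distinguishes the four right-hand sides.

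The main obstacle is precisely the non-factorizability of the quadratic form $\langle\cdot\,\mathscr{S}[\cdot]\rangle$ highlighted around \eqref{ex:quad_term}. For standard Wigner matrices $\mathscr{S} = \langle\cdot\rangle I$, so every such expression factorizes into a product of two averaged traces and the hierarchy closes at length two; in our setting $\mathscr{S}[B]=\diag{S[\vect{b}^{\mathrm{diag}}]}$ couples two chains through a non-trivial $S$, forcing an isotropic or operator-norm bound on one of the two internal factors. This is exactly why the four control quantities in \eqref{eq:Phi_def}--\eqref{eq:Phi(1,1)_def} are not redundant: whenever such a coupling appears in the It\^o expansion, I would split it as $\sum_{jk}S_{jk}(G\cdots G)_{jj}(G\cdots G)_{kk}$, apply flatness $S_{jk}\le C/N$, pay the operator norm on the factor controlled by $\Phi_{(1,1)}$ or $\Phi_2^{\mathrm{op}}$ and keep the Hilbert-Schmidt norm on the other factor controlled by $\Phi_2^{\mathrm{hs}}$. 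Careful bookkeeping of the norm on each factor at each step produces exactly the four asymmetric right-hand sides \eqref{eq:Master1av}--\eqref{eq:Master(1,1)av}; a secondary technical point is to ensure that all intermediate observables that arise (for instance the diagonal matrices coming from $S[\cdot]$, or products like $A_1M_{2,t}$) are either regular with respect to the appropriate pair of spectral parameters or otherwise admissible as a general $B$ in the slot of $\Phi_{(1,1)}$, so that the assumed bounds \eqref{eq:phi_assume} can be invoked consistently.
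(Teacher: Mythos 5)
Your proposal captures the broad architecture of the paper's argument correctly: differentiate the centered chain via It\^o along the combined OU/characteristic flow, cancel the transport terms against $\partial_t M_{[\cdot]}$, integrate from $t=0$ where the global law gives an $\Oprec(1)$ starting point, and bound the martingale plus quadratic forcing terms using the a priori assumptions on the four control quantities. You also correctly identify the non-factorizability of the quadratic form $\langle\cdot\,\mathscr{S}[\cdot]\rangle$ as the central obstacle and describe the split into $\sum_{jk}S_{jk}(G\cdots)_{jj}(G\cdots)_{kk}$ with an isotropic/operator-norm bound on one factor and a Hilbert--Schmidt bound on the other. However, there are two substantive gaps.

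First, you assert that the quadratic variation of the martingale ``evaluates to a combination of traces of chains of length at most four; using \eqref{eq:phi_assume} these are controlled by $\phi_2^{\mathrm{hs}}$, $\phi_2^{\mathrm{op}}$ and $\phi_{(1,1)}$.'' This does not hold: the $\phi$'s only bound chains of length $\le 2$, and the quantities that actually appear (e.g.\ $\langle\im G_1 A_1 G_2 A_2 \im G_1 A_2^* G_2^* A_1^*\rangle$ in \eqref{eq:2av_mart}) are genuine length-four averaged traces. Bounding them back in terms of length-two objects requires an independent \emph{reduction inequality} (Lemma \ref{lemma:reds}), and in the generalized-resolvent setting the paper cannot use the usual spectral-decomposition argument; it develops the conformal integral representation of Lemma \ref{lemma:imF_rep} to make the reduction work. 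This is a nontrivial missing step, not a routine estimate.

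Second, you say the diagonal observables coming from $S[\cdot]$ ``are either regular with respect to the appropriate pair of spectral parameters or otherwise admissible as a general $B$,'' which is too optimistic: $S^{(p)}$ is generically \emph{not} regular, and the paper needs the explicit $S$-decomposition Lemma \ref{lemma:S_decomp}, writing $S = N^{-1}(\mathring{S} + \vect{s}\vect{1}^*)$ with $\mathring{S}^{(p)}$ regular and the rank-one remainder handled through $\Phi_{(1,1)}$ or direct trace bounds. Similarly, for \eqref{eq:Master(1,1)av} the paper needs the observable regularization Lemma \ref{lemma:B_renorm} and a case split on $(\im z_1)(\im z_2)$, because the bound $\norm{\mathscr{S}[M_{[2,1],t}]}\lesssim \eta_t^{-1}\norm{B}$ saturates when the spectral parameters lie in opposite half-planes; the resolvent identity trick that converts the $\diag{\vect{z}_1-\vect{z}_2}$ piece into a difference of length-one chains is essential there and not captured by your description. (A minor point: the linear self-feedback term is removed in the paper by differentiating $\mathrm{e}^{-t}\langle(G_{[1,2],t}-M_{[1,2],t})A_2\rangle$ rather than via a stochastic Gronwall argument; Gronwall is reserved for the a-priori Lemmas \ref{lemma:1Flaw} and \ref{lemma:2G_weak}.)
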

We defer the proof of Proposition \ref{prop:masters} to Section \ref{sec:master_proofs}.
To obtain the improved bounds on $\Phi_{\dots}(t)$ using the master inequalities \eqref{eq:Master1av}--\eqref{eq:Master(1,1)av}, we apply the following iteration lemma.
\begin{lemma}[Iteration (Lemma 4.5 in \cite{Cipolloni2023Edge})] \label{lemma:iter}
	Let $X \equiv X_N(\vect{u})$ be an $N$-dependent random variable that also depends on a parameter $\vect{u} \in \mathcal{U} \equiv \mathcal{U}_N$. Suppose that $X \prec N^D$ uniformly in $\vect{u} \in \mathcal{U}$, and that for any $x>0$ the fact that $X \prec x$ uniformly in $\vect{u}\in\mathcal{U}$ implies 
	\begin{equation} \label{eq:iter}
		X \prec a + \frac{x}{b} + x^{1-\alpha}d^\alpha,
	\end{equation} 
	uniformly in $\vect{u}\in\mathcal{U}$ for some potentially $N$ and $\vect{u}$-dependent quantities $a,b,d$ satisfying $a,d>0$, $b \ge N^\delta$, and constants $D,\alpha,\delta>0$ independent of $N$ and $\vect{u}$. Then, uniformly in $\vect{u} \in \mathcal{U}$,
	\begin{equation} \label{eq:iter_result}
		X \prec a + d.
	\end{equation}
\end{lemma}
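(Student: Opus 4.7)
The plan is an iterative bootstrap combined with a deterministic contraction analysis. Starting from the a priori bound $X \prec N^D$, I would apply the hypothesis \eqref{eq:iter} repeatedly, producing $X \prec x_k$ where the deterministic sequence is defined by
\begin{equation*}
	x_0 := N^D, \qquad x_{k+1} := a + \frac{x_k}{b} + x_k^{1-\alpha} d^\alpha.
\end{equation*}
Once one shows that $x_K \le C(a+d)$ for some constant $C$ and some iteration depth $K$ essentially independent of $N$, the conclusion $X \prec a+d$ follows from the definition of stochastic domination together with a union bound over the $K$ failure events coming from the individual applications of the hypothesis.

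The deterministic recursion $x_{k+1} = \psi(x_k)$, with $\psi(x) := a + x/b + x^{1-\alpha} d^\alpha$, has two contracting mechanisms. First, the $x/b$ term shrinks $x$ by at least a factor $N^{-\delta}$ per step, so after $K_1 \sim D/\delta$ iterations the residual contribution $x_0/b^{K_1}\le N^{D-K_1\delta}\le 1$ is negligible. Second, viewed in logarithmic variables, $x\mapsto x^{1-\alpha} d^\alpha$ acts as the affine map $\log x \mapsto (1-\alpha)\log x + \alpha \log d$, with contraction rate $1-\alpha<1$ and fixed point $\log d$. Cross terms produced when expanding $\psi(x_k)$ can be decoupled via the subadditivity $(y+z)^{1-\alpha}\le y^{1-\alpha}+z^{1-\alpha}$ for $\alpha\in(0,1]$ and the elementary bound $y^{1-\alpha}z^\alpha\le y+z$. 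A clean packaging is to verify directly that the region $\{x : x \le C(a+d)\}$ is forward-invariant under $\psi$ for $C$ large (since $\psi(C(a+d))\le (1/C+1/b+C^{-\alpha})\,C(a+d)\le C(a+d)$ for $C$ and $N$ large enough), and that outside this region $\psi$ contracts by a uniform factor $\kappa<1$ (since $x^{1-\alpha}d^\alpha\le C^{-\alpha}x$ when $x\ge Cd$, while $a\le x/C$ and $1/b\le 1/C$). Hence a finite number $K$ of steps, depending only on $\alpha,\delta,D$, suffices to bring $x_0=N^D$ into the invariant region.

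Translating back into stochastic domination is then standard: for any targets $\varepsilon^{**},D^{**}>0$, choose per-step parameters $\varepsilon':=\varepsilon^{**}/K$ and $D':=D^{**}+1$, apply the hypothesis $K$ times, and take a union bound to obtain $X\le N^{K\varepsilon'}x_K \le N^{\varepsilon^{**}}C(a+d)$ with probability $\ge 1-N^{-D^{**}}$. The main obstacle is justifying that the iteration depth $K$ may be taken independent of $N$. This hinges crucially on both $\alpha>0$ (keeping the log-scale contraction rate $1-\alpha$ bounded below $1$) and $b\ge N^\delta$ with $\delta>0$ (ensuring the other mechanism contracts at uniform geometric rate); without either, $K$ would grow with $N$ and the $N^\varepsilon$ losses intrinsic to each step of the bootstrap would accumulate beyond the tolerance of $\prec$. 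When $a+d$ is polynomially small in $N$, $K$ may in fact grow as $O(\log\log N)$, but one can still pick $\varepsilon'=\varepsilon^{**}/K = o(1)$ in the bootstrap, consistent with the definition of $\prec$ via an appropriate $N$-dependent threshold $N_0$.
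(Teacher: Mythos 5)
Your overall plan is the right one and matches the standard argument from \cite{Cipolloni2023Edge}: run the deterministic recursion $x_{k+1}=\psi(x_k)$ with $x_0 = N^D$, exhibit a forward-invariant region near $a+d$ and a uniform contraction rate outside it, then apply the hypothesis finitely many times. The two contracting mechanisms you identify ($x/b \le N^{-\delta}x$, and the log-scale contraction from $x^{1-\alpha}d^\alpha$) are exactly the ones that make this work.

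Where your writeup goes off track is the treatment of the iteration depth $K$. You aim for $x_K \le C(a+d)$ on the nose, conclude that $K$ could be as large as $O(\log\log N)$, and then propose to absorb this by splitting the tolerance, $\varepsilon' := \varepsilon^{**}/K$, per step. Two issues. First, this is not how $\prec$ composes: each application of the hypothesis ``$X\prec x \Rightarrow X \prec \psi(x)$'' is an implication between two full stochastic-domination statements, not a one-shot probability bound at a pre-chosen $\varepsilon'$. There is no ``$N^{\varepsilon'}$ loss per step'' to tally — the output of one application already holds for every $\varepsilon,D'$, and so is a legitimate input to the next. What does accumulate over a chain of implications of $N$-dependent length is the implicit $N_0$-thresholds, and the lemma gives no uniformity in how those grow; so if $K$ truly needed to grow with $N$, this gap could not be patched by rescaling $\varepsilon'$. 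Second, and more to the point, you do not actually need $K$ to grow: since the target $X\prec a+d$ already carries an $N^{\varepsilon}$ slack for arbitrarily small fixed $\varepsilon>0$, it suffices to reach $x_K \le N^{\varepsilon}(a+d)$ rather than $C(a+d)$. With this relaxed target, the log-scale contraction $\log x_{k+1} - \log d \le (1-\alpha)(\log x_k - \log d) + O(1)$ starting from $\log x_0 - \log d \le (D+L)\log N$ (using $d \ge N^{-L}$, which you may assume without loss — replacing $d$ by $\max\{d,N^{-L}\}$ only enlarges the right side of \eqref{eq:iter}, and $L$ can be taken as large as the $D'$ in the target probability bound without affecting the conclusion) yields $x_K \le N^{\varepsilon}(a+d)$ after $K = O\bigl(\alpha^{-1}\log\bigl((D+L)/\varepsilon\bigr)\bigr)$ steps, a genuine constant independent of $N$. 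Once $K$ is constant, the chain of $K$ implications is unproblematic and a finite union bound finishes the proof. In short: the mechanism is right, but the bookkeeping should exploit the $N^{\varepsilon}$ tolerance of $\prec$ to cap $K$ at a fixed constant, rather than let $K$ drift with $N$ and then try to compensate.
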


\begin{proof}[Proof of Proposition \ref{prop:flow_local_laws}]
Recall that the goal is to establish \eqref{eq:Phi_goal}.
Master inequality \eqref{eq:Master1av} and the lower bound $N\eta_t \ge N^{\varepsilon}$ by \eqref{eta_t_lower} imply that the assumed bounds \eqref{eq:phi_assume} also hold with the time-independent parameter $\phi_1$ replaced by $1 + \sqrt{\phi_2^\mathrm{hs}}(N\eta_T)^{-1/4} + \phi_{(1,1)}$. Substituting the new $\phi_1$ into \eqref{eq:Master(1,1)av} and applying Lemma \ref{lemma:iter} yields
\begin{equation} \label{eq:Master(1,1)_reduced}
	\Phi_{(1,1)}(t) \prec 1+ \frac{\sqrt{\phi_{2}^{\mathrm{hs}}}}{(N\eta_T)^{1/4}} \quad \text{uniformly in }  t\in[0,T], \text{ observables, and } z_1,z_2\in\mathcal{D}.
\end{equation}
Therefore, we can choose the new $t$-independent control parameter $\phi_{(1,1)} := 1+ \sqrt{\phi_{2}^{\mathrm{hs}}}(N\eta_T)^{-1/4}$. Next, we substitute the new $\phi_{(1,1)}$ into \eqref{eq:Master2av_HS} and apply the iteration Lemma \ref{lemma:iter}, to obtain 
\begin{equation} \label{eq:Master2_HS_reduced}
	\Phi_{2}^{\mathrm{hs}}(t) \prec 1+  \phi_{2}^{\mathrm{op}}, \quad \text{ uniformly in } t\in[0,T], \text{ observables, and } z_1,z_2\in\mathcal{D}.
\end{equation}
Hence, we can plug $\phi_2^\mathrm{hs} := 1 + \phi_2^\mathrm{op}$ and, using \eqref{eq:Master(1,1)_reduced}, $\phi_{(1,1)} := 1+ \sqrt{\phi_{2}^{\mathrm{op}}}(N\eta_T)^{-3/4}$ into \eqref{eq:Master2av} to deduce via Lemma \ref{lemma:iter} that $\Phi_{2}^{\mathrm{op}}(t) \prec 1$. Therefore, it follows from \eqref{eq:Master(1,1)_reduced} and \eqref{eq:Master2_HS_reduced} that
\begin{equation} \label{eq:Phi_final}
	\Phi_{(1,1)}(t) \prec 1, \quad \Phi_{2}^{\mathrm{hs}}(t) \prec 1 \quad \text{ uniformly in } 0\le t \le T.
\end{equation}
Finally, it follows from \eqref{eq:Master1av}, \eqref{eq:Phi_final} that $\Phi_{1}(t) \prec 1$.
This concludes the proof of Proposition \ref{prop:flow_local_laws}.
\end{proof}

\section{Master Inequalities. Proof of Proposition \ref{prop:masters}} \label{sec:master_proofs}
In this section, we present the proof the master inequalities \eqref{eq:Master1av}--\eqref{eq:Master(1,1)av} that constitute the main technical ingredient in the proof of the local laws of Theorem \ref{th:locallaws}.

\subsection{Proof Strategy}  \label{sec:proof_strat}
First, we present a brief overview of our proof method and explain the importance of the auxiliary control quantities $\Phi_{(1,1)}$, $\Phi_{2}^{\mathrm{op}}$ that we introduced in \eqref{eq:Phi(1,1)_def}. 

The starting point in the proof of the master inequalities is the evolution equations \eqref{eq:GBevol} and \eqref{eq:GBGBevol} that we use to estimate the change of the control quantities $\Phi_{\dots}(t)$ along the combination of the characteristic flow \eqref{eq:z_evol} and the Ornstein-Uhlenbeck flow \eqref{eq:OUflow}, and exploit the algebraic cancellation resulting from their simultaneous effect, as discussed in Section \ref{sec:char_flow}. We proceed in a two steps.

The first step in our approach is to prove local laws for one and two-resolvents with general observables (see Lemmas \ref{lemma:1Flaw} and \ref{lemma:2G_weak} below). Without any gain from regularity, these results can be obtained in their optimal form using a combination of the characteristic flow and a simple stopping time argument. Nevertheless, they highlight an important distinction between the error terms controlled in terms of Hilbert-Schmidt and the operator norms, respectively. In particular, the denominator on the right-hand side of \eqref{eq:Weak1iso} carries and additional small $\sqrt{N\eta_t}$ factor compared to \eqref{eq:Weak1iso_HS}. This extra $\sqrt{N\eta_t}$ is a crucial ingredient in the following step two. 

In the second step, we use the local laws with general observables obtained above as an input to close the system of master inequalities for resolvent chains with regular observables. Recall that the non-trivial structure of (diagonal component of) the self-energy operator $\mathscr{S}$ results in two key difficulties: 
\begin{itemize}
	\item[(1)] First, when computing the action of $\mathscr{S}$ on two resolvent chains, e.g., 
	\begin{equation} \label{ex:2G_quad}
		\bigl\langle G_{1,t} A_1 G_{2,t} \mathscr{S}[G_{2,t}A_2G_{1,t}] \bigr\rangle = \frac{1}{N}\sum_{j,k} S_{jk} (G_{1,t} A_1 G_{2,t})_{jj} (G_{2,t}A_2G_{1,t})_{kk},
	\end{equation}
	one can only benefit from fluctuation averaging in a single summation index (either $j$ or $k$), while the remaining chain has to be estimated in the isotropic sense; 
	
	\item[(2)] Secondly, the rows of the matrix of variances $S$ act as new diagonal observables, that is
	\begin{equation}
		\sum_{j,k} S_{jk} (G_{1,t} A_1 G_{2,t})_{jj} (G_{2,t}A_2G_{1,t})_{kk} = \frac{1}{N}\sum_{k} \bigl\langle A_1 G_{2,t} S^{(k)} G_{1,t} \bigr\rangle(G_{1,t} A_1 G_{2,t})_{kk},
	\end{equation}
	where $S^{(k)}_{ja} := NS_{kj}\delta_{ja}$. Contrary to the standard Wigner case $S_{jk} = N^{-1}$, yielding $S^{(k)}=I$ for all $k$, the product $G_{2,t} S^{(k)} G_{1,t}$ can not be linearized using the resolvent identity, and hence the length of the averaged chain cannot be reduced algebraically.
\end{itemize}
In particular, ignoring the non-regularity of $S^{(k)}$ for now, naively using \eqref{eq:1iso_identity} to bound the isotropic factors in \eqref{ex:2G_quad}, and integrating the evolution equation \eqref{eq:GBGBevol} in time, yields the estimate
\begin{equation} \label{ex:Phi_bad1}
	\frac{\Phi_2^{\mathrm{hs}}(t)}{\sqrt{N\eta_t}} \prec 1 + \int_{0}^t\biggl(\frac{(\phi_{2}^{\mathrm{hs}})^2}{\sqrt{N}\eta_s} + \dots \biggr)\mathrm{d}s \prec 1 + \frac{(\phi_{2}^{\mathrm{hs}})^2}{\sqrt{N}} + \bigl(\text{positive terms}\bigr),
\end{equation}
where we used the integration rule $\int_0^t \eta_s^{-1}\mathrm{d}s \lesssim 1$ from \eqref{eq:int_rules} below. This estimate cannot be iterated using Lemma \ref{lemma:iter} since the right-hand side is quadratic in $\phi_{2}^{\mathrm{hs}}$.
However, if we use the suboptimal bound \eqref{eq:Weak1iso_HS} that ignores the regularity of $A_2$ and was proved in the first step, the estimate \eqref{ex:Phi_bad1} will then be improved to
\begin{equation}
	\frac{\Phi_2^{\mathrm{hs}}(t)}{\sqrt{N\eta_t}} \prec \frac{\phi_{2}^{\mathrm{hs}}}{\sqrt{N\eta_t}} + \bigl(\text{positive terms}\bigr),
\end{equation}
which is almost suitable for iteration, up to a missing small $(N\eta_t)^{-c}$ factor on the right-hand side. The key to recovering this factor is the following observation:  
the trivial bound $\langle |S^{(k)}|^2\rangle^{1/2} \lesssim \lVert S^{(k)}\rVert \lesssim 1$ is in fact sharp (see \eqref{eq:S_norms} below for proof). Therefore, we can control the averaged chains in \eqref{ex:2G_quad} using the auxiliary quantity $\Phi_{2}^{\mathrm{op}}$ with no loss. In turn, when we derive the master inequality for $\Phi_{2}^{\mathrm{op}}$, we can use the isotropic bound \eqref{eq:Weak1iso} to benefit from the additional $(N\eta_t)^{-1/2}$ factor in the denominator. Following this strategy, we deduce the system of inequalities,
\begin{equation}
	\frac{\Phi_2^{\mathrm{hs}}(t)}{\sqrt{N\eta_t}} \prec \frac{\phi_{2}^{\mathrm{op}}}{\sqrt{N\eta_t}} + \bigl(\text{positive terms}\bigr), \quad 
	\frac{\Phi_2^{\mathrm{op}}(t)}{\sqrt{N\eta_t}} \prec \frac{\phi_{2}^{\mathrm{op}}}{N\eta_t} + \bigl(\text{positive terms}\bigr),
\end{equation}
which can now be solved using the iteration from Lemma \ref{lemma:iter}.

Finally, to account for the non-regularity of the observables $S^{(k)}$, we decompose them according to the projector $\Pi$ from Definition \ref{def:reg_A}, and control the irregular part using the auxiliary quantity $\Phi_{(1,1)}$.

\begin{remark} [Small $\rho$ Regime] \label{rem:small_rho}
	The complete analysis of the stability operator $\stab_{z_1,z_2}$ near the small local minima of $\rho$ was carried out in \cite{R2023cusp}. Contrary to the standard Wigner setting, the destabilizing eigenvectors of $\stab_{z_1,z_2}$ and $\stab_{\bar{z}_1,z_2}$ are different, which introduces an additional order $\eta^{-1}\rho^2$ error in the analysis of local laws with regular observables. To offset this error, one has to carefully track the \emph{effective} distance between the spectral parameter and the self-consistent spectrum, as the trajectories of the characteristic flow become almost horizontal when $\rho$ is small. This becomes much more intricate for Wigner-type matrices, since the spectral parameters are vector-valued. However, a straightforward computation reveals that this effective distance behaves quadratically in time near a regular edge and as $(T-t)^{3/2}$ near a sharp cusp, as compared to the linear behavior of the imaginary part in the bulk (see \eqref{eq:eta_asymp} below).
\end{remark}

\begin{remark} [The Fullness Condition]
	Our method presented in this paper works under the uniform primitivity condition, the lower bound in \eqref{as:S_flat},
	in particular we can have  large zero blocks in $H$.  We briefly mention an alternative approach
	under the more restrictive \emph{fullness} condition requiring a lower bound  $\langle X\mathcal{S}[X] \rangle \ge c \langle |X|^2 \rangle$ for all $X \in \mathbb{C}^{N\times N}$.
	
	Local laws for such mean-field random matrices can also be studied by using the Ornstein-Uhlenbeck process driven by the standard matrix-valued
	Brownian motion, i.e. replacing all entries of $\widehat{S}$ by $1/\sqrt{N}$ in~\eqref{eq:OUflow}.  
	This replaces the self-energy operator $\mathcal{S}$ of the original ensemble by the averaged trace $\langle \cdot \rangle$ in all evolution equations. 
	The self-consistent density of states changes along this new flow, but this minor complication can be handled easily.  
	More importantly, the characteristic flow conjugate to the standard Ornstein-Uhlenbeck process preserves the scalar nature of spectral parameters, meaning that the resolvents along the flow commute and resolvent identities can be used to reduce the length of chain as in the standard Wigner case. 
	This greatly simplifies the resulting hierarchy of master inequalities, provided sufficient knowledge about the two-body stability operator of the Dyson equation corresponding to the random matrix ensemble $H$. 
	
	For Wigner-type matrices, for example, the necessary properties of $\stab_{z_1,z_2}$ have been established in the bulk.
	As another example, since for generalized Wigner matrices $M(z) = m_{\mathrm{sc}}(z)$ and the only destabilizing eigenvector of $1 - M(z)^{2}S$ is the vector of ones $\vect{1}$, multi-resolvent local laws can be obtained uniformly in the spectrum using the same hierarchy as in \cite{Cipolloni2023Edge} assuming $S_{jk} \sim N^{-1}$.

	However, we stress that this simplification is not available in the more general setup of uniformly primitive matrix of variances $S$. 
\end{remark}

\subsection{Inputs for the Proof of Master Inequalities}
Before proceeding to the proof of Proposition \ref{prop:masters}, we collect several necessary inputs, the proof of which is deferred to further sections.
One key input used to estimate the occurring $(G_t - M_t)$ quantities is the following local laws for a single generalized resolvent.
\begin{lemma}[Local Laws for one Generalized Resolvent] \label{lemma:1Flaw}
	Let $H$ be a random matrix of Wigner-type as in Definition \ref{def:WT}, then 
	the following averaged and isotropic local laws
	\begin{equation} \label{eq:1G_laws}
		\bigl\lvert \bigl \langle \bigl(G(H_t,\vect{z}_t) - M_t\bigr)B\bigr \rangle \bigr\rvert  \prec \frac{
		\norm{B}}{N\eta_t}, \quad 
		\bigl\lvert \bigl \langle \vect{x}, \bigl(G(H_t,\vect{z}_t) - M_t\bigr)\vect{y}\bigr \rangle \bigr\rvert\prec \frac{\norm{\vect{x}}_2\norm{\vect{y}}_2}{\sqrt{N\eta_t}},
	\end{equation}
	hold uniformly in deterministic matrices $B$, deterministic vectors $\vect{x}, \vect{y}$, in time $0\le t\le T$, and in spectral parameters $z \in \mathcal{D}$. 
	Here, for a fixed $z\in\mathbb{C}\backslash\mathbb{R}$, the vector $\vect{z}_t$ is a solution to \eqref{eq:z_evol} with the terminal condition $\vect{z}_T = z\,\vect{1}$, $\eta_t := |\langle \im \vect{z}_t \rangle|$. 
\end{lemma}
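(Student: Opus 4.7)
The plan is to propagate the single-resolvent global laws of Proposition \ref{prop:global_laws} (valid at $t = 0$, where $\eta_0 \sim 1$) forward in time along the coupled evolutions \eqref{eq:z_evol} and \eqref{eq:OUflow}, exploiting the algebraic cancellation built into the characteristic flow. Introduce the time-dependent control quantities
\begin{equation*}
\Psi^{\mathrm{av}}(t) := \frac{N\eta_t\,\bigl|\langle (G(H_t,\vect{z}_t)-M_t)B\rangle\bigr|}{\norm{B}}, \qquad \Psi^{\mathrm{iso}}(t) := \frac{\sqrt{N\eta_t}\,\bigl|\langle \vect{x},(G(H_t,\vect{z}_t)-M_t)\vect{y}\rangle\bigr|}{\norm{\vect{x}}_2\norm{\vect{y}}_2},
\end{equation*}
and work under the a priori assumption $\Psi^{\mathrm{av}}(t),\Psi^{\mathrm{iso}}(t) \prec \psi$ uniformly in $t \in [0,T]$, in observables, and in $z \in \mathcal{D}$. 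The trivial starting value $\psi \le N^{D}$ is supplied by $\norm{G_t} \le \eta_t^{-1}$ together with \eqref{eta_t_lower}, and the goal is to bootstrap this to $\psi \prec 1$.

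Apply It\^{o}'s formula to $\langle G_t B\rangle$ via \eqref{eq:GBevol} and analogously to $\langle \vect{x}, G_t \vect{y}\rangle$. By design, the drift $\langle G_t B G_t \diag{\partial_t \vect{z}_t + \tfrac{1}{2}(\vect{z}_t - \bm{\mathfrak{a}}) + S[\m(\vect{z}_t)]}\rangle$ vanishes identically thanks to \eqref{eq:z_evol}; subtracting $\mathrm{d}\langle M_t B\rangle$ using \eqref{eq:m_evol} leaves a benign linear scalar drift $\tfrac{1}{2}\langle (G_t - M_t)B\rangle\,\mathrm{d}t$ that is absorbed by the multiplicative factor $e^{-t/2}$ on the bounded interval $[0,T]$. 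What remains nontrivially is a Brownian martingale and the second-order It\^{o} correction
\begin{equation*}
\bigl\langle (G_t - M_t)\,\mathscr{S}[G_t B G_t]\bigr\rangle\,\mathrm{d}t = \frac{1}{N}\sum_{j,k} S_{jk}\,(G_t - M_t)_{jj}\,(G_t B G_t)_{kk}\,\mathrm{d}t,
\end{equation*}
together with an analogous bilinear correction in the isotropic case.

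The main obstacle, specific to the Wigner-type setting, is that $\mathscr{S}$ does not factorize into a product of averaged traces, so one cannot benefit from fluctuation averaging in both summation indices at once. The workaround is to bound one factor through the isotropic a priori input $|(G_t - M_t)_{jj}| \prec \psi/\sqrt{N\eta_t}$ and the other by the crude operator-norm estimate $|(G_t B G_t)_{kk}| \lesssim \norm{B}/\eta_t^{2}$, using the row bound $\opnorminf{S} \lesssim 1$ from Assumption \eqref{as:S_flat}. The resulting $\sqrt{N\eta_t}$ loss is precisely what is absorbed into the operator norm of $B$ on the right-hand side of \eqref{eq:1G_laws}; this is exactly the reason the lemma is stated in terms of $\norm{B}$ rather than $\langle |B|^2\rangle^{1/2}$, as no gain from observable regularity is being claimed at this stage. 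The Brownian martingale is controlled by the Burkholder--Davis--Gundy inequality, whose quadratic variation reduces to products of resolvent entries dominated by the a priori $\psi$ and by bounded quantities such as $|(\im M_t)_{jj}|$.

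Integrating over $[0,t]$ and using the integration rules valid along the characteristic flow (from \eqref{eta_t_lower} and the explicit behaviour of the trajectory), one obtains a self-consistent estimate of the form $\Psi^{\mathrm{av}}(t), \Psi^{\mathrm{iso}}(t) \prec 1 + \psi/N^{\delta}$ for some $\delta > 0$. A single application of the iteration Lemma \ref{lemma:iter} reduces the a priori parameter to $\psi \prec 1$ uniformly in $t \in [0, T]$, in observables, and in $z \in \mathcal{D}$, which is precisely \eqref{eq:1G_laws}. The remaining delicate point is uniformity: both the a priori bound and the conclusion must hold with high probability simultaneously over all $(B, \vect{x}, \vect{y}, z, t)$, which is handled in the standard way via a stopping-time argument combined with the Lipschitz regularity of $G_t$ in $\vect{z}_t$ and $t$ across a sufficiently fine net in the parameter space.
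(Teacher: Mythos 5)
Your first two steps (It\^{o}'s formula, the characteristic-flow cancellation of the drift, the removal of the $\tfrac{1}{2}\langle(G_t-M_t)B\rangle$ scalar term) are fine. The argument breaks at the quadratic term $\langle(G_t-M_t)\mathscr{S}[G_tBG_t]\rangle$, and the failure is not repairable within the self-consistent scheme you set up.

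Plugging your two bounds $|(G_t-M_t)_{jj}|\prec\psi/\sqrt{N\eta_t}$ and $|(G_tBG_t)_{kk}|\lesssim\norm{B}/\eta_t^2$ into the quadratic term and integrating with \eqref{eq:int_rules} gives
\begin{equation*}
\int_0^t\bigl|\bigl\langle(G_s-M_s)\mathscr{S}[G_sBG_s]\bigr\rangle\bigr|\,\mathrm{d}s\prec\frac{\psi\norm{B}}{\sqrt{N}}\int_0^t\eta_s^{-5/2}\,\mathrm{d}s\lesssim\frac{\psi\norm{B}}{\sqrt{N}\,\eta_t^{3/2}},
\end{equation*}
which, measured against the target $\norm{B}/(N\eta_t)$, is a factor $\psi\sqrt{N/\eta_t}\sim\psi N^{1-\varepsilon/2}$ larger — far from a contraction, so Lemma~\ref{lemma:iter} cannot bite. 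The root cause is not the crudeness of the factor $\eta_t^{-2}$ (the Ward-type inequality \eqref{eq:ImG_ineq} improves it to $\eta_t^{-1}$, and even the exact deterministic size $|(\M{\vect{z}_t,B,\vect{z}_t})_{kk}|\lesssim\norm{B}$ is available via \eqref{eq:scalar_stab_bounds}), but that $(G_tBG_t)_{kk}$ is \emph{not} a small fluctuation: its deterministic part is $\mathcal{O}(\norm{B})$. Bounding the whole two-resolvent chain and treating the quadratic term as a forcing contribution therefore produces, at best, $\Psi^{\mathrm{av}}(t)\prec1+\psi\cdot\mathcal{O}(1)$ with no $N^{-\delta}$ factor, so the bootstrap never closes.

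What the paper does instead is split $(G_tBG_t)_{kk}=(\M{\vect{z}_t,B,\vect{z}_t})_{kk}+\text{(fluct.)}_{kk}$ and treat the two parts in fundamentally different ways. The genuinely quadratic piece $\langle\mathscr{S}[G_t-M_t](G_tBG_t-\M{\vect{z}_t,B,\vect{z}_t})\rangle$ is small, but controlling it requires the \emph{isotropic two-resolvent} local law for $(G_tS^{(p)}G_t-M^{(p)}_t)_{jj}$, so Lemma~\ref{lemma:1Flaw} cannot be proved in isolation: it is established simultaneously with Lemma~\ref{lemma:2G_weak} in a coupled three-step bootstrap over the quantities $\chain^{0,\mathrm{iso}},\chain^{1,\mathrm{av}},\chain^{1,\mathrm{iso}},\chain^{2,\mathrm{av}}$ with a joint stopping time \eqref{eq:tau}. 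The linear piece $\langle(G_t-M_t)\mathscr{S}[\M{\vect{z}_t,B,\vect{z}_t}]\rangle$ is not a small forcing term either (its time integral costs a full $\log N$, which, weighed against the $\eta_t^{-1}$ in the target, is $\mathcal{O}(1)$ rather than $\mathcal{O}(N^{-\delta})$). Organizing the test observables into the $\ell^\infty$-vector $\chain^{1,\mathrm{av}}_p(t):=\langle(G_t-M_t)S^{(p)}\rangle$ reveals that this linear part is the operator $(\stab_{\vect{z}_t,\vect{z}_t}^{-1})^\mathfrak{t}$ acting on $\chain^{1,\mathrm{av}}(t)$, i.e.\ a genuine linear drift in the SDE \eqref{eq:1avX_evol}, and it has to be solved exactly via the stochastic Gronwall inequality, Lemma~\ref{lemma:Gronwall} — with the crucial input $\lVert\stab_{\vect{z}_t,\vect{z}_t}^{-1}\rVert_*\lesssim1$ from Lemma~\ref{lemma:stab_flow} supplying a bounded exponent. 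Your proposal, which never subtracts the deterministic two-resolvent approximation and replaces Gronwall by Lemma~\ref{lemma:iter}, misses both of these mechanisms.
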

The estimates in \eqref{eq:1G_laws} generalize the usual local laws for the resolvent of a Wigner-type matrix at a scalar-valued spectral parameter $z$ (Theorem 2.5 in \cite{Erdos2018CuspUF}). However, since $\vect{z}_t$ is not proportional to the vector of ones $\vect{1}$, they require a separate proof that we present in Section \ref{seq:prior_laws}.

Next, we state the local laws for two generalized resolvents with arbitrary observables. These statements are much easier to prove than their counterparts with regular observables in Proposition \ref{prop:flow_local_laws}, so we defer their proof to Section \ref{seq:prior_laws}.
\begin{lemma}[Two-Resolvent Local Laws for General Observables] \label{lemma:2G_weak}
	Fix $\rho_*,\eta_*,\varepsilon>0$, then under the assumptions and notation of Proposition \ref{prop:global_laws}, the local laws
	\begin{equation} \label{eq:Weak2av}
		\bigl\lvert\bigl \langle \bigl(G_{1,t}B_1G_{2,t} - \M{\vect{z}_{1,t},B_1,\vect{z}_{2,t}}\bigr)B_2\bigr \rangle\bigr\rvert \prec \frac{\langle |B_1|^2\rangle^{1/2}\norm{B_2}}{N\eta_{1,t}\eta_{2,t}},
	\end{equation}
	\begin{equation} \label{eq:Weak1iso_HS}
		\bigl\lvert\bigl \langle \vect{x},\bigl(G_{1,t}B_1 G_{2,t} - \M{\vect{z}_{1,t},B_1,\vect{z}_{2,t}}\bigr)\vect{y}\bigr \rangle \bigr\rvert \prec \frac{\langle |B_1|^2\rangle^{1/2}\norm{\vect{x}}_2\norm{\vect{y}}_2}{\sqrt{\eta_{1,t}\eta_{2,t}}},
	\end{equation}
	\begin{equation} \label{eq:Weak1iso}
		\bigl\lvert\bigl \langle \vect{x},\bigl(G_{1,t}B_1 G_{2,t} - \M{\vect{z}_{1,t},B_1,\vect{z}_{2,t}}\bigr)\vect{y}\bigr \rangle \bigr\rvert \prec \frac{\norm{B_1}\norm{\vect{x}}_2\norm{\vect{y}}_2}{\sqrt{N\eta_{1,t}\eta_{2,t} \eta_t}},
	\end{equation}
	holds uniformly in $t\in [0,T]$, deterministic matrices $B_1,B_2$, deterministic vectors $\vect{x}, \vect{y}$, 
	and in spectral parameters $z_1,z_2 \in \mathcal{D}$, defined in \eqref{eq:calD_def}.
\end{lemma}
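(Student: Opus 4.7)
The plan is to propagate the global laws of Proposition~\ref{prop:global_laws} (valid at $t=0$) along the combined characteristic flow~\eqref{eq:z_evol} and Ornstein--Uhlenbeck flow~\eqref{eq:OUflow} to all $t\in[0,T]$ by a stopping-time bootstrap. To this end, introduce the three control quantities
\begin{equation*}
\Psi^{\mathrm{av}}(t) := \frac{N\eta_{1,t}\eta_{2,t}\bigl|\bigl\langle(G_{1,t}B_1G_{2,t}-\M{\vect{z}_{1,t},B_1,\vect{z}_{2,t}})B_2\bigr\rangle\bigr|}{\langle|B_1|^2\rangle^{1/2}\norm{B_2}},
\end{equation*}
\begin{equation*}
\Psi^{\mathrm{hs}}(t) := \frac{\sqrt{\eta_{1,t}\eta_{2,t}}\bigl|\bigl\langle\vect{x},(G_{1,t}B_1G_{2,t}-\M{\vect{z}_{1,t},B_1,\vect{z}_{2,t}})\vect{y}\bigr\rangle\bigr|}{\langle|B_1|^2\rangle^{1/2}\norm{\vect{x}}_2\norm{\vect{y}}_2},
\end{equation*}
\begin{equation*}
\Psi^{\mathrm{op}}(t) := \frac{\sqrt{N\eta_{1,t}\eta_{2,t}\eta_t}\bigl|\bigl\langle\vect{x},(G_{1,t}B_1G_{2,t}-\M{\vect{z}_{1,t},B_1,\vect{z}_{2,t}})\vect{y}\bigr\rangle\bigr|}{\norm{B_1}\norm{\vect{x}}_2\norm{\vect{y}}_2},
\end{equation*}
whose uniform bound $\Psi^{\bullet}(t)\prec 1$ is equivalent to \eqref{eq:Weak2av}, \eqref{eq:Weak1iso_HS}, and \eqref{eq:Weak1iso}, respectively. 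The initial data satisfies $\Psi^{\bullet}(0)\prec 1$ by Proposition~\ref{prop:global_laws} together with $\eta_{j,0}\sim 1$. Because no observable regularity is exploited, the targets are weak enough that one does not need the refined hierarchy of Proposition~\ref{prop:masters}.

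I would apply It\^{o}'s formula to $\langle(G_{1,t}B_1G_{2,t}-M_{[1,2],t})B_2\rangle$ via the evolution equation~\eqref{eq:GBGBevol}, and analogously to the isotropic object obtained by pairing with $\vect{x},\vect{y}$ instead of taking the trace against $B_2$. The matching part of the time-derivative of $M_{[1,2],t}$ cancels the corresponding deterministic drift by the very design of the characteristic flow, leaving two error sources. The first source consists of the $\mathscr{S}$-correction terms $\langle\mathscr{S}[G_{j,t}-M_{j,t}]\,G_{j,t}B_1G_{2,t}B_2G_{1,t}\rangle$; these I would estimate by splitting the quadratic form of $\mathscr{S}$ into an averaged trace containing $G_{j,t}-M_{j,t}$ (controlled by Lemma~\ref{lemma:1Flaw}) and an inner two-resolvent chain (controlled by the bootstrap hypothesis on $\Psi^{\bullet}$ together with the trivial bounds $\norm{G_{j,t}}\lesssim\eta_{j,t}^{-1}$). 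The second source is the martingale part, a stochastic integral against $\Brwn_t$ whose quadratic variation is of the form $\sum_{j,k}S_{jk}|\partial_{jk}(\cdots)|^2$, namely a length-four averaged resolvent chain that reduces via Cauchy--Schwarz and the Ward identity $G_j^*G_j=(\im G_j)/\eta_{j,t}$ to products of two-resolvent chains bounded in turn by $\Psi^{\bullet}$.

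Combining these estimates and integrating in time on the event that $\max_\bullet\Psi^\bullet(s)\le N^{\xi}$ for $s\le t$ produces a self-improving inequality of the form $\Psi^{\bullet}(t)\prec 1+N^{-\delta}\max_{\bullet}\sup_{s\le t}\Psi^{\bullet}(s)$; the time-integrated factors $\int_0^t\eta_s^{-1}\,\mathrm{d}s$ arising in the drift are absorbed into $\prec$ by the controlled growth of $\eta_t$ along the flow provided by~\eqref{eq:z_evol} and Lemma~\ref{lemma:flow_exists}. Introducing the stopping time $\tau := \inf\{t\in[0,T]:\max_\bullet\Psi^{\bullet}(t)>N^{\xi}\}$ for a small fixed $\xi>0$ and iterating the self-bound then forces $\tau=T$ with overwhelming probability, which yields~\eqref{eq:Weak2av}--\eqref{eq:Weak1iso} at any $t\in[0,T]$.

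The main difficulty I anticipate is the delicate coupling between the three $\Psi^{\bullet}$ needed to close~\eqref{eq:Weak1iso}: the gain $\sqrt{N\eta_t}$ over the trivial pointwise bound $\norm{G_1B_1G_2}\lesssim\norm{B_1}/(\eta_{1,t}\eta_{2,t})$ must come from fluctuation averaging in the martingale quadratic variation, yet the natural control parameter for the isotropic martingale there is the Hilbert--Schmidt quantity $\Psi^{\mathrm{hs}}$ rather than $\Psi^{\mathrm{op}}$ itself, so the three bootstraps cannot be decoupled and must be iterated in parallel. The non-factorizability of $\mathscr{S}$ that drives the full master-hierarchy in Section~\ref{sec:proof_strat} is less painful here because the $B_i$ are general: any loss of fluctuation averaging in one summation index of the quadratic $\mathscr{S}$-form can be absorbed into the already-weak target estimates by using the operator-norm bound $\norm{B_i}$ on the isotropic side.
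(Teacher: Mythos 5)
Your proposal misses the central difficulty that the paper explicitly flags as ``the main obstacle to proving Lemma~\ref{lemma:2G_weak}'': the \emph{linear} drift terms of the form $\langle (G_{[1,2],t}-M_{[1,2],t})\,\mathscr{S}[M_{[2,1],t}]\rangle$ (and its partner with $1\leftrightarrow 2$). These are generated when the quadratic $\mathscr{S}$-term $\langle G_{[1,2],t}\,\mathscr{S}[G_{[2,1],t}]\rangle$ in~\eqref{eq:GBGBevol} is compared against $\partial_t\langle M_{[1,2],t}B_2\rangle$ via~\eqref{eq:M_evol}: the difference splits into the genuinely quadratic piece $\langle\mathscr{S}[G_{[1,2],t}-M_{[1,2],t}](G_{[2,1],t}-M_{[2,1],t})\rangle$ plus the two linear pieces. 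You address only the martingale and the $(3\mbox{-}1)$ terms $\langle\mathscr{S}[G_j-M_j]\,G_{[\cdots]}\rangle$, and implicitly the quadratic piece, but you never mention the linear pieces. They cannot be treated as a small forcing: $\norm{\mathscr{S}[M_{[2,1],s}]}\lesssim\eta_s^{-1}\norm{B_2}$ at worst (see~\eqref{eq:M_bound},~\eqref{eq:super-S-norm}), so their coefficient integrates to $\int_0^t\eta_s^{-1}\,\mathrm{d}s\sim\log N$, and since the dependence on the quantity you are bootstrapping is linear (not an $N^{-\delta}$-suppressed feedback), a naive Gronwall iteration of the inequality you write would produce a factor $\exp(c\log N)=N^{c}$ rather than the $N^{o(1)}$ you claim, destroying the estimate.

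The paper's actual route around this is what makes the lemma non-trivial. One observes that the new observable $\mathscr{S}[M_{[2,1],t}]$ lies in the range of $\mathscr{S}$, i.e.\ is a linear combination of the diagonal matrices $S^{(p)}=\diag{(NS_{pj})_j}$. This suggests closing a vector-valued linear system for the family $\chain_p(t)=\langle(G_{[1,2],t}-M^{(p)}_t)S^{(p)}\rangle$ (Step~1 of the paper's proof), whose drift is exactly $(\stab_{\vect{z}_{1,t},\vect{z}_{2,t}}^{-1})^{\mathfrak{t}}$ by~\eqref{eq:2avX_evol}. That drift has one nearly-singular eigendirection (the rank-one projector $\Pi_{z_1,z_2}$ from Lemma~\ref{lemma:stab_lemma}, which commutes with the drift for all $t$ by~\eqref{eq:Pi_stab_t_commute}). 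The $\ell^\infty$ stochastic Gronwall Lemma~\ref{lemma:Gronwall} is built precisely to split off and integrate this singular mode explicitly; the resulting exponential is computed exactly as $\eta_{1,s}\eta_{2,s}/(\eta_{1,t}\eta_{2,t})$ in~\eqref{eq:exp_fs_bound}, which is exactly the $\eta_{1,t}\eta_{2,t}$ sitting in the denominators of~\eqref{eq:Weak2av}--\eqref{eq:Weak1iso}. Without this spectral decomposition of the drift, your bootstrap cannot close. The further two-step structure (Step~2 with one general $B_1$ and the $S^{(p)}$ as the other observable, then Step~3 with both general) is then needed because the linear drift coefficient in the evolution for general $B_2$ is again $\mathscr{S}[M]$, which must be controlled by the Step~1 output. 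Your intuition that ``any loss of fluctuation averaging can be absorbed into the weak targets'' is correct for the quadratic and $(3\mbox{-}1)$ terms, but it does not apply to the linear drift, which is the real work here.
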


To shorten the presentation, in the present paper we focus on controlling resolvent chains of length one and two, as it is sufficient for establishing our main result, Theorem \ref{th:ETH}. However, the time evolution equation \eqref{eq:GBGBevol} and, eventually, the quadratic variation of the martingale term therein (c.f. \eqref{eq:2av_mart}) contain chains involving three and four generalized resolvent. To estimate the contribution of these terms, we use the following \textit{reduction inequalities}, the proof of which we defer to Section \ref{sec:reds_proof}.
\begin{lemma}[Reduction Inequalities] \label{lemma:reds}
	Under the assumptions and notations of Proposition \ref{prop:global_laws}, assume additionally that the uniform bounds \eqref{eq:phi_assume} hold, then the estimates	
	\begin{equation} \label{eq:Red4av_HS}
		\bigl\lvert \bigl\langle (\im G_{1,t}) A_1 G_{2,t} A_2 (\im G_{1,t}) A_2^* G_{2,t}^* A_1^* \bigr\rangle \bigr\rvert \prec N\biggl(1 + \frac{\phi_{2}^{\mathrm{hs}}}{\sqrt{N\eta_t}}\biggr)^2\langle |A_1|^2\rangle\langle |A_2|^2\rangle,
	\end{equation}
	%
	%
	\begin{equation} \label{eq:Red(2,1)av_HS}
		\bigl\lvert \bigl\langle G_{1,t} A_1 G_{2,t} A_2 G_{1,t} B  \bigr\rangle \bigr\rvert \prec \frac{1}{\eta_t}\biggl(1 + \frac{\phi_{2}^{\mathrm{hs}}}{\sqrt{N\eta_t}}\biggr)\langle |A_1|^2\rangle^{1/2}\langle |A_2|^2\rangle^{1/2}\norm{B},
	\end{equation}
	%
	%
	hold uniformly in $t\in[0,T]$, observables, and in $z_1,z_2\in\mathcal{D}$. Here, $\im G := \tfrac{1}{2\I}(G-G^*)$ denotes the imaginary part of the generalized resolvent $G$.
\end{lemma}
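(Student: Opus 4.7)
The overall strategy for both estimates is to apply the new integral representation of Lemma \ref{lemma:imF_rep}, which expresses each generalized resolvent $G(\vect{z})$ as a bounded weighted integral of the positive semi-definite imaginary part $\im G(\vect{w}_s)$ over shifted spectral parameters $\vect{w}_s$ that remain in the bulk $\bulk$. Sign-definiteness of $\im G$ enables the factorization $\im G = (\im G)^{1/2}(\im G)^{1/2}$ together with H\"{o}lder and submultiplicativity inequalities on products of positive-definite matrices, thereby reducing both chains to traces of two-resolvent quantities of the form $\langle \im G_{1,t} A_k \im G_{2,t}^{(s)} A_k^*\rangle$ involving only the regular observables $A_1, A_2$. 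Each such chain, after expanding $\im G = (G-G^*)/(2\I)$ and invoking the deterministic bound \eqref{eq:ex_regA} together with the a priori hypothesis $\Phi_{2}^{\mathrm{hs}}(t) \prec \phi_{2}^{\mathrm{hs}}$ from \eqref{eq:phi_assume}, is controlled by $(1 + \phi_{2}^{\mathrm{hs}}/\sqrt{N\eta_t})\langle|A_k|^2\rangle$.

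For the chain of length three in \eqref{eq:Red(2,1)av_HS}, the duality estimate $|\langle Y B\rangle| \le \norm{B}\,\langle|Y|\rangle$ with $Y := G_{1,t} A_1 G_{2,t} A_2 G_{1,t}$ reduces the task to bounding $\langle|Y|\rangle = N^{-1}\norm{Y}_1$. Substituting the integral representation for the middle factor $G_{2,t}$ and splitting each integrand $\im G_{2,t}^{(s)}$ into two square-root factors yields $Y = \int K(s)\,[G_{1,t} A_1 (\im G_{2,t}^{(s)})^{1/2}]\,[(\im G_{2,t}^{(s)})^{1/2} A_2 G_{1,t}]\,ds$. H\"{o}lder's inequality $\norm{XZ}_1 \le \norm{X}_2\norm{Z}_2$ applied pointwise in $s$ bounds the trace norm by a product of two Hilbert-Schmidt norms. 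The square of each Hilbert-Schmidt factor, after cyclicity and the identity $G_1^* G_1 \le \im G_{1,t}/\eta_t$ which follows from $\im G = G\,\diag{\im \vect{z}}\,G^* = G^* \diag{\im \vect{z}}\, G$, becomes a trace $\Tr(A_j^* \im G_{1,t} A_j \im G_{2,t}^{(s)})$ controlled by $\phi_2^{\mathrm{hs}}$ as above. Collecting factors and using $\int |K(s)|\,ds \lesssim 1$ yields the advertised $\eta_t^{-1}(1 + \phi_{2}^{\mathrm{hs}}/\sqrt{N\eta_t})$ estimate.

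For the chain of length four in \eqref{eq:Red4av_HS}, cyclicity writes the left-hand side as $N^{-1}\norm{P_1^{1/2} G_{2,t} P_2^{1/2}}_{HS}^2$ with $P_1 := A_1^*(\im G_{1,t}) A_1$ and $P_2 := A_2 (\im G_{1,t}) A_2^*$, both positive semi-definite. Substituting the integral representation for $G_{2,t}$ and applying Cauchy-Schwarz in the integration variable reduces the problem to estimating $\Tr(P_1 \im G_{2,t}^{(s)} P_2 \im G_{2,t}^{(s)})$ uniformly in $s$. A further cyclic rearrangement of this trace, combined with the bulk operator-norm bound $\norm{\im G_{j,t}}_{\mathrm{op}} \lesssim 1$ that follows from the isotropic single-resolvent local law of Lemma \ref{lemma:1Flaw}, and submultiplicativity of the trace for positive-definite matrices, reduces the bound to a product of two regular two-resolvent chains $\langle \im G_{1,t} A_j \im G_{2,t}^{(s)} A_j^*\rangle$, each controlled by $(1 + \phi_2^{\mathrm{hs}}/\sqrt{N\eta_t})\langle|A_j|^2\rangle$. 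The factor of $N$ on the right-hand side arises from the normalization $\norm{\cdot}_{HS}^2 = N\langle|\cdot|^2\rangle$.

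The main obstacle is verifying that the shifted spectral parameters $\vect{w}_s$ stay localized within the bulk $\bulk$ uniformly in the integration variable, so that both the a priori hypothesis $\Phi_2^{\mathrm{hs}} \prec \phi_2^{\mathrm{hs}}$ and the stability estimates from Lemma \ref{lemma:stab_lemma} (in particular the Lipschitz continuity \eqref{eq:Pi_continuity} of the regularity projector $\Pi$ needed to transfer $(z_1,z_2)$-regularity to the shifted parameters) apply to the shifted two-resolvent chains. This is precisely the localization property of the integral representation highlighted in Remark \ref{rem:im_g_int} and in the introduction as one of the two main methodological novelties of the paper.
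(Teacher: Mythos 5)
Your high-level strategy is the same as the paper's: apply the conformal integral representation of Lemma \ref{lemma:imF_rep} to $G_{2,t}$, split each $\im G$ into two square-root factors, use H\"older/submultiplicativity of the trace on positive matrices, and Cauchy--Schwarz in the integration variable to reduce both chains to the key integral $\int |x-\I\xi^{1/\gamma}|^{-1}\lvert\langle \mathcal{G}(x) A \im G_{1,t} A^*\rangle\rvert\,\mathrm{d}x$. That reduction is carried out correctly. However, there are two issues.

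First, the invocation of ``the bulk operator-norm bound $\norm{\im G_{j,t}}_{\mathrm{op}}\lesssim 1$'' in the argument for \eqref{eq:Red4av_HS} is false: for $\im z\sim N^{-1+\varepsilon}$ one has $\norm{\im G}\sim \eta^{-1}$, and the isotropic single-resolvent law only controls individual entries, not the operator norm. Fortunately it is also unnecessary: once you write $\Tr[P_1\mathcal{G}(x)P_2\mathcal{G}(x)]=\Tr\bigl[(\mathcal{G}^{1/2}P_1\mathcal{G}^{1/2})(\mathcal{G}^{1/2}P_2\mathcal{G}^{1/2})\bigr]$, the submultiplicativity $\Tr[XY]\le\Tr[X]\Tr[Y]$ for $X,Y\ge 0$ alone gives $\Tr[P_1\mathcal{G}(x)]\Tr[P_2\mathcal{G}(x)]$, which is what the paper uses in \eqref{eq:4av_int}. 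Remove the erroneous norm claim.

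Second, and more seriously, the argument stops exactly where the real technical work begins. You claim that each shifted two-resolvent chain $\langle \mathcal{G}(x) A_j \im G_{1,t} A_j^*\rangle$ is ``controlled by $(1+\phi_2^{\mathrm{hs}}/\sqrt{N\eta_t})\langle|A_j|^2\rangle$ after invoking $\Phi_2^{\mathrm{hs}}\prec\phi_2^{\mathrm{hs}}$,'' and you identify the need to ``transfer $(z_1,z_2)$-regularity to the shifted parameters'' via Lipschitz continuity of $\Pi$. This is not enough, and simply invoking $\Pi$-continuity does not close the gap: the observable $A_j$ is regular with respect to the fixed pair $(z_2,z_1)$, not with respect to the $x$-dependent pair $(w(x),z_1)$, and the discrepancy $|w(x)-z_2|$ is order one over most of the integration domain, so Lipschitz continuity gives no smallness. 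What one actually needs is the explicit decomposition $A = \mathring{A}(x) + a(x)\,\diag{\Delta\vect{z}(x)}$ provided by Lemma \ref{lemma:B_renorm}, where $\mathring{A}(x)$ is genuinely $(w(x),z_1)$-regular (so that $\Phi_2^{\mathrm{hs}}$ applies), together with the refined estimate \eqref{eq:A_decomp_bound} for $|a(x)|$ --- which crucially exploits the original $(z_2,z_1)$-regularity of $A$ --- and the resolvent identity \eqref{eq:res_id} to absorb the correction term $\diag{\Delta\vect{z}(x)}$; see \eqref{eq:red_local}--\eqref{eq:2G_leftovers}. Without this decomposition the $x$-integral over the local regime $I$ cannot be bounded. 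You should also address the far regime $x\notin I$, where the resolvent norm bound $\norm{\mathcal{G}(x)}\lesssim (\eta_*+|x|^\gamma)^{-1}$ and the isotropic single-resolvent law are what make the integral convergent, and mention that the $\xi$-regularization removes the logarithmic singularity at $x=0$. This is precisely the content of Lemma \ref{lemma:integral2G}; your proposal currently asserts its conclusion without the ingredients to prove it.
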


Besides longer resolvent chains, the evolution equations \eqref{eq:GBevol} and \eqref{eq:GBGBevol} involve terms where the super-operator $\mathscr{S}$, defined in \eqref{eq:superS_def}, acts as a quadratic form on two (random) resolvent chains,  e.g., $\bigl\langle (G_{t}-M_{t}) \mathscr{S}[ G_{t}BG_{t}]\bigr\rangle$ in \eqref{eq:GBevol}. We refer to terms of the form $\langle G\dots\, \mathscr{S}[G\dots] \rangle$ as \textit{quadratic terms}.  Here $(G\dots)$ denotes a resolvent chain or only its fluctuating part after subtracting the corresponding deterministic approximations. In contrast, if the quadratic form of $\mathscr{S}$ is acting on one resolvent chain (or its fluctuating part) and one deterministic matrix, e.g., $\langle G_t A G_t \mathscr{S}[\M{\vect{z}_t, B, \vect{z}_t}]\rangle$, we refer to it as a \textit{linear term}. Thus our terminology for linear and quadratic reflects the number of \textit{random} inputs (one or two) in the quadratic form of $\mathscr{S}$, and not the number of $G$ factors. Since $\mathscr{S}$ does not factorize, the linear and quadratic terms require different treatment. 

To bound the quadratic terms effectively, we employ the following decomposition of the matrix $S$ into a "regular" part $\mathring{S}$ and a rank one matrix. The proof is given in Section \ref{sec:A_reg_proofs}.
\begin{lemma}[$S$-Decomposition] \label{lemma:S_decomp}
	Let $z_1,z_2$ be two spectral parameters in the domain $\mathcal{D}$, defined in \eqref{eq:calD_def}. Then there exists a matrix $\mathring{S}(z_1,z_2)$ and a vector $\vect{s}(z_1,z_2)$, such that
	\begin{equation} \label{eq:S_decomp}
		S = N^{-1}\bigl(\mathring{S}(z_1,z_2) + \vect{s}(z_1,z_2)\,\vect{1}^*\bigr),
	\end{equation}
	and for all $p\in \{1,\dots,N\}$, the diagonal matrices $\mathring{S}^{(p)}(z_1,z_2)$, defined as
	\begin{equation} \label{eq:Sp_def}
		\mathring{S}^{(p)}(z_1,z_2):= \diag{\bigl(\mathring{S}_{pj}(z_1,z_2)\bigr)_{j=1}^N},
	\end{equation}
	are regular with respect to $(z_2,z_1)$ in the sense of Definition \ref{def:reg_A}. Moreover, for all $z_1,z_2\in \mathcal{D}$,
	\begin{equation} \label{eq:S_decomp_bounds}
		\max_{p}\bigl\lVert\mathring{S}^{(p)}(z_1,z_2)\bigr\rVert\lesssim 1,\quad \norm{\vect{s}(z_1,z_2)}_\infty \lesssim 1.
	\end{equation}
\end{lemma}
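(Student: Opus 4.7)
The plan is to treat the two regimes of Definition \ref{def:reg_A} separately. In the "far" regime $\min\{|z_1-z_2|,|\bar z_1 - z_2|\} > \delta/2$, every observable is regular by convention, so one may simply set $\vect{s}(z_1,z_2) := \vect{0}$ and $\mathring{S}(z_1,z_2) := NS$; then \eqref{eq:S_decomp} is trivial and the bounds \eqref{eq:S_decomp_bounds} follow immediately from the flatness upper bound in \eqref{as:S_flat}.

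The substantive case is the "close" regime $\min\{|z_1-z_2|,|\bar z_1-z_2|\} \le \delta/2$, where I would exploit the rank-one structure of the eigenprojector $\Pi := \Pi_{z_2^-, z_1^+}$ furnished by Lemma \ref{lemma:stab_lemma}. Writing $\Pi[\vect{v}] = \vect{r}\,\langle \vect{\ell}, \vect{v}\rangle$ for the right and left eigenvectors $\vect{r}, \vect{\ell}$ of $\stab_{z_2^-, z_1^+}$ associated with the isolated small eigenvalue, normalized so that $\langle \vect{\ell}, \vect{r}\rangle = 1$, an observable $A$ with diagonal entries $a_j = A_{jj}$ is $(z_2,z_1)$-regular if and only if $\sum_j w_j a_j = 0$, where $w_j := \bar\ell_j\, m_j(z_2^-)\, m_j(z_1^+)$. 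Plugging the ansatz $\mathring{S}_{pj} := NS_{pj} - s_p$ (forced by \eqref{eq:S_decomp}) into this condition for $A = \mathring{S}^{(p)}$ and solving yields the explicit formula
\begin{equation*}
s_p \;:=\; \frac{N\sum_j \bar\ell_j\, m_j(z_2^-)\, m_j(z_1^+)\, S_{pj}}{\sum_j \bar\ell_j\, m_j(z_2^-)\, m_j(z_1^+)},
\end{equation*}
which in turn defines $\vect{s}(z_1,z_2)$ and $\mathring{S}(z_1,z_2)$; by construction both \eqref{eq:S_decomp} and the regularity of every $\mathring{S}^{(p)}$ hold.

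The main work is then verifying the uniform bounds \eqref{eq:S_decomp_bounds}, which reduces to controlling the ratio defining $s_p$. The separation property \eqref{eq:Pi_separation} gives $|r_j\,\langle \vect{\ell},\m(z_2^-)\m(z_1^+)\rangle| \sim 1$ for every $j$, so the entries of $\vect{r}$ are of comparable modulus across $j$ and the denominator satisfies $|\langle \vect{\ell},\m(z_2^-)\m(z_1^+)\rangle| \sim \|\vect{r}\|_\infty^{-1}$. Combined with \eqref{eq:Pi_bound}, which forces $\|\vect{r}\|_\infty\,\|\vect{\ell}\|_\infty \lesssim N^{-1}$, together with the flatness bound $NS_{pj} \le C_{\mathrm{sup}}$ and Assumption \eqref{as:m_bound}, the numerator is bounded by $C\,N\,\|\vect{\ell}\|_\infty$, yielding $|s_p| \lesssim N\,\|\vect{\ell}\|_\infty\,\|\vect{r}\|_\infty \lesssim 1$ uniformly in $p$. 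The bound $\max_p \|\mathring{S}^{(p)}\| = \max_{p,j}|NS_{pj}-s_p| \lesssim 1$ then follows from the triangle inequality. I expect the main subtlety to be the careful bookkeeping of the normalization of the spectral data of the non-Hermitian operator $\stab_{z_2^-,z_1^+}$, since \eqref{eq:Pi_separation} and \eqref{eq:Pi_bound} only pin $\|\vect{r}\|_\infty\|\vect{\ell}\|_\infty$ down up to comparability; on the other hand, no continuity across the boundary $\min\{|z_1-z_2|,|\bar z_1-z_2|\} = \delta/2$ is demanded by the statement, so the piecewise construction is permissible.
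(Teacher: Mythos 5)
Your proof is correct and is essentially the same argument as the paper's. The paper also splits into the far and close regimes, defines $s_p$ via the ratio (interpreted as a scalar since $\Pi_{z_2^-,z_1^+}$ has rank one) of $\Pi_{z_2^-,z_1^+}[\m(z_1^+)\m(z_2^-)\vect{s}^{(p)}]$ to $\Pi_{z_2^-,z_1^+}[\m(z_1^+)\m(z_2^-)]$, and derives the bounds \eqref{eq:S_decomp_bounds} from the flatness bound in \eqref{as:S_flat}, Assumption \eqref{as:m_bound}, and the projector estimates \eqref{eq:Pi_bound}, \eqref{eq:Pi_separation}; your writing it out in terms of explicit left/right eigenvectors $\vect{r},\vect{\ell}$ with $\langle\vect{\ell},\vect{r}\rangle=1$ is merely a notational unpacking of the same projector formulas, and your observation that the normalization ambiguity only enters through the product $\|\vect{r}\|_\infty\|\vect{\ell}\|_\infty$ (pinned by \eqref{eq:Pi_bound}) and the ratio $|r_j|\cdot|\langle\vect{\ell},\m(z_2^-)\m(z_1^+)\rangle|\sim 1$ (from \eqref{eq:Pi_separation}) is exactly the right resolution of that bookkeeping.
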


Next, we collect the properties of the deterministic approximations $\m(\vect{z}_t)$ and $\M{\vect{z}_{1,t},B,\vect{z}_{2,t}}$ that are essential for controlling the fluctuations of the corresponding generalized resolvent chains, that we prove in Appendix \ref{sec:stab_section}.
\begin{lemma}[$M$-Bounds] \label{lemma:Mbounds}
	Let $(\bm{\mathfrak{a}},S)$ be a data pair satisfying the Assumptions \eqref{as:S_flat} and \eqref{as:m_bound} of Definition \ref{def:WT}, then
	the following properties hold. 
	\begin{itemize}
		\item[(i)] If the vector-valued spectral parameters $\vect{z}_{j,t}$ satisfy the characteristic flow equation \eqref{eq:z_evol}, then the solution vector $\m(\vect{z}_{j,t})$ and the self-consistent resolvent $M_{j,t}$ satisfy
		\begin{equation} \label{eq:m_upper}
			\norm{\m(\vect{z}_{j,t})}_\infty \lesssim 1, \quad \norm{M_{j,t}} \lesssim 1.
		\end{equation}
		\item[(ii)] If the vector-valued spectral parameters $\vect{z}_{j,t}$ satisfy the characteristic flow equation \eqref{eq:z_evol}, then the deterministic approximation matrix $\M{\vect{z}_{1,t},B,\vect{z}_{2,t}}$ defined in \eqref{eq:M_t_def} satisfies for all $B_1,B_2 \in \mathbb{C}^{N\times N}$,
		\begin{equation} \label{eq:M_evol}
			\partial_t \langle \M{\vect{z}_{1,t},B_1,\vect{z}_{2,t}} B_2 \rangle = \bigl\langle \M{\vect{z}_{1,t},B_1,\vect{z}_{2,t}} \mathscr{S}[\M{\vect{z}_{2,t},B_2,\vect{z}_{1,t}}] \bigr\rangle + \langle \M{\vect{z}_{1,t},B_1,\vect{z}_{2,t}} B_2 \rangle,
		\end{equation} 
		where the super-operator $\mathscr{S}$ is defined in \eqref{eq:superS_def}.
		
		\item[(iii)] Moreover, there exists a positive threshold $T_*\sim 1$, such that for all terminal times $0\le T\le T_*$, if $\vect{z}_{j,t}$ satisfy the characteristic flow equation \eqref{eq:z_evol} for all $t\in[0,T]$ with the terminal condition $\vect{z}_{j,T} = z_j \in \mathcal{D}$, then
		\begin{equation} \label{eq:regM_bound}
			\norm{\M{\vect{z}_{1,t},A,\vect{z}_{2,t}}} \lesssim \norm{A}, \quad  \bigl\langle |\M{\vect{z}_{1,t},A,\vect{z}_{2,t}}|^2\bigr\rangle^{1/2} \lesssim \langle |A|^2\rangle^{1/2}.
		\end{equation}
		for all deterministic matrices $A$ regular with respect to $(z_1,z_2)$ as in Definition \ref{def:reg_A}. 
		Under the same set of assumptions, for all deterministic matrices $B$,
		\begin{equation} \label{eq:M_bound}
				\frac{\norm{\M{\vect{z}_{1,t},B,\vect{z}_{2,t}}}}{\norm{B}} + \frac{\bigl\langle |\M{\vect{z}_{1,t}, B, \vect{z}_{2,t}}|^2\bigr\rangle^{1/2}}{\langle |B|^2\rangle^{1/2}} \lesssim \frac{1}{\eta_{1,t} + \eta_{2,t} + |z_1-z_2|+\mathds{1}_{(\im z_1)(\im z_2)>0}},
		\end{equation}
		where we recall that $\eta_{j,t} := |\langle \im \vect{z}_{j,t} \rangle|$ for $j\in\{1,2\}$.
	\end{itemize}
\end{lemma}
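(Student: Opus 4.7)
Part (i) follows directly from Lemma \ref{lemma:flow_exists}: the linear ODE $\partial_t\m(\vect{z}_t) = \tfrac{1}{2}\m(\vect{z}_t)$ integrates backwards from the terminal condition $\vect{z}_{j,T} = z_j\vect{1}$ to yield $\m(\vect{z}_{j,t}) = e^{(t-T)/2}\m(z_j\vect{1})$, and since $t\le T$ and Assumption \eqref{as:m_bound} provides $\norm{\m(z_j\vect{1})}_\infty\le C_\mathrm{m}$, both bounds in \eqref{eq:m_upper} follow immediately (the matrix-norm bound because $M_{j,t}$ is diagonal). For part (ii), I would differentiate the defining identity $(1-M_{1,t}M_{2,t}\mathscr{S})[\M{\vect{z}_{1,t},B_1,\vect{z}_{2,t}}] = M_{1,t}B_1 M_{2,t}$ in $t$, using $\partial_t M_{j,t} = \tfrac{1}{2}M_{j,t}$; after cancellation one arrives at $(1-M_{1,t}M_{2,t}\mathscr{S})[\partial_t\M{\vect{z}_{1,t},B_1,\vect{z}_{2,t}}] = \M{\vect{z}_{1,t},B_1,\vect{z}_{2,t}}$. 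Pairing with $B_2$ in the Hilbert--Schmidt sense, and exploiting the self-adjointness of $\mathscr{S}$ (valid because $S$ is symmetric) together with the intertwining relation $\mathscr{S}\circ(1-M_{1,t}M_{2,t}\mathscr{S})^{-1} = (1-\mathscr{S}M_{1,t}M_{2,t})^{-1}\circ\mathscr{S}$ — a direct consequence of the algebraic identity $(1-AB)^{-1}A = A(1-BA)^{-1}$ — rewrites the nontrivial term precisely as $\langle\M{\vect{z}_{1,t},B_1,\vect{z}_{2,t}}\,\mathscr{S}[\M{\vect{z}_{2,t},B_2,\vect{z}_{1,t}}]\rangle$, yielding \eqref{eq:M_evol}.

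For part (iii), the starting point is the decomposition \eqref{eq:M_offdiag}, which splits $\M{\vect{z}_{1,t},B,\vect{z}_{2,t}}$ into an off-diagonal piece $M_{1,t}B^{\mathrm{od}}M_{2,t}$ that is trivially controlled by $\norm{B}$ and $\langle|B|^2\rangle^{1/2}$ via part (i), and a diagonal piece built from the two-body stability operator $\mathcal{B}_{\vect{z}_{1,t},\vect{z}_{2,t}} := 1 - \diag{\m(\vect{z}_{1,t})\m(\vect{z}_{2,t})}S$ applied to the vector $\m(\vect{z}_{1,t})\vect{b}^{\mathrm{diag}}\m(\vect{z}_{2,t})$. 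The general bound \eqref{eq:M_bound} then combines the three scalar stability estimates of Lemma \ref{lemma:stab_lemma} into a single denominator: \eqref{eq:scalar_stab_bounds} in the same-half-plane regime supplies the indicator term, the long-range bound \eqref{eq:far_stability} handles $|z_1-z_2|\gtrsim 1$, and the $(|\im z_1|+|\im z_2|)^{-1}$ bound from \eqref{eq:scalar_stab_bounds} covers the near-conjugate regime. For the refined bound \eqref{eq:regM_bound}, I would decompose $\mathcal{B}^{-1} = \mathcal{B}^{-1}(1-\Pi) + \eigB^{-1}\Pi$ using the gap structure \eqref{eq:stab_gap}: the transversal component has operator norm $\lesssim 1$, delivering the desired control; the singular component is tamed by regularity, which kills $\Pi_{z_1^-,z_2^+}[\m(z_1^-)\vect{b}^{\mathrm{diag}}\m(z_2^+)]$ at the terminal time $T$ and, via Lipschitz continuity \eqref{eq:Pi_continuity} of $\Pi_{\cdot,\cdot}$ combined with the smoothness of $\m$ along the flow, leaves a residual of order $(T-t)\langle|B|^2\rangle^{1/2}$ that is absorbed by the size of the isolated eigenvalue $\eigB$ provided $T_*\sim 1$ is small.

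The main obstacle is that Lemma \ref{lemma:stab_lemma} is stated only for scalar spectral parameters, while the characteristic flow \eqref{eq:z_evol} produces genuinely vector-valued $\vect{z}_{j,t}$. The crux of part (iii) is therefore to upgrade the spectral gap, the eigenprojector representation \eqref{eq:Pi_int}, and the Lipschitz continuity to the vector-valued setting. I plan to do this by a standard analytic perturbation argument: integrating \eqref{eq:z_evol} backwards from $\vect{z}_{j,T} = z_j\vect{1}$ yields $\norm{\vect{z}_{j,t} - z_j\vect{1}}_\infty \lesssim T_*$, so $\mathcal{B}_{\vect{z}_{1,t},\vect{z}_{2,t}}$ is an $\mathcal{O}(T_*)$-perturbation of the scalar operator $\mathcal{B}_{z_1,z_2}$ in the relevant operator norms, and its isolated eigenvalue and rank-one eigenprojector persist under small perturbations with quantitative error bounds via the Riesz contour integral \eqref{eq:Pi_int}. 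Choosing $T_*$ small relative to the bulk-parameter constants from Lemma \ref{lemma:stab_lemma} then closes the argument; everything else reduces to algebraic bookkeeping.
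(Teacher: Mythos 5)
Your treatment of parts (i) and (ii) is correct and matches the paper's approach: part (i) is immediate from the integrated flow equation $\m(\vect{z}_{j,t}) = e^{(t-T)/2}\m(z_j)$, and part (ii) follows by differentiating the defining identity of $M(\vect{z}_{1,t},B_1,\vect{z}_{2,t})$ in $t$, using $\partial_t M_{j,t} = \tfrac12 M_{j,t}$, cyclicity of the trace, the self-adjointness of $\mathscr{S}$, and the intertwining $\mathscr{S}(1-M_1M_2\mathscr{S})^{-1} = (1-\mathscr{S}M_1M_2)^{-1}\mathscr{S}$, exactly as in the paper.

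Part (iii) has a genuine gap. You propose to treat $\stab_{\vect{z}_{1,t},\vect{z}_{2,t}}$ as an $\mathcal{O}(T_*)$-perturbation of $\stab_{z_1,z_2}$ via $\norm{\vect{z}_{j,t}-z_j\vect{1}}_\infty\lesssim T_*$ and analytic perturbation theory. That only yields $\eigB_t = \eigB_{z_1,z_2} + \mathcal{O}(T_*)$ and $\norm{\Pi_t - \Pi_{z_1,z_2}} \lesssim T_*$. But $|\eigB_{z_1,z_2}|$ can be as small as $|z_1-z_2|\gtrsim N^{-1+\varepsilon}$ in the opposite-half-plane regime, so an additive $\mathcal{O}(T_*)$ error with $T_*\sim 1$ does not prevent $\eigB_t$ from approaching or crossing $0$, and it certainly does not give the lower bound $|\eigB_t|\gtrsim (T-t) + |z_1-z_2|$ that you implicitly invoke when you say the $\mathcal{O}(T-t)\langle|A|^2\rangle^{1/2}$ residual is "absorbed by the size of $\eigB$." Without that lower bound, the singular component $\eigB_t^{-1}\Pi_t$ of $\stab_t^{-1}$ is uncontrolled, and the residual from the moving projector cannot be divided out. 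Worse, you cannot take $T_*$ small, because the flow must carry the spectral parameters from the global regime ($\eta_0\sim 1$) to the local regime, forcing $T_*\sim 1$.

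The fix is to use the exact scaling you already wrote down in part (i): $\m(\vect{z}_{j,t}) = e^{(t-T)/2}\m(z_j)$ implies $M_{1,t}M_{2,t} = e^{t-T}M(z_1)M(z_2)$, hence the exact identity
\begin{equation*}
	\stab_{\vect{z}_{1,t},\vect{z}_{2,t}} = 1 - e^{t-T}M(z_1)M(z_2)S = (1-e^{t-T}) + e^{t-T}\stab_{z_1,z_2}.
\end{equation*}
This is the content of Lemma \ref{lemma:stab_flow}. Consequently $\stab_{\vect{z}_{1,t},\vect{z}_{2,t}}$ has exactly the same eigenprojectors as $\stab_{z_1,z_2}$ (so there is no residual from the projector at all), and its isolated eigenvalue is the affine image $\eigB_t = 1 - e^{t-T}(1-\eigB_{z_1,z_2})$. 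Since $\re\eigB_{z_1,z_2}\ge 0$ (which follows from \eqref{eq:1-UF_pos_def}) the two contributions to $\re\eigB_t$ cannot cancel, giving $|\eigB_t|\sim (1-e^{t-T}) + |\eigB_{z_1,z_2}| \sim (T-t) + |z_1-z_2| \sim \eta_{1,t}+\eta_{2,t}+|z_1-z_2|$ — precisely the denominator in \eqref{eq:M_bound}. With this exact identity replacing perturbation theory, your decomposition $\stab_t^{-1} = \eigB_t^{-1}\Pi + \stab_t^{-1}(1-\Pi)$ and the off-diagonal/diagonal split of $M(\vect{z}_{1,t},B,\vect{z}_{2,t})$ close both \eqref{eq:regM_bound} and \eqref{eq:M_bound} as in the paper.
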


Finally, we record the following properties of the characteristic flow \eqref{eq:z_evol} and the integration rules for the parameters $\eta_{j,t}$ and $\eta_t$, that we prove in Appendix  \ref{sec:eta_sec}.
\begin{lemma}[$\eta$-Lemma] (c.f. Equation (4.3) in \cite{Cipolloni2023Edge}) \label{lemma:imz}
	Let $\mathcal{D}$ be the domain defined in \eqref{eq:calD_def}, and let $T$ be the terminal time provided by Proposition \ref{prop:global_laws}. For any $z_1,z_2\in\mathcal{D}$, define $\eta_{j,t} := |\langle \im \vect{z}_{j,t}\rangle |$ where $\vect{z}_{j,t}$ is the solution to the characteristic flow equation with the terminal condition $\vect{z}_{j,T} = z_j\vect{1}$. Then for all spectral parameters $z_j \in \mathcal{D}$, the comparison 
	\begin{equation} \label{eq:imz_flat}
		(\sign z_j)\im \vect{z}_{j,t} \sim  |\langle \im \vect{z}_{j,t} \rangle|=\eta_{j,t},
	\end{equation}
	holds entry-wise. Moreover, the quantities $\eta_{j,t}$ satisfy
	\begin{equation}\label{eq:eta_asymp}
		\eta_{j,t} \sim |\im z_j| + (T-t),
	\end{equation}
	and we have the following integration rules for all $\alpha > 1$,
	\begin{equation} \label{eq:int_rules}
		\int_0^t \frac{\mathrm{d}s}{\eta_{j,s}^{\alpha}} \lesssim \eta_{j,t}^{1-\alpha}, \quad \int_0^t \frac{\mathrm{d}s}{\eta_{s}^{\alpha}} \lesssim \eta_{t}^{1-\alpha}, \quad \int_0^t \frac{\mathrm{d}s}{\eta_{j,s}} \lesssim \log N, \quad \int_0^t \frac{\mathrm{d}s}{\eta_{s}} \lesssim \log N,
	\end{equation}
	where $\eta_t := \min\{\eta_{1,t},\eta_{2,t}\}$.
\end{lemma}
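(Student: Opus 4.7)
The plan is to extract an explicit formula for $\im\vect{z}_{j,t}$ from the characteristic flow \eqref{eq:z_evol} and read off all three claims as direct consequences. Fix $j\in\{1,2\}$, suppress the index, and assume without loss of generality that $\im z>0$; by Lemma~\ref{lemma:flow_exists} the vector $\im\vect{z}_t$ remains entry-wise positive throughout $[0,T]$ and \eqref{eq:m_evol} integrates to $\m(\vect{z}_t)=e^{(t-T)/2}\m(z\vect{1})$. Taking imaginary parts of the characteristic flow yields the linear inhomogeneous ODE
\begin{equation*}
\partial_t(\im\vect{z}_t) + \tfrac{1}{2}\im\vect{z}_t \;=\; -e^{(t-T)/2}\,S[\im\m(z\vect{1})],
\end{equation*}
with terminal condition $\im\vect{z}_T=(\im z)\vect{1}$ and a completely explicit forcing term.

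First I would multiply by the integrating factor $e^{t/2}$ and integrate from $t$ to $T$ to obtain the closed form
\begin{equation*}
\im\vect{z}_t \;=\; e^{(T-t)/2}(\im z)\vect{1} \;+\; 2\sinh\bigl(\tfrac{T-t}{2}\bigr)\,S[\im\m(z\vect{1})].
\end{equation*}
On $t\in[0,T]$ with $T\sim 1$ the two scalar prefactors are comparable to $1$ and $T-t$ respectively, so \eqref{eq:imz_flat} and \eqref{eq:eta_asymp} reduce to the entry-wise flatness $S[\im\m(z\vect{1})]\sim\vect{1}$ uniformly for $z\in\mathcal{D}$. For this I would invoke the bulk regularity of the vector Dyson equation: Assumption~\eqref{as:m_bound}, the bulk condition $\rho(\re z)\ge\rho_*$, and the Perron--Frobenius argument for uniformly primitive $S$ developed in \cite{Ajanki2019QVE} yield $\im m_k(z\vect{1})\sim 1$ uniformly in $k$; the Dyson identity $\im m_j=|m_j|^2(\im z+(S\,\im\m(z\vect{1}))_j)$ then forces $(S\,\im\m(z\vect{1}))_j\sim 1$ entry-wise. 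Substituting back gives $\im\vect{z}_t\sim(|\im z|+T-t)\vect{1}$ entry-wise, which simultaneously establishes \eqref{eq:imz_flat} and \eqref{eq:eta_asymp} for each index.

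Finally, the integration rules \eqref{eq:int_rules} are a direct computation: substituting $u=T-s$ and using $\eta_{j,s}\sim|\im z_j|+u$ from the previous step,
\begin{equation*}
\int_0^t\frac{\mathrm{d}s}{\eta_{j,s}^\alpha} \;\lesssim\; \int_{T-t}^T\frac{\mathrm{d}u}{(|\im z_j|+u)^\alpha},
\end{equation*}
which for $\alpha>1$ is bounded by $(\alpha-1)^{-1}(|\im z_j|+T-t)^{1-\alpha}\lesssim\eta_{j,t}^{1-\alpha}$, and for $\alpha=1$ equals $\log[(|\im z_j|+T)/(|\im z_j|+T-t)]\lesssim\log N$ using $|\im z_j|\ge N^{-1+\varepsilon}$ for $z_j\in\mathcal{D}$. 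The versions for $\eta_t=\min\{\eta_{1,t},\eta_{2,t}\}$ follow by applying the same bound to whichever index realizes the minimum. The main obstacle in the argument is the entry-wise (rather than merely averaged) flatness of the forcing $S[\im\m(z\vect{1})]$: without the uniform primitivity lower bound in \eqref{as:S_flat}, the vector $S\vect{1}$ could have vanishing coordinates and individual entries of $\im\vect{z}_t$ would be driven below the average. Uniform primitivity, via its imprint on the Dyson solution, is precisely what rules this out and makes \eqref{eq:imz_flat} robust.
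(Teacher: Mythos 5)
Your proof is correct and follows essentially the same strategy as the paper: the explicit formula $\im\vect{z}_t = \mathrm{e}^{(T-t)/2}(\im z)\vect{1} + 2\sinh\bigl(\tfrac{T-t}{2}\bigr)S[\im\m(z)]$ you derive by integrating the ODE is precisely the imaginary part of \eqref{eq:flow_explicit}, which the paper already has on hand, and the whole lemma reduces to the entrywise flatness $S[\im\m(z)]\sim\vect{1}$ for $z\in\mathcal{D}$ together with the direct calculus in \eqref{eq:int_rules}. The one minor difference is that you obtain this flatness by a detour through the Dyson identity $\im m_j = |m_j|^2(\im z + (S\im\m)_j)$ — which requires the (mild, implicit) observation that $\eta_*$ is chosen small enough so that subtracting $\im z$ does not destroy the lower bound — whereas the paper reads it off directly from the two-sided comparison $\im\m(z)\sim\rho(z)\vect{1}$ in \eqref{eq:im_m_flat} combined with $S[\vect{1}]\sim\vect{1}$ from uniform primitivity, which sidesteps that subtlety.
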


\subsection{Proof of Master Inequalities}
Equipped with Lemmas \ref{lemma:1Flaw}--\ref{lemma:imz}, we are ready to prove the master inequalities of Proposition \ref{prop:masters}. In the sequel, unless explicitly stated otherwise, all the stochastic domination bounds hold uniformly in observables and in spectral parameters $z_1, z_2\in\mathcal{D}$.

Note that at time $t=0$ the global laws of Proposition \ref{prop:global_laws} and \eqref{eq:global_laws-HS} imply the bound
\begin{equation} \label{eq:Phi_goal_at0}
	\Phi_{1}(0) +  \Phi_{2}^{\mathrm{op}}(0) + \Phi_{2}^{\mathrm{hs}}(0) + \Phi_{(1,1)}(0) \prec 1.
\end{equation}
\begin{proof}[Proof of Master Inequality \eqref{eq:Master1av}] 
To condense the notation, we drop the lower index $1$, abbreviating $z := z_1$, $\vect{z}_t := \vect{z}_{1,t}$, $G_t := G_{1,t}$, $M_t := M_{1,t}$, and $\eta_t := \eta_{1,t}$.
Combining \eqref{eq:GBevol} with $B:=A$ and \eqref{eq:m_evol}, we obtain
\begin{equation}\label{eq:(G-m)A_evol}
	\begin{split}
		\mathrm{e}^{t/2}\mathrm{d}\bigl\{\mathrm{e}^{-t/2}\bigl\langle (G_{t}-M_t)A\bigr\rangle\} =&~ \frac{1}{2}\sum_{j,k}\partial_{jk}\bigl\langle G_{t}A\bigr\rangle\sqrt{S_{jk}}\mathrm{d}\Brwn_{jk,t}
		+\bigl\langle (G_{t}-M_{t}) \mathscr{S}[ G_{t}AG_{t}]\bigr\rangle\mathrm{d}t.
	\end{split}
\end{equation}	
First, we bound the stochastic term in \eqref{eq:(G-m)A_evol} using the inequality (see \cite{martingale}, Appendix B.6, Eq. (18)) for continuous martingales $\mathcal{M}_t$, finite stopping time $\tau$, and any fixed $x,y > 0$,
\begin{equation} \label{eq:mart_ineq}
	\mathbb{P}\biggl(\sup_{0\le s \le \tau} \biggl\lvert \int_0^{s} \mathrm{d}\mathcal{M}_r \biggr\rvert \ge x, \quad  \biggl[ \int_0^{\cdot} \mathrm{d}\mathcal{M}_r \biggr]_{\tau} \le y \biggr) \le 2\mathrm{e}^{-x^2/(2y)},
\end{equation}
where $[\cdot]_t$ denotes the quadratic variation process.
Therefore, using the bound $\lvert \partial_{jk}\langle G_{t}A\rangle \rvert \prec N^2\langle |A|^2 \rangle^{1/2}$ and a dyadic argument with $x := 2^k\log N$, $y := 2^{2k}$ for $k \in [-100\log N, 100 \log N]$, we deduce from \eqref{eq:mart_ineq} with $\tau := t$ that
\begin{equation} \label{eq:mart+_est}
	\sup_{0\le s \le t} \biggl\lvert \int_0^{s} \sum_{j,k}\partial_{jk}\bigl\langle G_{r}A\bigr\rangle\sqrt{S_{jk}}\mathrm{d}\Brwn_{jk,r} \biggr\rvert 
	\prec \biggl( \int_0^t \sum_{j,k} S_{jk} \bigl\lvert \partial_{jk}\bigl\langle G_{s}A\bigr\rangle\bigr\rvert^2\mathrm{d}s\biggl)^{1/2} \log N + \frac{\langle |A|^2\rangle^{1/2}}{N},
\end{equation}
The integrand of the quadratic variation on the right-hand side of \eqref{eq:mart+_est} satisfies
\begin{equation} \label{eq:1av_mart}
		\sum_{j,k} S_{jk} \bigl\lvert \partial_{jk}\bigl\langle G_{s}A\bigr\rangle\bigr\rvert^2 \lesssim \frac{1}{N^2}\sum_{j,k} S_{jk} (G_{s}AG_{s})_{kj}(G_{s}^*A^*G_{s}^*)_{jk}
		\lesssim \frac{1}{N^2\eta_s^2}\bigl\langle (\im G_{s}) A (\im G_{s}) A^*\bigr\rangle,
\end{equation}
where we used the upper bound in \eqref{as:S_flat}, and the operator inequality
\begin{equation} \label{eq:ImG_ineq}
	G_s G_s^* \le \norm{(\im \vect{z}_s)^{-1}}_\infty \im G_s \lesssim (\im z)\eta_s^{-1} \im G_s,
\end{equation}
for a generalized resolvent $G_s$ defined in \eqref{eq:Gt}. We use \eqref{eq:ImG_ineq} to replace the usual \textit{Ward identity} $GG^* = (\im z)^{-1} \im G$, available only for standard resolvents $G := (H-z)^{-1}$.
The inequality \eqref{eq:ImG_ineq} follows from \eqref{eq:imz_flat} and the generalized resolvent identity 
\begin{equation} \label{eq:res_id}
	G(X,\bm{\zeta}_1) - G(X,\bm{\zeta}_2) = G(X,\bm{\zeta}_1) \diag{\bm{\zeta}_1 - \bm{\zeta}_2} G(X,\bm{\zeta}_2), \quad \bm{\zeta}_j \in \mathbb{H}^N\cup (\mathbb{H}^*)^N,
\end{equation}
with $\bm{\zeta}_1 = \overline{\bm{\zeta}_2} = \vect{z}_s$.
Integrating \eqref{eq:1av_mart} in time yields
\begin{equation}\label{eq:1av_mart_int}
	\int_0^t\sum_{j,k} S_{jk} \bigl\lvert \partial_{jk}\bigl\langle G_{s}A\bigr\rangle\bigr\rvert^2 \mathrm{d}s
	\prec \int_0^t\frac{\langle |A|^2\rangle}{N^2\eta_{s}^2}\biggl(1 + \frac{\phi_{2}^{\mathrm{hs}}}{\sqrt{N\eta_{s}}}\biggr) \mathrm{d}s \prec \frac{\langle |A|^2\rangle}{N^2\eta_t}\biggl(1 + \frac{\phi_{2}^{\mathrm{hs}}}{\sqrt{N\eta_t}}\biggr),
\end{equation}
where in the first step we used the assumption \eqref{eq:phi_assume} together with the second bound \eqref{eq:regM_bound}, and in the second step we used the integration rule \eqref{eq:int_rules}. Note that the $\log N$ factors from \eqref{eq:int_rules} can be absorbed into $\prec$ by Definition \ref{def:prec}.  Therefore, \eqref{eq:mart+_est}, \eqref{eq:1av_mart_int} imply uniformly in $0\le t \le T$,
\begin{equation} \label{eq:1Gmart_term}
	\sup_{0\le s \le t}\biggl\lvert \int_0^s\frac{1}{2}\sum_{j,k}\partial_{jk}\bigl\langle G_{r}A\bigr\rangle\sqrt{S_{jk}}\mathrm{d}\Brwn_{jk,r} \biggr\rvert \prec \frac{\langle |A|^2\rangle^{1/2}}{N\sqrt{\eta_t}}\biggl(1+\frac{\sqrt{\phi_{2}^{\mathrm{hs}}}}{(N\eta_t)^{1/4}}\biggr).
\end{equation}

Finally, we estimate the quadratic term, i.e., the second term on the right-hand side of \eqref{eq:(G-m)A_evol}. Defining the matrices $S^{(p)} := N\mathrm{diag}((S_{pj})_{j=1}^N)$ for all $p \in \{1,\dots, N\}$, we obtain
\begin{equation} \label{eq:1av_quad}
		\bigl\langle (G_{s}-M_{s}) \mathscr{S}[ G_{s}AG_{s}]\bigr\rangle = \frac{1}{N}\sum_p \bigl\langle S^{(p)}(G_{s}AG_{s}-M_{[1,2],s})\bigr\rangle (G_{s}-M_{s})_{pp} 
		+ \bigl\langle (G_{s}-M_{s})\mathscr{S}[M_{[1,2],s}] \bigr\rangle,
\end{equation}
where $M_{[1,2],s} := \M{\vect{z}_s,A,\vect{z}_s}$ is the deterministic approximation to the chain $G_sAG_s$, defined in \eqref{eq:M_t_def}. Hence, using the upper bound in \eqref{as:S_flat}, the local laws of Lemma \ref{lemma:1Flaw}, and the assumption \eqref{eq:phi_assume}, we obtain for all $s\in [0,T]$,
\begin{equation} \label{eq:1av_Sterm}
		\bigl\lvert \bigl\langle (G_{s}-M_{s}) \mathscr{S}[ G_{s}AG_{s}]\bigr\rangle \bigr\rvert
		\prec  \frac{1}{\eta_s}\frac{\langle|A|^2\rangle^{1/2}}{N\sqrt{\eta_s}}\phi_{(1,1)} + \frac{\langle |A|^2\rangle^{1/2}}{N\eta_s},
\end{equation}
Here, we additionally used the upper bound in Assumption \eqref{as:S_flat} and the second estimate in \eqref{eq:regM_bound} to deduce that $\norm{\mathscr{S}[M_{[1,2],s}]} \lesssim \langle |M_{[1,2],s}|^2\rangle^{1/2} \lesssim \langle |A|^2\rangle^{1/2}$.
Therefore, integrating \eqref{eq:1av_Sterm} in time, using $\eta_t \lesssim 1$ by \eqref{eq:eta_asymp}, and the integration rule \eqref{eq:int_rules}, yields uniformly in  $0\le t\le T$,
\begin{equation} \label{eq:1Gquad_term} 
	 \int_0^t\bigl\lvert\bigl\langle (G_{s} - M_{s}) \mathscr{S}[ G_{s}AG_{s}]\bigr\rangle\bigr\rvert \mathrm{d}s \prec \frac{\langle |A|^2\rangle^{1/2}}{N\sqrt{\eta_t}}\bigl(1 +  \phi_{(1,1)}\bigr).
\end{equation}
Note that since $0\le t \le T \lesssim 1$, the factor $\mathrm{e}^{t/2} \sim 1$ can be ignored in \eqref{eq:(G-m)A_evol}. Hence, integrating \eqref{eq:(G-m)A_evol}, using $\Phi_{1}(0)\prec 1$ from \eqref{eq:Phi_goal_at0}, and the estimates \eqref{eq:1Gmart_term}, \eqref{eq:1Gquad_term} yields the desired \eqref{eq:Master1av} by definition of $\Phi_{1}(t)$ in \eqref{eq:Phi_def}. This concludes the proof of the master inequality \eqref{eq:Master1av}.
\end{proof}

\begin{proof}[Proof of Master Inequalities \eqref{eq:Master2av_HS} and \eqref{eq:Master2av}]
We prove the next two master inequalities, \eqref{eq:Master2av_HS} and \eqref{eq:Master2av}, simultaneously. To this end, we index the quantities $\Phi_{2}^{\n}$ and $\phi_{2}^{\n}$ with a label $\n\in\{\mathrm{hs},\mathrm{op}\}$, and introduce the following notation 
\begin{equation} \label{eq:norm_parameters}
	\norm{A}_{\mathrm{hs}} := \langle |A|^2 \rangle^{1/2}, \quad \norm{A}_{\mathrm{op}} := \norm{A}, \quad \alpha(\mathrm{hs}) := \tfrac{1}{2}, \quad \alpha(\mathrm{op}) := 1.
\end{equation} 
Under this notation, the master inequalities \eqref{eq:Master2av_HS} and \eqref{eq:Master2av} have the form
\begin{equation}
	\Phi_{2}^{\n}(s) \prec 1+ \frac{\phi_{2}^{\mathrm{hs}}}{\sqrt{N\eta_s}} + \frac{\phi_{2}^{\mathrm{op}}}{(N\eta_s)^{\alpha(\n)-1/2}} + \frac{\phi_{(1,1)}^2}{(N\eta_s)^{3/2}}.
\end{equation}
Furthermore, to condense the presentation, we denote the length two resolvent chains by 
\begin{equation}
	G_{[1,2],t} := G_{1,t}A_1 G_{2,t}, \quad G_{[2,1],t} := G_{2,t}A_2 G_{1,t}. 
\end{equation}

The proof follows the same outline as that of \eqref{eq:Master1av} above, hence we present only the key new steps in full detail.
Starting with \eqref{eq:GBGBevol} and \eqref{eq:M_evol}, we obtain
\begin{equation} \label{eq:(GAG-M)A_evol}
	\begin{split}
		\mathrm{d}\bigl\langle (G_{[1,2],t} - M_{[1,2],t}) A_2 \bigr\rangle =&~ \frac{1}{2}\sum_{j,k}\partial_{jk}\bigl\langle G_{1,t}A_1 G_{2,t} A_2 \bigr\rangle \sqrt{S_{jk}}\mathrm{d}\Brwn_{jk,t} +	\biggl( \bigl\langle (G_{[1,2],t} - M_{[1,2],t}) A_2 \bigr\rangle\\
		&+ \bigl\langle  (G_{[1,2],t} - M_{[1,2],t})\mathscr{S}[M_{[2,1],t}]\bigr\rangle  
		+ \bigl\langle (G_{[2,1],t} - M_{[2,1],t})\mathscr{S}[M_{[1,2],t}]\bigr\rangle\\
		&+\bigl\langle \mathscr{S}[G_{[1,2],t} - M_{[1,2],t}] (G_{[2,1],t} - M_{[2,1],t})\bigr\rangle\\
		&+\bigl\langle(G_{1,t}-M_{1,t})\mathscr{S}[G_{1,t}A_1 G_{2,t} A_2 G_{1,t}]\bigr\rangle\\ 
		&+\bigl\langle(G_{2,t}-M_{2,t})\mathscr{S}[G_{2,t}A_2 G_{1,t} A_1G_{2,t}]\bigr\rangle 
		\biggr)\mathrm{d}t
		,
	\end{split}
\end{equation}
where we denote $M_{1,t} := \M{\vect{z}_{1,t},A_1,\vect{z}_{2,t}}$, and $M_{2,t} := \M{\vect{z}_{2,t},A_2,\vect{z}_{1,t}}$. Here we used the definition of $\mathscr{S}$ in \eqref{eq:superS_def} to assert that for all $X,Y \in \mathbb{C}^{N\times N}$,  $\langle X\mathscr{S}[Y] \rangle = \langle \mathscr{S}[X]Y \rangle$.

Note that similarly to the proof of the master inequality \eqref{eq:Master(1,1)av}, the second term on the right-hand side of \eqref{eq:(GAG-M)A_evol} can be removed by differentiating $\mathrm{e}^{-t}\langle (G_{[1,2],t} - M_{1,t}) A_2 \rangle$ with $\mathrm{e}^{-t} \sim 1$, and therefore we omit it from the analysis.

Fix a label $\n \in \{\mathrm{hs}, \mathrm{op}\}$. First, we expand the quadratic variation of the martingale term on the right-hand side of \eqref{eq:(GAG-M)A_evol}. Similarly to \eqref{eq:1av_mart}--\eqref{eq:1av_mart_int}, we obtain
\begin{equation} \label{eq:2av_mart}
	\begin{split}
		\sum_{j,k}S_{jk}\bigl\lvert\partial_{jk}\bigl\langle G_{1,s}A_1 G_{2,s} A_2 \bigr\rangle \bigr\rvert^2\lesssim&~
		\frac{1}{N^2}\sum_{j,k}S_{jk}\bigl\lvert(G_{1,s}A_1 G_{2,s} A_2G_{1,s})_{kj} + (G_{2,s}A_2 G_{1,s} A_1G_{2,s})_{kj}\bigr\rvert^2 \\ \lesssim&~
		 \frac{1}{N^2\eta_{1,s}^2} \bigl\langle \im G_{1,s}A_1 G_{2,s} A_2 \im G_{1,s} A_2^*G_{2,s}^*A_1^* \bigr\rangle\\
		&+\frac{1}{N^2\eta_{2,s}^2} \bigl\langle \im G_{2,s} A_2 G_{1,s} A_1 \im G_{2,s} A_1^* G_{1,s}^*A_2^* \bigr\rangle\\
		\prec&~ \frac{1}{\eta_s}\frac{1}{N\eta_s}\biggl(1 + \frac{\phi_{2}^{\mathrm{hs}}}{\sqrt{N\eta_s}}\biggr)^2\langle |A_1|^2\rangle\norm{A_2}_{\n}^2,
	\end{split}
\end{equation}
for all $0\le s \le T$, where we used $\norm{\cdot}_{\mathrm{hs}} \le \norm{\cdot}_{\mathrm{op}}$, and we recall $\eta_s := \min\{\eta_{1,s},\eta_{2,s}\}$.
Here, the first two steps follow by the upper bound in Assumption \eqref{as:S_flat} and the generalized resolvent inequality \eqref{eq:ImG_ineq}, and in the last step we used \eqref{eq:phi_assume}, and the reduction inequality \eqref{eq:Red4av_HS}.
Integrating \eqref{eq:2av_mart}, using \eqref{eq:int_rules}, and the martingale inequality \eqref{eq:mart_ineq}, yields uniformly in $t\in[0,T]$,
\begin{equation} \label{eq:2av_mart_final}
	\sup_{0\le s\le t}\biggl\lvert \int_0^s\sum_{j,k}\partial_{jk}\bigl\langle G_{1,r}A_1 G_{2,r} A_2 \bigr\rangle \sqrt{S_{jk}}\mathrm{d}\Brwn_{jk,r} \mathrm{d}r\biggr\rvert 
	\prec \frac{\langle |A_1|^2\rangle^{1/2}\norm{A_2}_{\n}}{\sqrt{N\eta_t}}\biggl(1 + \frac{\phi_{2}^{\mathrm{hs}}}{\sqrt{N\eta_t}}\biggr).
\end{equation}

Next, we observe that the first term in the second line of \eqref{eq:(GAG-M)A_evol} can be estimated using the local law \eqref{eq:Weak2av}. Indeed, by \eqref{eq:int_rules}, we have uniformly in $t \in[0,T]$,
\begin{equation} \label{eq:2av_tr_term}
	 \int_0^t \bigl\lvert\bigl\langle (G_{[1,2],t} - M_{1,s}) \mathscr{S}[M_{2,s}] \bigr\rangle \bigr\rvert  \mathrm{d}s \prec \int_0^t \frac{\langle |A_1|^2\rangle^{1/2}\norm{\mathscr{S}[M_{2,s}]}}{N\eta_{1,s}\eta_{2,s}}\mathrm{d}s \prec \frac{\langle |A_1|^2\rangle^{1/2}\norm{A_2}_{\n}}{N\eta_t}.
\end{equation}
Here we used the regularity of $A_2$, the second bound in \eqref{eq:regM_bound}, and the estimate
\begin{equation} \label{eq:super-S-norm}
	\norm{\mathscr{S}[X]} \lesssim \langle |X|^2\rangle^{1/2},
\end{equation} 
that follows from the upper bound in Assumption \eqref{as:S_flat}, to assert that $\norm{\mathscr{S}[M_{2,s}]} \lesssim \langle |M_{2,s}|^2\rangle^{1/2} \lesssim \norm{A_2}_{\n}$ for both labels $\n \in \{\mathrm{hs}, \mathrm{op}\}$. The other term in the second line of \eqref{eq:(GAG-M)A_evol} is estimated analogously.

We now turn to estimating the quadratic terms, starting with the term in the third line of \eqref{eq:(GAG-M)A_evol}. Decomposing the matrix $S$ according to Lemma \ref{lemma:S_decomp}, and using the definition of $\mathscr{S}$ in \eqref{eq:superS_def}, we obtain, for $0\le s \le T$,
\begin{equation}
	\begin{split}
		\bigl\langle \mathscr{S}[G_{[1,2],s}-M_{[1,2],s}] (G_{[2,1],s}-M_{[2,1],s})\bigr\rangle  =&~ \frac{1}{N}\sum_p  \bigl\langle \mathring{S}^{(p)}(G_{[1,2],s}-M_{[1,2],s})\bigr\rangle (G_{[2,1],s}-M_{[2,1],s})_{pp}\\  &+  \bigl\langle G_{[1,2],s}-M_{[1,2],s}\bigr\rangle \bigl\langle \diag{\vect{s}}(G_{[2,1],s}-M_{[2,1],s})\bigr\rangle,
	\end{split}
\end{equation}
where $\mathring{S}^{(p)}$, $\vect{s}$ are defined in \eqref{eq:S_decomp}. Moreover, it follows from the isotropic local laws \eqref{eq:Weak1iso_HS}, \eqref{eq:Weak1iso} that
\begin{equation} \label{eq:iso_bound}
	\bigl\lvert (G_{[2,1],s}-M_{[2,1],s})_{pp}  \bigr\rvert  \prec \frac{\norm{A_2}_{\n}}{(N\eta_s)^{\alpha(\n)-1/2}\eta_s}, \quad s\in[0,T].
\end{equation}
Using the regularity of $\mathring{S}^{(p)}(z_1,z_2)$ with respect to $(z_2,z_1)$, 
the estimates \eqref{eq:phi_assume}, \eqref{eq:S_decomp_bounds}, \eqref{eq:iso_bound}, and the integration rule \eqref{eq:int_rules}, we obtain uniformly in $t\in[0,T]$,
\begin{equation} \label{eq:2av_S_term}
	\int_0^t \bigl\lvert \bigl\langle \mathscr{S}[G_{[1,2],s}-M_{[1,2],s}] (G_{[2,1],s}-M_{[2,1],s})\bigr\rangle\bigr\rvert  \mathrm{d}s
	\prec \frac{\langle |A_1|^2\rangle^{1/2}\norm{A_2}_{\n}}{\sqrt{N\eta_t}}\biggl(\frac{\phi_{2}^{\mathrm{op}}}{(N\eta_t)^{\alpha(\n)-1/2}} + \frac{\phi_{(1,1)}^2}{\sqrt{N\eta_t}}\biggr).
\end{equation}
Finally, we estimate the quadratic term in the fourth line of \eqref{eq:(GAG-M)A_evol}. For $S^{(p)}$ as in \eqref{eq:1av_quad} and $s\in [0,T]$,
\begin{equation} \label{eq:2a_quad}
	\bigl\langle(G_{1,s}-M_{1,s})\mathscr{S}[G_{1,s} A _1 G_{2,s} A_2 G_{1,s}]\bigr\rangle = \frac{1}{N}\sum_{p} (G_{1,s}-M_{1,s})_{pp} \bigl\langle S^{(p)}(G_{1,s} A_1 G_{2,s} A_2 G_{1,s})\bigr\rangle.
\end{equation}
For a fixed index $p \in \{1,\dots,N\}$, we estimate the trace on right-hand side of \eqref{eq:2a_quad} using the reduction inequality \eqref{eq:Red(2,1)av_HS} with $B := S^{(p)}$, and bound each of the factors $(G_{1,s}-M_{1,s})_{pp}$ using the isotropic local of Lemma \ref{lemma:1Flaw}. Integrating in time and applying \eqref{eq:int_rules}, we obtain for all $0\le t \le T$,
\begin{equation} \label{eq:2av_3-1term}
		\int_0^t \bigl\lvert \bigl\langle(G_{1,s}-M_{1,s})\mathscr{S}[G_{1,s} A _1 G_{2,s} A_2 G_{1,s}]\bigr\rangle \bigr\rvert   \mathrm{d}s 
		\prec \frac{ \langle |A_1|^2\rangle^{1/2}\norm{A_2}_{\n}}{\sqrt{N\eta_t}}\biggl(1+ \frac{\phi_{2}^{\mathrm{hs}}}{\sqrt{N\eta_t}}\biggr).
\end{equation}
The bound on the other remaining term on the right-hand side of \eqref{eq:(GAG-M)A_evol} is completely analogous.

Therefore, evoking \eqref{eq:Phi_goal_at0} and summing the bounds \eqref{eq:2av_mart_final}, \eqref{eq:2av_tr_term}, \eqref{eq:2av_S_term}, \eqref{eq:2av_3-1term}, we conclude the proof of the master inequalities \eqref{eq:Master2av_HS} and \eqref{eq:Master2av}.
\end{proof}

In preparation for proving the master inequality \eqref{eq:Master(1,1)av}, we assert the following norm bound on a generalized resolvent $G(X, \vect{z}_t)$ for any Hermitian $N\times N$ matrix $X$,
\begin{equation} \label{eq:G_norm}
	\norm{G(X,\vect{z}_t)} \lesssim \eta_t^{-1}, \quad z\in\mathcal{D},
\end{equation}
where $\vect{z}_t$ solves \eqref{eq:z_evol} with $\vect{z}_T = z$ at the terminal time $T$ provided by Proposition \ref{prop:global_laws}.
To prove \eqref{eq:G_norm}, we define $\bm{\nu} := (\eta_t^{-1}|\im\vect{z}_t|)^{-1/2} \sim 1$ by \eqref{eq:imz_flat}, and, starting with the definition \eqref{eq:Gt}, we obtain
\begin{equation} \label{eq:G_rescale}
	G(X,\vect{z}_t) = \bigl(X-\vect{z}_t\bigr)^{-1} = \bm{\nu}\bigl(\bm{\nu}(X - \re\vect{z}_t)\bm{\nu}-\I (\sign z) \eta_t\bigr)^{-1} \bm{\nu},
\end{equation}
where we identify the vector $\bm{\nu}$ with the diagonal matrix $\diag{\bm\nu}$. Therefore, using the standard norm bound for the resolvent of a Hermitian matrix $\bm{\nu}(X - \re\vect{z}_t)\bm{\nu}$ at a spectral parameter $\I(\sign z) \eta_t$, we deduce that $\lVert G(X,\vect{z}_t)\rVert \le \eta_t^{-1}\lVert{\bm{\nu}}\rVert_\infty^2 \sim \eta_t^{-1}$ by \eqref{eq:imz_flat}.

\begin{proof}[Proof of Master Inequality \eqref{eq:Master(1,1)av}]
	For this proof, we redefine $G_{[1,2],t} := G_{1,t}A G_{2,t}$, $G_{[2,1],t} := G_{2,t} B G_{1,t}$, and, respectively, $M_{[1,2],t} := \M{\vect{z}_{1,t},A,\vect{z}_{2,t}}$, $M_{[2,1],t} := \M{\vect{z}_{1,t},B,\vect{z}_{2,t}}$.
	Starting with \eqref{eq:GBGBevol} and \eqref{eq:M_evol}, exactly as in \eqref{eq:(GAG-M)A_evol}, we obtain
	\begin{equation} \label{eq:(GAGB-M)_evol}
		\begin{split}
			\mathrm{d}\bigl\langle (G_{[1,2],t} - M_{[1,2],t}) B \bigr\rangle = &~ \bigl\langle (G_{[1,2],t}  - M_{[1,2],t}) \mathscr{S}[M_{[2,1],t}] \bigr\rangle\mathrm{d}t 
			+ \frac{1}{2}\sum_{j,k}\partial_{jk}\bigl\langle G_{[1,2],t} B \bigr\rangle \sqrt{S_{jk}}\mathrm{d}\Brwn_{jk,t}\\
			&+\biggl(\bigl\langle (G_{[1,2],t} - M_{[1,2],t}) B \bigr\rangle + \bigl\langle (G_{[2,1],t} - M_{[2,1],t})\mathscr{S}[M_{[1,2],t}]\bigr\rangle\\
			&+\bigl\langle \mathscr{S}[G_{[1,2],t} - M_{[1,2],t}] (G_{[2,1],t} - M_{[2,1],t})\bigr\rangle\\
			&+\bigl\langle \mathscr{S}[G_{1,t}-M_{1,t}] G_{1,t} A G_{2,t} B G_{1,t}\bigr\rangle\\ &+\bigl\langle \mathscr{S}[G_{2,t}-M_{2,t}] G_{2,t} B G_{1,t} A G_{2,t} \bigr\rangle\biggr)\mathrm{d}t.
		\end{split}
	\end{equation}
	
	We start by analyzing  the time integral of the first term on the right-hand side of \eqref{eq:(GAGB-M)_evol}.
	We distinguish two cases. 
	
	\textbf{Case 1.} First, we consider the easier regime $(\im z_1) (\im z_2) > 0$.
	Owing to \eqref{eq:M_bound} and \eqref{eq:super-S-norm}, we have  $\norm{\mathscr{S}[\M{2,s}]}\lesssim \norm{B}$. Hence, for such $z_1,z_2$, uniformly in $t\in [0,T]$,
	\begin{equation} \label{eq:(1,1)_tr_case1}
		\int_0^t\bigl\langle (G_{1,s} A G_{2,s} - M_{[1,2],s}) \mathscr{S}[M_{[2,1],s}] \bigr\rangle \bigr\rvert  \mathrm{d}s 
		\prec \int_0^t \frac{\langle |A|^2\rangle^{1/2}\norm{B}}{N\eta_s^2}\mathrm{d}s \prec \frac{\langle |A|^2\rangle^{1/2}\norm{B}}{\sqrt{N\eta_t}\sqrt{\eta_t}}\frac{1}{\sqrt{N}},
	\end{equation} 
	where we used the averaged local law \eqref{eq:Weak2av} without any improvement from the regularity of $A$. 
	
	\textbf{Case 2.} Next, we consider the case $(\im z_1) (\im z_2) < 0$.
	We stress that since the bound $\norm{\mathscr{S}[M_{[2,1],t}]} \lesssim \eta_t^{-1}\norm{B}$ is saturated whenever $z_1,z_2$ lie in opposite half-planes, it is not affordable to ignore the regularity of $A_1$ and  simply use \eqref{eq:Weak2av}, as we did in \eqref{eq:(1,1)_tr_case1} for $(\im z_1)(\im z_2) > 0$. 	
	Instead, we employ the generalized resolvent identity \eqref{eq:res_id} and the following lemma that we prove in Section \ref{sec:A_reg_proofs}.
	\begin{lemma}[Observable Regularization] \label{lemma:B_renorm}
		There exists a threshold $1 \lesssim T_* \le T$ such that for all times $T-T_* \le t \le T$, the following holds true. Let $z_1,z_2 \in \mathcal{D}$, and let $\vect{z}_{j,t}$ solve \eqref{eq:z_evol} with $\vect{z}_{j,T} = z_j\vect{1}$, then for any $B\in \mathbb{C}^{N\times N}$, there exists a matrix $\mathring{B}_t := \mathring{B}_t(z_1,z_2)$ and a complex number $b_t := b_t(z_1,z_2)$ such that 
		\begin{equation} \label{eq:B_decomp}
			B = \mathring{B}_t + b_t \diag{\,\widehat{\vect{z}}_{1,t}-\vect{z}_{2,t}}, \quad \widehat{\vect{z}}_{1,t} := \re \vect{z}_{1,t} - \I \frac{\sign(\im z_2)}{\sign (\im z_1)} \im \vect{z}_{1,t},
		\end{equation}
		and the observable $\mathring{B}_t$ is regular with respect to $(z_2,z_1)$ in the sense of Definition \ref{def:reg_A}. Moreover, $\mathring{B}_t$ and $b_t$ satisfy
		\begin{equation} \label{eq:B_decomp_bounds}
			\bigl\lVert\mathring{B}_t\bigr\rVert \lesssim \norm{B},\quad \langle |\mathring{B}_t|^2\rangle^{1/2} \lesssim \langle |B|^2\rangle^{1/2}, \quad |b_t| \lesssim \frac{\langle |B|^2\rangle^{1/2}}{\eta_{1,t}+\eta_{2,t}+|z_1-z_2|},
		\end{equation}
		where $\eta_{j,t} := |\langle \im\vect{z}_{j,t}\rangle|$.
		
		Assuming additionally that the observable $B$ is regular with respect to $(z_3, z_4)$ with some $z_3,z_4\in\mathcal{D}$ satisfying $(\im z_1)(\im z_4) >0$ and $(\im z_2)(\im z_3) >0$, the third estimate in \eqref{eq:B_decomp_bounds} is improved to
		\begin{equation} \label{eq:A_decomp_bound}
			|b_t| \lesssim \langle |B|^2\rangle^{1/2}\frac{|z_1-z_4|+|z_2-z_3|}{\eta_{1,t}+\eta_{2,t}+|z_1-z_2|}.
		\end{equation} 
	\end{lemma}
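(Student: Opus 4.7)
The plan is to define $b_t$ as the unique scalar for which $\mathring{B}_t := B - b_t\diag{\widehat{\vect{z}}_{1,t} - \vect{z}_{2,t}}$ is $(z_2, z_1)$-regular in the sense of Definition \ref{def:reg_A}. Since the projector $\Pi_{z_2^-, z_1^+}$ is rank one, writing it as $\Pi_{z_2^-, z_1^+}[\cdot] = \vect{u}^r \langle \vect{u}^\ell,\,\cdot\,\rangle$ with $\langle \vect{u}^\ell, \vect{u}^r\rangle = 1$, the regularity condition reduces to the single scalar equation
\[
b_t = \frac{\bigl\langle \vect{u}^\ell,\,\m(z_2^-)\vect{b}^{\mathrm{diag}}\m(z_1^+)\bigr\rangle}{\bigl\langle \vect{u}^\ell,\,\m(z_2^-)(\widehat{\vect{z}}_{1,t}-\vect{z}_{2,t})\m(z_1^+)\bigr\rangle}.
\]
The bulk of the argument is then to bound the numerator from above by $\langle|B|^2\rangle^{1/2}$ and the denominator from below by a multiple of $\eta_{1,t}+\eta_{2,t}+|z_1-z_2|$.

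For the denominator, the crucial preparatory step is to obtain an explicit representation of $\widehat{\vect{z}}_{1,t}-\vect{z}_{2,t}$. Combining \eqref{eq:m_evol}, which gives $\m(\vect{z}_{j,t}) = e^{(t-T)/2}\m(z_j\vect{1})$, with the characteristic flow \eqref{eq:z_evol} reduces the latter to a first-order linear ODE in $\vect{z}_{j,t}$ that integrates explicitly; substituting the vector Dyson equation \eqref{eq:VDE} at scalar spectral parameters $z_j$ then produces the representation
\[
\widehat{\vect{z}}_{1,t}-\vect{z}_{2,t} = \alpha_t\,(\widehat{z}_1 - z_2)\vect{1} + \beta_t\,S\bigl[\m(\widehat{z}_1\vect{1}) - \m(z_2\vect{1})\bigr],
\]
with scalars satisfying $\alpha_T = 1$, $\beta_T = 0$ and $|\alpha_t|\sim 1$, $|\beta_t|\sim T-t$ on $[T-T_*, T]$ for some $T_*\sim 1$. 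Applying $\langle \vect{u}^\ell, \m(z_2^-)(\cdot)\m(z_1^+)\rangle$ and exploiting the left eigenrelation $S\m(z_2^-)\m(z_1^+)\vect{u}^\ell = (1-\eigB)\vect{u}^\ell$ to move $S$ onto $\vect{u}^\ell$, the denominator takes the form $\alpha_t(\widehat{z}_1 - z_2)C_1 + \beta_t(1-\eigB)D_1$ with $C_1 := \langle \vect{u}^\ell, \m(z_2^-)\m(z_1^+)\rangle$ and $D_1 := \langle \vect{u}^\ell, \m(\widehat{z}_1\vect{1}) - \m(z_2\vect{1})\rangle$. The explicit projector formula \eqref{eq:Pi_barzz} at the coincidence $\widehat{z}_1 \approx \bar z_2$, combined with $|\Pi[\m\m]|\sim\vect{1}$ from \eqref{eq:Pi_separation}, shows that $(\widehat{z}_1 - z_2)C_1$ and $D_1$ carry matching phases (both proportional to $-\I$, reflecting that $\vect{u}^\ell$ is aligned with $|\m|^{-2}\im\m$ and that $\widehat{z}_1$ and $z_2$ lie in opposite half-planes by construction), so the two contributions add constructively. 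Together with $|\widehat{z}_1 - z_2|\sim|\im z_1|+|\im z_2|+|z_1-z_2|$ and \eqref{eq:eta_asymp}, this gives $|\text{denom}|\gtrsim \eta_{1,t}+\eta_{2,t}+|z_1-z_2|$ uniformly on $[T-T_*, T]$.

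The numerator bound $|\text{num}|\lesssim\langle|B|^2\rangle^{1/2}$ follows directly from $\|\Pi^{\mathfrak t}\|_{\ell^1\to\ell^\infty}\lesssim N^{-1}$ in \eqref{eq:Pi_bound}, the uniform estimate $\|\m\|_\infty\lesssim 1$ from Lemma \ref{lemma:Mbounds}(i), and the Cauchy--Schwarz inequality $\|\vect{b}^{\mathrm{diag}}\|_1\le\sqrt{N}\|\vect{b}^{\mathrm{diag}}\|_2\le N\langle|B|^2\rangle^{1/2}$; combining with the denominator bound yields the estimate on $|b_t|$ in \eqref{eq:B_decomp_bounds}. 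The norm bounds on $\mathring{B}_t$ then follow from the triangle inequality and the entrywise estimate $\|\widehat{\vect{z}}_{1,t}-\vect{z}_{2,t}\|_\infty\lesssim \eta_{1,t}+\eta_{2,t}+|z_1-z_2|$, which is immediate from the representation obtained above together with \eqref{eq:imz_flat}. For the improved estimate \eqref{eq:A_decomp_bound}, the sign hypotheses place $z_4^+$ in the same half-plane as $z_1^+$ (and analogously $z_3^-, z_2^-$), so $(z_3, z_4)$-regularity of $B$ means $\Pi_{z_3^-, z_4^+}[\m(z_3^-)\vect{b}^{\mathrm{diag}}\m(z_4^+)]=0$, and one can telescope
\[
\Pi_{z_2^-, z_1^+}[\m(z_2^-)\vect{b}^{\mathrm{diag}}\m(z_1^+)] = \bigl(\Pi_{z_2^-, z_1^+} - \Pi_{z_3^-, z_4^+}\bigr)\bigl[\m(z_2^-)\vect{b}^{\mathrm{diag}}\m(z_1^+)\bigr] + \Pi_{z_3^-, z_4^+}\bigl[\bigl(\m(z_2^-)\m(z_1^+) - \m(z_3^-)\m(z_4^+)\bigr)\vect{b}^{\mathrm{diag}}\bigr],
\]
and bound both pieces by $(|z_1-z_4|+|z_2-z_3|)\langle|B|^2\rangle^{1/2}$ using the Lipschitz continuity \eqref{eq:Pi_continuity} of $\Pi$ and the local Lipschitz regularity of $\m$ in the bulk. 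The hard part is the denominator lower bound of the second paragraph: without the phase analysis, the two contributions $\alpha_t(\widehat{z}_1 - z_2)C_1$ and $\beta_t(1-\eigB)D_1$ could in principle cancel in the regime where $T-t$ and $|\widehat{z}_1 - z_2|$ are comparable, and the opposite-half-plane structure encoded in the definition of $\widehat{\vect{z}}_{1,t}$ together with the explicit form \eqref{eq:Pi_barzz} of $\Pi$ at coincidence is precisely what rules this out.
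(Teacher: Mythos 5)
Your proposal is correct and follows essentially the same route as the paper's proof: the same definition of $b_t$ via the rank-one projector $\Pi_{z_2^-,z_1^+}$, the same explicit representation of $\widehat{\vect{z}}_{1,t}-\vect{z}_{2,t}$ from \eqref{eq:flow_explicit}, and the same numerator/telescoping estimates. The one organizational difference is in the denominator lower bound: the paper further reduces your $D_1$ to a multiple of $C_1$ via \eqref{eq:m_diff_identity} and the eigenrelation, collapsing everything to $\mathrm{e}^{(T-t)/2}K(z_1,z_2)\,\Pi[\m_1^+\m_2^-]$ with an explicit scalar $K$, and then observes that $\lvert\im K\rvert$ has a definite sign (hence the two pieces add) and $\lvert\re K\rvert = \lvert\re z_1 - \re z_2\rvert$; your "matching phases at coincidence via \eqref{eq:Pi_barzz}" observation is the same phenomenon, only stated with $C_1$ and $D_1$ kept apart. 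Two small points to tidy up: you should dispose of the trivial regime $\min\{|z_1-z_2|,|\bar z_1 - z_2|\}>\tfrac12\delta$ (set $\mathring{B}_t:=B$, $b_t:=0$) before invoking $\Pi_{z_2^-,z_1^+}$, and the error term of order $(T-t)|z_1^+-z_2^-|$ coming from the perturbative step needs to be noted as subordinate to the main scalar.
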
	
	Note that Lemma \ref{lemma:B_renorm} is only applicable for $t \in [T-T_*, T]$, however, in the complementary regime $t \in [0, T-T_*]$, the asymptotic \eqref{eq:eta_asymp} implies that $\eta_{t} \sim 1$. Hence, it follows from the local law \eqref{eq:Weak2av} and \eqref{eq:int_rules}, that, uniformly in $t \in [0,T]$,
	\begin{equation} \label{eq:(1,1)_short_time}
		\int_0^{t\wedge(T-T_*) } \bigl\lvert\bigl\langle (G_{[1,2],s} - M_{[1,2],s}) \mathscr{S}[M_{[2,1],s}] \bigr\rangle \bigr\rvert \mathrm{d}s \prec \int_0^{t\wedge(T-T_*) } \frac{\langle |A|^2\rangle^{1/2}\norm{B}}{N\eta_{1,s}\eta_{2,s}\eta_s} \prec \frac{\langle |A|^2\rangle^{1/2}\norm{B}}{N}. 
	\end{equation}
	Hence, our goal is to prove that for $z_1,z_2 \in \mathcal{D}$ satisfying $(\im z_1)(\im z_2) < 0$, the bound
	\begin{equation} \label{eq:(1,1)_tr_goal}
		\int_{T-T_*}^t\bigl\langle (G_{[1,2],s} - M_{[1,2],s}) \mathscr{S}[M_{[2,1],s}] \bigr\rangle \bigr\rvert  \mathrm{d}s 
		\prec \frac{\langle |A|^2\rangle^{1/2}\norm{B}}{\sqrt{N\eta_t}\sqrt{\eta_t}}\biggl(\frac{1 + \phi_1}{\sqrt{N\eta_t}} + \frac{\sqrt{\phi_{2}^{\mathrm{hs}}}}{(N\eta_t)^{1/4}}\biggr),
	\end{equation}
	holds uniformly in $t \in [T-T_*, T_*]$, for which we can apply Lemma \ref{lemma:B_renorm}. Let $T' := T - T_*$.
	Since $(\im z_1)(\im z_2) < 0$, we have $\widehat{\vect{z}}_{1,s} = \vect{z}_{1,s}$. Decomposing the matrix $Y_s:=\mathscr{S}[M_{[2,1],s}]$ into $Y_s = \mathring{Y}_s + y_s \diag{\vect{z}_{1,s}-\vect{z}_{2,s}}$ according to Lemma \ref{lemma:B_renorm}, we obtain for all $T' \le s \le T$,
	\begin{equation} \label{eq:(1,1)av_tr_decomp}
		\bigl\langle (G_{[1,2],s} - M_{[1,2],s}) \mathscr{S}[M_{[2,1],s}] \bigr\rangle = \bigl\langle (G_{[1,2],s} - M_{[1,2],s}) \mathring{Y}_s \bigr\rangle + y_s \bigl\langle (G_{[1,2],s} - M_{[1,2],s}) (\vect{z}_{1,s}-\vect{z}_{2,s}) \bigr\rangle.
	\end{equation}
	It follows from \eqref{eq:M_bound}, \eqref{eq:super-S-norm}, and the estimates in \eqref{eq:B_decomp_bounds}, that $\mathring{Y}_s$ and $y_s$ satisfy, for all $T' \le s \le T$,
	\begin{equation} \label{eq:Y_bounds}
		\bigl\lVert\mathring{Y}_s\bigr\rVert \lesssim \norm{Y_s} = \norm{\mathscr{S}[M_{[2,1],s}]} \lesssim \eta_s^{-1}\norm{B}, \quad 
		|y_s| \lesssim (\eta_{1,s}+\eta_{2,s}+|z_1-z_2|)^{-1}\eta_s^{-1}\norm{B}.
	\end{equation}
	Since $\mathring{Y}_s$ is $(z_2,z_1)$-regular  for all $T'\le s \le T$, the integral of the first term on the right-hand side of \eqref{eq:(1,1)av_tr_decomp} admits the bound
	\begin{equation}
		\begin{split}
			\int_{T'}^t \bigl\lvert  \bigl\langle (G_{[1,2],s} - M_{[1,2],s}) \mathring{Y}_s \bigr\rangle \bigr\rvert  \mathrm{d}s 
			&\prec \int_{T'}^t \biggl(\frac{\langle |A|^2\rangle^{1/2}\langle |\mathring{Y}_s|^2\rangle^{1/2}}{\sqrt{N\eta_s}}\phi_{2}^\mathrm{hs}\biggr)^{1/2} \biggl(\frac{\langle |A|^2\rangle^{1/2}\bigl\lVert\mathring{Y}_s\bigr\rVert}{N\eta_s^2}\biggr)^{1/2}  \mathrm{d}s \\
			&\prec \int_{T'}^t \frac{\sqrt{\phi_{2}^\mathrm{hs}}}{(N\eta_s)^{3/4}} \frac{\langle |A|^2\rangle^{1/2}}{\sqrt{\eta_s}} \bigl\lVert\mathring{Y}_s\bigr\rVert \mathrm{d}s 
			\prec \frac{\langle |A|^2\rangle^{1/2}\norm{B}}{\sqrt{N\eta_t}\sqrt{\eta_t}}\frac{\sqrt{\phi_{2}^{\mathrm{hs}}}}{(N\eta_t)^{1/4}},
		\end{split}
	\end{equation}
	uniformly in $t\in[T',T]$, where we used $\Phi_2^\mathrm{hs} \prec \phi_2^\mathrm{hs}$ from \eqref{eq:phi_assume} to bound $|\langle (G_{[1,2],s} - M_{[1,2],s}) \mathring{Y}_s\rangle|^{1/2}$, and \eqref{eq:Weak2av} to bound the other $1/2$ power ignoring the regularity of $A$ and $\mathring{Y}_s$. We additionally used the integration rule \eqref{eq:int_rules}, and the first bound in \eqref{eq:Y_bounds}.
	
	We turn to bound the contribution of the second term on the right-hand side of \eqref{eq:(1,1)av_tr_decomp}. It follows from the vector Dyson equation \eqref{eq:VDE} and the definition \eqref{eq:M_def} that
	\begin{equation}
		\bigl\langle  M_{[1,2],s} (\vect{z}_{1,s}-\vect{z}_{2,s})  \bigr\rangle = \bigl\langle A(M_{1,s} - M_{2,s}) \bigr\rangle, \quad 
		s\in [T-T_*,T].
	\end{equation}
	Hence, using the generalized \eqref{eq:res_id}, we express the second term on the right-hand side of \eqref{eq:(1,1)av_tr_decomp} as
	\begin{equation} \label{eq:G-G_term}
		 y_s \bigl\langle (G_{[1,2],s} - M_{[1,2],s}) (\vect{z}_{1,s}-\vect{z}_{2,s}) \bigr\rangle =  y_s \bigl\langle (G_{1,s} - M_{1,s}) A \bigr\rangle - y_s \bigl\langle (G_{2,s} - M_{2,s}) A \bigr\rangle,
	\end{equation}
	and estimate each term separately.
	Evoking Lemma \ref{lemma:B_renorm} and \eqref{eq:A_decomp_bound} with $\{z_1,z_2,z_3,z_4\} := \{\bar{z}_1, z_1, z_1, z_2\}$, we decompose $A$ for all $ s\in[T',T]$,
	\begin{equation} \label{eq:Az_1z_1}
		A = \mathring{A}_s(\bar{z}_1,z_1) - 2 a_s(\bar{z}_1,z_1) \im\vect{z}_{1,s}, \quad |a_s(\bar{z}_1,z_1)|\lesssim \langle |A|^2\rangle^{1/2}\frac{|\bar{z}_1 -z_2|}{\eta_{1,s}},
	\end{equation}
	with $(z_1,\bar{z}_1)$-regular $\mathring{A}:=\mathring{A}_s(\bar{z}_1,z_1)$ satisfying $\langle |\mathring{A}_s(\bar{z}_1,z_1)|^2\rangle^{1/2}\lesssim \langle |A|^2\rangle^{1/2}$ by the second estimate in \eqref{eq:B_decomp_bounds}. Plugging \eqref{eq:Az_1z_1} into the first term on the right-hand side of \eqref{eq:G-G_term} and integrating in time, we obtain uniformly in $t\in[T',T]$,
	\begin{equation}
		\begin{split}
			\int_{T'}^t \bigl\lvert y_s \bigl\langle (G_{1,s} - M_{1,s}) A \bigr\rangle\bigr\rvert  \mathrm{d}s &\le \int_{T'}^t \frac{\bigl\lvert \bigl\langle (G_{1,s} - M_{1,s}) \mathring{A} \bigr\rangle \bigr\rvert + |a_s|\bigl\lvert \bigl\langle (G_{1,s} - M_{1,s}) \im\vect{z}_{1,s} \bigr\rangle \bigr\rvert}{\eta_{1,s}+\eta_{2,s} + |\bar{z}_1-z_2|}\frac{\norm{B}}{\eta_s}\mathrm{d}s\\
			&\prec \langle |A|^2\rangle^{1/2}\norm{B}\int_{T'}^t \biggl(\frac{\phi_{1}}{N\eta_s^{5/2}} + \frac{1}{N\eta_s^2}\biggr)\mathrm{d}s \prec \frac{\langle |A|^2\rangle^{1/2}\norm{B}}{\sqrt{N\eta_t}\sqrt{\eta_t}}\frac{1+\phi_{1}}{\sqrt{N\eta_t}}.
		\end{split} 
	\end{equation}
	where $a_s := a_s(\bar{z}_1,z_1)$, and we used the second estimate in \eqref{eq:Y_bounds} for $|y_s|$, the bound $|z_1 - z_2| \ge |\bar{z}_1 - z_2|$ which holds for all $z_1,z_2$ in opposite complex half-planes, and the integration rule \eqref{eq:int_rules}. The second term on the right-hand side of \eqref{eq:G-G_term} can be estimated using the same procedure. Therefore, \eqref{eq:(1,1)_tr_goal} is established and hence, together with \eqref{eq:(1,1)_short_time} we get, for all $z_1,z_2\in\mathcal{D}$ with $(\im z_1)(\im z_2) <0$,
	\begin{equation} \label{eq:(1,1)_tr_case2}
		\int_{0}^t\bigl\langle (G_{[1,2],s} - M_{[1,2],s}) \mathscr{S}[M_{[2,1],s}] \bigr\rangle \bigr\rvert  \mathrm{d}s 
		\prec \frac{\langle |A|^2\rangle^{1/2}\norm{B}}{\sqrt{N\eta_t}\sqrt{\eta_t}}\biggl(\frac{1 + \phi_1}{\sqrt{N\eta_t}} + \frac{\sqrt{\phi_{2}^{\mathrm{hs}}}}{(N\eta_t)^{1/4}}\biggr).
	\end{equation}
	
	The contribution of the remaining terms on the right-hand side of \eqref{eq:(GAGB-M)_evol} is estimated similarly to their counterparts in the proof of the master inequality \eqref{eq:Master2av}, and hence we  provide only a brief record.
	For the quadratic variation of the martingale term, using \eqref{eq:phi_assume}, \eqref{eq:imz_flat}, \eqref{eq:int_rules}, the norm bound \eqref{eq:G_norm}, and the generalized resolvent inequality \eqref{eq:ImG_ineq}, we obtain, uniformly in $ t \in [0,T]$,
	\begin{equation} \label{eq:(1,1)av_mart}
			\int_0^t\sum_{j,k}S_{jk}\bigl\lvert\partial_{jk}\bigl\langle G_{[1,2],s} B \bigr\rangle \bigr\rvert^2\mathrm{d}s
			\prec \frac{\langle |A|^2\rangle\norm{B}^2}{N^2\eta_t^3}\biggl(1 + \frac{\phi_{2}^{\mathrm{hs}}}{\sqrt{N\eta_t}}\biggr).
	\end{equation}
	This estimate is cruder than its analog in \eqref{eq:2av_mart}, since $B$ is a general observable and hence the simple estimate $\langle \im G_{1,s}AG_{2,s}B \im G_{1,s} B^* G_{2,s}^* A^* \rangle \lesssim \norm{B}^2(\eta_{1,s}\eta_{2,s})^{-1}\langle \im G_{1,s}A\im G_{2,s} A^* \rangle$ that follows from \eqref{eq:ImG_ineq} is affordable.  
	Using the martingale inequality \eqref{eq:mart_ineq}, we deduce from \eqref{eq:(1,1)av_mart} that, uniformly in $t\in[0,T]$,
	\begin{equation} \label{eq:(1,1)_mart}
		\sup_{0\le s\le t}\biggl\lvert \int_0^s\frac{1}{2}\sum_{j,k}\partial_{jk}\bigl\langle G_{[1,2],r} B \bigr\rangle \sqrt{S_{jk}}\mathrm{d}\Brwn_{jk,r} \biggr\rvert \prec \frac{\langle |A|^2\rangle^{1/2}\norm{B}}{\sqrt{N\eta_t}\sqrt{\eta_t}}\frac{1}{\sqrt{N\eta_t}}\biggl(1 + \frac{\sqrt{\phi_{2}^{\mathrm{hs}}}}{(N\eta_t)^{1/4}}\biggr).
	\end{equation}	

	The two terms in the second line of \eqref{eq:(GAGB-M)_evol} are bounded similarly to \eqref{eq:(1,1)_tr_case1}, using the second bound in \eqref{eq:regM_bound}.
	Next, uniformly in $t \in [0,T]$, we bound the quadratic term by using \eqref{eq:phi_assume}, \eqref{eq:Weak1iso} and \eqref{eq:int_rules}, 
	\begin{equation} \label{eq:(1,1)_quad}
			\int_0^t \bigl\lvert  \bigl\langle\mathscr{S}[G_{[1,2],s} - M_{[1,2],s}] (G_{[2,1],s} - M_{[2,1],s})\bigr\rangle \bigr\rvert   \mathrm{d}s
			\prec \frac{\langle |A|^2\rangle^{1/2}\norm{B}}{\sqrt{N\eta_t}\sqrt{\eta_t}}\frac{\phi_{(1,1)}}{\sqrt{N\eta_t} },
	\end{equation}
	again, since the observable $B$ is general (recall that $M_{[2,1],s} := G_{2,s}B G_{1,s}$), we can afford a simpler estimate than \eqref{eq:2av_S_term} and do not employ the decomposition of $S$ from Lemma \ref{lemma:S_decomp}.
	Finally, to estimate the last two terms on the right-hand side of \eqref{eq:(GAGB-M)_evol}, we observe that by definition of the super-operator $\mathscr{S}$ in \eqref{eq:superS_def}, the averaged local law in \eqref{eq:1G_laws} implies  that 
	\begin{equation} \label{eq:bound(1,2)av}
			\bigl\lvert \bigl\langle \mathscr{S}[G_{1,s} - M_{1,s}] G_{1,s} A G_{2,s} B G_{1,s} \bigr\rangle \bigr\rvert \prec \frac{1}{N\eta_{1,s}}\frac{1}{N}\sum_{j} \bigl\lvert \bigl\langle \vect{e}_j,  G_{1,s} A G_{2,s} B G_{1,s}\vect{e}_j\bigr\rangle \bigr\rvert.
	\end{equation}
	Next, using  the inequality \eqref{eq:ImG_ineq} for $\im G_{j,s}$, we obtain the following series of estimates
	\begin{equation} \label{eq:|3G|tr}
		\begin{split}
			\frac{1}{N}\sum_{j} \bigl\lvert \bigl\langle \vect{e}_j,  G_{1,s} A G_{2,s} B G_{1,s}\vect{e}_j\bigr\rangle \bigr\rvert
			&\lesssim \frac{\norm{B}}{N}\sum_{j} \frac{\bigl\langle \vect{e}_j, G_{1,s} A \im G_{2,s} A^* G_{1,s}^*\vect{e}_j\bigr\rangle^{1/2}}{\sqrt{\eta_{2,s}}} \frac{\bigl\langle \vect{e}_j, \im G_{1,s}\vect{e}_j\bigr\rangle^{1/2}}{\sqrt{\eta_{1,s}}}\\
			&\lesssim \frac{\norm{B}}{\eta_{1,s}\sqrt{\eta_{2,s}}} \bigl\langle \im G_{1,s} A \im G_{2,s} A^* \bigr\rangle^{1/2}\bigl\langle \im G_{1,s}\bigr\rangle^{1/2}.
		\end{split}
	\end{equation}
	We conclude the proof, by using \eqref{eq:int_rules} and inequalities \eqref{eq:bound(1,2)av}, \eqref{eq:|3G|tr} together with \eqref{eq:phi_assume}, Assumption \eqref{as:m_bound}, the averaged local law in \eqref{eq:1G_laws}, and \eqref{eq:M_bound}, to deduce that, uniformly in $t\in[0,T]$,
	\begin{equation} \label{eq:(1,1)_3-1-term}
			\int_0^t \bigl\lvert 	\bigl\langle\mathscr{S}[G_{1,s} - M_{1,s}] G_{1,s} A G_{2,s} B G_{1,s} \bigr\rangle\bigr\rvert   \mathrm{d}s 
			\prec \frac{\langle |A|^2\rangle^{1/2}\norm{B}}{\sqrt{N\eta_t}\sqrt{\eta_t}}\frac{1}{\sqrt{N\eta_t}}\biggl(1+ \frac{\sqrt{\phi_{2}^\mathrm{hs}}}{(N\eta_t)^{1/4}}\biggr).
	\end{equation}
	The estimate for the remaining term is analogous. Collecting the estimates \eqref{eq:(1,1)_tr_case1}, \eqref{eq:(1,1)_tr_case2}, \eqref{eq:(1,1)_mart}, \eqref{eq:(1,1)_quad} and \eqref{eq:(1,1)_3-1-term} concludes the proof of \eqref{eq:Master(1,1)av}.
\end{proof}

\subsection{Reduction Inequalities. Proof of Lemma \ref{lemma:reds}} \label{sec:reds_proof}
For the remainder of this section, we consider the terminal time fixed and assume that \eqref{eq:phi_assume} holds uniformly in $t\in [0,T]$ and in $z_1,z_2\in\mathcal{D}$. 

Since the generalized resolvents at different vector-valued spectral parameters do not share the same spectral decomposition, as is the case of classical resolvent, the strategy for proving the reduction inequalities laid out in Appendix A.3 of \cite{Cipolloni2023Edge} no longer applies. Instead, we prove the inequalities \eqref{eq:Red4av_HS} and \eqref{eq:Red(2,1)av_HS} using submultiplicativity of trace $\langle X Y \rangle \le N \langle X\rangle \langle Y\rangle$ for positive semidefinite matrices $X,Y \ge 0$ in tandem with the novel integral representation for the generalized resolvent contained in Lemma \ref{lemma:imF_rep} below. Before presenting the proof of Lemma \ref{lemma:reds}, we state the main inputs.

To condense the presentation, for a fixed terminal time $T$, we define the \textit{flow map} $\mathfrak{f}^t \equiv \mathfrak{f}^t_T : \mathbb{H}\cup\mathbb{H}^* \to \mathbb{H}^N\cup (\mathbb{H}^*)^N$ for $t\in[0,T]$ by
\begin{equation} \label{eq:flow_map}
	\mathfrak{f}^t(z) := \vect{z}_{t},\quad \text{where} \quad \vect{z}_t \text{ solves \eqref{eq:z_evol} with } \vect{z}_T = z\,\vect{1}.
\end{equation}
It follows from \eqref{eq:z_evol} and \eqref{eq:m_evol} that the flow map $\mathfrak{f}^t$ admits the explicit expression 
\begin{equation} \label{eq:flow_explicit}
	\mathfrak{f}^t(z) = \mathrm{e}^{(T-t)/2} z\,\vect{1} + \bigl(1-\mathrm{e}^{(T-t)/2} \bigr) \bm{\mathfrak{a}} + 2\sinh\bigl(\tfrac{T-t}{2}\bigr) S[\m(z)].
\end{equation}

In view of Theorem 2.6 in \cite{Ajanki2019QVE} on the structure of the self-consistent density of states $\rho$, the set $\mathcal{D}$, defined in \eqref{eq:calD_def}, consists of a disjoint union of $K\sim 1$ rectangles of order one width. Therefore, the spectral domain $\mathcal{D}$ satisfies the following \textit{cone property} (see Figure \ref{fig:calD_figure}).
\begin{Def}[Cone Property]  \label{def:cone}
 We say that a domain $\other{\mathcal{D}} \subset \mathbb{C}$ satisfies the cone property if and only if there exists a positive constant $1 \lesssim \gamma \le \tfrac{1}{4}$ and an angle function $\omega : \mathcal{\other{D}} \to [-\tfrac{\pi}{2}\gamma, \tfrac{\pi}{2}\gamma]$, such that for any $z \in \mathcal{\other{D}}$, the (half) cone $\mathbb{V}_z \equiv \mathbb{V}_{z,\gamma,\omega(z)}$ with vertex $z$, aperture angle $\gamma\pi$, and tilt angle $\omega(z)$ between the axis and the positive imaginary direction, defined by
	\begin{equation} \label{eq:cone}
		\mathbb{V}_z \equiv \mathbb{V}_{z,\gamma,\omega(z)} := \bigl\{\zeta\in\mathbb{C} : \sign(\im z)\im[\mathrm{e}^{-\I\omega(z)}(\zeta - z)] \ge \cos(\tfrac{\pi}{2}\gamma) |\zeta - z| \bigr\},
	\end{equation}
	is contained entirely in $\mathcal{\other{D}}\cup\{\zeta\in\mathbb{C} : |\im \zeta| \ge \eta_*\}$ for some positive threshold $\eta_*$, i.e.,
	\begin{equation} \label{eq:cone_prop}
		\mathbb{V}_z \subset \mathcal{\other{D}}\cup\{\zeta\in\mathbb{C} : |\im \zeta| \ge \eta_*\}, \quad z \in \mathcal{\other{D}}.
	\end{equation}
\end{Def}
\begin{figure}[h]
	\centering
	\includegraphics[width=.8\textwidth]{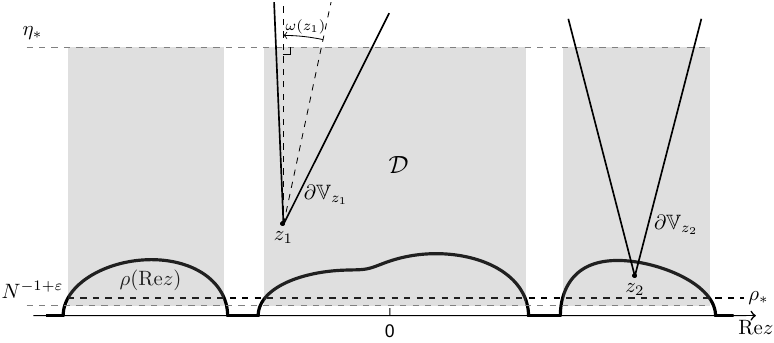}
	\caption{Shaded in gray is the part of a domain $\mathcal{D} = \mathcal{D}_{\rho_*,\eta_*,\varepsilon}$ lying in $\mathbb{H}$. The dashed gray lines indicate the horizontal cut-offs of $\mathcal{D}$, $\im z = N^{-1+\varepsilon}$ and $\im z = \eta_*$. Superimposed in black is the graph of the self-consistent density of states $\rho(\re z)$ supported on multiple intervals and the threshold $\rho_*$ that defines the vertical cut-offs of the domain $\mathcal{D}$. Furthermore, depicted are two points $z_1, z_2 \in \mathcal{D}$ and the boundaries of the corresponding cones $\mathbb{V}_{z_j}$ as in \eqref{eq:cone}. For the cone $\mathbb{V}_{z_1}$, we additionally include the tilt angle $\omega(z_1)$ between the axis on the cone and the vertical line (black, dashed). }
	\label{fig:calD_figure}
\end{figure}

\begin{lemma}[ Conformal Integral Representation 
	] \label{lemma:imF_rep}
	Let $\other{\mathcal{D}}\subset{H}$ be a domain satisfying the cone property with aperture $\gamma$ and tilt angle function $\omega$, as in Definition \eqref{def:cone} above, and let $g : \mathbb{H} \to \mathbb{C}^{N\times N}$ be an analytic operator-valued function satisfying the decay condition $\norm{g(\zeta)} \lesssim |\im \zeta|^{-1}$ for $\zeta \in \other{\mathcal{D}}\cup\{|\im \zeta| \ge \eta_*\}$. Then for all $z \in \other{\mathcal{D}}$,
	\begin{equation} \label{eq:abstract_img}
		g(z) = \frac{1}{\pi}\int_\mathbb{R}\frac{\im g(\psi(x))}{x -\psi^{-1}(z)}\mathrm{d}x,
	\end{equation}
	where $\im g := \tfrac{1}{2\I}(g-g^*)$, and $\xi \in (0, \im z)$. Here $\psi : \overline{\mathbb{H}} \to \mathbb{H}$ is defined as
	\begin{equation} \label{eq:psi_map}
		\psi(u) \equiv \psi_{z,\xi}(u) := z + \mathrm{e}^{\I\omega(z)}\bigl(-\I\xi + \mathrm{e}^{\I\frac{\pi}{2}(1-\gamma)} u^\gamma\bigr),
	\end{equation}
	where $u^\gamma := \exp(\gamma \log u)$, and we choose the branch of $\log$ cut along $[0,-\I\infty]$.
	Moreover, the function $\psi$ is continuous in $\overline{\mathbb{H}}$ and conformally maps $\mathbb{H}$ onto the interior of $\{-\I\mathrm{e}^{\I\omega(z)}\xi\} + \mathbb{V}_z$, interpreted as a Minkowski sum (see Figure \ref{fig:psi_map}), with $\mathbb{V}_z$ defined in \eqref{eq:cone}.
	
	In particular, for any $N\times N$ matrix $X=X^*$ and any  $t\in[0,T]$, Lemma \ref{lemma:imF_rep} implies that for all $z \in \mathcal{D}\cap\mathbb{H}$, the generalized resolvent $G(X,\mathfrak{f}^t(z))$, defined in \eqref{eq:Gvect}, admits the integral representation
	\begin{equation} \label{eq:ImFint}
		G(X,\mathfrak{f}^t(z)) = \frac{1}{\pi} \int_{\mathbb{R}} \frac{\im G\bigl(X, (\mathfrak{f}^t\circ\psi)(x)\bigr)}{x - \I\xi^{1/\gamma}}\mathrm{d}x.
	\end{equation} 
\end{lemma}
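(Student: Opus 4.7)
The plan is to reduce the identity \eqref{eq:abstract_img} to the classical Cauchy representation on the half-plane $\mathbb{H}$ applied to the composition $h(u) := g(\psi(u))$. The map $\psi$ is tailored precisely so that $\psi$ conformally straightens the shifted cone $\{-\I \mathrm{e}^{\I\omega(z)}\xi\} + \mathbb{V}_z$ back to $\mathbb{H}$, converting the decay hypothesis on $g$ into decay of $h$ at infinity in the $u$-variable, and sending $u_0 := \I\xi^{1/\gamma}\in\mathbb{H}$ to $z$ (a direct computation: $(\I\xi^{1/\gamma})^\gamma = \xi\,\mathrm{e}^{\I\pi\gamma/2}$, combined with $\mathrm{e}^{\I\pi(1-\gamma)/2}\mathrm{e}^{\I\pi\gamma/2} = \I$, cancels the $-\I\xi$ term).

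I begin by establishing the geometric properties of $\psi$: it is the composition of the branched power $u\mapsto u^\gamma$ (a biholomorphism from $\mathbb{H}$ onto the sector of aperture $\gamma\pi$ in $\mathbb{H}$), a rotation by $\mathrm{e}^{\I(\omega(z)+\pi(1-\gamma)/2)}$ aligning this sector with the axis of $\mathbb{V}_z$, and an affine shift. From this presentation, the asserted conformal equivalence with the shifted cone is immediate, and $\psi$ extends continuously to $\overline{\mathbb{H}}$ with $\psi(\mathbb{R})$ tracing the boundary rays of the shifted cone inside $\mathbb{H}$. As $|u|\to\infty$, $\psi(u) \sim \mathrm{e}^{\I(\omega(z)+\pi(1-\gamma)/2)}u^\gamma$; since $\gamma \le 1/4$ and $|\omega(z)|\le \pi\gamma/2$, the argument of $\psi(u)$ stays bounded away from $\{0,\pi\}$, so $|\im\psi(u)| \gtrsim |u|^\gamma$. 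Combining this with the cone property \eqref{eq:cone_prop} (which places the shifted cone inside $\other{\mathcal{D}}\cup\{|\im\zeta|\ge\eta_*\}$), the decay hypothesis on $g$ yields $\|h(u)\| \lesssim |u|^{-\gamma}$. Applying Cauchy's integral formula on a large upper semicircle enclosing $u_0$ and letting the radius diverge (the arc contributes $\mathcal{O}(R^{-\gamma})\to 0$ since $\gamma \gtrsim 1$ is strictly positive), I obtain
\begin{equation*}
    h(u_0) = \frac{1}{2\pi\I}\int_{\mathbb{R}}\frac{h(x)}{x-u_0}\,\mathrm{d}x.
\end{equation*}

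To pass from $h$ to $\im h$, I introduce the Schwarz-type reflection $\tilde h(v) := h(\bar v)^*$ on $\overline{\mathbb{H}^*}$. Entry-wise, $\tilde h$ is holomorphic in $v\in\mathbb{H}^*$ (the matrix adjoint turns the antiholomorphic dependence on $v$ into a holomorphic one), continuous up to $\mathbb{R}$ with $\tilde h(x)=h(x)^*$, and inherits the same decay. Since $u_0\in\mathbb{H}$ lies outside $\overline{\mathbb{H}^*}$, Cauchy's theorem applied on a large lower semicircle (with the appropriate orientation correction) gives
\begin{equation*}
    0 = \frac{1}{2\pi\I}\int_{\mathbb{R}}\frac{h(x)^*}{x-u_0}\,\mathrm{d}x.
\end{equation*}
Subtracting the two identities yields $h(u_0)=\tfrac{1}{\pi}\int_{\mathbb{R}}\frac{\im h(x)}{x-u_0}\mathrm{d}x$, which is \eqref{eq:abstract_img} after substituting $h=g\circ\psi$ and $u_0=\psi^{-1}(z)$. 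The specialization \eqref{eq:ImFint} follows by taking $g(\zeta) := G(X,\mathfrak{f}^t(\zeta))$: analyticity is transparent from \eqref{eq:flow_explicit}, and the required decay $\|g(\zeta)\|\lesssim |\im\zeta|^{-1}$ is a direct consequence of \eqref{eq:G_norm} combined with the asymptotic \eqref{eq:eta_asymp}. The main technical subtlety I anticipate is the geometric step: verifying that the \emph{shifted} cone $\{-\I\mathrm{e}^{\I\omega(z)}\xi\}+\mathbb{V}_z$ is genuinely contained in $\other{\mathcal{D}}\cup\{|\im\zeta|\ge\eta_*\}$, so that the decay of $g$ can be invoked along the contour $\psi(\mathbb{R})$. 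The shift vector has imaginary part $-\xi\cos\omega(z)$, and although $\xi<\im z$ keeps the new vertex above the real axis, one must exploit a strictly positive margin in the cone aperture (e.g., by working with a slightly smaller $\gamma$ than the geometric maximum allowed by $\other{\mathcal{D}}$) to absorb the shift while preserving the containment.
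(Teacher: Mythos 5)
Your argument is correct and follows essentially the same route as the paper: conjugate by the conformal map $\psi$, invoke the Schwarz/Poisson representation on $\mathbb{H}$ for $h=g\circ\psi$ at $u_0=\I\xi^{1/\gamma}$, and verify the required analyticity and polynomial decay via the cone geometry and the norm bound \eqref{eq:G_norm}. The only presentational differences are that you re-derive the Schwarz integral formula from Cauchy's theorem via the reflection $\tilde h(v):=h(\bar v)^*$ (the paper simply cites it as standard), and that you explicitly flag the containment issue for the shifted cone $\{-\I\mathrm{e}^{\I\omega(z)}\xi\}+\mathbb{V}_z$ versus $\mathbb{V}_z$; this concern is legitimate at the level of the abstract statement but is immaterial in the intended application since the decay $\|G(X,\mathfrak{f}^t(\zeta))\|\lesssim(\im\zeta)^{-1}$ in fact holds on all of $\mathbb{H}$ (each entry of $\im\mathfrak{f}^t(\zeta)$ is at least $\mathrm{e}^{(T-t)/2}\im\zeta$ by \eqref{eq:flow_explicit}), not merely on $\other{\mathcal{D}}\cup\{|\im\zeta|\ge\eta_*\}$.
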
 
Note that we state Lemma \ref{lemma:imF_rep} only for $z\in\mathcal{D}\cap\mathbb{H}$ for simplicity, and the result can easily be extended to $z \in \mathcal{D}\cap\mathbb{H}^*$ by complex conjugation.
\begin{figure}[h] 
	\centering
	\includegraphics[width=.8\textwidth]{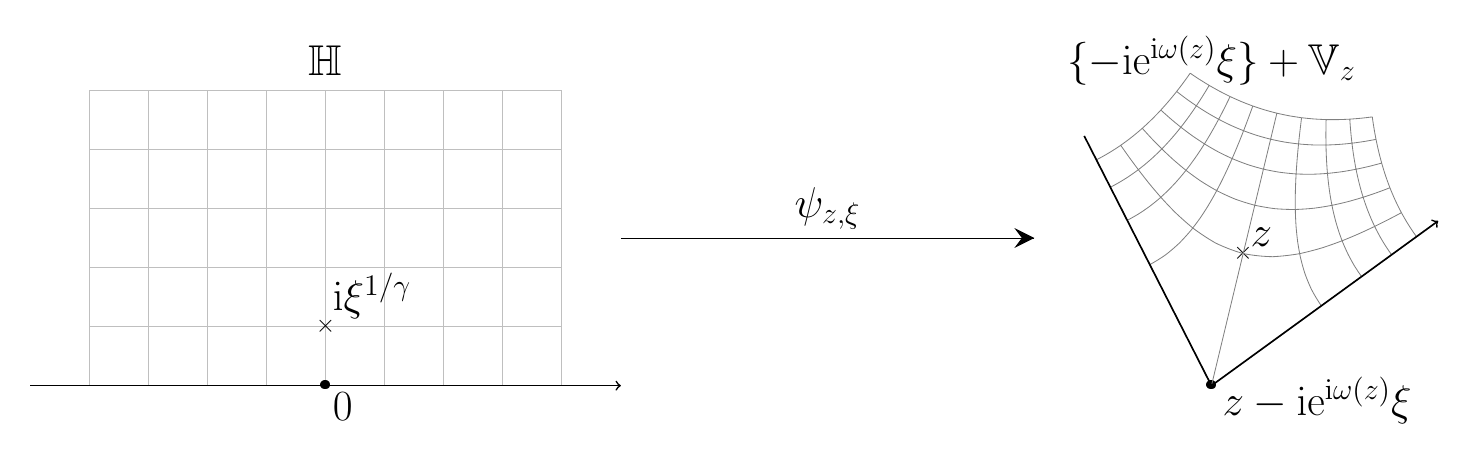}
	\caption{Depicted is the action of the conformal map $\psi \equiv \psi_{z,\xi}$, defined in \eqref{eq:psi_map}, from the complex upper half-plane $\mathbb{H}$ to the cone $\{-\I\mathrm{e}^{\I\omega(z)}\xi\} + \mathbb{V}_z$. Marked are the point $z = \psi(\I\xi^{1/\gamma})$ and the vertex $z-\I\mathrm{e}^{\omega(z)}\xi = \psi(0)$, together with their pre-images under $\psi$.}
	\label{fig:psi_map}
\end{figure}

\begin{remark} \label{rem:im_g_int}
	The main advantage of the integral representation \eqref{eq:ImFint} compared to the standard Stieltjes integral along a horizontal line, 
	\begin{equation} \label{eq:inG_old}
		G(z) = \frac{1}{\pi}\int_\mathbb{R}\frac{\im G\bigl(x+\I\im z-\I \xi\bigr)}{x-\I \xi}\mathrm{d}x, \quad z\in\mathbb{H},
	\end{equation}
	for $0  < \xi < \im z$, is that the argument of $\im G$ in \eqref{eq:ImFint} is restricted to the union of the bulk domain $\mathfrak{f}^t(\mathcal{D})$ (up to a tiny distance of order $\xi$) and the global scale domain $\mathfrak{f}^t(\{|\im \zeta| \gtrsim 1\})$. Hence, the representation \eqref{eq:ImFint} prevents the spectral parameter of $\im G$ from getting too close to the real line, falling below the critical scale $\eta(E)$ defined implicitly by $N\eta(E)\rho(E+\I\eta(E)) = N^{\varepsilon'}$. The key point is that the local laws are effective only above the critical curve, i.e., for $z$ satisfying $\im z \ge \eta(\re z)$. Since $\eta(E)$ increases as $\rho(E)$ becomes small, e.g., when $E$ approaches the spectral edges (it grows from $N^{-1}$ to $N^{-2/3}$), the integral contour in \eqref{eq:inG_old} can go below the critical curve, causing major complications (see Lemma 4.6 in \cite{Cipolloni2023Edge} and its proof).
	
	 Furthermore, the particular shape of the domain $\mathbb{V}_z$ in \eqref{eq:cone} was chosen for concreteness and because the corresponding conformal map $\psi_{z,\xi}$ admits a simple explicit expression \eqref{eq:psi_map}. However, the integral representation \eqref{eq:ImFint} can be extended to a wide class of unbounded domains $\mathbb{V}$ for which the conformal map $\psi : \mathbb{H} \to \mathbb{V}$ grows polynomially at infinity and, e.g., $|\im \psi(x)| \gtrsim |x|$ for $|x| \ge C \sim 1$. 
\end{remark}

\begin{proof} [Proof of Lemma \ref{lemma:imF_rep}]
	We recall the standard Schwarz integral formula in the upper-half plane. Let the matrix-valued function $f:\mathbb{H} \to \mathbb{C}^{N\times N}$ be analytic in $\mathbb{H}$, continuous in the closure $\overline{\mathbb{H}}$, and satisfy $|u|^\alpha\norm{f(u)} \le C$ for some positive constants $\alpha, C >0$, then
	\begin{equation} \label{eq:Schwarz_UPH}
		f(w) = \frac{1}{\pi} \int_\mathbb{R} \frac{\im f(x)}{x - w}\mathrm{d}x, \quad w \in \mathbb{H}.
	\end{equation}
	Note that substituting $f(u) := (g\circ \psi)(u)$ and $w := \psi^{-1}(z) = \I \xi^{1/\gamma}$ into \eqref{eq:Schwarz_UPH} yields \eqref{eq:ImFint} immediately. Since $\psi : \mathbb{H} \to \{-\I\mathrm{e}^{\I\omega(z)}\xi\} + \mathbb{V}_z$ is analytic and $\{-\I\mathrm{e}^{\I\omega(z)}\xi\} + \overline{\mathbb{V}_z} \subset \mathbb{H}$ for $0 < \xi < \im z$, the composite map $f := g\circ \psi$ is analytic in $\mathbb{H}$ and continuous in $\overline{\mathbb{H}}$. Therefore, it suffices to establish analyticity, continuity and polynomial decay of the map $f$ at infinity. By definition of $f$ and the decay of $g$, we have
	\begin{equation}
		\norm{f(u)} = \norm{g(\psi(u))} \lesssim |\im \psi(u)|^{-1} \lesssim \bigl(\im z - \xi + |u|^{\gamma}\bigr)^{-1},
	\end{equation} 
	where we used that $\psi(u) \in (\other{\mathcal{D}}\cup\{\im \zeta \ge \eta_*\})\cap\mathbb{H}$ by the assumed cone property. Here the last inequality follows immediately from \eqref{eq:psi_map} with $0 \le \gamma \le \tfrac{1}{4}$ and $|\omega(z)| \le \frac{\pi}{2}\gamma$, that for all $z\in\mathbb{H}$ and $0 < \xi < \im z$. Therefore, $|u|^\gamma \norm{f(u)} \lesssim 1$, and $f$ satisfies the assumptions of \eqref{eq:Schwarz_UPH}, hence \eqref{eq:abstract_img} is established.
	
	We now prove \eqref{eq:ImFint}.	It follows from \eqref{eq:flow_explicit} and the analyticity of the solution $\m(z)$ to \eqref{eq:VDEz} in $\mathbb{H}$, that the map $\zeta \mapsto \mathfrak{f}^t(\zeta)$ is entry-wise analytic in $\mathbb{H}$. Moreover, the generalized resolvent $\vect{w} \mapsto G(X,\vect{w})$ is analytic in the entries of $\vect{w} \in \mathbb{H}^N$.  Therefore, the composite map $g :\zeta \mapsto G(X,\mathfrak{f}^t(\zeta))$ is analytic in $\mathbb{H}$. Recall that by assumption $z \in \mathcal{D}\cap\mathbb{H}$. Starting with the bound \eqref{eq:G_norm}, we obtain for all $\zeta \in \mathbb{H}$,
	\begin{equation} \label{eq:G_decay}
		\norm{G\bigl(X,\mathfrak{f}^t(\zeta)\bigr)} \lesssim \bigl\langle \im \mathfrak{f}^t(\zeta) \bigr\rangle^{-1} \lesssim (\im \zeta)^{-1},
	\end{equation}
	where in the last inequality we used \eqref{eq:flow_explicit}. Hence $g := G(X,\mathfrak{f}^t(\cdot))$ satisfies the assumptions of \eqref{eq:abstract_img}, and therefore \eqref{eq:ImFint} holds. This concludes the proof of Lemma \ref{lemma:imF_rep}.
\end{proof}

 Representation \eqref{eq:ImFint} allows us to focus on studying chains containing only imaginary parts of resolvents. In particular, to estimate chains of length two containing only $\im G$'s and regular observables inside the integral, we rely on the following technical lemma. 
\begin{lemma}[$\im G$ Chain Integral Bound] \label{lemma:integral2G}
	Let $z_1\in\mathcal{D}$, let $z_2\in\mathcal{D}\cap\mathbb{H}$, and let $A$ be a $(z_2,z_1)$-regular observable. Fix $\xi:=N^{-100}$, and let $\Gamma := \mathfrak{f}^t\circ\psi_{z_2,\xi}$, where $\psi_{z_2,\xi}$ is the function defined in \eqref{eq:psi_map}, then
	\begin{equation} \label{eq:integral2G}
		\int_{\mathbb{R}} \frac{\bigl\lvert\bigl\langle \im G(H_t, \Gamma(x)) A \im G_{1,t} A^*\bigr\rangle \bigr\rvert}{|x-\I\xi^{1/\gamma}|}\mathrm{d}x \prec \langle |A|^2\rangle \biggl(1+\frac{\phi_{2}^{\mathrm{hs}}}{\sqrt{N\eta_t}}\biggr),
	\end{equation}
	uniformly in $t\in[0,T]$, where $G_{1,t}:= G(H_t, \mathfrak{f}^t(z_1))$, we recall $\eta_t := \min\{\eta_{1,t}, \eta_{2,t}\}$ and $\eta_{j,t} := |\langle \im \mathfrak{f}^t(z_j) \rangle|$.
\end{lemma}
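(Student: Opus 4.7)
The plan is to split the real axis into a local region $|x| \le R$ for some constant $R \sim 1$, where by the cone property $\psi_{z_2,\xi}(x) \in \mathcal{D}$, and a global region $|x| > R$, where $|\im \psi_{z_2,\xi}(x)| \sim |x|^\gamma \gtrsim 1$. First, I would expand $\im G = \tfrac{1}{2\I}(G - G^*)$ and analogously for $\im G_{1,t}$, reducing the integrand to a linear combination of four ordinary two-resolvent traces of the form $\bigl\langle G(H_t, \Gamma(x))^{\#_1} A\, G_{1,t}^{\#_2} A^* \bigr\rangle$ with $\#_j \in \{\cdot,*\}$. To each such trace I would apply the working hypothesis $\Phi_{2}^{\mathrm{hs}} \prec \phi_{2}^{\mathrm{hs}}$ from \eqref{eq:phi_assume} in the form
\begin{equation*}
\bigl\langle G^{\#_1} A\, G_{1,t}^{\#_2} A^*\bigr\rangle = \bigl\langle \M{\cdot} A^*\bigr\rangle + \Ost{\phi_{2}^{\mathrm{hs}} \langle |A|^2\rangle / \sqrt{N\eta_t}},
\end{equation*}
bounding the deterministic piece by Cauchy--Schwarz and the $M$-bound \eqref{eq:regM_bound} as $|\langle \M{\cdot} A^*\rangle|\le \langle|\M{\cdot}|^2\rangle^{1/2}\langle|A|^2\rangle^{1/2} \lesssim \langle|A|^2\rangle$, provided the observable pair $(A,A^*)$ is regular with respect to the relevant pair of spectral parameters.

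In the local region the observable $A$ is only $(z_2, z_1)$-regular rather than $(\psi(x), z_1)$-regular, so I would use the Lipschitz continuity \eqref{eq:Pi_continuity} of the eigenprojector $\Pi$ to decompose $A = A_x^{\mathrm{reg}} + A_x^{\mathrm{res}}$, where $A_x^{\mathrm{reg}}$ is $(\psi(x), z_1)$-regular with $\langle|A_x^{\mathrm{reg}}|^2\rangle^{1/2} \lesssim \langle|A|^2\rangle^{1/2}$, and the diagonal rank-one correction $A_x^{\mathrm{res}}$ along the unique destabilizing direction has both operator and Hilbert--Schmidt norms of order $|\psi(x) - z_2|\langle|A|^2\rangle^{1/2}$. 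The analogous splitting of $A^*$ addresses the regularity of the second observable. The regular--regular cross term produces the law bound directly; the three remaining cross terms acquire at least one small factor $|\psi(x) - z_2|$ and are controlled by the cruder estimate \eqref{eq:Weak2av}, the smallness factor compensating for the $\eta_t^{-1}$ and operator-norm losses. The weight integral $\int_{|x|\le R} |x - \I\xi^{1/\gamma}|^{-1}\mathrm{d}x \lesssim \log(1/\xi) \lesssim \log N$ is absorbed into $\prec$.

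In the global region $|x| > R$ the separation $|\psi(x) - z_1| \gtrsim |x|^\gamma$ exceeds the threshold $\delta$ of Definition \ref{def:reg_A}, so $A$ and $A^*$ are automatically regular with respect to the corresponding pairs. The same two-resolvent law applies directly, and combining the deterministic $M$-bound with the norm inequality \eqref{eq:G_norm}, which yields $\lVert \im G(H_t, \Gamma(x))\rVert \lesssim |x|^{-\gamma}$, produces an integrand decaying like $|x|^{-(1+\gamma)}$ against the weight $|x|^{-1}$; the tail contribution is thus bounded by $\langle|A|^2\rangle(1 + \phi_{2}^{\mathrm{hs}}/\sqrt{N\eta_t})$.

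\textbf{The main difficulty} will be executing the regularity-correction step cleanly. Since $\Pi$ is continuous only in the $\ell^1 \to \ell^\infty$ sense, the residue $A_x^{\mathrm{res}}$ must take a very specific rank-one diagonal form (tracked through the explicit formula \eqref{eq:Pi_barzz} and its $t$-deformed analog along the flow), and one must verify that the gain $|\psi(x) - z_2|$ extracted from Lipschitz continuity precisely offsets the losses from the non-regular estimate \eqref{eq:Weak2av} and the logarithmic divergence of the weight near $x = 0$, so that all cross terms combine into a benign correction of size $\langle|A|^2\rangle(1+\phi_{2}^{\mathrm{hs}}/\sqrt{N\eta_t})$.
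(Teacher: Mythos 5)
Your high-level plan — split the integral into a local and a global region, use the norm bound in the global region, and in the local region decompose $A$ into a regular part and a small residual along the destabilizing direction — correctly identifies the structural skeleton of the proof, including the need for the observable regularization step. Your global-region estimate via the norm bound on $\im G$ and the isotropic law tested against the spectral decomposition of $|A|^2$ is also correct (though "the same two-resolvent law applies directly" is slightly off: for $|\im\psi(x)| > \eta_*$ the spectral parameter leaves $\mathcal{D}$, so it is precisely the norm bound, not the local law, that carries the argument there). The step of replacing $\psi$ by the shifted $w$ to land exactly in $\mathcal{D}$ is a technicality you omit but could supply.

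The genuine gap is in the local region. Your plan is to expand $\im G = \frac{1}{2\I}(G-G^*)$ and then decompose $A = A^{\mathrm{reg}}_x + A^{\mathrm{res}}_x$, producing cross terms of the form $\langle G(w)^{\#_1} A^{\mathrm{res}}_x G_1^{\#_2} (A^{\mathrm{reg}}_x)^*\rangle$ that you propose to control by \eqref{eq:Weak2av}, claiming the smallness of $A^{\mathrm{res}}_x$ compensates for the crude-law losses. As stated, this fails: taking $B_1 := A^{\mathrm{res}}_x$ (HS norm $\lesssim |\psi(x)-z_2|\langle|A|^2\rangle^{1/2}$) and $B_2 := (A^{\mathrm{reg}}_x)^*$ in \eqref{eq:Weak2av}, one is forced to pay the operator norm $\lVert A^{\mathrm{reg}}_x\rVert \le \sqrt N\langle|A|^2\rangle^{1/2}$ for $B_2$, and after the gain $|\psi(x)-z_2|/\tilde\eta(x)\lesssim 1$ from the cone geometry, the fluctuation contribution integrates to roughly $\langle|A|^2\rangle / (\sqrt N\,\eta_{1,t})$, which is $N^{1/2-\varepsilon}\langle|A|^2\rangle$ at the local scale $\eta_{1,t}\sim N^{-1+\varepsilon}$ — far too large. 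To make the cross terms close, one must first cyclically permute the chain so that the \emph{regular} observable sits in the middle position $B_1$ (where only its HS norm is charged) and the residual sits in the $B_2$ position (where its small operator norm suffices); this reshuffling is essential and is not part of your plan. The paper sidesteps the entire difficulty by \emph{not} expanding $\im G$: it keeps $\im G_1$ and $\other{\mathcal{G}}(x)$ intact, uses their positive semi-definiteness to apply the Cauchy--Schwarz inequality $\langle P(B_1+B_2)Q(B_1+B_2)^*\rangle \le 2\langle PB_1QB_1^*\rangle + 2\langle PB_2QB_2^*\rangle$ (valid for $P,Q\ge 0$), which eliminates cross terms outright, and then treats the purely-residual term $\langle\other{\mathcal{G}}\,\Delta\vect{z}\,\im G_1\,\Delta\vect{z}^*\rangle$ not via \eqref{eq:Weak2av} but via the algebraic identity \eqref{eq:imG_Dz_imG_Dz}, which uses that $\Delta\vect{z}(x)$ is \emph{exactly} the difference of the two spectral parameters so the generalized resolvent identity collapses the chain to shorter ones with the small observable $\diag{\im\vect{w}(x)}$ inserted. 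Your route can likely be repaired by inserting the cyclicity argument and the cone-geometry relation $\tilde\eta(x)\gtrsim|\psi(x)-z_2|$ explicitly, but without these it does not close; the paper's positivity-based split is both more robust and avoids the bookkeeping entirely.
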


We defer the proof of Lemma \ref{lemma:integral2G} to Section \ref{sec:int_rep}. We are ready to prove the reduction inequalities of Lemma \ref{lemma:reds}. For brevity, we drop the dependence of $G_{j,t}$, $\vect{z}_{j,t}$ and $\eta_t$ on the time $t$, as it is fixed throughout the proof. 
\begin{proof}[Proof of Lemma \ref{lemma:reds}]
	 First, we prove \eqref{eq:Red4av_HS}.
	Without loss of generality, we can assume that $z_2 \in \mathcal{D}\cap\mathbb{H}$, otherwise we use the fact that $G(X,\overline{\vect{z}}) = G(X,\vect{z})^*$. 
	Define $\mathcal{G}(x) := \im G(H_t, \Gamma(x))$, where $\Gamma := \mathfrak{f}^t\circ\psi_{z_2,\xi}$ with $\xi:=N^{-100}$ and $\psi_{z_2,\xi}$ is given by \eqref{eq:psi_map}, then
	\begin{equation} \label{eq:4av_int}
		\begin{split}
			\bigl\langle \im G_1 A_2 G_2 A_1 \im G_1 A_1^* G_2^* A_2^* \bigr\rangle
			&= \frac{1}{\pi^2}\iint_{\mathbb{R}^2} \frac{\bigl\lvert\bigl\langle \im G_1 A_2 \mathcal{G}(x) A_1 \im G_1 A_1^* \mathcal{G}(y) A_2^* \bigr\rangle \bigr\rvert}{(x-\I\xi^{1/\gamma})(y+\I\xi^{1/\gamma})}\mathrm{d}x\mathrm{d}y\\
			&\lesssim N\biggl(\int_{\mathbb{R}} \frac{\bigl\lvert\bigl\langle \mathcal{G}(x) A_1 \im G_1 A_1^*\bigr\rangle\bigr\rvert^{1/2} \bigl\lvert\bigl\langle \im G_1 A_2 \mathcal{G}(x) A_2^* \bigr\rangle\bigr\rvert^{1/2}}{|x-\I\xi^{1/\gamma}|}\mathrm{d}x\biggr)^2\\
			&\lesssim N\int_{\mathbb{R}} \frac{\bigl\lvert\bigl\langle \mathcal{G}(x) A_1 \im G_1 A_1^*\bigr\rangle\bigr\rvert}{|x-\I\xi^{1/\gamma}|}\mathrm{d}x \int_{\mathbb{R}} \frac{ \bigl\lvert\bigl\langle \im G_1 A_2 \mathcal{G}(x) A_2^* \bigr\rangle\bigr\rvert}{|x-\I\xi^{1/\gamma}|}\mathrm{d}x.
		\end{split}
	\end{equation}
	Here, in the first step, we used the Cauchy-Schwarz inequality and submultiplicativity of trace. The reduction inequality \eqref{eq:Red4av_HS} follow immediately from \eqref{eq:4av_int} and \eqref{eq:integral2G}.
	
	Now, we prove \eqref{eq:Red(2,1)av_HS}. Similarly to the proof of \eqref{eq:Red4av_HS} above, we write
	\begin{equation}\label{eq:(2,1)av_int}
		\begin{split}
			\bigl\lvert \bigl\langle G_1 A_1 G_2 A_2 &G_1 B  \bigr\rangle \bigr\rvert
			\le \frac{1}{\pi}\int_\mathbb{R} \frac{\bigl\lvert \bigl\langle G_1 A_1 \mathcal{G}(x) A_2 G_1 B  \bigr\rangle \bigr\rvert}{|x - \I\xi^{1/\gamma}|} \mathrm{d}x\\
			\lesssim&~ \frac{\norm{B}}{\eta}\int_\mathbb{R} \frac{\bigl\lvert\bigl\langle \im G_1 A_1 \mathcal{G}(x) A_1^* \bigr\rangle\bigr\rvert^{1/2}\bigl\lvert
			\bigl\langle \mathcal{G}(x) A_2 \im G_1 A_2^*  \bigr\rangle\bigr\rvert ^{1/2} }{|x - \I\xi^{1/\gamma}|} \mathrm{d}x\\
			\lesssim&~ \frac{\norm{B}}{\eta}\biggl(\int_\mathbb{R} \frac{\bigl\lvert\bigl\langle \im G_1 A_1 \mathcal{G}(x) A_1^* \bigr\rangle \bigr\rvert}{|x - \I\xi^{1/\gamma}|} \mathrm{d}x\biggr)^{1/2}\biggl(\int_\mathbb{R} \frac{\bigl\lvert\bigl\langle \mathcal{G}(x) A_2 \im G_1 A_2^*  \bigr\rangle \bigr\rvert}{|x - \I\xi^{1/\gamma}|} \mathrm{d}x\biggr)^{1/2},
		\end{split}
	\end{equation}
	where in the second step we used the inequality \eqref{eq:ImG_ineq} for $\im G_1$.
	Hence the reduction inequality \eqref{eq:Red(2,1)av_HS} follows immediately from \eqref{eq:(2,1)av_int} and \eqref{eq:integral2G}.
\end{proof}

\section{Proof of Auxiliary Results} 
We record the preliminary properties on the solution to the vector Dyson equation \eqref{eq:VDEz} that were obtained in \cite{Ajanki2019QVE} under the Assumptions \eqref{as:S_flat} and \eqref{as:m_bound}. We state the properties for the spectral parameter $z \in \mathbb{H}$, but using the definition $\m(\bar{z}) := \overline{\m(z)}$, they can be extended to the lower half-plane $\mathbb{H}^*$. 
\begin{lemma} (Properties of $\m$)
	Let $\m(z)$ be the solution to the VDE \eqref{eq:VDEz} (with scalar $z$).
	Provided that Assumptions \eqref{as:S_flat} and \eqref{as:m_bound} hold for the data pair $(\bm{\mathfrak{a}},S) \in \mathbb{R}^N\times\mathbb{R}_+^{N\times N}$, the solution vector $\m$ satisfies the following properties:
	\begin{itemize}
		\item[(i)] (Theorems 2.1, 2.6, and Proposition 7.1 in \cite{Ajanki2019QVE} The map $\m$ is analytic in $\mathbb{H}$ and uniformly $1/3$-H\"{o}lder continuous in $\overline{\mathbb{H}}$, that is
		\begin{equation} \label{eq:m_holder}
			\norm{\m(z_1) - \m(z_2)}_\infty \lesssim |z_1-z_2|^{1/3}, \quad z_1,z_2\in\overline{\mathbb{H}}.
		\end{equation}
		Moreover, the self-consistent density $\rho$ defined in \eqref{eq:rho_def} is compactly supported, that is, there exists a constant $C\sim 1$, such that 
		\begin{equation} \label{eq:bulk_bounded}
			|E| \le C, \quad E \in \supp{\rho}.
		\end{equation}
		\item[(ii)] (Proposition 5.4 in \cite{Ajanki2019QVE}) The solution $\m$ satisfies the bound
		\begin{equation} \label{eq:m_lower}
			|\m(z)| \sim (1 + |z|)^{-1}\vect{1}.
		\end{equation}
		Moreover, the components of $\im \m$ are comparable in size,
		\begin{equation} \label{eq:im_m_flat}
			\im \m(z) \sim \langle \im \m(z)\rangle\vect{1}, \quad z \in \mathbb{H}.
		\end{equation}
	\end{itemize} 
\end{lemma}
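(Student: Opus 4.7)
The plan is to invoke the results from \cite{Ajanki2019QVE} as stated, but the underlying ideas can be sketched in a unified manner from the vector Dyson equation itself. Throughout, I will write the VDE as $F(\m,z) := 1/\m + z - \bm{\mathfrak{a}} + S[\m] = 0$ with the selection principle $\im \m \in \mathbb{H}^N$.

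For (i), analyticity of $\m$ in $\mathbb{H}$ is a standard implicit function argument: the Jacobian $\partial_\m F = -\diag{\m^{-2}} + S$ is invertible on the solution branch (this uses the constraint $\im \m > 0$ in conjunction with Assumption \eqref{as:m_bound}), so $\m$ depends holomorphically on $z$. The uniform $1/3$-H\"{o}lder continuity up to the real line is the most delicate part and is exactly the content of the shape analysis of Theorem 2.6 in \cite{Ajanki2019QVE}: the cubic structure of the VDE implies that $\m$ has a square-root singularity at regular edges and a cube-root singularity at cusps, and $1/3$ is the worst-case H\"{o}lder exponent attained at a cusp. Compact support of $\rho$ in \eqref{eq:bulk_bounded} follows from the VDE by observing that for $|z|$ large, Assumption \eqref{as:m_bound} together with $|1/m_j| = |z-\mathfrak{a}_j + (S\m)_j| \ge |z|-C$ forces $|m_j| \to 0$, hence $\im m_j \to 0$.

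For (ii), the lower bound in \eqref{eq:m_lower} is a direct consequence of the VDE: writing $m_j = -(z-\mathfrak{a}_j + (S\m)_j)^{-1}$ and using \eqref{eq:vect_a}, \eqref{as:S_flat}, \eqref{as:m_bound} to bound $|z-\mathfrak{a}_j + (S\m)_j| \le |z|+C$ from above gives $|m_j| \ge c(1+|z|)^{-1}$; the matching upper bound for large $|z|$ comes from expanding $m_j = -z^{-1}(1+\mathcal{O}(|z|^{-1}))$, and for bounded $|z|$ it is already built into Assumption \eqref{as:m_bound}. The flatness \eqref{eq:im_m_flat} is proved by taking the imaginary part of the VDE, which yields
\begin{equation*}
    \frac{\im m_j(z)}{|m_j(z)|^2} = \im z + \bigl(S[\im \m(z)]\bigr)_j,
\end{equation*}
and hence $\im \m \ge |\m|^2\, S[\im \m] \gtrsim S[\im \m]$ entry-wise by the lower bound on $|\m|$ just established. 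Iterating this inequality $L$ times and invoking the uniform primitivity $\bigl(S^L\bigr)_{jk}\ge c_{\inf}/N$ from Assumption \eqref{as:S_flat} yields $\im \m \gtrsim S^L[\im \m] \gtrsim \langle \im \m \rangle \vect{1}$. Combined with the trivial upper bound $\im m_j \le N\langle \im \m \rangle$ which then follows by averaging the preceding inequality, this gives $\im \m \sim \langle \im \m \rangle \vect{1}$.

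The only step that genuinely resists a short self-contained argument is the sharp $1/3$-H\"{o}lder regularity in \eqref{eq:m_holder}; this truly requires the cubic-equation shape analysis near the singular points of $\rho$ developed in \cite{Ajanki2019QVE}, and I would treat it as a black-box input. All remaining items reduce either to a direct manipulation of the VDE or to an iteration powered by the uniform primitivity assumption.
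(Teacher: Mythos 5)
The paper itself offers no proof for this lemma; it is stated as a compilation of cited results from \cite{Ajanki2019QVE} (Theorems 2.1, 2.6, Propositions 5.4, 7.1). Your proposal to defer the $1/3$-H\"older regularity to the shape analysis there is exactly what the paper does, and your sketches for analyticity, compact support, and the lower bound in \eqref{eq:m_lower} are in the right spirit (with the caveat that the compact-support argument as written shows $\im m_j \to 0$ as $|z| \to \infty$, which by itself only gives vanishing of $\rho$ at infinity; turning this into genuine compact support needs the self-improving step $\langle\im\m\rangle \le \langle|\m|^2\rangle(\eta + C_\sup\langle\im\m\rangle)$, which forces $\langle\im\m\rangle \to 0$ as $\eta\to 0$ once $\langle|\m|^2\rangle < 1/(2C_\sup)$).

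There is, however, a genuine gap in your treatment of the flatness \eqref{eq:im_m_flat}, specifically the upper bound. You write that ``the trivial upper bound $\im m_j \le N\langle\im\m\rangle$'' combined with the iteration lower bound gives $\im\m \sim \langle\im\m\rangle\vect{1}$. This is false: $\im m_j \le N\langle\im\m\rangle$ is indeed trivial (a non-negative entry is at most $N$ times the average), but the implied constant is $N$, whereas $\sim$ in this paper means comparability with constants depending only on the model parameters. The lower bound $\im m_k \gtrsim \langle\im\m\rangle$ (which your iteration does produce) does not, by averaging alone, yield an $N$-independent upper bound $\im m_j \lesssim \langle\im\m\rangle$; one must go back to the VDE. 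Concretely, taking the imaginary part again gives $\im m_j = |m_j|^2\bigl(\im z + (S[\im\m])_j\bigr) \le C_m^2\bigl(\im z + C_\sup\langle\im\m\rangle\bigr)$ using $\norm{\m}_\infty \le C_m$ and the flatness of $S$. Combined with the lower bound $\langle\im\m\rangle \ge \langle|\m|^2\rangle\,\im z \gtrsim (1+|z|)^{-2}\im z$, this gives $\im m_j \lesssim (1+|z|)^2\langle\im\m\rangle$, which is uniform for bounded $|z|$. Relatedly, your lower-bound iteration $\im\m \ge (|\m|^2 S)^L[\im\m] \gtrsim \min_j|m_j|^{2L}\langle\im\m\rangle\vect{1}$ also picks up a factor $(1+|z|)^{-2L}$ from \eqref{eq:m_lower}, so as written the entire comparison \eqref{eq:im_m_flat} degenerates for $|z|$ large and is genuinely uniform only on a compact set; extending it to all of $\mathbb{H}$ needs a separate treatment for large $|z|$ (e.g., via the Stieltjes representation together with compact support of $\rho$), or one simply restricts the claim to the bounded region where the paper actually uses it. In short: the lower bound in (ii) by iteration of $S$ is sound, but the step to the upper bound is not a triviality and your argument for it is incorrect.
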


Finally, we note that the flow map  $\mathfrak{f}^t$, defined in \eqref{eq:flow_map} is uniformly Lipschitz-continuous,
\begin{equation} \label{eq:flow_continuity}
	\norm{\mathfrak{f}^t(\zeta_1) - \mathfrak{f}^t(\zeta_2)}_\infty \lesssim |\zeta_1-\zeta_2|, \quad \zeta_1,\zeta_2 \in \mathcal{D}'.
\end{equation}
in  the domain $\mathcal{D}' \supset \mathcal{D}\cap\mathbb{H}$,
\begin{equation} \label{eq:calD'_def}
	\mathcal{D}' \equiv \mathcal{D}'_{\rho_*,\eta_*,\varepsilon} := \{\zeta \in \mathbb{H}: \rho(\re\zeta) \ge \tfrac{1}{2}\rho_*, N^{-1+\varepsilon/2} \le \im \zeta \le \eta_* \},
\end{equation} 
Indeed, subtracting two copies of \eqref{eq:VDEz} yields the identity
\begin{equation} \label{eq:m_diff_identity}
	\m(\zeta_1) - \m(\zeta_2) = (\zeta_1-\zeta_2)\stab_{\zeta_1,\zeta_2}^{-1}[\m(\zeta_1) \m(\zeta_2)].
\end{equation}
Hence, it follows from Assumption \eqref{as:m_bound}, and the first estimate \eqref{eq:scalar_stab_bounds}, that
\begin{equation} \label{eq:m_continuity}
	\norm{\m(\zeta_1) - \m(\zeta_2)}_\infty \lesssim |\zeta_1-\zeta_2|, \quad \zeta_1,\zeta_2 \in \mathcal{D}',
\end{equation}
which implies \eqref{eq:flow_continuity} by \eqref{eq:flow_explicit}.

\subsection{Resolvent Integral Bound. Proof of Lemma \ref{lemma:integral2G}} 
\label{sec:int_rep}

\begin{proof}[Proof of Lemma \ref{lemma:integral2G}] 
	Denote $\mathcal{G}(x) := \im G(H_t, \Gamma(x))$, $G_1 := G_{1,t}$, $\eta_1:=\eta_{1,t}$, and $\eta := \eta_t$. Since $\im G(X,\mathfrak{f}^t(z)) = - \im G(X,\mathfrak{f}^t(\bar z))$, we can assume without loss of generality that $\im z_1 < 0$ and $\im z_2 > 0$.
	
	We note that for $0 \le t \le T-T^*$, where $T_*\sim 1$ is the threshold from Lemma \ref{lemma:B_renorm}, \eqref{eq:eta_asymp} implies that $\eta_t \sim 1$, hence the estimate \eqref{eq:integral2G} follows trivially from the second bound in \eqref{eq:regM_bound}, the local law \eqref{eq:Weak2av}, and the bound $\norm{A} \le \sqrt{N}\langle |A|^2\rangle^{1/2}$. Therefore, we assume that $0 \le T-t \le T^*$ for the remainder of the proof.
	To bound the integral on the left-hand side of \eqref{eq:integral2G},  we split the $x$ integration into two regimes. We define the set $I$ as
	\begin{equation} \label{eq:int_I}
		I \equiv I_{z_2,\xi} := \{x\in\mathbb{R}:|\im \psi(x)| \le \eta_*\} \subset [-(2\eta_*)^{1/\gamma}, (2\eta_*)^{1/\gamma}],
	\end{equation}
	where $\psi := \psi_{z_2,\xi}$ is defined in \eqref{eq:psi_map}, 
	 and the inclusion follows immediately from \eqref{eq:psi_map}. 

	First, in the regime $x\notin I$, we  use the norm-bound \eqref{eq:G_norm} for $\mathcal{G}(x)$ to obtain, 
	\begin{equation} \label{eq:red_far}
		\int_{\mathbb{R}\backslash I} \frac{\bigl\lvert\bigl\langle \mathcal{G}(x) A \im G_{1} A^*\bigr\rangle\bigr\rvert}{|x-\I\xi^{1/\gamma}|}\mathrm{d}x \lesssim \int_{\mathbb{R}\backslash I} \frac{\bigl\lvert\bigl\langle \im G_{1} A^*A\bigr\rangle\bigr\rvert}{(\eta_* + |x|^\gamma)(|x|+\xi^{1/\gamma})}\mathrm{d}x \prec \langle |A|^2\rangle\log N \prec \langle |A|^2\rangle.
	\end{equation}
	Here, in the second step we employed the spectral decomposition of the hermitian matrix $|A|^2 = A^*A \ge 0$ and the isotropic local law \eqref{eq:1G_laws} to deduce the bound
	\begin{equation} \label{eq:1G_<A^2>_bound}
		\bigl\lvert\bigl\langle \im G_1 |A|^2 \bigr\rangle\bigr\rvert = N^{-1}\sum_p \sigma_p(|A|^2) \bigl\lvert\bigl\langle \vect{u}_p^{A}, \im G_1 \vect{u}_p^{A}\bigr\rangle\bigr\rvert \prec \langle |A|^2\rangle,
	\end{equation}
	where $\sigma_p(|A|^2)$, $\vect{u}_p^{A}$ are the eigenvalues and the corresponding eigenvectors of $|A|^2$.  Note that we need to use the isotropic local law since using the corresponding averaged law in \eqref{eq:1G_laws} would yield an error term controlled in terms of the operator norm $\norm{A^*A}$ of the observable.
	
	Next, we consider $x\in I$. Define the map $w:\mathbb{R} \to \mathbb{H}$ by
	\begin{equation} \label{eq:other_psi}
		w(x) := \psi(x) + \mathrm{e}^{\I\omega(z_2)}\I\xi = z_2 + \mathrm{e}^{\I\frac{\pi}{2}(1-\gamma)+\I\omega(z_2)} u^\gamma.
	\end{equation}
	It follows from the cone property \eqref{eq:cone_prop} of the domain $\mathcal{D}$  and the fact that $|\omega(z)| \le \tfrac{\pi}{2}\gamma$, $0 \le \gamma \le \tfrac{1}{4}$, that the map $w:\mathbb{R} \to \mathbb{H}$, defined in \eqref{eq:other_psi}, satisfies
	\begin{equation} \label{eq:other_psi_prop}
		w(x) \in \mathcal{D},\quad 	\bigl\lvert w(x) - \psi(x)\bigr\rvert \le \xi, \quad \im [w(x)-z_2] \gtrsim |w(x)-z_2| \quad x \in I.
	\end{equation}
	Recall that $\xi = N^{-100}$, so the distance between $w(x)$ and $\psi(x)$ is practically negligible. The only reason $\xi$ was introduced in \eqref{eq:ImFint} was to regularize a logarithmically divergent $1/|x|$ singularity in the integral.
	We rewrite the integral on the left-hand side of \eqref{eq:integral2G} over $x\in I$ as
	\begin{equation} \label{eq:2Gint_Ipart}
		\int_{I} \frac{\bigl\lvert\bigl\langle \mathcal{G}(x) A \im G_{1} A^*\bigr\rangle\bigr\rvert}{|x-\I\xi^{1/\gamma}|}\mathrm{d}x 
		\le \int_{I} \frac{\bigl\lvert \bigl\langle (\mathcal{G}-\other{\mathcal{G}})(x) A \im G_{1} A^*\bigr\rangle \bigr\rvert}{|x-\I\xi^{1/\gamma}|}\mathrm{d}x
		+ \int_{I} \frac{\bigl\lvert\bigl\langle \other{\mathcal{G}}(x) A \im G_{1} A^*\bigr\rangle\bigr\rvert}{|x-\I\xi^{1/\gamma}|}\mathrm{d}x,
	\end{equation}
	where $\other{\mathcal{G}}(x) := \im G(H_t, (\mathfrak{f}^t\circ w)(x))$.  Note that $\other{\mathcal{G}}(x)$ differs from $\mathcal{G}(x)$ only in replacing $\psi(x)$ with $w(x)$ in the argument of the flow map, and hence the first term on the right-hand side of \eqref{eq:2Gint_Ipart} is negligible. 
	
	To bound the first term on the right-hand side of \eqref{eq:2Gint_Ipart} rigorously, we observe that the map $\zeta \mapsto G(X, \mathfrak{f}^t(\zeta))$  satisfies the Lipschitz continuity property,
	\begin{equation}
		\norm{G(X, \mathfrak{f}^t(\zeta_1)) - G(X, \mathfrak{f}^t(\zeta_2))} \lesssim N^{2-\varepsilon} |\zeta_1 - \zeta_2|, \quad \zeta_1,\zeta_2 \in \mathcal{D}',
	\end{equation}
	where $\mathcal{D}'$ is defined in \eqref{eq:calD'_def}.
	Indeed, using the generalized resolvent identity \eqref{eq:res_id}, the norm-bound \eqref{eq:G_norm}, and \eqref{eq:flow_continuity}, we deduce that 
	\begin{equation} \label{eq:G_Lip}
		\begin{split}
			\norm{G\bigl(X, \mathfrak{f}^t(\zeta_1)\bigr) - G(X, \mathfrak{f}^t(\zeta_2))} &= \norm{G(X, \mathfrak{f}^t(\zeta_1))} \norm{\mathfrak{f}^s(\zeta_1) - \mathfrak{f}^t(\zeta_2)}_\infty \norm{G(X, \mathfrak{f}^t(\zeta_2))}\\
			&\lesssim  (\im \zeta_1)^{-1} (\im \zeta_2)^{-1} |\zeta_1-\zeta_2| \le N^{2-\varepsilon}|\zeta_1-\zeta_2|.
		\end{split} 
	\end{equation}
	Recalling that $\xi = N^{-100}$, we conclude from \eqref{eq:1G_<A^2>_bound}, \eqref{eq:other_psi_prop} and \eqref{eq:G_Lip}, that
	\begin{equation} \label{eq:2Gint_diff_term}
		\int_{I} \frac{\bigl\lvert \bigl\langle (\mathcal{G}-\other{\mathcal{G}})(x) A\im G_{1} A^*\bigr\rangle \bigr\rvert}{|x-\I\xi^{1/\gamma}|}\mathrm{d}x \le \int_{I} N^{2-\varepsilon}\xi\frac{\bigl\lvert\bigl\langle \im G_{1} |A|^2\bigr\rangle\bigr\rvert }{|x-\I\xi^{1/\gamma}|}\mathrm{d}x \prec \langle |A|^2\rangle  N^{-98-\varepsilon}\log N.
	\end{equation}
	
	We turn to bound the second integral on the right-hand side of \eqref{eq:2Gint_Ipart}.  The key idea is to decompose the $(z_2,z_1)$-regular observable $A$ into an ($x$-dependent) part regular with respect to $(w(x),z_1)$ and a small correction parallel to $\diag{\mathfrak{f}^t(z_1) - (\mathfrak{f}^t\circ w)(x)}$, that can be dealt with using the resolvent identity \eqref{eq:res_id}.  Denote $\vect{z}_1 := \mathfrak{f}^t(z_1)$, $\vect{w}(x) := (\mathfrak{f}^t\circ w)(x)$. Applying the observable regularization Lemma \ref{lemma:B_renorm} with $\{z_1,z_2,z_3,z_4, B\} := \{z_1, w(x), z_2, z_1, A\}$, we decompose
	\begin{equation} \label{eq:A_decomp}
		A = \mathring{A}(x) + a(x)\Delta\vect{z}(x),\quad \Delta\vect{z}(x) := \diag{\vect{z}_1 - \vect{w}(x)},
	\end{equation}
	where $a(x) := a_t(z_1,w(x))$, and the matrix $\mathring{A}(x) := \mathring{A}_t(z_1,w(x))$ is regular with respect to $(w(x),z_1)$. Since $A$ is $(z_2, z_1)$-regular by assumption, the estimate \eqref{eq:A_decomp_bound} implies
	\begin{equation} \label{eq:small_a_bound}
		|a(x)| \lesssim \frac{|w(x) - z_2|}{\eta_1 + \other{\eta}(x) + |w(x) - z_1|},
	\end{equation} 
	where $\other{\eta}(x) := \langle \im \vect{w}(x)\rangle$.
	Therefore, using the Schwarz inequality, we deduce that 
	\begin{equation} \label{eq:red_local}
		\begin{split}
			\int_{I} \frac{\bigl\lvert\bigl\langle \other{\mathcal{G}}(x) A \im G_1 A^*\bigr\rangle\bigr\rvert}{|x-\I\xi^{1/\gamma}|}\mathrm{d}x 
			\lesssim&~ \int_{I} \frac{\bigl\lvert\bigl\langle \other{\mathcal{G}}(x) \mathring{A}(x) \im G_1 \mathring{A}(x)^*\bigr\rangle \bigr\rvert}{|x-\I\xi^{1/\gamma}|}\mathrm{d}x\\
			&+ \int_{I} \frac{\bigl\lvert\bigl\langle \other{\mathcal{G}}(x) \Delta \vect{z}(x) \im G_1 \Delta \vect{z}^*(x)\bigr\rangle\bigr\rvert}{|x-\I\xi^{1/\gamma}|}|a(x)|^2\mathrm{d}x.
		\end{split}
	\end{equation}
	Since $\mathring{A}(x)$ is $(w(x),z_1)$-regular, it follows from \eqref{eq:phi_assume}, the second bound in  \eqref{eq:regM_bound},  and the second bound in \eqref{eq:B_decomp_bounds}, that the first integral on the right-hand side of \eqref{eq:red_local} is stochastically dominated by $(1+(N\eta)^{-1/2}\phi_{2}^{\mathrm{hs}})\langle |A|^2\rangle$.
	
	Therefore, it remains to bound the second integral on the right-hand side of \eqref{eq:red_local}.
	A direct calculation shows that for any $\bm{\zeta}_1, \bm{\zeta}_2\in\mathbb{H}^N \cup (\mathbb{H}^*)^N$, the resolvent identity \eqref{eq:res_id} implies that, with $\Delta\bm{\zeta} := \mathrm{diag}(\bm{\zeta}_1 - \bm{\zeta}_2)$,
	\begin{equation}\label{eq:imG_Dz_imG_Dz}
			\bigl\langle \im G(\bm{\zeta}_2) \Delta\bm{\zeta} \im G(\bm{\zeta}_1) \Delta\bm{\zeta}  \bigr\rangle 
			= \bigl\langle \im G(\bm{\zeta}_2) (\im\Delta\bm{\zeta}^*)  \bigr\rangle + \im\bigl\langle G(\bm{\zeta}_2)^* \diag{\im \bm{\zeta}_2} G(\bm{\zeta}_1) \Delta\bm{\zeta}^*  \bigr\rangle.
	\end{equation}	
	Therefore, applying the bounds \eqref{eq:m_upper}, \eqref{eq:M_bound}, and  the averaged local laws in \eqref{eq:1G_laws} and \eqref{eq:Weak2av} to the right-hand side of \eqref{eq:imG_Dz_imG_Dz} with $\bm{\zeta}_1 := \vect{z}_1$, $\bm{\zeta}_2 := \vect{w}(x) = (\mathfrak{f}^t\circ w)(x)$, we obtain
	\begin{equation}
		\begin{split}
			\bigl\lvert \bigl\langle \other{\mathcal{G}}(x)\Delta \vect{z}(x) \im G_1 \Delta\vect{z}^*(x)\bigr\rangle \bigr\rvert 
			\prec&~ \norm{\Delta\vect{z}(x)}\biggl(1 + \frac{1}{N\other{\eta}(x)}\biggr) + \frac{\norm{\Delta\vect{z}(x)}\norm{\im\vect{w}(x)}_\infty}{\other{\eta}(x)}\biggl(1 + \frac{1}{N\eta_1}\biggr),
		\end{split}
	\end{equation}
	where $\other{\eta}(x) := \langle \im \vect{w}(x) \rangle \ge \im z_2 \ge N^{-1+\varepsilon}$. It follows from \eqref{eq:imz_flat} that $\lVert \im\vect{w}(x) \rVert_\infty \sim \other{\eta}(x)$, hence
	\begin{equation} \label{eq:renorm_leftovers}
		\bigl\lvert \bigl\langle \other{\mathcal{G}}(x)\Delta \vect{z}(x) \im G_1 \Delta\vect{z}^*(x)\bigr\rangle \bigr\rvert  \prec \norm{\Delta\vect{z}(x)}.
	\end{equation}
	The comparison \eqref{eq:imz_flat} implies that $\norm{\im \vect{z}_1}_\infty \sim \eta_1$ and $\norm{\im \vect{w}(x)}_\infty \sim \other{\eta}(x)$, hence using \eqref{eq:flow_continuity}
	we conclude  that
	\begin{equation}\label{eq:Dz_bound}
		\norm{\Delta \vect{z}(x)} 
		\le \norm{\im \vect{z}_1}_\infty + \norm{\im \vect{w}(x)}_\infty + \norm{\mathfrak{f}^t(\bar z_1) - (\mathfrak{f}^t\circ w)(x)}_\infty
		\lesssim \eta_{1} + \other{\eta}(x) + |\bar{z}_1 - w(x)|.
	\end{equation}
	Therefore, combining estimates \eqref{eq:small_a_bound} for $a(x)$, \eqref{eq:renorm_leftovers} and \eqref{eq:Dz_bound}, we obtain
	\begin{equation} \label{eq:2G_leftovers}
		\int_{I} \frac{\bigl\lvert\bigl\langle \other{\mathcal{G}}(x) \Delta \vect{z}(x) \im G_1 \Delta \vect{z}^*(x)\bigr\rangle\bigr\rvert}{|x-\I\xi^{1/\gamma}|}|a(x)|^2\mathrm{d}x \prec  \int_I  \frac{\langle |A|^2\rangle |w(x)-z_2|^2}{\eta_{1}+\other{\eta}(x)+|z_1-w(x)|}\frac{\mathrm{d}x}{|x| + \xi^{1/\gamma}} \prec \langle |A|^2\rangle,
	\end{equation}
	where in the last inequality we used $|w(x)-z_2| = |x|^\gamma$ and $\other{\eta}(x) \ge \im w(x) \gtrsim |w(x)-z_2|$ that follow from \eqref{eq:other_psi} and \eqref{eq:other_psi_prop}, respectively. Combining the bounds \eqref{eq:red_far},  \eqref{eq:2Gint_Ipart}, \eqref{eq:2Gint_diff_term}, \eqref{eq:red_local}, and \eqref{eq:2G_leftovers} yields \eqref{eq:integral2G}.
	This concludes the proof of Lemma \ref{lemma:integral2G}.
\end{proof}

\subsection{Observable Regularization. Proof of Lemmas \ref{lemma:isotropic}, \ref{lemma:S_decomp} and \ref{lemma:B_renorm}} \label{sec:A_reg_proofs}
We close this section by proving the observable decomposition lemmas. We record the following asymptotic expansion for the smallest eigenvalue of the stability operator $\stab$, defined in \eqref{eq:stab_def}, that we prove in Appendix \ref{sec:stab_section}.
\begin{lemma} \label{lemma:Pi}
	Let $z_1,z_2 \in \mathcal{D}$, defined in \eqref{eq:calD_def}, satisfy $(\im z_1)(\im z_2) < 0$ and  $|z_1-z_2| \le \tfrac{1}{2}\delta$, where $\delta$ is the threshold in Lemma \ref{lemma:stab_lemma}. Let $\eigB_{z_1,z_2}$ be the smallest eigenvalue of $\stab_{z_1, z_2}$, then
		\begin{equation}\label{eq:beta_expan}
			\eigB_{z_1,z_2} = \I\frac{z_1 - z_2}{\kappa(z_1)} + \mathcal{O}(|z_1 - z_2|^2), \quad \kappa(z):=\frac{2}{\langle\im \M{z}\rangle }\biggl\langle \frac{(\im \M{z})^2}{|\M{z}|^2} \biggr\rangle \sim 1, \quad z \in \mathcal{D}.
		\end{equation}
\end{lemma}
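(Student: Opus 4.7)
The plan is first-order perturbation theory for the isolated simple eigenvalue $\eigB_{z_1,z_2}$, combined with a closed-form identity derived from the vector Dyson equation~\eqref{eq:VDE}. Subtracting the Dyson equations for $\m(z_1)$ and $\m(z_2)$, dividing by $m_j(z_1)m_j(z_2)$, and rearranging (the same algebraic manipulation that produces~\eqref{eq:m_diff_identity}) yields
\begin{equation} \label{eq:proof_key}
\stab_{z_1,z_2}\bigl[\m(z_1)-\m(z_2)\bigr] = (z_1-z_2)\,\m(z_1)\m(z_2),
\end{equation}
valid for all $z_1,z_2\in\mathbb{C}\setminus\mathbb{R}$. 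Since $\eigB_{z_1,z_2}$ is isolated and simple with rank-one spectral projector $\Pi_{z_1,z_2}$ defined via the contour integral~\eqref{eq:Pi_int} (applicable in the present opposite-half-plane setting by~\eqref{eq:stab_gap}), I would write $\Pi_{z_1,z_2}[v] = r_{z_1,z_2}\,g_{z_1,z_2}(v)$ for a suitable linear functional $g_{z_1,z_2}$ and apply $\Pi_{z_1,z_2}$ to both sides of~\eqref{eq:proof_key}. Invoking $\stab_{z_1,z_2}\Pi_{z_1,z_2} = \eigB_{z_1,z_2}\Pi_{z_1,z_2}$ then produces the exact formula
\begin{equation} \label{eq:proof_beta_exact}
\eigB_{z_1,z_2} = (z_1-z_2)\,\frac{g_{z_1,z_2}\bigl(\m(z_1)\m(z_2)\bigr)}{g_{z_1,z_2}\bigl(\m(z_1)-\m(z_2)\bigr)}.
\end{equation}

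The leading-order coefficient is extracted by taking $(z_1,z_2)\to(E+\I 0,\,E-\I 0)$ with $E := \re z_1$. In this limit $\m(z_1)\m(z_2)\to|\m(E)|^2$ and $\m(z_1)-\m(z_2)\to 2\I\im\m(E)$, while the explicit formula~\eqref{eq:Pi_barzz} together with~\eqref{eq:Pi_continuity} gives that $g_{z_1,z_2}$ converges to the functional $g_0$ associated to the left eigenvector $\ell_0 = |\m(E)|^{-2}\im\m(E)$. A direct computation using the normalization in~\eqref{eq:Pi_barzz} then yields
\begin{equation*}
\frac{g_0\bigl(|\m(E)|^2\bigr)}{g_0\bigl(2\I\im\m(E)\bigr)} = \frac{\langle\im M(E)\rangle}{2\I\,\langle(\im M(E))^2/|M(E)|^2\rangle} = \frac{1}{\I\,\kappa(E)},
\end{equation*}
which, multiplied by $(z_1-z_2)$, reproduces the leading-order term of~\eqref{eq:beta_expan} (with the $\pm\I$ sign set by the convention for which of $z_1,z_2$ is placed in $\mathbb{H}$). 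The remainder is controlled by combining three sources: the Lipschitz continuity~\eqref{eq:m_continuity} of $\m$ gives $\m(z_j)-\m(E\pm\I0) = \bigO{|z_1-z_2|}$; \eqref{eq:Pi_continuity} gives $g_{z_1,z_2}-g_0 = \bigO{|z_1-z_2|}$ on bounded vectors; and $\kappa(z_1)-\kappa(E) = \bigO{|z_1-z_2|}$ by continuity. Since $\im\m\sim 1$ in the bulk, the denominator of~\eqref{eq:proof_beta_exact} stays of order unity, so multiplying these $\bigO{|z_1-z_2|}$ relative errors by the $(z_1-z_2)$ prefactor produces the required $\bigO{|z_1-z_2|^2}$ remainder.

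The main technical step is the quantitative bound $g_{z_1,z_2}-g_0 = \bigO{|z_1-z_2|}$, which I would establish by differentiating the contour-integral representation~\eqref{eq:Pi_int} via the second resolvent identity
\begin{equation*}
(\zeta-\stab_{z_1,z_2})^{-1} - (\zeta-\stab_0)^{-1} = (\zeta-\stab_0)^{-1}\bigl(\stab_{z_1,z_2}-\stab_0\bigr)(\zeta-\stab_{z_1,z_2})^{-1},
\end{equation*}
where $\stab_0 := 1 - |M(E)|^2 S$, using the uniform resolvent bound on $|\zeta|=c$ supplied by~\eqref{eq:stab_gap} together with the elementary estimate $\stab_{z_1,z_2}-\stab_0 = \bigl(|M(E)|^2-M(z_1)M(z_2)\bigr)S = \bigO{|z_1-z_2|}$ in operator norm (which uses Assumption~\eqref{as:m_bound} and~\eqref{eq:m_continuity}). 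The hypothesis $|z_1-z_2|\le\delta/2$ keeps the configuration inside the perturbative regime where~\eqref{eq:stab_gap} applies uniformly. I expect the error bookkeeping in this step to be the most delicate part of the proof, although each individual ingredient is already part of the stability-operator machinery developed elsewhere in the paper.
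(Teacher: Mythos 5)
Your proposal is correct but takes a genuinely different route from the paper. The paper's proof (carried out inside the proof of Lemma~2.3 in Appendix~A.2, culminating in \eqref{eq:beta_expan_proof}) relies on first-order analytic perturbation theory for the simple isolated eigenvalue of $\stab_{\bar z_1,z_2}$, expanding in the variable $z_2$ around the ``diagonal'' base point $z_2 = z_1$: the zeroth-order value $\eigB_{\bar z_1,z_1}=1-\norm{F_{\bar z_1,z_1}}$ is extracted from \eqref{eq:F_bound}, and the first-order coefficient is the matrix element $\langle |\m_1|^{-1}\vv,\,(\partial_{z_2}\stab_{\bar z_1,z_2})|_{z_2=z_1}\,|\m_1|\vv\rangle$, computed in \eqref{eq:deriv_iden} via the identity $\partial_z\m=\m^2(1-S\m^2)^{-1}[\vect{1}]$ and the approximation of the principal eigenvector $\vv$ by $\vect{f}$ from \eqref{eq:v_f_bound}. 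Your route instead applies the rank-one projector $\Pi_{z_1,z_2}$ directly to the VDE difference identity $\stab_{z_1,z_2}[\m_1-\m_2]=(z_1-z_2)\m_1\m_2$ (which is indeed \eqref{eq:m-m_F_identity} in disguise), obtaining the \emph{exact} formula $\eigB=(z_1-z_2)\,g(\m_1\m_2)/g(\m_1-\m_2)$, and then extracts $\kappa$ by passing to the boundary limit $(E+\I 0,E-\I 0)$ via \eqref{eq:Pi_barzz}. What this buys you is that the derivative identities for $\m$ are never needed and the expansion coefficient drops out purely from the known boundary projector; the trade-off is that the error control has to go through continuity of $\m$ and $\Pi$ up to the real axis (Lipschitz in the bulk, via \eqref{eq:m_continuity} and \eqref{eq:Pi_continuity}) rather than staying at finite $\eta$, and you must separately check that the denominator $g_{z_1,z_2}(\m_1-\m_2)$ stays bounded away from zero throughout $|z_1-z_2|\le\delta/2$, which follows a posteriori from \eqref{eq:Pi_separation} and the exact formula itself since $\eigB\ne 0$. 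Both arguments produce the same leading coefficient up to the half-plane sign convention for $\kappa(z_1)$ (you acknowledge the ambiguity; it is present in the paper's formulation as well, and \eqref{eq:beta_expan_proof} fixes it by always evaluating $\kappa$ at the representative in $\mathbb{H}$).
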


\begin{proof} [Proof of Lemma \ref{lemma:B_renorm}]
	By Definition \ref{def:reg_A}, in the regime $\min\{|z_1 - z_2|, |\bar{z}_1- z_2|\} > \tfrac{1}{2}\delta$, all observables are $(z_1,z_2)$-regular, hence the conclusion of Lemma \ref{lemma:B_renorm} is satisfied with $\mathring{B}_t := B$ and $b_t := 0$.	
	Therefore, it remains to consider the regime $\min\{|z_1 - z_2|, |\bar{z}_1- z_2|\} \le \tfrac{1}{2}\delta$.
	
	Let $\Pi_{z_2^-, z_1^+}$ be the eigenprojector corresponding to the smallest eigenvalue of $\stab_{z_2^-, z_1^+}$ as in Lemma \ref{lemma:stab_lemma}, with the spectral parameters $z_j^\pm = \re z_j \pm \I|\im z_j|$. Recall that by construction, $z_1^+ \in \mathbb{H}$ and $z_2^- \in \mathbb{H}^*$,  and $|z_1^+ - z_2^-| \le \tfrac{1}{2}\delta$.
	
	Denote the vector $\Delta\vect{z}_t := \widehat{\vect{z}}_{1,t} - \vect{z}_{2,t}$, where $\widehat{\vect{z}}_{1,t}$ is defined in \eqref{eq:B_decomp}, and $\m_j^\pm := \m(z_j^\pm)$. Since by Lemma \ref{lemma:stab_lemma}  $\mathrm{rank}\,\Pi_{z_2^-, z_1^+} = 1$,  with a slight abuse of notation (i.e., interpreting the ratio of two parallel vectors as a scalar), we define the complex number $b_t$ and the matrix $\mathring{B}_t$ as
	\begin{equation} \label{eq:b_decomp_def}
		b_t := \frac{\Pi_{z_2^-, z_1^+}[\m_1^+\m_2^-\vect{b}^{\mathrm{diag}}]}{\Pi_{z_2^-, z_1^+}[\m_1^+\m_2^-\Delta\vect{z}_t]}, \quad \mathring{B}_t := B - b_t \diag{\Delta\vect{z}_t},
	\end{equation}
	where we recall the notation $\vect{b}^\mathrm{diag} := (B_{jj})_{j=1}^N$. 
	First, we show that  there exists a threshold $1 \lesssim T_* \le T$ such that for all times $T-T_* \le t \le T$,
	\begin{equation} \label{eq:a_denom_comp}
		\norm{\Pi_{z_2^-, z_1^+}[\m_1^+\m_2^-\Delta\vect{z}_t]}_\infty \sim \eta_{1,t}+ \eta_{2,t} + |z_1 - z_2|, 
	\end{equation}
	where we recall that $\eta_{j,t} := |\langle \im \vect{z}_{j,t} \rangle|$. It follows from \eqref{eq:flow_map} and \eqref{eq:flow_explicit} that 
	\begin{equation} \label{eq:z_diff_expr}
		\Delta\vect{z}_t = \mathrm{e}^{(T-t)/2} (\widehat{z}_1 - z_2)\vect{1} + 2\sinh\bigl((T-t)/2\bigr)S[\m(\widehat{z}_1) - \m(z_2)],
	\end{equation}
	where $\widehat{z}_1 := \re z_1 - \I \sign(\im z_2) |\im z_1|$. 
	It follows from \eqref{eq:m_continuity} that 
	\begin{equation}
		\m(\widehat{z}_1) - \m(z_2) = -\sign(\im z_2) \bigl(\m_1^+ - \m_2^-\bigr) + \mathcal{O}\bigl(\min\{|z_1 - z_2|, |\bar{z}_1- z_2|\}\bigr).
	\end{equation}
	Hence, using identity \eqref{eq:m_diff_identity} with $\zeta_1 := z_1^+$, $\zeta_2 := z_2^-$, the definition of the projector $\Pi_{z_2^-, z_1^+}$ in Lemma \ref{lemma:stab_lemma}, and the asymptotic for $\eigB_{z_2^-, z_1^+}$ from \eqref{eq:beta_expan}, we obtain
	\begin{equation} \label{eq:PI_of_m_diff}
		\Pi_{z_2^-, z_1^+}\bigl[\m_1^+\m_2^-S[\m(\widehat{z}_1) - \m(z_2)]\bigr] 
		= -\I\sign(\im z_2)\kappa(z_2)\Pi_{z_2^-, z_1^+}[\m_1^+\m_2^-] + \mathcal{O}\bigl(|z_1^+-z_2^-|\bigr).
	\end{equation}
	Therefore, \eqref{eq:z_diff_expr} and \eqref{eq:PI_of_m_diff} imply
	\begin{equation} \label{eq:a_denom_expand}
		\Pi_{z_2^-, z_1^+}[\m_1^+\m_2^-\Delta\vect{z}_t] = \mathrm{e}^{(T-t)/2}K(z_1,z_2)\Pi_{z_2^-, z_1^+}[\m_1^+\m_2^-] + \mathcal{O}\bigl((T-t)|z_1^+ - z_2^-|\bigr),
	\end{equation}
	where the function $K(z_1,z_2)$ is defined as
	\begin{equation}
		K(z_1,z_2) := \widehat{z}_1 - z_2 - \I\sign(\im z_2)\kappa(z_2)\bigl(1 - \mathrm{e}^{t-T}\bigr).
	\end{equation}
	Since  $\kappa(z_2) \sim 1$ by \eqref{eq:beta_expan}, we deduce that
	\begin{equation}
		|\re K(z_1,z_2) | = |\re z_1 - \re z_2|, \quad |\im K(z_1,z_2)| \sim |\im z_1| + |\im z_2| + (T-t).
	\end{equation}
	It follows from \eqref{eq:eta_asymp} that $\eta_{j,t} \sim |\im z_j| + (T-t)$, hence $|K(z_1,z_2)| \sim \eta_{1,t}+\eta_{2,t} + |z_1-z_2|$. Therefore, there exists a threshold $1 \lesssim T_* \le T$ such that for all times $T-T_* \le t \le T$, the bound \eqref{eq:a_denom_comp} follows from \eqref{eq:a_denom_expand} and \eqref{eq:Pi_separation}.
	
	Next,we use the bound  $\lVert\vect{b}^{\mathrm{diag}}\rVert_1 \le N\langle |B|^2 \rangle^{1/2}$ together with Assumption \eqref{as:m_bound} and the estimate \eqref{eq:Pi_bound} to deduce that 
	\begin{equation} \label{eq:Pi_bdiag_bound}
		\norm{\Pi_{z_2^-, z_1^+}[\m_1^+\m_2^-\vect{b}^{\mathrm{diag}}]}_\infty \lesssim \langle |B|^2 \rangle^{1/2}.
	\end{equation}
	Combining \eqref{eq:Pi_bdiag_bound} with the relation \eqref{eq:a_denom_comp}, we immediately obtain the upper bound on $|b_t|$ in \eqref{eq:B_decomp_bounds} using the definition of $b_t$ in \eqref{eq:b_decomp_def}.
	Next, using \eqref{eq:eta_asymp}, \eqref{eq:m_continuity}, and \eqref{eq:z_diff_expr}, we obtain
	\begin{equation}
		\norm{\Delta\vect{z}_t}_\infty \lesssim \eta_{1,t} + \eta_{2,t} + |z_1 - z_2|.
	\end{equation}
	Hence the other two bounds in \eqref{eq:B_decomp_bounds} on the norms of $\mathring{B}_t$ follow from its definition in \eqref{eq:b_decomp_def} and the inequalities $\norm{\diag{\Delta\vect{z}_t}}_\mathrm{hs} \le \norm{\diag{\Delta\vect{z}_t}} \le \norm{\Delta\vect{z}_t}_\infty$.
	
	Finally, we show that the improved  bound \eqref{eq:A_decomp_bound} holds under the assumption that $B$ is $(z_3,z_4)$-regular for $z_3,z_4 \in \mathcal{D}$ lying in the same complex half-plane as $z_1$ and $z_2$, respectively. Once again, it suffices to consider the regime $\min\{|z_1 - z_3|, |z_2-z_4|\} \le \tfrac{1}{2}\delta$.
	To estimate the numerator in \eqref{eq:b_decomp_def}, we observe that 
	\begin{equation} \label{eq:a_bound_num}
		\norm{\Pi_{z_2^-, z_1^+}[\m_1^+\m_2^-\vect{b}^{\mathrm{diag}}] - \Pi_{z_3^-, z_4^+}[\m_3^-\m_4^+\vect{b}^{\mathrm{diag}}]}_\infty \lesssim \langle |B|^2\rangle^{1/2}\bigl(|z_1-z_3|+|z_2-z_4|\bigr),
	\end{equation}
	where we used the bounds \eqref{eq:m_continuity} and \eqref{eq:Pi_continuity}. Since $B$ is $(z_3,z_4)$-regular, $\Pi_{z_3^-, z_4^+}[\m_3^-\m_4^+\vect{b}^{\mathrm{diag}}] = 0$ by Definition \ref{def:reg_A}. Here we used the inequality $\norm{\vect{b}^\mathrm{diag}}_2 \le N^{1/2}\langle |B|^2 \rangle^{1/2}$, and $|z_1^+ - z_4^+| \le |z_1-z_4|$, $|z_2^- - z_3^-| \le |z_2-z_3|$ that follow the definition of $z_j^\pm$ and the conditions $(\im z_1)(\im z_4) > 0$, $(\im z_2)(\im z_3) >0$. 
	This concludes the proof of Lemma \ref{lemma:B_renorm}.
\end{proof}

The proof of Lemma \ref{lemma:S_decomp} is contained in Section 6.3 of \cite{R2023bulk}. However, for the sake of completeness and consistency, we present the argument with the notation of the present paper.
\begin{proof}[Proof of Lemma \ref{lemma:S_decomp}]
	Similarly to the proof of Lemma \ref{lemma:B_renorm}, it suffices to consider the regime $\min\{|z_1-z_2|, |\bar{z}_1-z_2|\} \le \tfrac{1}{2}\delta$, where $\delta$ is the threshold in Lemma \ref{lemma:stab_lemma}.
	Let $\vect{s}^{(p)} := (NS_{pj})_{j=1}^N$ for all $p \in \{1,\dots, N\}$ be the rows of the matrix $S$ multiplied by $N$ so that $\lVert\vect{s}^{(p)}\rVert_\infty \sim 1$ (by Assumption \eqref{as:S_flat}). Using the fact that $\mathrm{rank}\,\Pi_{z_2^-, z_1^+} = 1$ and \eqref{eq:Pi_separation}, we define, for all $p \in \{1,\dots, N\}$,
	\begin{equation} \label{eq:S_decomp_def}
		s_p := \frac{\Pi_{z_2^-, z_1^+}[\m(z_1^+)\m(z_2^-)\vect{s}^{(p)}]}{\Pi_{z_2^-, z_1^+}[\m(z_1^+)\m(z_2^-)]}, \quad \mathring{\vect{s}}^{(p)} := \vect{s}^{(p)} - s_p\vect{1},
	\end{equation}
	hence $\mathrm{diag}(\vect{s}^{(p)})$ are $(z_2,z_1)$-regular by Definition \ref{def:reg_A}.
	Defining $\mathring{S}(z_1,z_2)$ to be the matrix with rows $\mathring{\vect{s}}^{(p)}$, and setting $\vect{s} := (s_p)_{p=1}^N$, we obtain the decomposition \eqref{eq:S_decomp}.
	It follows immediately from the upper bound in Assumptions \eqref{as:S_flat}, \eqref{as:m_bound}, and  bounds \eqref{eq:Pi_bound}, \eqref{eq:Pi_separation}, that $|s_p| \lesssim 1$, and hence the second estimate in \eqref{eq:S_decomp_bounds} holds. The first estimate in \eqref{eq:S_decomp_bounds} follows trivially from the definition of $\vect{s}^{(p)}$ in \eqref{eq:S_decomp_def} and $\norm{\vect{s}} \lesssim 1$. This concludes the proof of Lemma \ref{lemma:S_decomp}.
\end{proof}

\begin{proof}[Proof of Lemma \ref{lemma:isotropic}]
	First, we observe that for $B:=N\vect{y}\vect{x}^*$, we have the identity
	\begin{equation}
		\langle \vect{x}, (G_{1,t}A_1G_{2,t}-M_t)\vect{y} \rangle = \langle (G_{1,t}A_1G_{2,t}-M_t)B \rangle.
	\end{equation}
	In the regime $\min\{|z_1-z_2|, |\bar{z}_1-z_2|\} > \tfrac{1}{2}\delta$, the observable $B$ is $(z_1,z_2)$-regular, hence the statement of Lemma \ref{lemma:isotropic} holds with the choice $A_2 := N^{-1/2}B$ and $a := 0$. 
	In the complementary regime  $\min\{|z_1-z_2|, |\bar{z}_1-z_2|\} \le \tfrac{1}{2}\delta$, we use the fact that $\mathrm{rank}\,\Pi_{z_2^-, z_1^+} = 1$ and the lower bound \eqref{eq:Pi_separation} to define $a$ and $A_2$ by
	\begin{equation} \label{eq:xy_decomp}
		a := N \frac{\Pi_{z_2^-, z_1^+}[\m(z_1^+)\m(z_2^-)\,\vect{y}\,\overline{\vect{x}}]}{\Pi_{z_2^-, z_1^+}[\m(z_1^+)\m(z_2^-)]}, \quad A_2 := \sqrt{N}\bigl(\vect{y}\vect{x}^* - N^{-1}a \, I\bigr).
	\end{equation}
	In particular, the bounds \eqref{eq:Pi_bound} and \eqref{eq:Pi_separation} imply that
	\begin{equation}
		|a| \lesssim  \norm{\overline{\vect{x}}\vect{y}}_1 \lesssim \norm{\vect{x}}_2\norm{\vect{y}}_2.
	\end{equation}
	Finally, by construction \eqref{eq:xy_decomp}, the matrix $A_2$ is $(z_1,z_2)$-regular and satisfies
	\begin{equation}
		\langle |A_2|^2 \rangle^{1/2} \lesssim \sqrt{N}\langle|\vect{y}\vect{x}^*|^2\rangle^{1/2} + |a| \lesssim \norm{\vect{x}}_2\norm{\vect{y}}_2.
	\end{equation}
	This concludes the proof of Lemma \ref{lemma:isotropic}.
\end{proof}

\section{Green Function Comparison. Proof of Proposition \ref{prop:GFT}} \label{sec:GFT}
The inductive argument laid out is Section 5 of \cite{Cipolloni2023Edge} is a robust approach to Green Function Comparison that does not depend on the specific random matrix ensemble, and requires only that the desired local laws hold for a random matrix with some entry distribution with the first three matching moments (that we accomplished in Proposition \ref{prop:flow_local_laws}). While in \cite{Cipolloni2023Edge} the procedure is performed for chains of arbitrary length, our Theorem \ref{th:locallaws} only involves chains of length up to two. 
However, to close the argument for the averaged two-resolvent local law \eqref{eq:2G_av_reg}, we need an isotropic local law for symmetric chains of length three, which is the content of Proposition \ref{prop:2iso} below. The remainder of the Green Function Comparison argument for isotropic chains of length up to three and averaged chains of length up to two can be imported directly from Sections 5.1 and 5.2 of \cite{Cipolloni2023Edge}, respectively.
\begin{prop}[Isotropic Local Law for Three Resolvents] \label{prop:2iso}
	Fix $\varepsilon > 0$, then under the notation and notation of Proposition \ref{prop:global_laws}, the isotropic local law
	\begin{equation} \label{eq:3G_iso_t}
		\bigl\lvert \bigl\langle \vect{x}, (G_{1,t} A_1 G_{2,t} A_2 G_{1,t} - \M{\vect{z}_{1,t}, A_1, \vect{z}_{2,t}, A_2, \vect{z}_{1,t}})\vect{y} \bigr\rangle \bigr\rvert  \prec \frac{N\langle |A_1|^2\rangle^{1/2}\langle |A_2|^2\rangle^{1/2}\norm{\vect{x}}_2\norm{\vect{y}}_2}{\sqrt{N\eta_t}}
	\end{equation}
	holds uniformly in time $0\le t \le T$, in regular observables $A_1$, $A_2$, in spectral parameters $z_1,z_2\in\mathcal{D}$, and in deterministic vectors $\vect{x}$, $\vect{y}$.  
	Here, the deterministic approximation $\M{\vect{z}_{1,t}, A_1, \vect{z}_{2,t}, A_2, \vect{z}_{1,t}}$ is defined as
	\begin{equation} \label{eq:M2iso_def}
		\M{\vect{z}_{1,t}, A_1, \vect{z}_{2,t}, A_2, \vect{z}_{1,t}} := \bigl(1 - M_{1,t}M_{2,t}\mathscr{S} \bigr)^{-1}\biggl[M_{1,t} \bigl(A_1 + \mathscr{S}[M_{[1,2],t}]\bigr)M_{[2,1],t}\biggr],
	\end{equation}
	where $M_{[1,2],t} := \M{\vect{z}_{1,t},A_1,\vect{z}_{2,t}}$, $M_{[2,1],t} := \M{\vect{z}_{2,t},A_2,\vect{z}_{1,t}}$ are defined in \eqref{eq:M_def}, and recall that $M_{j,t} := \diag{\m(\vect{z}_{j,t})}$ from \eqref{eq:Mt_def}.
\end{prop}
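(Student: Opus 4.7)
The plan is to adapt the characteristic flow machinery of Section \ref{sec:flow_laws_proof} to the three-resolvent isotropic chain. I introduce the control quantity
$$\Phi_3(t) := \frac{\sqrt{N\eta_t}}{N\langle|A_1|^2\rangle^{1/2}\langle|A_2|^2\rangle^{1/2}\norm{\vect{x}}_2\norm{\vect{y}}_2}\, \bigl\lvert \bigl\langle \vect{x}, \bigl(G_{1,t}A_1G_{2,t}A_2G_{1,t} - \M{\vect{z}_{1,t},A_1,\vect{z}_{2,t},A_2,\vect{z}_{1,t}}\bigr)\vect{y} \bigr\rangle \bigr\rvert,$$
so that the target \eqref{eq:3G_iso_t} is exactly $\Phi_3(t) \prec 1$. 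At $t=0$ the bound $\Phi_3(0) \prec 1$ follows from a cumulant expansion analogous to Proposition \ref{prop:global_laws}: since $\eta_{j,0} \sim 1$ and the right-hand side of \eqref{eq:3G_iso_t} carries an extra $\sqrt{N/\eta_t}$ factor compared to the natural Hilbert--Schmidt rate, this is a very loose requirement that even the trivial bound $\norm{A_1}\norm{A_2}\eta^{-3}$ combined with $\norm{A_j} \le \sqrt{N}\langle|A_j|^2\rangle^{1/2}$ nearly achieves on its own.

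Next, I apply It\^{o}'s formula to $\langle \vect{x}, G_{1,t}A_1G_{2,t}A_2G_{1,t}\vect{y}\rangle$ along the combined characteristic flow \eqref{eq:z_evol} and Ornstein--Uhlenbeck flow \eqref{eq:OUflow}, and subtract the deterministic evolution of $\M{\vect{z}_{1,t},A_1,\vect{z}_{2,t},A_2,\vect{z}_{1,t}}$ obtained by differentiating \eqref{eq:M2iso_def} in time via \eqref{eq:M_evol}, \eqref{eq:m_evol} and \eqref{eq:z_evol}. The usual algebraic cancellation between the two flows leaves three types of contributions: a martingale whose quadratic variation involves chains of length four; quadratic forms of $\mathscr{S}$ on pairs of fluctuating chains of lengths $(1,3)$, $(2,2)$ and $(3,1)$; and linear terms of the form $\langle (G_{j,t}-M_{j,t})\mathscr{S}[\text{length-}4]\rangle$. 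Each such contribution is estimated using the local laws of Proposition \ref{prop:flow_local_laws}: the parts featuring only regular observables adjacent to each resolvent are controlled by \eqref{eq:2G_av_t}--\eqref{eq:1G_av_t}, while the pieces involving general observables are absorbed into the Hilbert--Schmidt auxiliary estimates \eqref{eq:Weak2av}--\eqref{eq:Weak1iso} of Lemma \ref{lemma:2G_weak}.

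The length-4 chains appearing in the martingale's quadratic variation are reduced to averaged Hilbert--Schmidt quantities via the conformal integral representation of Lemma \ref{lemma:imF_rep}, in the spirit of the reduction inequality \eqref{eq:Red4av_HS}. To handle the quadratic $\mathscr{S}$-terms, which do not factorize, I use the $S$-decomposition of Lemma \ref{lemma:S_decomp} to split each row of $S$ into a $(z_2,z_1)$-regular diagonal matrix $\mathring{S}^{(p)}$ plus a rank-one correction: the regular piece allows one to invoke the isotropic two-resolvent law \eqref{eq:2G_iso_t}, while the rank-one correction is controlled through the auxiliary quantity $\Phi_{(1,1)}$ from \eqref{eq:Phi(1,1)_def} together with the isotropic local law \eqref{eq:1G_laws}. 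After integrating in time via \eqref{eq:int_rules}, one obtains a master inequality of the schematic form $\Phi_3(t) \prec 1 + (N\eta_t)^{-1/2}\Phi_3(t) + (\text{known positive terms})$, which closes via Lemma \ref{lemma:iter}. I expect the main obstacle to be the careful bookkeeping of the four- and five-resolvent chains generated by the non-factorizing $\mathscr{S}$ in the quadratic variation: because the spectral parameters along the characteristic flow are vector-valued, neither resolvent identities nor spectral decomposition can be used to shorten these chains, so the argument must rely exclusively on the conformal integral representation \eqref{eq:ImFint} in tandem with the $S$-decomposition of Lemma \ref{lemma:S_decomp}.
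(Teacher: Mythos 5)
Your proposal is structurally sound and closely mirrors the paper's actual proof in Section \ref{sec:GFT}: start from the global law at $t=0$, apply It\^o's formula along the combined characteristic/Ornstein--Uhlenbeck flow, exploit the cancellation with $\partial_t M_{[1,3],t}$, and reduce the length-four and length-five chains in the quadratic variation via the conformal integral representation \eqref{eq:ImFint}, using the bulk estimate \eqref{eq:integral2G}. You also correctly identify the key obstruction, namely that vector-valued spectral parameters prohibit resolvent-identity shortcuts.

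Two minor deviations from the paper are worth noting. First, the paper does not close a self-consistent inequality in $\Phi_3$ via Lemma \ref{lemma:iter}: by the time Proposition \ref{prop:2iso} is proved, the bounds $\Phi_1(t)\prec1$, $\Phi_2^{\mathrm{hs}}(t)\prec1$, $\Phi_{(1,1)}(t)\prec1$ from \eqref{eq:Phi_final} are already available, so every term on the right-hand side of \eqref{eq:2iso_evol} is estimated directly (cf.\ \eqref{eq:3iso_mart}, \eqref{eq:3iso_jk}, \eqref{eq:3iso_2}, \eqref{eq:3iso_3}) without a feedback loop --- your bootstrap is harmless but unnecessary. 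Second, the quadratic $\mathscr{S}$-terms are handled not by reapplying the $S$-decomposition of Lemma \ref{lemma:S_decomp} explicitly, but by plugging $S^{(p)}$ into the already-established $\Phi_{(1,1)}$ bound (for $k=j+1$) or the reduction inequality \eqref{eq:Red(2,1)av_HS} (for $k=j+2$); the $S$-decomposition only enters indirectly through the proof of those earlier bounds. Both routes arrive at the same estimates, so your variant is valid, just slightly redundant.
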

We postpone the proof of Proposition \ref{prop:2iso} until the end of this section. Furthermore, we record the following bounds on the deterministic approximation to a chain containing three resolvents interlaced with regular observables $A_1$, $A_2$, defined in \eqref{eq:M2iso_def},
\begin{equation} \label{eq:M_3_bounds}
	\begin{split}
		\norm{\M{\vect{z}_{1,t}, A_1, \vect{z}_{2,t}, A_2, \vect{z}_{1,t}}} &\lesssim N\langle |A_1|^2\rangle^{1/2}\langle |A_2|^2\rangle^{1/2},\\
		\langle |\M{\vect{z}_{1,t}, A_1, \vect{z}_{2,t}, A_2, \vect{z}_{1,t}}|^2\rangle &\lesssim N\langle |A_1|^2\rangle \langle |A_2|^2\rangle,
	\end{split}
\end{equation}
that follow from the identity \eqref{eq:d-od_stab_identity}, the first estimate in \eqref{eq:scalar_stab_bounds}, the bounds \eqref{eq:regM_bound}, \eqref{eq:super-S-norm}, submultiplicativity of trace, and the trivial inequality $\norm{A_j} \le \sqrt{N}\langle |A_j|^2 \rangle^{1/2}$.
\begin{proof}[Proof of Proposition \ref{prop:GFT}]
	For $1\le j \le k\le 3$, we use the notation $G_{[j,k],t} := \prod_{p=j}^{k-1}(G_{p,t}A_p)G_{k,t}$ introduced in Eq. (4.5) of \cite{Cipolloni2023Edge}, where we define $G_{3,t} := G_{1,t}$ for convenience of indexing, and let  $M_{[j,k],t}$ denote the corresponding deterministic approximations, as defined in \eqref{eq:M_def} for $k=j+1$, in \eqref{eq:M2iso_def} for $k=j+2$, and in \eqref{eq:Mt_def} for $j=k$. Under this convention, for $k \in \{1,2\}$,  we define the isotropic and averaged control quantities
	\begin{equation} 
		\begin{split}
			\Psi_k^\mathrm{iso}(\vect{x},\vect{y}) &:= \frac{\sqrt{N\eta_T}}{N^{k/2}}\bigl\lvert \bigl\langle \vect{x},(G_{[1,k+1],T}-M_{[1,k+1],T})\vect{y} \bigr\rangle \bigr\rvert,\\
			\Psi_k^\mathrm{av} &:= \frac{\sqrt{N\eta_T}}{N^{k/2-1}}\bigl\lvert \bigl\langle (G_{[1,k],T}-M_{[1,k],T})A_{4-k} \bigr\rangle \bigr\rvert,
		\end{split}
	\end{equation}
	where $\vect{x}, \vect{y}$ are deterministic unit vectors in $\mathbb{C}^N$, and the observables $A_1$, $A_2$, $A_3$ with $\langle |A_j|^2\rangle^{1/2} = 1$, are $(z_1,z_2)$, $(z_2,z_1)$ and $(z_1,z_1)$-regular, respectively, in the sense of Definition \ref{def:reg_A}.
	Using the standard local laws for a single resolvent of a Wigner-type matrix (Theorem 2.5 in \cite{Erdos2018CuspUF}),
	\begin{equation} \label{eq:1G_laws_old}
		\bigr\lvert \langle G(z) - M(z)  \rangle \bigr\rvert \prec \frac{1}{N|\im z|}, \quad \bigr\lvert \langle \vect{x}, \bigl(G(z) - M(z)\bigr)\vect{y}  \rangle \bigr\rvert \prec \frac{\norm{\vect{x}}_2 \norm{\vect{y}}_2}{\sqrt{N|\im z|}},
	\end{equation}
	as the base, together with the isotropic local laws \eqref{eq:2G_iso_t} and \eqref{eq:3G_iso_t} at time $t=T$, the bounds \eqref{eq:regM_bound} and \eqref{eq:M_3_bounds}, the induction argument laid out in Section 5.2 of \cite{Cipolloni2023Edge} yields
	\begin{equation} \label{eq:Psi_isos}
		\Psi_k^\mathrm{iso}(\vect{x},\vect{y}) \prec 1, \quad k \in \{1,2\}.
	\end{equation}
	Therefore, using the averaged local laws from \eqref{eq:2G_av_t}, \eqref{eq:1G_av_t}, \eqref{eq:1G_laws_old}, the bounds \eqref{eq:regM_bound}, \eqref{eq:M_3_bounds}, and the induction argument for the averaged quantities $\Psi_k^\mathrm{av}$ in Section 5.3 of \cite{Cipolloni2023Edge}, we obtain
	\begin{equation}
		\Psi_k^\mathrm{av} \prec 1, \quad k \in \{1,2\}.
	\end{equation}
	This concludes the proof of Proposition \ref{prop:GFT}.
\end{proof}
The remainder of the section is dedicated to proving Proposition \ref{prop:2iso}.
\begin{proof}[Proof of Proposition \ref{prop:2iso}]		
	We prove that the global-law version of \eqref{eq:3G_iso_t} holds at time $t=0$ in Appendix \ref{sec:global_laws}.	
	Here we show how to propagate it to the local scale.
	Without loss of generality, we assume that $\norm{\vect{x}}_2=\norm{\vect{y}}_2 = 1$.
	Differentiating the definition \eqref{eq:M2iso_def} in time and using \eqref{eq:m_evol}, \eqref{eq:M_evol}  yields
	\begin{equation}
			\partial_t M_{[1,3],t} = \frac{3}{2}M_{[1,3],t} + \other{M}_{1,t} + \other{M}_{2,t} + \other{M}_{3,t},
	\end{equation}
	where we denote $\other{M}_{1,t} := \M{\vect{z}_{1,t}, \mathscr{S}[M_{[1,2],t}], \vect{z}_{2,t}, A_2, \vect{z}_{1,t}}$, $\other{M}_{2,t} := \M{\vect{z}_{1,t}, A_1, \vect{z}_{2,t}, \mathscr{S}[M_{[2,3],t}], \vect{z}_{1,t}}$, and  $\other{M}_{3,t} := \M{\vect{z}_{1,t}, \mathscr{S}[M_{[1,3],t}],\vect{z}_{2,t}}$.
	Using It\^{o}'s formula together with \eqref{eq:OUflow}, we obtain (dropping all subscripts $t$) 
	\begin{equation} \label{eq:2iso_evol}
		\begin{split}
			\mathrm{d}\bigl\langle \vect{x}, (G_{[1,3]}-M_{[1,3]}) \vect{y}\bigr\rangle = 
			&~  \frac{1}{2}\sum_{j,k}\partial_{jk}\bigl\langle \vect{x}, G_{[1,3]}\vect{y} \bigr\rangle \sqrt{S_{jk}}\mathrm{d}\Brwn_{jk} +
			\frac{3}{2}\bigl\langle \vect{x} (G_{[1,3]}-M_{[1,3]}) \vect{y}\bigr\rangle\mathrm{d}t\\ 
			&+\sum_{1\le j\le k \le 3} \bigl\langle\vect{x},G_{[1,j]}\mathscr{S}[G_{[j,k]}-M_{[j,k]}]G_{[k,3]}  \vect{y}\bigr\rangle\mathrm{d}t \\
			&+ \bigl\langle \vect{x}, (G_{1} \mathscr{S}[M_{[1,3]}] G_{1}- \other{M}_{3}) \vect{y}\bigr\rangle\mathrm{d}t\\
			&+ \bigl\langle \vect{x}, (G_{[1,2]} \mathscr{S}[M_{[2,3]}] G_{1}- \other{M}_{2}) \vect{y}\bigr\rangle\mathrm{d}t \\
			& +\bigl\langle \vect{x}, (G_{1} \mathscr{S}[M_{[1,2]}] G_{[2,3]}- \other{M}_{1}) \vect{y}\bigr\rangle 
			\mathrm{d}t.
		\end{split}
	\end{equation}
	We can now estimate the integral of each of the terms on the right-hand side of \eqref{eq:2iso_evol}.
	Note that the second term on the right-hand side of \eqref{eq:2iso_evol} can removed by differentiating $\mathrm{e}^{-3t/2}\bigl\langle \vect{x}, (G_{[1,3]}-M_{[1,3]}) \vect{y}\bigr\rangle$ with a harmless exponential factor $\mathrm{e}^{-3t/2} \sim 1$, therefore we omit this term from the analysis.
	
	First, we bound the quadratic variation of martingale term, using the Schwarz inequality and the operator inequality \eqref{eq:ImG_ineq},
	\begin{equation} \label{eq:2iso_mart}
		\begin{split}
			\sum_{j,k}S_{jk}\bigl\lvert\partial_{jk}\bigl\langle \vect{x}, G_{[1,3],s}\vect{y} \bigr\rangle\bigr\rvert^2 
			\lesssim&~ \frac{1}{N\eta_s^2} \bigl\langle \vect{x}, \im G_{1,s}\vect{x} \bigr\rangle \bigl\langle \vect{y},G_{[2,3],s}^*A_1^* \im G_{1,s} A_1G_{[2,3],s}\vect{y} \bigr\rangle\\
			&+\frac{1}{N\eta_s^2} \bigl\langle \vect{y}, \im G_{1,s}\vect{y} \bigr\rangle \bigl\langle \vect{x},G_{[1,2],s}A_2 \im G_{1,s} A_2^*G_{[1,2],s}^*\vect{x} \bigr\rangle\\
			&+ \frac{1}{N\eta_s^2} \bigl\langle \vect{x},G_{1,s}A_1 \im G_{2,s} A_1^* G_{1,s}^* \vect{x} \bigr\rangle \bigl\langle \vect{y},G_{1,s}^*A_2^* \im G_{2,s} A_2 G_{1,s} \vect{y} \bigr\rangle.
		\end{split}
	\end{equation}
To estimate the average trace of a chain containing five generalized resolvents, we apply the integral representation \eqref{eq:ImFint} of Lemma \ref{lemma:imF_rep} to the first and last resolvents, use the H\"{o}lder inequality and the submultiplicativity for trace, to deduce that 
\begin{equation} \label{eq:5G_iso_int}
	\begin{split}
		\bigl\lvert\bigl\langle \vect{y},G_{[2,3]}^*A_1^* \im G_{1} A_1G_{[2,3]}\vect{y} \bigr\rangle \bigr\rvert
		&\le \iint_{\mathbb{R}^2}\frac{\bigl\lvert \bigl\langle \vect{y},\mathcal{G}_1(x)A_2^*G_2^*A_1^* \im G_{1} A_1G_2A_2 \mathcal{G}_1(y)\vect{y} \bigr\rangle \bigr\rvert}{|x-\I\xi^{1/\gamma}||y-\I\xi^{1/\gamma}|}\mathrm{d}x\mathrm{d}y\\
		%
		%
		&\le  N\int_{\mathbb{R}}\frac{\bigl\lvert\bigl\langle \vect{y},\mathcal{G}_1(x) \vect{y} \bigr\rangle  \bigr\rvert}{|x-\I\xi^{1/\gamma}|}	\mathrm{d}x\int_{\mathbb{R}}\frac{\bigl \langle A_2^*G_2^*A_1^* \im G_{1} A_1G_2A_2 \mathcal{G}_1(y) \bigr\rangle}{|y-\I\xi^{1/\gamma}|}	\mathrm{d}y,
		%
	\end{split}
\end{equation}
where $\mathcal{G}_j(x) \equiv \mathcal{G}_{j,s}(x):= \im G\bigl(H_s, (\mathfrak{f}^s\circ\psi_{z_j,\xi})(x) \bigr)$. Similarly to the proof of Lemma \ref{lemma:integral2G}, we bound the first integral in the last line of \eqref{eq:5G_iso_int} by considering the regimes $x\in I$ and $x\notin I$ separately, with $I := I_{z_1,\xi}$ defined in \eqref{eq:int_I}. The contribution from the integral over $\mathbb{R}\backslash I$ is stochastically dominated by $\norm{\vect{y}}_2^2$ due to the norm bound \eqref{eq:G_norm}. The contribution from $x\in I$ is stochastically dominated by $\norm{\vect{x}}_2^2(1+ (N\eta_{1,s})^{-1/2})$ owing the isotropic local law in \eqref{eq:1G_laws}.  

Next, to estimate the other integral in last line of \eqref{eq:5G_iso_int}, we apply the integral representation \eqref{eq:ImFint} of Lemma \ref{lemma:imF_rep} to the resolvents $G_2$ and $G_2^*$, use H\"{o}lder inequality and the submultiplicativity of trace again, to obtain
\begin{equation} \label{eq:4Gtrace_int}
	\bigl \langle A_2^*G_2^*A_1^* \im G_{1} A_1G_2A_2 \mathcal{G}_1(x) \bigr\rangle
	\le N \int_{\mathbb{R}}\frac{\bigl\lvert \bigl \langle A_2^* \mathcal{G}_2(y)A_2 \mathcal{G}_1(x) \bigr\rangle \bigr\rvert}{|y-\I\xi^{1/\gamma}|}	\mathrm{d}y \int_{\mathbb{R}}\frac{\bigl\lvert\bigl \langle  \mathcal{G}_2(y) A_1^* \im G_{1} A_1\bigr\rangle\bigr\rvert}{|y-\I\xi^{1/\gamma}|}	\mathrm{d}y.
\end{equation}
The second factor on the right-hand side of \eqref{eq:4Gtrace_int} is stochastically dominated by $\langle |A_1|^2\rangle$ by Lemma \ref{lemma:integral2G} and \eqref{eq:Phi_final}. For the first integral in \eqref{eq:4Gtrace_int}, owing to \eqref{eq:G_norm}, the contribution from the regime $y\notin I_{z_2,\xi}$ admits the bound
\begin{equation} \label{eq:y_notin_I}
	\int_{\mathbb{R}\backslash I_{z_2,\xi}}\frac{\bigl\lvert\bigl \langle A_2^* \mathcal{G}_2(y)A_2 \mathcal{G}_1(x) \bigr\rangle\bigr\rvert}{|y-\I\xi^{1/\gamma}|}	\mathrm{d}y \lesssim \bigl\lvert\bigl\langle \mathcal{G}_{1}(x)|A_2|^2\bigr\rangle\bigr\rvert \log N, \quad x \in \mathbb{R}
\end{equation}
In the complementary regime $y\in I_{z_2,\xi}$, we obtain the bound
\begin{equation} \label{eq:y_in_I}
	\int_{I_{z_2,\xi}}\frac{\bigl\lvert\bigl \langle A_2^* \mathcal{G}_2(y)A_2 \mathcal{G}_1(x) \bigr\rangle\bigr\rvert}{|y-\I\xi^{1/\gamma}|}	\mathrm{d}y \prec \biggl(1 + \frac{\mathds{1}_{x\notin I_{z_1,\xi}}}{\eta_* + |x|^\gamma}\biggr) \bigl\langle |A_2|^2\bigr\rangle\log N, \quad x \in \mathbb{R},
\end{equation}
where, in the regime  $x\notin I_{z_1,\xi}$, we used $\norm{\mathcal{G}_1(x)} \lesssim (\eta_* + |x|^\gamma)^{-1}$ and the estimate \eqref{eq:1G_<A^2>_bound}, while in the regime $x\in I_{z_1,\xi}$, we use \eqref{eq:Phi_final}, the $(z_2,z_1)$-regularity of $A_2$ and the argument as in the proof of Lemma \ref{lemma:integral2G}.
Hence, combining the bounds \eqref{eq:y_notin_I}--\eqref{eq:y_in_I}, we deduce that
\begin{equation}
	\iint_{\mathbb{R}^2}\frac{\bigl\lvert\bigl \langle A_2^* \mathcal{G}_2(y)A_2 \mathcal{G}_1(x) \bigr\rangle\bigr\rvert}{|x-\I\xi^{1/\gamma}||y-\I\xi^{1/\gamma}|}	\mathrm{d}y \mathrm{d}x \prec N\langle |A_2|^2 \rangle,
\end{equation}
and therefore
\begin{equation} \label{eq:5G_iso}
	\bigl\lvert\bigl\langle \vect{y},G_{[2,3]}^*A_1^* \im G_{1} A_1G_{[2,3]}\vect{y} \bigr\rangle\bigr\rvert \prec N^2\langle |A_1|^2\rangle \langle |A_2|^2\rangle.
\end{equation}
The term in the second line of \eqref{eq:2iso_mart} admits an analogous estimate.

For the term in the third line of \eqref{eq:2iso_mart}, we use \eqref{eq:Phi_final} and Lemmas \ref{lemma:imF_rep}--\ref{lemma:integral2G} to obtain
\begin{equation} \label{eq:3Giso_crude}
	\begin{split}
		\bigl\lvert\bigl\langle \vect{x},G_{1}A_1 \im G_2 A_1^* G_{1}^* \vect{x} \bigr\rangle\bigr\rvert \le&~ \int_{\mathbb{R}}\frac{\bigl\lvert\bigl\langle \vect{x}, \mathcal{G}_{1}(x)A_1 \im G_2 A_1^*  \mathcal{G}_{1}(x) \vect{x} \bigr\rangle\bigr\rvert^{1/2}}{|x-\I\xi^{1/\gamma}|}\mathrm{d}x\\
		\lesssim&~ N\int_{\mathbb{R}}\frac{\bigl\lvert\bigl\langle \vect{x}, \mathcal{G}_{1}(x)\vect{x} \bigr\rangle\bigr\rvert}{|x-\I\xi^{1/\gamma}|}\mathrm{d}x 
		\int_{\mathbb{R}}\frac{\bigl\lvert\bigl\langle  \mathcal{G}_{1}(x)A_1 \im G_2 A_1^* \bigr\rangle \bigr\rvert}{|x-\I\xi^{1/\gamma}|}\mathrm{d}x \prec N\langle |A_1|^2 \rangle,
	\end{split}
\end{equation}
and the other factor is estimated similarly.
Therefore, collecting the estimates \eqref{eq:5G_iso}, \eqref{eq:3Giso_crude} for the quadratic variation in \eqref{eq:2iso_mart}, using the martingale inequality \eqref{eq:mart_ineq} and the integration rule \eqref{eq:int_rules}, we obtain for all $0\le t \le T$,
\begin{equation} \label{eq:3iso_mart}
	\sup_{0\le s \le t}\biggl\lvert \int_0^s\sum_{j,k}\partial_{jk}\bigl\langle \vect{x}, G_{[1,3],r}\vect{y} \bigr\rangle \sqrt{S_{jk}}\mathrm{d}\Brwn_{jk,r} \biggr\rvert \prec N\frac{\langle |A_1|^2  \rangle^{1/2}\langle |A_2|^2 \rangle^{1/2} }{\sqrt{N\eta_t}}.
\end{equation}

Next, we bound the terms in the second line of \eqref{eq:2iso_evol}. To this end, observe that
\begin{equation}\label{eq:j-k-terms}
	\bigl\lvert \bigl\langle\vect{x},G_{[1,j]}\mathscr{S}[G_{[j,k]}-M_{[j,k]}]G_{[k,3]}  \vect{y}\bigr\rangle \bigr\rvert \le \max_p \bigl\lvert \bigl\langle S^{(p)}(G_{[j,k]}-M_{[j,k]}) \bigr\rangle \bigr\rvert \cdot \bigl\lVert G_{[1,j]}^*\vect{x}\bigr\rVert_2  \cdot\bigl\lVert G_{[k,3]}\vect{y}\bigr\rVert_2.
\end{equation}

Observe that the  bound in \eqref{eq:super-S-norm}, the stability estimate \eqref{eq:scalar_stab_bounds}, the second bound in \eqref{eq:regM_bound}, and the definition of $M_{[1,3],s}$ in \eqref{eq:M2iso_def} imply that 
\begin{equation} \label{eq:M3S}
	\bigl\lvert \bigl\langle S^{(p)} M_{[1,3],s} \bigr\rangle \bigr\rvert \lesssim \langle |A_1|^2 \rangle^{1/2} \langle |A_2|^2\rangle^{1/2}\bigl\lVert S^{(p)}\bigr\rVert.
\end{equation}
Hence, for all integers $1\le j \le k \le 3$, using the bound $\norm{S^{p}} \lesssim 1$, we deduce that
\begin{equation} \label{eq:j-k-tr}
	\max_p \bigl\lvert \bigl\langle S^{(p)}(G_{[j,k],s}-M_{[j,k],s}) \bigr\rangle \bigr\rvert \prec \frac{N^{(k-j)/2}}{N\eta_s}\prod_{i=j}^{k-1}\langle |A_i|^2 \rangle^{1/2}, \quad s \in [0,T],
\end{equation}
where for $k=j$ we use the averaged local law from \eqref{eq:1G_laws}, for $k=j+1$ we use the definition of $\Phi_{(1,1)}$ in \eqref{eq:Phi(1,1)_def} and the bound \eqref{eq:Phi_final}, while for $k = j+2$ we used the reduction inequality \eqref{eq:Red(2,1)av_HS} and the bound \eqref{eq:M3S} to estimate the resolvent chain and the deterministic approximation separately.
Using the inequality \eqref{eq:ImG_ineq}, we obtain for all $1\le j \le k \le 3$,
\begin{equation} \label{eq:j-k-norm}
	\bigl\lVert G_{[j,k],s}\vect{x}\bigr\rVert_2^2 \lesssim  \eta_{j,s}^{-1}\bigl\lvert\bigl\langle \vect{x}, G_{[j+1,k],s}^*A_j^*\im G_{j,s}A_jG_{[j+1,k],s}   \vect{x}\bigr\rangle\bigr\rvert^{1/2} \prec \frac{N^{k-j}}{\eta_{j,s}}\prod_{i=j}^{k-1}\langle |A_i|^2\rangle, \quad s\in[0,T]
\end{equation}
where we used \eqref{eq:1G_laws} for $k=j$, \eqref{eq:3Giso_crude} for $k=j+1$, and \eqref{eq:5G_iso} for $k=j+2$. Therefore,  using the integration rule \eqref{eq:int_rules} and the bounds \eqref{eq:j-k-terms}--\eqref{eq:j-k-norm}, we obtain for all $0\le t\le T$,
\begin{equation}\label{eq:3iso_jk}
	\frac{1}{\langle |A_1|^2\rangle^{1/2} \langle |A_2|^2\rangle^{1/2} }\int_{0}^t\biggl\lvert\sum_{1\le j\le k \le 3} \bigl\langle\vect{x},G_{[1,j],s}\mathscr{S}[G_{[j,k],s}-M_{[j,k],s}]G_{[k,3],s}  \vect{y}\bigr\rangle\biggr\rvert  \mathrm{d}s
	\prec \int_0^t \frac{\mathrm{d}s}{\eta_s^2}\prec \frac{N}{N\eta_t}.
\end{equation}

To estimate the remaining terms on the right-hand side of \eqref{eq:2iso_evol}, we observe that bound \eqref{eq:super-S-norm}, \eqref{eq:scalar_stab_bounds} from Lemma \ref{lemma:stab_lemma}, \eqref{eq:regM_bound} and the definition \eqref{eq:M2iso_def} imply
\begin{equation} \label{eq:3Mbounds}
	\bigl\lVert\mathscr{S}[M_{[1,3],s}] \bigr\rVert \lesssim N \langle |A_1|^2\rangle^{1/2}\langle |A_2|^2\rangle^{1/2}, \quad \bigl\lVert\other{M}_{p,s}\bigr\rVert \lesssim \sqrt{N}\eta_s^{-1}\langle |A_1|^2\rangle^{1/2}\langle |A_2|^2\rangle^{1/2}, \quad p\in{1,2}.
\end{equation}
For the first term in the third line of \eqref{eq:2iso_evol}, we use \eqref{eq:Weak1iso}, \eqref{eq:int_rules}, and \eqref{eq:3Mbounds}, to obtain
\begin{equation} \label{eq:3iso_2}
	\int_0^t \bigl\lvert\bigl\langle \vect{x}, (G_{1,s} \mathscr{S}[M_{[1,3],s}] G_{1,s}- \other{M}_{3,s}) \vect{y}\bigr\rangle \bigr\rvert  \mathrm{d}s  
	\prec \int_0^t \frac{\bigl\lVert\mathscr{S}[M_{[1,3],s}] \bigr\rVert}{\sqrt{N\eta_s}}\frac{\mathrm{d}s}{\eta_s} \prec \frac{N\langle |A_1|^2\rangle^{1/2} \langle |A_2|^2\rangle^{1/2} }{\sqrt{N\eta_t}}.
\end{equation}
For the second term in the third line of \eqref{eq:2iso_evol}, we use the upper bound in Assumption \eqref{as:S_flat}, \eqref{eq:regM_bound} and \eqref{eq:j-k-norm} to estimate the resolvent chain, and \eqref{eq:3Mbounds} to bound the deterministic term, yielding
\begin{equation} \label{eq:3iso_3}
	\frac{1}{\langle |A_1|^2\rangle^{1/2} \langle |A_2|^2\rangle^{1/2}} \int_0^t \bigl\lvert \bigl\langle \vect{x}, (G_{[1,2],s} \mathscr{S}[M_{[2,3],s}] G_{1,s}- \other{M}_{2,s}) \vect{y}\bigr\rangle \bigr\rvert \mathrm{d}s 
	\prec \int_0^t \frac{\sqrt{N}}{\eta_s}\mathrm{d}s \prec \sqrt{N}.
\end{equation}
The term in the last line of \eqref{eq:2iso_evol} is estimated similarly. Summing the bounds \eqref{eq:3iso_mart}, \eqref{eq:3iso_jk}, \eqref{eq:3iso_2}, \eqref{eq:3iso_3} concludes the proof of Proposition \ref{prop:2iso}.
\end{proof}

\section{A-Priori Bounds. Proof of Lemmas \ref{lemma:1Flaw} and \ref{lemma:2G_weak}} \label{seq:prior_laws}

We now prove the local laws involving one and two resolvents without exploiting the regularity of the observables in Lemmas \ref{lemma:1Flaw} and \ref{lemma:2G_weak} simultaneously.
The main obstacle to proving Lemma \ref{lemma:2G_weak} using the approach laid out in Section \ref{sec:locallaws_proof} is the presence of the \textit{linear} terms of the form $\langle (G_{1,t} B_1 G_{2,t} - \M{\vect{z}_{1,t},B_1,\vect{z}_{2,t}}) \mathscr{S}[\M{\vect{z}_{2,t},B_2,\vect{z}_{1,t}}]  \rangle$ in the time differential of $\langle (G_{1,t} B_1 G_{2,t} - \M{\vect{z}_{1,t},B_1,\vect{z}_{2,t}}) B_2\rangle$ and its isotropic analog, c.f. \eqref{eq:(GAG-M)A_evol}. However, the new observables $\mathscr{S}[\M{\dots}]$ lie in the range of the super-operator $\mathscr{S}$. Therefore, we can construct a self-consistent system of time-evolution equations for observables in the range of $\mathscr{S}$, and solve it using the following variant of the Gronwall estimate.
\begin{lemma} [$\ell^\infty$ Stochastic Gronwall's Inequality] (c.f. Lemma 5.6 in \cite{Cipolloni23Gumbel}) \label{lemma:Gronwall}
	Fix $k \in \{1,2\}$, and let $\mathcal{X}_t \in \mathbb{C}^{N^k}$ be a solution to the stochastic differential equation
	\begin{equation} \label{eq:Xk_evol}
		\mathrm{d}\mathcal{X}_t = \mathcal{A}_t^{\oplus k}\bigl[\mathcal{X}_t\bigr]\mathrm{d}t + \mathcal{F}_t\mathrm{d}t + \mathrm{d}\mathcal{E}_t,
	\end{equation}
	where the forcing term $\mathcal{F}_t := (\mathcal{F}_{\vect{j},t})_{\vect{j}\in \{1,\dots,N\}^k}\in \mathbb{C}^{N^k}$ is adapted to a continuous family of $\sigma$-algebras associated with the martingale $\mathcal{E}_t := (\mathcal{E}_{\vect{j},t})_{\vect{j}\in \{1,\dots,N\}^k}\in\mathbb{C}^{N^k}$, and $\mathcal{A}_t\in\mathbb{C}^{N\times N}$ is a family of operators in $\mathbb{C}^{N\times N}$. Here, $\mathcal{A}^{\oplus k}$ denotes the $k$-fold direct sum of $\mathcal{A}$ with itself \footnote{
		In particular, for $k=2$, the action of  $\mathcal{A}^{\oplus 2} = \mathcal{A}\oplus \mathcal{A}$ on matrices $X \in \mathbb{C}^{N\times N}$ is given by $\mathcal{A}^{\oplus 2}[X] = \mathcal{A} X + X \mathcal{A}^\mathfrak{t}$.
	}. 
	
	Assume additionally that $|\mathcal{F}_{\vect{j},t}| + |\mathcal{E}_{\vect{j},t}| \le N^D$ for some $D>0$, and that  there exists a time-independent eigenprojector $\mathcal{P} \in \mathbb{C}^{N\times N}$ with $\mathrm{rank}\,\mathcal{P}\le 1$ and bounded norm $\norm{\mathcal{P}}_{\ell^\infty\to\ell^\infty} \le C_1$, and a complex function $f_t$ that satisfy
	\begin{equation} \label{eq:stabPpm}
		\mathcal{P}\mathcal{A}_t = \mathcal{A}_t\mathcal{P} = f_t\mathcal{P}, \quad \norm{\mathcal{A}_t(1-\mathcal{P})}_{\ell^\infty\to\ell^\infty} \le C_2.
	\end{equation}
	Assume additionally that $\tau$ is a random stopping time such that 
	\begin{equation} \label{eq:forcing_mart_assume}
		\max_{\vect{j}\in \{1,\dots,N\}^k}\biggl( \int_0^{t\wedge \tau} |\mathcal{F}_{\vect{j},s}|\mathrm{d}s\biggr)^2 + \max_{\vect{j}\in \{1,\dots,N\}^k}\biggl[ \int_0^{\cdot} \mathrm{d}\mathcal{E}_{\vect{j},s} \biggr]_{t\wedge\tau} \le h_{t\wedge\tau}^2,
	\end{equation}
	with very high probability for some positive deterministic function $h_t \ge N^{-D}$, where $[\cdot]_t$ denotes the quadratic variation process. Then the random variable $\mathcal{Z}_t := \max_{\vect{j}\in \{1,\dots,N\}^k} |\mathcal{X}_{\vect{j},t}|^2$ satisfies the bound
	\begin{equation} \label{eq:Z_improved_bound}
		\sup_{0\le s\le t\wedge\tau} \mathcal{Z}_s \lesssim \mathcal{Z}_0 + N^{3\theta}h_t^2 + \int_0^{t\wedge\tau} \bigl(\mathcal{Z}_0 + N^{3\theta}h_s^2\bigr)\bigl(1+|\re f_s|\bigr)\exp\biggl\{2k(1+N^{-\theta})\int_s^{t\wedge\tau} |\re f_r|\mathrm{d}r\biggr\} \mathrm{d}s
	\end{equation}
	with very high probability, for any time $0 \le t \le C_3$ and any arbitrary small parameter $\theta > 0$, with the implicit constant depending on the constants $C_1,C_2,C_3, D, \theta$ and $k$.
\end{lemma}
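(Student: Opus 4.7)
The plan is to exploit the spectral decomposition induced by the rank-one projector $\mathcal{P}$. For $k=1$, write $\mathcal{X}_t = \mathcal{P}\mathcal{X}_t + \mathcal{Q}\mathcal{X}_t$ with $\mathcal{Q} := 1-\mathcal{P}$, and for $k=2$ use the four-fold decomposition $\mathcal{X}_t = (\mathcal{P}\mathcal{X}_t \mathcal{P}) + (\mathcal{P}\mathcal{X}_t\mathcal{Q}) + (\mathcal{Q}\mathcal{X}_t\mathcal{P}) + (\mathcal{Q}\mathcal{X}_t\mathcal{Q})$. Since $\mathcal{P}$ commutes with $\mathcal{A}_t$ by the first identity in \eqref{eq:stabPpm}, so does $\mathcal{Q}$, and each of these (one or four) subspaces is invariant under $\mathcal{A}_t^{\oplus k}$. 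On the fully-projected piece $\mathcal{P}^{\otimes k}\mathcal{X}_t$, the drift reduces to multiplication by $kf_t$; on any piece containing at least one $\mathcal{Q}$-factor, $\mathcal{A}_t$ enters only through $\mathcal{A}_t\mathcal{Q}$, which is uniformly bounded in $\ell^\infty\to\ell^\infty$ by $C_2$.

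Next, I would apply Duhamel's principle subspace by subspace. For the fully-projected piece, the integrating factor $\exp(-k\int_0^t f_s\mathrm{d}s)$ gives the exact representation
\begin{equation*}
	\mathcal{P}^{\otimes k}\mathcal{X}_t = \mathrm{e}^{k\int_0^t f_s\mathrm{d}s}\mathcal{P}^{\otimes k}\mathcal{X}_0 + \int_0^t \mathrm{e}^{k\int_s^t f_r\mathrm{d}r}\mathcal{P}^{\otimes k}\mathcal{F}_s\mathrm{d}s + \int_0^t \mathrm{e}^{k\int_s^t f_r\mathrm{d}r}\mathcal{P}^{\otimes k}\mathrm{d}\mathcal{E}_s.
\end{equation*}
Taking entry-wise absolute values, using $\norm{\mathcal{P}}_{\ell^\infty\to\ell^\infty} \le C_1$ and $|\mathrm{e}^{kf_s}| = \mathrm{e}^{k\re f_s}$, and applying the martingale inequality \eqref{eq:mart_ineq} together with a union bound over the $N^k$ indices and a dyadic argument (the factors $N^{3\theta}$ and $(1+N^{-\theta})$ in the statement absorb the resulting polynomial losses) yields a bound of the form $\mathcal{Z}_0 \mathrm{e}^{2k\int_s^{t} |\re f_r|\mathrm{d}r} + N^{3\theta}h_t^2\, \mathrm{e}^{2k\int_s^t |\re f_r|\mathrm{d}r}$ on the fully-projected contribution, essentially by \eqref{eq:forcing_mart_assume}.

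For the remaining subspaces, every one contains at least one $\mathcal{Q}$-factor, so the relevant drift operator has bounded $\ell^\infty\to\ell^\infty$ norm and no exponential blow-up arises. Here a standard deterministic Gronwall estimate in $\ell^\infty$, combined with the same martingale inequality and dyadic union bound for the noise contribution, gives an upper bound of order $(\mathcal{Z}_0 + N^{3\theta}h_t^2)\mathrm{e}^{C_2 t}$, which is order one on the finite horizon $t \le C_3$. Adding the two contributions and converting the exponential control $\mathrm{e}^{2k\int_s^t|\re f_r|\mathrm{d}r}$ into its differential form via
\begin{equation*}
	\mathrm{e}^{2k\int_0^{t\wedge\tau}|\re f_r|\mathrm{d}r} = 1 + \int_0^{t\wedge\tau} 2k|\re f_s|\mathrm{e}^{2k\int_s^{t\wedge\tau}|\re f_r|\mathrm{d}r}\mathrm{d}s
\end{equation*}
produces the integral form on the right-hand side of \eqref{eq:Z_improved_bound}, with the $(1+N^{-\theta})$ factor in the exponent reflecting the small loss incurred by the dyadic argument.

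The principal technical obstacle is the passage from the $\ell^2$ (or spectral) gain coming from the rank-one structure of $\mathcal{P}$ to the $\ell^\infty$-entrywise control required for $\mathcal{Z}_t$; this requires checking that $\mathcal{P}^{\otimes k}$ has $\ell^\infty$-norm bounded independently of $N$ (which follows from $\norm{\mathcal{P}}_{\ell^\infty\to\ell^\infty}\le C_1$ and the product structure of $\mathcal{A}^{\oplus k}$), and carefully applying the uniform martingale inequality \eqref{eq:mart_ineq} in a way that only costs polynomial factors $N^{3\theta}$. The case $k=2$ is slightly more delicate because one must separately treat the three subspaces of partially-projected matrices, but the key point is that none of them produces a $2f_t$ eigenvalue, so the maximal exponential rate remains $kf_t$, matching the factor in the exponent of \eqref{eq:Z_improved_bound}.
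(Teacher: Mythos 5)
Your Duhamel/variation-of-constants route is genuinely different from the paper's proof, which instead applies It\^{o}'s formula directly to $|(\mathcal{P}_\alpha\mathcal{X}(t)\mathcal{P}_{\alpha'}^\mathfrak{t})_{pq}|^2$, deduces an integral inequality of Gronwall type for $\mathcal{Z}_{\alpha\alpha',t}$ that is uniform in all four sectors $\alpha,\alpha'\in\{+,-\}$, absorbs the martingale term via Young's inequality and \eqref{eq:mart_ineq}, and then closes with the standard deterministic Gronwall. The paper's approach is cleaner at the martingale step: the convolution $\int_0^t \mathrm{e}^{k\int_s^t f_r\mathrm{d}r}\mathrm{d}\mathcal{E}_s$ that you would have to control is replaced by the quadratic-variation estimate already provided in \eqref{eq:forcing_mart_assume}.

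There is, however, a genuine gap in your argument for $k=2$. You assert that on ``any piece containing at least one $\mathcal{Q}$-factor, $\mathcal{A}_t$ enters only through $\mathcal{A}_t\mathcal{Q}$'' and hence the drift is uniformly bounded by $C_2$, concluding in the third paragraph that those sectors contribute only $(\mathcal{Z}_0+N^{3\theta}h_t^2)\mathrm{e}^{C_2 t}$. This is false for the mixed sector $\mathcal{P}\mathcal{X}\mathcal{Q}^\mathfrak{t}$ (and its transpose). Since the sectors are invariant, the restricted drift there is
\begin{equation*}
	\mathcal{A}^{\oplus 2}[\mathcal{P}\mathcal{X}\mathcal{Q}^\mathfrak{t}] = (\mathcal{A}\mathcal{P})\mathcal{X}\mathcal{Q}^\mathfrak{t} + \mathcal{P}\mathcal{X}(\mathcal{A}\mathcal{Q})^\mathfrak{t} = f_t\,\mathcal{P}\mathcal{X}\mathcal{Q}^\mathfrak{t} + \mathcal{P}\mathcal{X}(\mathcal{A}\mathcal{Q})^\mathfrak{t},
\end{equation*}
so $\mathcal{A}_t$ does \emph{not} enter solely through $\mathcal{A}_t\mathcal{Q}$: the $\mathcal{A}\mathcal{P}=f_t\mathcal{P}$ part produces an $f_t$-identity contribution. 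In the actual application $f_t=\chi(\eigB_{\vect{z}_{1,t},\vect{z}_{2,t}}^{-1}-\tfrac{k-1}{2})$ scales like $\eta_t^{-1}$, which is not bounded, so ``order one on the finite horizon'' is not available. Your fourth paragraph implicitly acknowledges the mixed sectors grow at rate $f_t$ (``none of them produces a $2f_t$ eigenvalue''), which contradicts the third paragraph's claim of bounded drift. The overall bound \eqref{eq:Z_improved_bound} is still attainable by your method, since a rate-$|\re f_t|$ sector is dominated by the factor $2k|\re f_r|$ in the exponent; but as written, the third paragraph gives the wrong rate for the mixed sectors and the argument needs to be repaired there before it can be considered a proof.
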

We defer the proof of Lemma \ref{lemma:Gronwall} to the end of the Section.
In the sequel, we apply Lemma \ref{lemma:Gronwall} with the operator $\mathcal{A}_t := (\mathrm{const} + \stab_{\vect{z}_{1,t},\vect{z}_{2,t}}^{-1})^\mathfrak{t}$, where $(\cdot)^\mathfrak{t}$ denotes the transpose. Therefore, we collect the necessary properties of the stability operator $\stab_{\vect{z}_{1,t},\vect{z}_{2,t}}$ in the following lemma, that we prove  in Appendix \ref{sec:stab_section}.
\begin{lemma} [Stability along the Flow] \label{lemma:stab_flow}
	Let $z_1,z_2 \in \mathcal{D}$ and let $\vect{z}_{j,t} := \mathfrak{f}^t(z_j)$, where $\mathfrak{f}^t$ is the flow map defined in \eqref{eq:flow_map}. Let $ M_{j,t} := \diag{\m(\vect{z}_{j,t})}$, then the stability operator $\stab_{\vect{z}_{1,t},\vect{z}_{2,t}} := 1 - M_{1,t}M_{2,t}S$ satisfies, for all $t\in [0,T]$,
	\begin{equation} \label{eq:stab_t_bounds}
		\bigl\lVert\stab^{-1}_{\vect{z}_{1,t},\vect{z}_{2,t}}\bigr\rVert_* \lesssim 1, \quad \text{if}\quad (\im z_1)(\im z_2) >0 \text{ or } \min\{|z_1 - z_2|, |\bar{z}_1-z_2|\} > \tfrac{1}{2}\delta,
	\end{equation}
	where $\delta$ is the threshold from Lemma \ref{lemma:stab_lemma}.
	On the other hand, for $z_1,z_2$ satisfying $(\im z_1)(\im z_2) < 0$ and $|\bar{z}_1-z_2| \le \delta$, we have 
	\begin{equation} \label{eq:Pi_stab_t_commute}
		\stab_{\vect{z}_{1,t},\vect{z}_{2,t}}\Pi_{z_1,z_2} = \Pi_{z_1,z_2}\stab_{\vect{z}_{1,t},\vect{z}_{2,t}} = \eigB_{\vect{z}_{1,t},\vect{z}_{2,t}}\Pi_{z_1,z_2},\quad	\bigl\lVert\stab^{-1}_{\vect{z}_{1,t},\vect{z}_{2,t}}(1-\Pi_{z_1,z_2})\bigr\rVert_* \lesssim 1,
	\end{equation}
	where $\Pi_{z_1,z_2}$ is the time-independent eigenprojector defined in Lemma \ref{lemma:stab_lemma}. The eigenvalue $\eigB_{\vect{z}_{1,t},\vect{z}_{2,t}}$ satisfies
	\begin{equation} \label{eq:eigB_t}
		\eigB_{\vect{z}_{1,t},\vect{z}_{2,t}} = 1 - \mathrm{e}^{t-T}(1-\eigB_{z_1,z_2}), \quad |\eigB_{\vect{z}_{1,t},\vect{z}_{2,t}}| \sim \eta_{1,t} + \eta_{2,t} + |z_1 - z_2|.
	\end{equation}
\end{lemma}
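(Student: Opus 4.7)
The whole lemma hinges on one algebraic observation that we exploit at the very start. Combining the characteristic flow ODE in \eqref{eq:m_evol} with the terminal condition $\vect{z}_{j,T}=z_j\vect{1}$, we obtain the identity
\begin{equation*}
\m(\vect{z}_{j,t}) \;=\; e^{(t-T)/2}\, \m(z_j), \qquad M_{j,t} \;=\; e^{(t-T)/2}\, M(z_j),
\end{equation*}
so that $M_{1,t}M_{2,t}S = e^{t-T}\, M(z_1)M(z_2)S = e^{t-T}(I-\stab_{z_1,z_2})$, which gives the affine representation
\begin{equation*}
\stab_{\vect{z}_{1,t},\vect{z}_{2,t}} \;=\; (1-e^{t-T})\,I \;+\; e^{t-T}\,\stab_{z_1,z_2}.
\end{equation*}
This reduces everything to a spectral perturbation of the static operator $\stab_{z_1,z_2}$ already analyzed in Lemma~\ref{lemma:stab_lemma}.

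First I would read off the spectral structure from this identity. Any operator that commutes with $\stab_{z_1,z_2}$ commutes with $\stab_{\vect{z}_{1,t},\vect{z}_{2,t}}$, and the two operators share the same spectral projectors. In the perturbative regime $(\im z_1)(\im z_2)<0$, $|\bar z_1-z_2|\le \delta$ covered by Lemma~\ref{lemma:stab_lemma}, the rank-one projector $\Pi_{z_1,z_2}$ (defined by the contour integral \eqref{eq:Pi_int} and independent of $t$) therefore satisfies
\begin{equation*}
\stab_{\vect{z}_{1,t},\vect{z}_{2,t}}\Pi_{z_1,z_2} \;=\; \bigl[(1-e^{t-T}) + e^{t-T}\eigB_{z_1,z_2}\bigr]\Pi_{z_1,z_2} \;=\; \bigl[1-e^{t-T}(1-\eigB_{z_1,z_2})\bigr]\Pi_{z_1,z_2},
\end{equation*}
yielding at once the commutation relation and the eigenvalue formula in \eqref{eq:eigB_t}. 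For the size estimate $|\eigB_{\vect{z}_{1,t},\vect{z}_{2,t}}|\sim \eta_{1,t}+\eta_{2,t}+|z_1-z_2|$, I would plug the first-order expansion $\eigB_{z_1,z_2}=\I(z_1-z_2)/\kappa(z_1)+\mathcal O(|z_1-z_2|^2)$ of Lemma~\ref{lemma:Pi} into the affine formula. Because $(\im z_1)(\im z_2)<0$ and $\kappa(z_1)\sim 1$ is real positive, the real part of $\I(z_1-z_2)$ equals $|\im z_1|+|\im z_2|$, so
\begin{equation*}
\re\eigB_{\vect{z}_{1,t},\vect{z}_{2,t}} \;\sim\; (T-t) + |\im z_1|+|\im z_2|, \qquad \im\eigB_{\vect{z}_{1,t},\vect{z}_{2,t}}\;\sim\; |\re z_1-\re z_2|,
\end{equation*}
and \eqref{eq:eta_asymp} ($\eta_{j,t}\sim|\im z_j|+(T-t)$) closes the claim.

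For the uniform norm bounds \eqref{eq:stab_t_bounds} and the complement bound in \eqref{eq:Pi_stab_t_commute}, I would argue via contour calculus starting from the annular resolvent estimate \eqref{eq:stab_gap}. Writing $Q:=1-\Pi_{z_1,z_2}$ and $(\alpha,\beta):=(1-e^{t-T},e^{t-T})$, the factorization
\begin{equation*}
\stab_{\vect{z}_{1,t},\vect{z}_{2,t}}^{-1}Q \;=\; \frac{1}{2\pi\I}\oint_{|\zeta|=c}\frac{1}{\alpha+\beta\zeta}\bigl(\zeta-\stab_{z_1,z_2}\bigr)^{-1}Q\,\mathrm d\zeta
\end{equation*}
(together with its analog on a large contour capturing the bounded part of the spectrum of $\stab_{z_1,z_2}$) reduces the problem to checking two things: that $(\zeta-\stab_{z_1,z_2})^{-1}Q$ is uniformly bounded on the chosen contours, which is precisely \eqref{eq:stab_gap} on $|\zeta|=c$ and follows from the norm bound on $\stab_{z_1,z_2}$ together with \eqref{eq:scalar_stab_bounds} on a large contour, and that $|\alpha+\beta\zeta|\gtrsim 1$ along them. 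The latter amounts to ensuring the shifted spectrum $e^{t-T}\sigma(\stab_{z_1,z_2}|_{\mathrm{Ran}\,Q})$ avoids a neighborhood of $-(1-e^{t-T})\in(-1,0]$, and this is where $T$ being small enough (which we are free to choose, as $T\sim 1$ in Proposition~\ref{prop:global_laws}) enters: it keeps $1-e^{t-T}\le 1-e^{-T}$ strictly smaller than the annular gap $2c$. The non-perturbative case in \eqref{eq:stab_t_bounds}, where either $(\im z_1)(\im z_2)>0$ or $\min\{|z_1-z_2|,|\bar z_1-z_2|\}>\delta/2$, is handled by the same contour argument but with a single contour encircling the entire spectrum, since \eqref{eq:scalar_stab_bounds} and \eqref{eq:far_stability} give $\norm{\stab_{z_1,z_2}^{-1}}_*\lesssim 1$ without any projector subtraction.

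The main technical obstacle is this last verification that the shifted non-principal spectrum of $\stab_{z_1,z_2}$ stays uniformly away from the ray $(-\infty,0]$ traced by $-\alpha/\beta=-(e^{T-t}-1)\in[-(e^T-1),0]$. In the bulk regime, this should follow cleanly from Lemma~\ref{lemma:stab_lemma} by shrinking $T$ (possibly decreasing the terminal time produced by Proposition~\ref{prop:global_laws}), but one must check that the contour can actually be drawn avoiding both the annular spectrum of $\stab_{z_1,z_2}|_Q$ and the critical ray, uniformly in $z_1,z_2\in\mathcal D$. Once this geometric fact is in hand, the contour bound converts to the $\ell^2\to\ell^2$ and $\ell^\infty\to\ell^\infty$ norm bounds claimed.
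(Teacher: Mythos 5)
Your proof is correct and uses the same key algebraic observation as the paper, namely the affine identity $\stab_{\vect{z}_{1,t},\vect{z}_{2,t}} = (1-\mathrm{e}^{t-T}) + \mathrm{e}^{t-T}\stab_{z_1,z_2}$, from which the commutation relation, the eigenvalue formula, and the size estimate for $\eigB_{\vect{z}_{1,t},\vect{z}_{2,t}}$ all follow as you describe. Where you depart from the paper is in the verification of the uniform resolvent bounds \eqref{eq:stab_t_bounds} and the complementary bound in \eqref{eq:Pi_stab_t_commute}: the paper avoids contour calculus entirely and instead exploits the Hermitian operator inequality $\re(1-U_{z_1,z_2}F_{z_1,z_2})\ge 1-\norm{F_{z_1,z_2}}\gtrsim |\im z_1|+|\im z_2|$ (recorded as \eqref{eq:1-UF_pos_def}, via \eqref{eq:stab_identity}), which shows that $\re\stab_{\vect{z}_{1,t},\vect{z}_{2,t}}$ (after the bounded similarity $|M_1M_2|^{\pm 1/2}$) is bounded below by $\gtrsim (T-t)+|\im z_1|+|\im z_2|$; combined with the static inverse bounds from Lemma~\ref{lemma:stab_lemma} this gives the uniform estimate directly, without the need to track any contour. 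Your Riesz--Dunford route is more generic and works just as well, but it shifts the burden onto the separate verification that the pole $\zeta_0=-\alpha/\beta\in[-(\mathrm{e}^{T}-1),0]$ stays away from the annular spectrum of $\stab_{z_1,z_2}\vert_{\operatorname{Ran}Q}$; you correctly identify this and correctly note that shrinking $T$ (so that $\mathrm{e}^T-1 < c$, $c$ being the annular radius in \eqref{eq:stab_gap}) resolves it, and $T\sim 1$ is at our disposal by Proposition~\ref{prop:global_laws}. The positivity argument the paper uses is arguably cleaner for this specific operator because it sidesteps the pole-separation check and gives quantitative control on $\re\stab$ simultaneously; your contour argument would be the more robust tool if the positivity structure of $1-UF$ were not available.

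Two small slips worth flagging. First, the residue of $\tfrac{1}{\alpha+\beta\zeta}(\zeta-\stab_{z_1,z_2})^{-1}Q$ at $\zeta_0=-\alpha/\beta$ is $\tfrac{1}{\beta}(\zeta_0-\stab_{z_1,z_2})^{-1}Q = -\stab_{\vect{z}_{1,t},\vect{z}_{2,t}}^{-1}Q$, so with the usual counterclockwise orientation your displayed formula is off by a sign; moreover the ``analog on a large contour'' is not needed, since $(\zeta-\stab_{z_1,z_2})^{-1}Q$ is already holomorphic in $\{|\zeta|<2c\}$ and the small circle alone picks up exactly the one residue giving $-\stab_t^{-1}Q$. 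Second, the quantity controlling the location of the pole is $|\zeta_0|=\mathrm{e}^{T-t}-1\le \mathrm{e}^T-1$ rather than $1-\mathrm{e}^{t-T}$; they agree to leading order for small $T-t$, and the intended bound $\mathrm{e}^T-1<c$ (or $<2c$) is what one actually needs. Neither of these affects the correctness of the argument.
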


\begin{proof} [Proof of Lemmas \ref{lemma:1Flaw} and \ref{lemma:2G_weak}]
	We note that without loss of generality, we can assume that $\norm{\vect{x}}_2=\norm{\vect{y}}_2=1$, and that  the observables $B,B_1,B_2$ are Hermitian. Indeed, this follows immediately from the multi-linearity of the resolvent chains and the deterministic approximations involved in \eqref{eq:1G_laws}--\eqref{eq:Weak1iso} in the observables, and the bounds $\norm{\re B}_\n, \norm{\im B}_\n \le \norm{B}_\n$  for any choice of label $\n \in \{\mathrm{hs},\mathrm{op}\}$. 

	We prove the local laws \eqref{eq:1G_laws}--\eqref{eq:Weak1iso} in three steps: first, with both observables replaced by diagonal matrices $S^{(p)}$ containing the entries of the $p$-th row of the matrix $NS$, then - with only $B_2$ replaced by $S^{(p)}$, and finally for arbitrary Hermitian $B_1,B_2$.
	
	For a fixed pair of deterministic Hermitian observables $B_1, B_2$ and  deterministic vectors $\vect{x}, \vect{y}$, we define the set of matrices $\mathcal{M}$ and the set of vectors $\mathcal{V}$ as
	\begin{equation} \label{eq:V_set}
		\mathcal{M} := \{B_1, B_2\}\cup \{S^{{(p)}}\}_{p=1}^N, \quad \mathcal{V}:=\{\vect{x},\vect{y}\}\cup \{\vect{e}_j\}_{j=1}^N \cup \{\vect{u}_j^{B_1},\vect{u}_j^{B_2}\}_{j=1}^N,
	\end{equation}
	where $\vect{e}_j$ are the coordinate vectors in $\mathbb{R}^N$, and $\vect{u}_j^{B_k}$ are the normalized eigenvectors of $|B_k|^2$.
	
	\textbf{Step 1.} First, we prove the local laws \eqref{eq:1G_laws}--\eqref{eq:Weak1iso} with $B_1$, $B_2$ replaced by $S^{(p)}$ and $S^{(q)}$ for any $p,q \in \{1, \dots, N\}$. For all $z_j \in \mathcal{D}$ and $t\in [0,T]$, we define the (inverse) target size parameters
	\begin{equation}
		\begin{split}
			\size_t^{0,\mathrm{iso}}(z_1)\equiv \size_t^{0,\mathrm{iso}}(z_1) := \sqrt{N\eta_{1,t}},& \quad \size_t^{1,\mathrm{av}}\equiv \size_t^{1,\mathrm{av}}(z_1) := N\eta_{1,t},\\
			\size_t^{1,\mathrm{iso}}\equiv \size_t^{1,\mathrm{iso}}(z_1, z_2) := \sqrt{N\eta_{1,t}\eta_{2,t}\eta_t},& \quad \size_t^{2,\mathrm{av}} \equiv \size_t^{2,\mathrm{av}}(z_1, z_2) := N\eta_{1,t}\eta_{2,t}.
		\end{split}
	\end{equation} 
	Observe that the uniform bounds in Assumption \eqref{as:S_flat} imply that for all $p \in \{1,\dots,N\}$,
	\begin{equation} \label{eq:S_norms}
		\langle |S^{(p)}|^2 \rangle^{1/2} = \biggl(N\sum_j (S_{pj})^2\biggr)^{1/2} \ge \sum_j S_{pj} 
		\ge C_{\sup}^{1-L}\sum_{j} \bigl(S^{L}\bigr)_{pj}
		\ge C_{\sup}^{1-L}c_{\inf} \gtrsim 1.
	\end{equation}
	In particular, for all $p \in \{1,\dots,N\}$, the Hilbert-Schmidt and the operator norms of $S^{(p)}$ are comparable
	\begin{equation}
		\bigl\lVert S^{(p)} \bigr\rVert \sim \langle |S^{(p)}|^2 \rangle^{1/2} \sim 1.
	\end{equation}
	Therefore, since $\sqrt{N\eta_t} \gtrsim 1$, in the special case $B_1 := S^{(p)}$ the size parameter $\size_t^{1,\mathrm{iso}}$ reflects both the local laws \eqref{eq:Weak1iso_HS} and \eqref{eq:Weak1iso}.
	We introduce the following sets of auxiliary variables indexed by $p,q \in \{1,\dots, N\}$,
	\begin{equation}
		\begin{split}
			\chain^{0,\mathrm{iso}}(t) &\equiv \chain^{0,\mathrm{iso}}(z_1,\vect{x}_1,\vect{x}_2,t) := \bigl\langle \vect{x}_1,(G_{1,t} - M_{1,t})\vect{x}_2  \bigr\rangle,\quad \vect{x}_1,\vect{x}_2 \in \mathcal{V},\\
			\chain^{1,\mathrm{av}}_{p}(t) &\equiv \chain^{1,\mathrm{av}}_{p}(z_1,t) := \bigl\langle (G_{1,t} - M_{1,t} )S^{(p)}  \bigr\rangle,\\
			\chain^{1,\mathrm{iso}}_{p}(t) &\equiv\chain^{1,\mathrm{iso}}_{p}(z_1,z_2,\vect{x}_1,\vect{x}_2,t) := \bigl\langle \vect{x}_1, \bigl(G_{1,t} S^{(p)} G_{2,t} - M^{(p)}_{t} \bigr)\vect{x}_2 \bigr\rangle,\quad \vect{x}_1,\vect{x}_2 \in \mathcal{V},\\
			\chain^{2,\mathrm{av}}_{pq}(t) &\equiv \chain^{2,\mathrm{av}}_{pq}(z_1,z_2,t) := \bigl\langle \bigl(G_{1,t} S^{(p)} G_{2,t} - M^{(p)}_{t} \bigr)S^{(q)} \bigr\rangle,
		\end{split}
	\end{equation}
	where $\mathcal{V}$ is defined in \eqref{eq:V_set}\footnote{
		Since the matrices $S^{(p)}$ are diagonal, in Step 1 of the proof we only need to consider $\vect{x}, \vect{y}, \{\vect{e}_j\}_{j=1}^N$.
	}, and $M^{(p)}_{t} := \M{\vect{z}_{1,t},S^{(p)},\vect{z}_{2,t}}$ as in \eqref{eq:M_def}.
	Here the superscript $k$ is equal to the number of matrices $S^{(p)}$ that appear in the corresponding resolvent chain, and coincides with the dimension of the quantity $\mathcal{X}^{k,\dots}$.
	We fix $0 < \varepsilon' \le \tfrac{1}{10}\varepsilon$, and define a stopping time $\tau_1$ as
	\begin{equation}\label{eq:tau}
		\begin{split}
			\tau_1 := \inf\biggl\{ t\in [0,T]: \sup_{z_1,z_2\in\mathcal{D}}\max_{\vect{x}_1,\vect{x}_2 \in \mathcal{V}}\max_{p,q}\biggl(&\size_t^{0,\mathrm{iso}}\bigl\lvert\chain^{0,\mathrm{iso}}(t)\bigr\rvert + N^{\varepsilon'}\size_t^{1,\mathrm{av}} \bigl\lvert\chain^{1,\mathrm{av}}_{p}(t)\bigr\rvert \\
			& + \size_t^{2,\mathrm{av}}\bigl\lvert\chain^{2,\mathrm{av}}_{pq}(t)\bigr\rvert + \size_t^{1,\mathrm{iso}} \bigl\lvert\chain^{1,\mathrm{iso}}_{p}(t)\bigr\rvert \biggr) = N^{2\varepsilon'}\biggr\},
		\end{split}
	\end{equation}
	where we made the dependence of the $\chain$ quantities on all arguments except the time $t$ implicit for brevity. Note the additional factor $N^{\varepsilon'}$ in front of $\size_t^{1,\mathrm{av}}$. Since the set $\mathcal{V}$ contains at most $3N+2$ vectors, and the indices $p,q \in \{1,\dots, N\}$, a simple grid argument in $\mathcal{D}$ together with Proposition \ref{prop:global_laws} shows that $\tau_1 > 0$ with very high probability. Our first goal is to show that $\tau_1 = T$ with very high probability. For the remainder of the proof, we consider the implicit arguments of the $\chain$ quantities fixed.
	
	To this end, we use the evolution equations \eqref{eq:GBevol} and \eqref{eq:GBGBevol}, together with \eqref{eq:m_evol}, \eqref{eq:M_evol}, and the definition of $\mathscr{S}$ in \eqref{eq:superS_def}, to deduce that the quantity $\chain^{0,\mathrm{iso}}(t)
	$, the vectors $\chain^{1,\mathrm{av}}(t) := (\chain^{1,\mathrm{av}}_{p}(t))_{p=1}^N$ and $\chain^{2,\mathrm{iso}}(t) := (\chain^{2,\mathrm{iso}}_{p}(t))_{p=1}^N$, and the matrix $\chain^{2,\mathrm{av}}(t) := (\chain^{2,\mathrm{av}}_{pq}(t))_{p,q=1}^N$ satisfy the following stochastic differential equations
	\begin{equation} \label{eq:0isoX_evol}
		\mathrm{d}\chain^{0,\mathrm{iso}}(t) = \frac{1}{2}\chain^{0,\mathrm{iso}}(t) \mathrm{d}t + \mathcal{F}_t^{0,\mathrm{iso}}\mathrm{d}t + \mathrm{d}\mathcal{E}_t^{0,\mathrm{iso}},
	\end{equation}
	\begin{equation} \label{eq:1avX_evol}
		\mathrm{d}\chain^{1,\mathrm{av}}(t) = -\frac{1}{2}\chain^{1,\mathrm{av}}(t) \mathrm{d}t + (\stab^{-1}_{\vect{z}_{1,t}, \vect{z}_{1,t}})^\mathfrak{t}\chain^{1,\mathrm{av}}(t) \mathrm{d}t + \mathcal{F}_t^{1,\mathrm{av}}\mathrm{d}t + \mathrm{d}\mathcal{E}_t^{1,\mathrm{av}},
	\end{equation}
	\begin{equation} \label{eq:1isoX_evol}
		\mathrm{d}\chain^{1,\mathrm{iso}}(t) = (\stab^{-1}_{\vect{z}_{1,t}, \vect{z}_{2,t}} )^\mathfrak{t}\chain^{1,\mathrm{iso}}(t)\mathrm{d}t + \mathcal{F}_t^{1,\mathrm{iso}}\mathrm{d}t + \mathrm{d}\mathcal{E}_t^{1,\mathrm{iso}},
	\end{equation}
	\begin{equation} \label{eq:2avX_evol}
		\mathrm{d}\chain^{2,\mathrm{av}}(t) = \bigl(-\chain^{2,\mathrm{av}}(t) + (\stab^{-1}_{\vect{z}_{1,t}, \vect{z}_{2,t}})^\mathfrak{t}\chain^{2,\mathrm{av}}(t)  + \chain^{2,\mathrm{av}}(t) \stab^{-1}_{\vect{z}_{1,t}, \vect{z}_{2,t}}\bigr)\mathrm{d}t  + \mathcal{F}_t^{2,\mathrm{av}}\mathrm{d}t + \mathrm{d}\mathcal{E}_t^{2,\mathrm{av}},
	\end{equation}
	where $(\cdot)^\mathfrak{t}$ denotes the transpose. 
	Here for a label $(k,\mu) \in \bigl\{(0,\mathrm{iso}), (1,\mathrm{av}), (1,\mathrm{iso}), (2,\mathrm{av})\bigr\}$, the martingale terms $\mathrm{d}\mathcal{E}^{k,\mu}_t$ are given by
	\begin{equation} \label{eq:X_marts}
		\mathrm{d}\mathcal{E}^{k,\mu}_{\vect{j},t}  := \frac{1}{2}\sum_{j,k}\partial_{jk}\chain_{\vect{j}}^{k,\mu}(t) \sqrt{S_{jk}}\mathrm{d}\Brwn_{jk,t}, \quad 		\vect{j} \in \{1,\dots, N\}^k.
	\end{equation}
	For $k=0$, we identify the index set $\{1,\dots, N\}^0$ with the singleton $\{1\}$, e.g, $\mathrm{d}\mathcal{E}^{0,\mathrm{iso}}_{t} \equiv (\mathrm{d}\mathcal{E}^{0,\mathrm{iso}}_{1,t})$, and drop the subscript one in the sequel.
	The forcing $\mathcal{F}^{k,\mu}_t$ terms in \eqref{eq:0isoX_evol}--\eqref{eq:2avX_evol} are defined as
	\begin{equation} \label{eq:0iso_forcing}
		\mathcal{F}_t^{0,\mathrm{iso}} 
		:= \bigl\langle \vect{x}_1, G_{1,t} \mathscr{S}[G_{1,t}-M_{1,t}] G_{1,t}\vect{x}_2\bigr\rangle,
	\end{equation}
	\begin{equation} \label{eq:1av_forcing}
		\mathcal{F}_{p,t}^{1,\mathrm{av}}  := \bigl\langle \mathscr{S}[G_{1,t}-M_{1,t}] (G_{1,t} S^{(p)}G_{1,t} - M_{t}^{(p)})\bigr\rangle,
	\end{equation}
	\begin{equation} \label{eq:1iso_forcing}
		\begin{split}
			\mathcal{F}^{1,\mathrm{iso}}_{p,t} :=&~ \bigl\langle \vect{x}_1, G_{1,t}\mathscr{S}[G_{1,t}S^{(p)}G_{2,t}-M_{t}^{(p)}]G_{2,t}  \vect{x}_2 \bigr\rangle 
			+ \bigl\langle \vect{x}_1, G_{1,t}\mathscr{S}[G_{1,t}-M_{1,t}]G_{1,t}S^{(p)}G_{2,t} \vect{x}_2 \bigr\rangle\\
			&+\bigl\langle \vect{x}_1, G_{1,t}S^{(p)}G_{2,t}\mathscr{S}[G_{2,t}-M_{2,t}]G_{2,t} \vect{x}_2 \bigr\rangle,
		\end{split}
	\end{equation}
	\begin{equation} \label{eq:2av_forcing}
		\begin{split}
			\mathcal{F}^{2,\mathrm{av}}_{pq,t} := &~ \bigl\langle \mathscr{S}\bigl[G_{1,t} S^{(p)} G_{2,t} - M^{(p)}_{t}\bigr] \bigl(G_{2,t} S^{(q)} G_{1,t} - M^{(q)}_{t}\bigr)\bigr\rangle\\
			&+\bigl\langle\mathscr{S}[G_{1,t}-M_{1,t}]G_{1,t} S^{(p)} G_{2,t} S^{(q)} G_{1,t}\bigr\rangle 
			+\bigl\langle\mathscr{S}[G_{2,t}-M_{2,t}]G_{2,t} S^{(q)} G_{1,t} S^{(p)} G_{2,t}\bigr\rangle.
		\end{split}
	\end{equation}
	We claim that the martingale and forcing terms defined in \eqref{eq:X_marts}--\eqref{eq:2av_forcing} satisfy
	\begin{equation} \label{eq:X_mart_forcing_bounds}
		\max_{\vect{j}\in \{1,\dots,N\}^k}\biggl(\int_0^{t\wedge\tau_1} \bigl\lvert \mathcal{F}^{k,\mu}_{\vect{j},s} \bigr\rvert \mathrm{d}s\biggr)^2
		+ \max_{\vect{j}\in \{1,\dots,N\}^k}\biggl[\int_0^\cdot \mathrm{d} \mathcal{E}^{k,\mu}_{\vect{j},s}\biggr]_{t\wedge\tau_1} \lesssim \frac{1 + \mathds{1}_{k=2}N^{2\varepsilon'}}{\bigl(\size_{t\wedge\tau_1}^{k,\mu}\bigr)^2},
	\end{equation}
	for all labels $(k,\mu) \in \bigl\{(0,\mathrm{iso}), (1,\mathrm{av}), (1,\mathrm{iso}), (2,\mathrm{av})\bigr\}$. 
	 
	Therefore, it follows from \eqref{eq:Pi_bound} and Lemma \ref{lemma:stab_flow} that the vector $\chain^{1,\mathrm{iso}}(t)$ and the matrix $\chain^{2,\mathrm{av}}$ satisfy the assumptions of Lemma \ref{lemma:Gronwall} with $k = 1$ and $k=2$, respectively, $h_t := (\size_t^{k,\mu})^{-1} \sqrt{1 + \mathds{1}_{k=2}N^{2\varepsilon'}}$, $\mathcal{A}_t := (\stab_{\vect{z}_{1,t},\vect{z}_{2,t}}^{-1} - \tfrac{k-1}{2})^\mathfrak{t}$, $C_1, C_2 \lesssim 1$ (owing to \eqref{eq:Pi_bound} and Lemma \ref{lemma:stab_flow}), $\mathcal{P} := \chi(z_1,z_2)\Pi_{z_1,z_2}^\mathfrak{t}$ and $f_t := f_{k,t} = \chi(z_1,z_2)(\eigB_{\vect{z}_{1,t},\vect{z}_{2,t}}^{-1} -\tfrac{k-1}{2})$, where $\chi(z_1,z_2)$ is defined as
	\begin{equation}
		\chi(z_1,z_2) := \mathds{1}_{(\im z_1)(\im z_2) <0}  \cdot \mathds{1}_{|\bar{z}_1-z_2| \le \delta}.
	\end{equation}
	For $r \in [0,T]$, the expansion \eqref{eq:beta_expan} and \eqref{eq:eigB_t} imply the upper bound
	\begin{equation} 
		\bigl\lvert\re[f_{k,r}]\bigr\rvert \le \mathrm{e}^{T-r}\bigl(\mathrm{e}^{T-r} - 1+\kappa(z_1)^{-1}(|\im  z_1|+|\im  z_2|)\bigr)^{-1} + \mathcal{O}\bigl(1+|\eigB_{\vect{z}_{1,r},\vect{z}_{2,r}}|^{-2}|\bar{z}_1 - z_2|^2\bigr).
	\end{equation}
	Hence, using the asymptotic for $\eigB_{\vect{z}_{1,r},\vect{z}_{2,r}}$ in \eqref{eq:eigB_t} and a simple convexity estimate, we deduce that for all $z_1,z_2\in\mathcal{D}$ and all $r \in [0,T]$,
	\begin{equation} \label{eq:re_beta_bound}
		2\bigl\lvert\re[f_{k,r}]\bigr\rvert \le  -\partial_r \log\bigl(\mathrm{e}^{T-r} - 1+2\kappa(z_1)^{-1}|\im  z_1|\bigr) -\partial_r \log\bigl(\mathrm{e}^{T-r} - 1+2\kappa(z_1)^{-1}|\im  z_2|\bigr) + \mathcal{O}(1).
	\end{equation}
	Integrating the bound \eqref{eq:re_beta_bound}, and using \eqref{eq:eta_asymp}, $\kappa(z_1)\sim 1$ from \eqref{eq:beta_expan}, we deduce that
	\begin{equation} \label{eq:exp_fs_bound}
		\exp\biggl\{2(1+N^{-\varepsilon})\int_s^{t\wedge\tau_1} |\re f_{k,r}|\mathrm{d}r\biggr\} \lesssim  \frac{\eta_{1,s}\eta_{2,s}}{\eta_{1,t\wedge\tau_1}\eta_{2,t\wedge\tau_1}},\quad 0\le s \le t \wedge \tau_1.
	\end{equation}

	On the other hand, the vector $\chain^{1,\mathrm{av}}(t)$ satisfies the assumptions of Lemma \ref{lemma:Gronwall} with $k=1$, $h_t := \size_t^{1,\mathrm{av}}$, $\mathcal{A}_t := -\tfrac{1}{2} + (\stab_{\vect{z}_{1,t},\vect{z}_{1,t}}^{-1})^\mathfrak{t}$, $\mathcal{P} = 0$, $f_t = 0$ and $C_1, C_2 \lesssim 1$ (owing to Lemma \ref{lemma:stab_flow}).
	Therefore, using Lemma \ref{lemma:Gronwall} with $\theta = \tfrac{1}{3}\varepsilon'$, the evolution equations \eqref{eq:1avX_evol}--\eqref{eq:2avX_evol}, the bounds \eqref{eq:X_mart_forcing_bounds}, $f_t = 0$ for $(k,\mu) = (1,\mathrm{av})$, and  the estimate \eqref{eq:exp_fs_bound} for $(k,\mu) = (1,\mathrm{iso}), (2,\mathrm{av})$, we conclude that
	\begin{equation} \label{eq:X_imporved}
		\sup_{0\le s \le t\wedge \tau_1} \max_{\vect{j}\in\{1,\dots,N\}^k} \bigl\lvert \chain_{\vect{j}}^{k,\mu}(s) \bigr\rvert^2  \lesssim \frac{N^{\varepsilon'} + \mathds{1}N^{3\varepsilon'}}{\bigl(\size_{t\wedge\tau_1}^{k,\mu}\bigr)^2} \quad 
		\text{ with very high probability},
	\end{equation}
	for all labels $(k,\mu) \in \bigl\{(1,\mathrm{av}), (1,\mathrm{iso}), (2,\mathrm{av})\bigr\}$. Here we additionally used that $|\chain_{\vect{j}}^{k,\mathrm{av}}(0)| \lesssim N^{-1}$ and $|\chain_{\vect{j}}^{k,\mathrm{iso}}(0)| \lesssim N^{-1/2}$ since $\tau_1 > 0$ with very high probability. The corresponding bound for $(k,\mu) = (0,\mathrm{iso})$ follows immediately from \eqref{eq:0isoX_evol}, the bound \eqref{eq:X_mart_forcing_bounds} and the martingale inequality \eqref{eq:mart_ineq}. Therefore, assuming \eqref{eq:X_mart_forcing_bounds} holds, the stopping time $\tau_1 = T$ with very high probability, and hence the local laws \eqref{eq:1G_laws}--\eqref{eq:Weak1iso} hold with $B_1 = S^{(p)}$ and $B_2 = S^{(q)}$ for any $p,q \in \{1,\dots, N\}$.
	
	We now prove the estimate \eqref{eq:X_mart_forcing_bounds}. Note that by the upper bound in \eqref{as:S_flat}, $\norm{S^{(p)}} \lesssim 1$.
	First, we consider $(k,\mu) = (0,\mathrm{iso})$. Computing the quadratic variation of $\mathrm{d}\mathcal{E}^{0,\mathrm{iso}}$ using the definition of $G_{1,t}$ in \eqref{eq:Gt},  the integration rules \eqref{eq:int_rules}, the upper bound in \eqref{as:S_flat}, and the definition of $\tau_1$ in \eqref{eq:tau}, we obtain
	\begin{equation}
		\biggl[\int_0^\cdot \mathrm{d} \mathcal{E}^{0,\mathrm{iso}}_{s}\biggr]_{t\wedge\tau_1} \lesssim \int_{0}^{t\wedge\tau_1}\frac{1}{N\eta_{1,s}^2} \bigl\langle \vect{x}_1, \im G_{1,s} \vect{x}_1 \bigr\rangle\bigl\langle \vect{x}_2, \im G_{1,s} \vect{x}_2 \bigr\rangle \mathrm{d}s \lesssim \frac{1}{N\eta_{1,{t\wedge\tau_1}}}\biggl(1 + \frac{N^{4\varepsilon'}}{N\eta_{1,{t\wedge\tau_1}}}\biggr).
	\end{equation}
	To estimate the integral of \eqref{eq:0iso_forcing}, we compute, using \eqref{eq:int_rules} and \eqref{eq:tau},
	\begin{equation}
		\int_0^{t\wedge\tau_1} \bigl\lvert\mathcal{F}_s^{0,\mathrm{iso}}\bigr\rvert \mathrm{d}s \lesssim \int_0^{t\wedge\tau_1}\frac{N^{\varepsilon'}}{N\eta_{1,s}} \biggl(1+ \frac{N^{2\varepsilon'}}{\sqrt{N\eta_{1,s}}}\biggr)\frac{\mathrm{d}s}{\eta_{1,s}} \lesssim \frac{N^{\varepsilon'}}{N\eta_{1,t\wedge\tau_1}} \lesssim \frac{1}{\sqrt{N\eta_{1,t\wedge\tau_1}}}.
	\end{equation}
	Therefore, \eqref{eq:X_mart_forcing_bounds} is established for $(k,\mu) = (0,\mathrm{iso})$.	
	
	We proceed to prove \eqref{eq:X_mart_forcing_bounds} with $(k,\mu) = (1,\mathrm{av})$.
	In fact, we will prove a stronger statement with a general observable $B' \in \mathcal{M}$ in place of $S^{(p)}$ and $M^{(p)}$ replaced with the corresponding deterministic approximation $M^{B'}_s := \M{\vect{z}_{1,s},B',\vect{z}_{1,s}}$.
	For an observable $B' \in \mathcal{M}$, defined in \eqref{eq:V_set}, using the same approach as in \eqref{eq:1av_mart} but estimating one of the resolvents by its norm via \eqref{eq:G_norm}, we obtain
	\begin{equation} \label{eq:1av_mart_qv}
		\int_0^{t\wedge\tau_1} \sum_{j,k} S_{jk}\bigl\lvert \partial_{jk}\bigl\langle G_{1,s} B'\bigr\rangle \bigr\rvert^2\mathrm{d}s \lesssim 
		\int_0^{t\wedge\tau_1} \frac{\bigl\lvert\bigl\langle \im G_{1,s} |B'|^2 \bigr\rangle\bigr\rvert}{N^2\eta_{1,s}^3}\mathrm{d}s \lesssim \frac{\langle |B'|^2\rangle}{(N\eta_{1,t\wedge\tau_1})^2}\biggl(1 + \frac{N^{2\varepsilon'}}{\sqrt{N\eta_{t\wedge\tau_1}}}\biggr).
	\end{equation}
	Here, in the last step we used the equality from \eqref{eq:1G_<A^2>_bound}, the definition of $\tau_1$ in \eqref{eq:tau}, and the integration rules \eqref{eq:int_rules}.
	
	Next, to bound the integral of the forcing term, using the definition of the stopping time $\tau_1$ in \eqref{eq:tau}, we deduce that for all $0 \le s \le t\wedge\tau_1$,
	\begin{equation} \label{eq:1av_forcing_bound}
		\begin{split}
			\bigl\lvert \bigl\langle \mathscr{S}[G_{1,s} - M_{1,s}]\bigl(G_{1,s} B' G_{1,s} - M^{B'}_{s}\bigr)\bigr\rangle \bigr\rvert &=
			\frac{1}{N}\sum_j \bigl\lvert \bigl\langle(G_{1,s} - M_{1,s}) S^{(j)}\bigr\rangle (G_{1,s}B'G_{1,s}-M^{B'}_s)_{jj} \bigr\rvert\\
			& \le \frac{N^{\varepsilon'}}{N\eta_{1,s}}\max_j \bigl\lvert \bigl\langle \vect{e}_j (G_{1,s}B'G_{1,s}-M^{B'}_s)\vect{e}_j \bigr\rangle \bigr\rvert.
		\end{split}
	\end{equation}
	In particular, setting $B' := S^{(p)}$ in \eqref{eq:1av_forcing_bound}, using \eqref{eq:m_upper}, the definition of $\tau_1$ in \eqref{eq:tau}, $\varepsilon' \le \tfrac{\varepsilon}{10}$, and the integration rules \eqref{eq:int_rules}, together with \eqref{eq:1av_mart_qv}, we deduce \eqref{eq:X_mart_forcing_bounds} for $(k,\mu) = (1,\mathrm{av})$.
	
	Next, we prove \eqref{eq:X_mart_forcing_bounds} for $(k,\mu)= (1,\mathrm{iso})$. Again, we analyze a more general quantity with an arbitrary $B'\in\mathcal{M}$ in place of $S^{(p)}$, and obtain the bounds in terms of both the Hilbert-Schmidt and the operator norms of $B'$ corresponding to the right-hand sides of \eqref{eq:Weak1iso_HS} and \eqref{eq:Weak1iso}, respectively. 
	
	Observe that by the definition of $\tau_1$, for any observable $B' \in \mathcal{M}$, the quadratic variation process of $Q_t:=[\int_0^\cdot \sum_{j,k}\partial_{jk} \bigl\langle \vect{x}_1, G_{1,s}B'G_{2,s}\vect{x}_2 \bigr\rangle \sqrt{S_{jk}}\mathrm\Brwn_{jk,s}]_t$ satisfies 
	\begin{equation} \label{eq:iso_qvterm}
		\begin{split}
			Q_{t\wedge\tau_1}
			&\lesssim \int_0^{t\wedge\tau_1}\biggl(\frac{\bigl\lvert\bigl\langle \vect{x}_2, G_{2,s}^* B'^* \im G_{1,s} B' G_{2,s}\vect{x}_2 \bigr\rangle\bigr\rvert}{N\eta_{1,s}^2} + \frac{\bigl\lvert\bigl\langle \vect{x}_1, G_{1,s} B' \im G_{2,s} B'^* G_{1,s}^*\vect{x}_1 \bigr\rangle\bigr\rvert }{N\eta_{2,s}^2}\biggr)\mathrm{d}s\\
			&\lesssim \int_0^{t\wedge\tau_1} \frac{\norm{B'}^2}{N\eta_{1,s}\eta_{2,s}\eta_s}\biggl(1 + \frac{N^{2\varepsilon'}}{\sqrt{N\eta_{s}}}\biggr)\frac{\mathrm{d}s}{\eta_s} \lesssim  \biggl(\frac{\norm{B'}}{\size^{1,\mathrm{iso}}_{t\wedge\tau_1}(z_1,z_2)}\biggr)^2
		\end{split}
	\end{equation}
	where we used \eqref{eq:tau}, the generalized resolvent inequality \eqref{eq:ImG_ineq}.
	On the other hand, using the submultiplicativity of trace $\Tr[AB] \le \Tr[A]\Tr[B]$ for $A,B \ge 0$ and the equality in \eqref{eq:1G_<A^2>_bound}, we also obtain
	\begin{equation} \label{eq:iso_qvterm_hs}
		Q_{t\wedge\tau_1}\lesssim \int_0^{t\wedge\tau_1} \frac{\langle|B'|^2\rangle}{\eta_{1,s}\eta_{2,s}}\biggl(1 + \frac{N^{2\varepsilon'}}{\sqrt{N\eta_{s}}}\biggr)^2 \frac{\mathrm{d}s}{\eta_s} \prec \frac{\langle|B'|^2\rangle}{\eta_{1,t\wedge\tau_1}\eta_{2,t\wedge\tau_1}}.
	\end{equation}
	Similarly to the estimate \eqref{eq:j-k-norm} with $j=1, k=2$ for regular $A_1$, we observe that for any $B'\in \mathcal{M}$, 
	\begin{equation}
		\norm{G_{1,s}B_1'G_{2,s} \vect{x}_2}_2^2 = \frac{\bigl\langle \vect{x}_2, G_{2,s}^* (B')^*\im G_{1,s}B'G_{2,s} \vect{x}_2\bigr\rangle}{\eta_{1,s}} \lesssim N\frac{\bigl\langle \im G_{1,s}|B'|^2\rangle \langle\vect{x}_2, \im G_{2,s} \vect{x}_2\bigr\rangle}{\eta_{1,s}\eta_{2,s}}.
	\end{equation}
	On the other hand, using  the inequality \eqref{eq:ImG_ineq} and the operator norm bound \eqref{eq:G_norm}, we deduce that
	\begin{equation}
		\norm{G_{1,s}B_1'G_{2,s} \vect{x}_2}_2^2 = \eta_{1,s}^{-1}\bigl\langle \vect{x}_2, G_{2,s}^* (B')^*\im G_{1,s}B'G_{2,s} \vect{x}_2\bigr\rangle \lesssim \eta_{1,s}^{-2} \eta_{2,s}^{-1}\norm{B'}^2\langle\vect{x}_2, \im G_{2,s}. \vect{x}_2\bigr\rangle
	\end{equation}
	Therefore, the integration rules \eqref{eq:int_rules}, the definition of $\tau_1$ in \eqref{eq:tau}, the equality in \eqref{eq:1G_<A^2>_bound}, and the bound $\norm{G_{1,s}\vect{x}_1}_2 \prec \eta_{1,s}^{-1/2}\langle \vect{x}_1, \im G_{1,s} \vect{x}_1 \rangle^{1/2}$ imply
	\begin{equation} \label{eq:iso1-3term}
		\begin{split}
			\int_0^{t\wedge\tau_1}\bigl\lvert \bigl\langle \vect{x}_1, G_{1,s}\mathscr{S}[G_{1,s}-M_{1,s}]G_{1,s}B'G_{2,s} \vect{x}_2 \bigr\rangle \bigr\rvert \mathrm{d}s
			&\lesssim \biggl( \frac{\norm{B'}}{
				\size^{1,\mathrm{iso}}_{t\wedge\tau_1}(z_1,z_2)} \wedge \frac{\langle |B'|^2 \rangle^{1/2}}{\sqrt{\eta_{1,t\wedge\tau_1}\eta_{2,t\wedge\tau_1}}}\biggr)\frac{N^{\varepsilon'}}{\sqrt{N\eta_{t}}}.
		\end{split}
	\end{equation}
	Similarly, using $\norm{G_{j,s}\vect{x}_a}_2 \prec \eta_{j,s}^{-1/2}\langle \vect{x}_a, \im G_{j,s} \vect{x}_a \rangle^{1/2}$ and \eqref{eq:tau}, we deduce that
	\begin{equation} \label{eq:iso2-2term}
		\bigl\lvert  \bigl\langle \vect{x}_1, G_{1,s}\mathscr{S}[G_{1,s}B'G_{2,s}-M^{B'}_{s}]G_{2,s}  \vect{x}_2 \bigr\rangle \bigr\rvert  \prec (\eta_{1,s}\eta_{2,s})^{-1/2}\max_j \bigl\lvert \bigl \langle(G_{1,s}B'G_{2,s}-M^{B'}_{s})S^{(j)}\bigr\rangle \bigr\vert.
	\end{equation}
	Therefore, combining \eqref{eq:iso_qvterm}, \eqref{eq:iso_qvterm_hs}, \eqref{eq:iso1-3term} and \eqref{eq:iso2-2term} with $B' := S^{(p)}$, and using the definition of the stopping time $\tau_1$ in \eqref{eq:tau} to bound the averaged chains in \eqref{eq:iso2-2term}, we deduce that \eqref{eq:X_mart_forcing_bounds} holds for $(k,\mu) = (1,\mathrm{iso})$.
	
	Finally, we prove \eqref{eq:X_mart_forcing_bounds} for $(k,\mu) = (2,\mathrm{av})$. For two observables $B_1', B_2' \in \mathcal{M}$\footnote{
		Once again, in Step 1 we only use $B_j' = S^{(p)}$, but in the future steps of the proof, we use the same estimates for $B_j' = B_1, B_2$ as well. In preparation for that we obtain the estimates for all $B_1',B_2' \in \mathcal{M}$ simultaneously.
	}, similarly to \eqref{eq:2av_mart}, using \eqref{eq:int_rules}, the equality in \eqref{eq:1G_<A^2>_bound},  and \eqref{eq:tau}, we obtain
	\begin{equation} \label{eq:qvterm}
		\begin{split}
			\int_{0}^{t\wedge\tau_1}\sum_{j,k}S_{jk} \bigl\lvert\partial_{jk}\bigl\langle G_{1,s}B_1' G_{2,s} B_2' \bigr\rangle\bigr\rvert^2\mathrm{d}s &\lesssim \norm{B_2'}^2\int_{0}^{t\wedge\tau_1}\frac{\bigl\langle \bigl(|\im G_{1,s}| +  |\im G_{2,s}|\bigr)|B_1'|^2 \bigr\rangle}{(N\eta_{1,s}\eta_{2,s})^2} \frac{\mathrm{d}s}{\eta_s}\\ &\lesssim \frac{\langle |B_1'|^2 \rangle \norm{B_2}^2}{N\eta_{1,t\wedge\tau_1}\eta_{2,t\wedge\tau_1}}\biggl(1 + \frac{N^{2\varepsilon'}}{\sqrt{\eta_{t\wedge\tau_1}}}\biggr).
		\end{split}
	\end{equation} 
	Using the definition of $\tau_1$ in \eqref{eq:tau} to bound $\langle (G_{1,s}-M_{1,s})S^{(p)} \rangle$, we obtain, for all $0 \le s \le \tau_1$,
	\begin{equation}
		\begin{split}
			\bigl\lvert\bigl\langle\mathscr{S}[G_{1,s}-M_{1,s}]G_{1,s} B_1' G_{2,s} B_2' G_{1,s}\bigr\rangle\bigl\lvert &\le \frac{N^{\varepsilon'}}{N\eta_{1,s}}\frac{1}{N}\sum_j \bigl\lvert\bigl\langle \vect{e}_j, G_{1,s} B_1' G_{2,s} B_2' G_{1,s}\vect{e}_j  \bigr\rangle\bigr\rvert\\
			&\le \frac{N^{\varepsilon'}\norm{G_{2,s}}}{N\eta_{1,s}^2} \bigl\lvert\bigl\langle \im G_{1,s} |B_1'|^2 \bigr\rangle\bigr\rvert^{1/2}\bigl\lvert\bigl\langle \im G_{1,s} |B_2'|^2 \bigr\rangle\bigr\rvert^{1/2}.
		\end{split}
	\end{equation}
	Therefore, the norm bound \eqref{eq:G_norm}, the equality in \eqref{eq:1G_<A^2>_bound}, \eqref{eq:tau}, and the integration rules \eqref{eq:int_rules} imply
	\begin{equation} \label{eq:3-1term}
		\int_{0}^{t\wedge\tau_1}\bigl\lvert \bigl\langle\mathscr{S}[G_{1,s}-M_{1,s}]G_{1,s} B_1' G_{2,s} B_2' G_{1,s}\bigr\rangle  \bigr\rvert \mathrm{d}s 
		\lesssim \frac{N^{\varepsilon'}\langle |B_1'|^2 \rangle^{1/2}\langle |B_2'|^2 \rangle^{1/2}}{N\eta_{1,t\wedge\tau_1}\eta_{2,t\wedge\tau_1}}\biggl(1 + \frac{N^{2\varepsilon'}}{\sqrt{\eta_{t\wedge\tau_1}}}\biggr).
	\end{equation}
	The other term in the second line of \eqref{eq:2av_forcing} admits an analogous bound.
	To estimate the first term on the right-hand side of \eqref{eq:2av_forcing}, we observe that for chains $G_{[j,k],s} := G_{j,s}B_{j}'G_{k,s}$ and $M_{[j,k],s}$ denoting the corresponding deterministic approximations, we have the identity
	\begin{equation} \label{eq:2-2term}
		\bigl\langle \mathscr{S}[G_{[1,2],s}-M_{[1,2],s}] (G_{[2,1],s}-M_{[2,1],s})\bigr\rangle  = \frac{1}{N}\sum_j  \bigl\langle (G_{[1,2],s}-M_{[1,2],s})S^{(j)}\bigr\rangle (G_{[2,1],s}-M_{[2,1],s})_{jj}.
	\end{equation}
	Combining \eqref{eq:qvterm}, \eqref{eq:3-1term}, \eqref{eq:2-2term} and using the definition of the stopping time $\tau_1$ in \eqref{eq:tau} to estimate the individual factors on the right-hand side of \eqref{eq:2-2term} for $B_1' = S^{(p)}$, $B_2' = S^{(q)}$, together with the integration rules \eqref{eq:int_rules}, and the fact that $\eta_{j,t} \ge |\im z_j| \ge N^{-1+\varepsilon}$ by definition of $\mathcal{D}$ in \eqref{eq:calD_def}, yields \eqref{eq:X_mart_forcing_bounds} for $(k,\mu) = (2,\mathrm{av})$. This concludes the proof of \eqref{eq:X_mart_forcing_bounds}.
	
	%
	%
	%
	%
	%
 	\textbf{Step 2.}  Next, we prove that the local laws \eqref{eq:1G_laws}--\eqref{eq:Weak1iso} hold with only $B_2$ replaced by $S^{(q)}$ for any $q \in \{1,\dots, N\}$. To this end, we introduce the quantities
 	\begin{equation}
 		\begin{split}
 			\mathcal{Y}^{1,\mathrm{av}}(t) &:= \langle (G_{1,t}-M_{1,t})B_1 \rangle, \quad \mathcal{Y}^{1,\mathrm{iso}}(t) := \langle \vect{x}_1,(G_{1,t}B_1G_{2,t}-M^{B_1}_{[1,2],t})\vect{x}_2 \rangle,\\ \mathcal{Y}^{2,\mathrm{av}}(t) &:= \bigl(\langle (G_{1,t}B_1G_{2,t}-M^{B_1}_{[1,2],t})S^{(q)} \rangle\bigr)_{q=1}^N,
 		\end{split}
 	\end{equation}
 	where $M^{B_1}_{[1,2],t} := \M{\vect{z}_{1,t},B_1,\vect{z}_{2,t}}$ and we suppress the dependence of $\mathcal{Y}$'s on $z_1,z_2,\vect{x}_1, \vect{x}_2$ for brevity.
 	
 	Note that compared to the $\chain$ quantities, the dimension of $\mathcal{Y}^{k,\mu}$ are $k-1$, where the superscript $k$ now equal to one plus the number of observables $S^{(p)}$ in the corresponding chain.
 	For a fixed $0 < \varepsilon' \le \tfrac{1}{10}\varepsilon$ as in \eqref{eq:tau}, define a random stopping time $\tau_2$ by
 	\begin{equation}\label{eq:tau2}
 		\begin{split}
 			\tau_2 := \inf\biggl\{ t\in [0,T]: \sup_{z_1,z_2\in\mathcal{D}}\max_{\vect{x}_1,\vect{x}_2 \in \mathcal{V}}\max_{p}\biggl(&\other{\size}_t^{1,\mathrm{iso}}\bigl\lvert\mathcal{Y}^{1,\mathrm{iso}}(t)\bigr\rvert + \other{\size_t}^{2,\mathrm{av}}\bigl\lvert\mathcal{Y}^{2,\mathrm{av}}_{p}(t)\bigr\rvert \biggr) = N^{4\varepsilon'}\biggr\},
 		\end{split}
 	\end{equation}
 	where, for $k \in \{1,2\}$, we define the size parameters $\other{\size}$ adjusted for a general observable $B_1\neq 0$ as
 	\begin{equation}
 		\other{\size}_t^{1,\mathrm{iso}} \equiv \other{\size}_t^{1,\mathrm{iso}}(z_1,z_2) := \max \biggl\{ \frac{\size_t^{1,\mathrm{iso}}(z_1,z_2)}{\norm{B_1}}, \frac{\sqrt{\eta_{1,t}\eta_{2,t}}}{\langle|B_1|^2\rangle^{1/2}} \biggr\}, \quad \other{\size}_t^{k,\mathrm{av}} \equiv \other{\size}_t^{k,\mathrm{av}}(z_1,z_2) := \frac{\size_t^{k,\mathrm{av}}(z_1,z_2)}{\langle|B_1|^2\rangle^{1/2}}.
 	\end{equation}
 	Note that the definition of $\other{\size}^{1,\mathrm{iso}}$ reflects both isotropic local laws \eqref{eq:Weak1iso_HS} and \eqref{eq:Weak1iso}.
 	Once again, by Proposition \ref{prop:global_laws}, $\tau_2 > 0$ with very high probability.
 	
 	For the remainder of the proof, we consider the implicit arguments of the $\mathcal{Y}$ quantities to be fixed.
 	Similarly to \eqref{eq:1avX_evol}--\eqref{eq:2avX_evol}, we observe that the quantities $\mathcal{Y}^{k,\mu}(t)$ satisfy the evolution equations
 	\begin{equation} \label{eq:Y_evol}
 		\mathrm{d}\mathcal{Y}^{k,\mu}(t) = \mathcal{A}_t^{k,\mu}\mathcal{Y}^{k,\mu}(t)\mathrm{d}t + N^{-1}\bigl(\chain^{k,\mu}(t)\bigr)^\mathfrak{t}\, \vect{b}_{1,t}^{k,\mu} \mathrm{d}t  + \other{\mathcal{F}}^{k,\mu}_t\mathrm{d}t + \mathrm{d}\other{\mathcal{E}}^{k,\mu}_t,
 	\end{equation}
 	for all $(k,\mu) \in \{(1,\mathrm{av}), (1,\mathrm{iso}), (2,\mathrm{av})\}$, where $\mathcal{A}_t^{1,\mathrm{av}} := \tfrac{1}{2}$, $\mathcal{A}_t^{1,\mathrm{iso}} := 1$, $\mathcal{A}_t^{2,\mathrm{av}} := (\stab_{\vect{z}_{1,t},\vect{z}_{2,t}}^{-1})^\mathfrak{t}$, and
 	\begin{equation}
 		\vect{b}_{1,t}^{1,\mathrm{av}} := \stab_{\vect{z}_{1,t},\vect{z}_{1,t}}^{-1}[\m_{1,t}^2\vect{b}_1^\mathrm{diag}], \quad \vect{b}_{1,t}^{1,\mathrm{iso}} = \vect{b}_{1,t}^{2,\mathrm{av}}:= \stab_{\vect{z}_{1,t},\vect{z}_{2,t}}^{-1}[\m_{1,t}\m_{2,t}\vect{b}_1^\mathrm{diag}],
 	\end{equation} 
 	with $\vect{b}_1^\mathrm{diag} := (B_{1,jj})_{j=1}^N$ denoting the main diagonal of $B_1$.
 	The martingale $\other{\mathcal{E}}^{k,\mu}_{t}$ and forcing  $\other{\mathcal{F}}^{2, \mathrm{av}}_{t}$ terms in \eqref{eq:Y_evol} are defined analogously to \eqref{eq:X_marts} and \eqref{eq:2av_forcing}, respectively, but with $S^{(p)}$ replaced with $B_1$, and their respective dimensions reduced from $k$ to $k-1$.
 	
 	Using the bounds \eqref{eq:1av_mart_qv}, \eqref{eq:1av_forcing_bound} for $(k,\mu) = (1,\mathrm{av})$, \eqref{eq:iso_qvterm}--\eqref{eq:iso2-2term} for $(k,\mu) = (1,\mathrm{iso})$, and \eqref{eq:qvterm}--\eqref{eq:2-2term} for $(k,\mu) = (2,\mathrm{av})$ with arbitrary small $\varepsilon' > 0$, by the local laws established in Step 1 above, we deduce that, with very high probability,
	\begin{equation} \label{eq:Y_mart_forcing_bounds}
		\begin{split}
			\max_{\vect{j}\in \{1,\dots,N\}^{k-1}}\biggl(\int_0^{t\wedge\tau_2} \bigl\lvert \other{\mathcal{F}}^{k,\mu}_{\vect{j},s} \bigr\rvert \mathrm{d}s\biggr)^2
			+ \max_{\vect{j}\in \{1,\dots,N\}^{k-1}}\biggl[\int_0^\cdot \mathrm{d} \other{\mathcal{E}}^{k,\mu}_{\vect{j},s}\biggr]_{t\wedge\tau_2} \lesssim \frac{1 + \mathds{1}_{k=2}N^{2\varepsilon'}}{\bigl(\other{\size}_{t\wedge\tau_2}^{k,\mu}\bigr)^2}.
		\end{split}
 	\end{equation}
 	Therefore, to apply Lemma \ref{lemma:Gronwall}, it suffices to bound the additional forcing term in \eqref{eq:Y_evol}, namely $(\chain^{k,\mu}(t))^\mathfrak{t}\, N^{-1}\vect{b}_{1,t}^{k,\mu}$. Using Lemma \ref{lemma:stab_flow} and \eqref{eq:X_imporved}, we obtain a very high probability estimate
 	\begin{equation}
 		\int_0^{t} \norm{\bigl(\chain^{k,\mu}(s)\bigr)^\mathfrak{t}\, \vect{b}_{1,s}^{k,\mu}}_\infty \mathrm{d}s \lesssim \int_0^{t}  \frac{ N^{3\varepsilon'/2}}{\size^{k,\mu}_s}\frac{\langle |B_1|^2\rangle^{1/2}}{\eta_s}\mathrm{d}s \lesssim \frac{N^{3\varepsilon'/2}}{\other{\size}_t^{k,\mu}}.
 	\end{equation}
 	Hence, using Lemma \ref{lemma:Gronwall} and reasoning as in \eqref{eq:X_imporved}, we conclude that
 	\begin{equation} \label{eq:Y_imporved}
 		\sup_{0\le s \le t\wedge \tau_2} \max_{\vect{j}\in\{1,\dots,N\}^{k-1}} \bigl\lvert \mathcal{Y}_{\vect{j}}^{k,\mu}(s) \bigr\rvert^2  \lesssim \frac{N^{2\varepsilon'}}{\bigl(\other{\size}_{t\wedge\tau_2}^{k,\mu}\bigr)^2} \quad 
 		\text{ with very high probability}.
 	\end{equation}
 	Therefore, the stopping time $\tau_2 = T$ with very high probability, and hence the local laws \eqref{eq:1G_laws}, \eqref{eq:Weak1iso_HS}, \eqref{eq:Weak1iso}, as well as \eqref{eq:Weak2av} with $B_2 = S^{(p)}$ for all $p$, are established.
 	
 	\textbf{Step 3.} Finally, we prove \eqref{eq:Weak2av} with a general $B_2$. For this easy step, we do not need to introduce a new stopping time or use Lemma \ref{lemma:Gronwall}. Indeed, by examining the evolution equation \eqref{eq:GBGBevol}, we note that after removing the harmless $\langle G_{1,t}B_1G_{2,t}B_2 \rangle$ term from the right hand-side (see the discussion below \eqref{eq:(GAG-M)A_evol} on why this term can be omitted), the remaining terms can be estimated using the bounds \eqref{eq:qvterm}--\eqref{eq:2-2term} together with \eqref{eq:X_imporved}, \eqref{eq:Y_imporved}, and the martingale inequality \eqref{eq:mart+_est}.
 	 This concludes the proof of Lemmas \ref{lemma:1Flaw} and \ref{lemma:2G_weak}.
\end{proof}

\begin{proof}[Proof of Lemma \ref{lemma:Gronwall}]
	We focus on proving the case $k=2$, as the proof for $k=1$ differs only in using the projector $\mathcal{P}$ on one side of the variable $\mathcal{X}$ in the following proof.
	
	For $\alpha,\alpha' \in \{-,+\}$, we define the processes $ \mathcal{Z}_{\alpha\alpha',t}:= \max_{p,q}\sup_{0\le s \le t}\bigl\lvert \bigl(\mathcal{P}_{\alpha}\mathcal{X}(s) \mathcal{P}_{\alpha'}^\mathfrak{t}\bigr)_{pq} \bigr\rvert^2$, 
	where the projectors $\mathcal{P}_\pm$ are defined as $\mathcal{P}_+ := \mathcal{P}$, and $\mathcal{P}_-  := 1-\mathcal{P}$.
	Using the It\^{o}'s formula for $|(\mathcal{P}_{\alpha}\mathcal{X}(t)\mathcal{P}_{\alpha'}^\mathfrak{t})_{pq}|^2$, \eqref{eq:stabPpm}, we deduce that for all $0\le t\le T$,
	\begin{equation} \label{eq:Z_t_est}
		\begin{split}
			\mathcal{Z}_{\alpha\alpha',t\wedge\tau} \le&~ \mathcal{Z}_{\alpha\alpha',0} + 2k\int_0^{t\wedge\tau} \bigl(C + 2\bigl\lvert\re[\eigB_s^{-1}]\bigr\rvert\bigr)\mathcal{Z}_{\alpha\alpha',s}  \mathrm{d}s
			+ \other{C}\mathcal{Z}_{\alpha\alpha',t\wedge\tau}^{1/2}\max_{p,q}\int_0^{t\wedge\tau} |\mathcal{F}_{pq,s}|\mathrm{d}s\\ 
			&+  2\max_{p,q}\sup_{0\le s \le t}\biggl\lvert\int_0^{s\wedge\tau} (\mathcal{P}_{\alpha}\mathcal{X}(r)\mathcal{P}_{\alpha'}^\mathfrak{t})_{pq}\mathrm{d}(\mathcal{P}_{\alpha}\mathcal{E}_{r}\mathcal{P}_{\alpha'}^\mathfrak{t})_{pq}\biggr\rvert 
			+ \max_{p,q}\biggl[ \int_0^{\cdot} \mathrm{d}(\mathcal{P}_{\alpha}\mathcal{E}_{s}\mathcal{P}_{\alpha'}^\mathfrak{t})_{pq} \biggr]_{t\wedge\tau},
		\end{split}
	\end{equation}
	with $C:=(1+C_2)^k$, $\other{C} := (1+C_1)^k$ where we recall that $[\cdot]_{t}$ denotes the quadratic variation process.
	
	To bound the stochastic term in \eqref{eq:Z_t_est}, we use the martingale inequality \eqref{eq:mart_ineq}
	the bounds $|\mathcal{X}_{pq,t}| + |\mathcal{E}_{pq,t}| \le N^{D}$ and a dyadic argument, to deduce that
	\begin{equation} \label{eq:Z_mart_est}
		\sup_{0\le s \le t} \biggl\lvert \int\limits_0^{s\wedge \tau} (\mathcal{P}_{\alpha}\mathcal{X}(r)\mathcal{P}_{\alpha'}^\mathfrak{t})_{pq}\mathrm{d}(\mathcal{P}_{\alpha}\mathcal{E}_{r}\mathcal{P}_{\alpha'}^\mathfrak{t})_{pq} \biggr\rvert \le \biggl[ \int\limits_0^{\cdot} (\mathcal{P}_{\alpha}\mathcal{X}(r)\mathcal{P}_{\alpha'}^\mathfrak{t})_{pq}\mathrm{d}(\mathcal{P}_{\alpha}\mathcal{E}_{r}\mathcal{P}_{\alpha'}^\mathfrak{t})_{pq}\biggr]_{t\wedge \tau}^{1/2} \log N + \frac{1}{N^D},
	\end{equation}
	with very high probability.
	Furthermore, by definition of $\mathcal{Z}_{\alpha\alpha',s}$, we obtain
	\begin{equation} \label{eq:mart_qv_est}
		\biggl[ \int_0^{\cdot} (\mathcal{P}_{\alpha}\mathcal{X}(r)\mathcal{P}_{\alpha'}^\mathfrak{t})_{pq}\mathrm{d}(\mathcal{P}_{\alpha}\mathcal{E}_{r}\mathcal{P}_{\alpha'}^\mathfrak{t})_{pq} \biggr]_{t\wedge \tau} \le \mathcal{Z}_{\alpha\alpha',t\wedge\tau} \biggl[ \int_0^{\cdot}\mathrm{d}(\mathcal{P}_{\alpha}\mathcal{E}_{r}\mathcal{P}_{\alpha'}^\mathfrak{t})_{pq} \biggr]_{t\wedge \tau}.
	\end{equation}
	To bound the quadratic variation processes in \eqref{eq:Z_t_est} and \eqref{eq:mart_qv_est}, we observe that by \eqref{eq:forcing_mart_assume} and the assumption  that $\norm{\mathcal{P}}_{\ell^\infty \to \ell^\infty} \le C_1$,
	\begin{equation} \label{eq:qv_bound}
		\biggl[ \int_0^{\cdot}\mathrm{d}(\mathcal{P}_{\alpha}\mathcal{E}_{r}\mathcal{P}_{\alpha'}^\mathfrak{t})_{pq} \biggr]_{t\wedge \tau} \le \frac{C_1^4}{2} \max_{p',q'}\biggl[ \int_0^{\cdot}\mathrm{d}\mathcal{E}_{p'q',r}\biggr]_{t\wedge \tau} \le \frac{C_1^4}{2} h_{t\wedge\tau}^2.
	\end{equation}
	Therefore, combining  \eqref{eq:forcing_mart_assume}, \eqref{eq:Z_t_est}, \eqref{eq:Z_mart_est}--\eqref{eq:qv_bound}, we obtain for all $\alpha,\alpha'\in\{+,-\}$,
	\begin{equation} \label{eq:Z_alpha_gron}
		\mathcal{Z}_{\alpha\alpha',t\wedge\tau} \le (2k + N^{-\theta})\int_0^{t\wedge\tau} \bigl(C + \bigl\lvert\re[\eigB_s^{-1}]\bigr\rvert\bigr)\mathcal{Z}_{\alpha\alpha',s}  \mathrm{d}s
		+  N^{3\theta}h_{t\wedge\tau}^2,\quad \text{w.v.h.p.}
	\end{equation}
	Note that $\mathcal{Z}_t \le \sum_{\alpha,\alpha' =\pm}\mathcal{Z}_{\alpha\alpha',t} \le 4(1+C_1)^k \mathcal{Z}_t$, hence, using the standard Gronwall inequality, we conclude \eqref{eq:Z_improved_bound} from \eqref{eq:Z_alpha_gron}.
\end{proof}

\appendix
\section{Additional Technical Results}
\subsection{Properties of the Characteristic Flow. Proof of Lemmas \ref{lemma:flow_exists} and \ref{lemma:imz}} \label{sec:eta_sec}
 
\begin{proof}[Proof of Lemma \ref{lemma:flow_exists}]
	For addressing the existence of the solutions, it is convenient to study the time-reversed version of the characteristic flow \eqref{eq:z_evol}, namely
	\begin{equation} \label{eq:z_flow-t}
		\partial_t\vect{z}_t =  \frac{1}{2}\vect{z}_t + S[\m(\vect{z_t})],
	\end{equation}
	with the initial condition $\vect{z}_0 = z \vect{1}$. It suffices to show that any $z\in \mathbb{H}$, the solution $\vect{z}_t$ exists for $t\in[0,T]$ for any $T > 0$.
	
	We begin by obtaining crude bounds for the solution $\m(\vect{z})$ of the generalized vector Dyson equation \eqref{eq:VDE} for $\vect{z} \in \mathbb{H}^N$. Recall that for such $\vect{z}$, $\m(\vect{z}) \in \mathbb{H}^N$. 
	Let $h(\vect{z}) := \min\{ h \ge 0 : \im\vect{z} \ge h\,\vect{1}\}$ and note that for any $\vect{z} \in \mathbb{H}^N$, $h(\vect{z}) > 0$. 
	By taking the imaginary part of \eqref{eq:VDE}, using the fact that the entries of $S$ are non-negative and $\im\m(\vect{z}) > 0$, we obtain the trivial bound $\im\bigl[-\m(\vect{z})^{-1}\bigr] \ge h(\vect{z})\vect{1}$, that implies
	\begin{equation} \label{eq:m_trivial_upper}
		\norm{\m(\vect{z})}_\infty \le h(\vect{z})^{-1}.
	\end{equation}
	Taking the $\ell^\infty$-norm of both sides of \eqref{eq:VDE} and using the upper bounds in \eqref{eq:vect_a}, Assumption \eqref{as:S_flat}, and \eqref{eq:m_trivial_upper}, to obtain the estimate for the reciprocal of $\m$, we conclude
	\begin{equation} \label{eq:m_trivial}
		\bigl(1 + \norm{\vect{z}}_\infty + h(\vect{z})^{-1}\bigr)^{-1}\vect{1} \lesssim |\m(\vect{z})| \le h(\vect{z})^{-1} \vect{1}.
	\end{equation}
	Define the Hermitian operator $F(\vect{z}) := |\m(\vect{z})|S|\m(\vect{z})|$, and let $\vv(\vect{z})$ be the unique positive eigenvector of $F(\vect{z})$ (that exists by Lemma 5.6 in \cite{Ajanki2019QVE}). 
	Using \eqref{eq:m_trivial}, Lemma 5.5 in \cite{Ajanki2019QVE}, we conclude that
	\begin{equation}
		1 -\norm{F(\vect{z})} \ge \frac{\bigl\langle \vv(\vect{z}), |\m(\vect{z})| \im\vect{z}  \bigr\rangle}{\langle \vv(\vect{z}), |\m(\vect{z})|^{-1}\im\m(\vect{z}) \rangle} \gtrsim \frac{h(\vect{z})}{1 + \norm{\vect{z}}_\infty + h(\vect{z})^{-1}},
	\end{equation}
	where we used that $|\m(\vect{z})|^{-1}\im\m(\vect{z}) \le \vect{1}$. Therefore, the derivative of $\m(\vect{z})$ in the direction $\vect{w} \in \mathbb{C}^N$ admits the bound
	\begin{equation} \label{eq:F_crude_norm}
		\norm{(\partial_{\vect{w}}\m)(\vect{z})}_2 = \norm{(1-\m(\vect{z})^2S)^{-1}[\m(\vect{z})^2\vect{w}]}_2 \lesssim \frac{1 + \norm{\vect{z}}_\infty + h(\vect{z})^{-1}}{h(\vect{z})^{3}}\norm{\vect{w}}_2,
	\end{equation}
	hence the map $\m : \mathbb{H}^N \to \mathbb{H}^N$ is uniformly Lipschitz-continuous in the domain 
	\begin{equation}
		\Omega_{h_0} := \{\vect{z}\in\mathbb{H} : h(\vect{z}) \ge h_0, \norm{\vect{z}}_\infty \le h_0^{-1}\}, \quad h_0 > 0.
	\end{equation}
	Hence, by standard ODE arguments, for any $z \in \mathbb{H}$, there exists a unique solution $\vect{z}_t$, $t \in [0, T(z)]$ to \eqref{eq:z_flow-t} with $\vect{z}_0 = z\vect{1}$ that extends to the boundary of the domain $\Omega_{h_0}$ with $0 < h_0 
	 < \im z$. Moreover, since $S$ is positivity-preserving, it follows from \eqref{eq:z_flow-t} that $\im \vect{z}_t \ge \im\vect{z}_0$ and hence $\norm{\vect{z}_{T(z)}}_\infty = h_0^{-1}$.
	 
	 Differentiating \eqref{eq:VDE} with respect to time along the solution $\vect{z} = \vect{z}_t$, we obtain
	 \begin{equation} \label{eq:m_flow-t}
	 	\bigl(1-\m(\vect{z}_t)^2S\bigr)\bigl[\partial_t\m(\vect{z}_t)\bigr] = -\frac{1}{2}\bigl(1-\m(\vect{z}_t)^2S\bigr)\bigl[\m(\vect{z}_t)\bigr], \quad 
	 	\text{hence} \quad \partial_t\m(\vect{z}_t) = -\frac{1}{2}\m(\vect{z}_t),
	 \end{equation}
 	 where we used that the operator $1-\m^2S$ in invertible in $\Omega_{h_0}$ by \eqref{eq:F_crude_norm}. In particular, $\m(\vect{z}_t) = \mathrm{e}^{-t/2}\m(z)$, and a simple Gronwall estimate implies that 
 	 \begin{equation}
 	 	\norm{\vect{z}_t}_\infty \le C\mathrm{e}^{t}\bigl(|z| + \sqrt{1-\mathrm{e}^{-t}}\norm{\m(z)}_\infty\bigr) \le C\mathrm{e}^{t}\bigl(C_\mathfrak{m}+ |z|\bigr),
 	 \end{equation}
  	 for some $C\sim 1$, where we used the upper bound on $\norm{\m}_\infty$ from Assumption \eqref{as:m_bound}.
 	 Therefore, for any $T>0$, setting $h_0 := \min\{\im z/2, \mathrm{e}^{-2T}C^{-1}(C_\mathfrak{m}+|z|)^{-1}\}$ implies that $T(z) \ge T$.
 	 
 	 The sign preservation and the lower bound in \eqref{eta_t_lower} follow immediately from \eqref{eq:z_flow-t} and the fact that $\im\m(\vect{z}) > 0$ for $\vect{z} \in \mathbb{H}^N$. 
 	 The evolution equation \eqref{eq:m_evol} follows immediately from \eqref{eq:m_flow-t} by reversing the time. This concludes the proof of Lemma \ref{lemma:flow_exists}.
\end{proof}

\begin{proof}[Proof of Lemma \ref{lemma:imz}]
	Recall that for $t\in[0,T]$ and $z \in \mathcal{D}\cap \mathbb{H}$, we $\vect{z}_t = \mathfrak{f}^t(z)$, where the flow map $\mathfrak{f}^t$ is defined in \eqref{eq:flow_map}. 
	First, similarly to \eqref{eq:S_norms}, Assumption \eqref{as:S_flat} and \eqref{eq:im_m_flat} imply that for all $z\in\mathbb{H}$,
	\begin{equation}
		C_{\sup}^{1-L}c_{\inf} \rho(z) \vect{1} \le  \rho(z) S[\vect{1}]  \lesssim  S[\im\m(z)]  \lesssim \rho(z) S[\vect{1}] \le \rho(z) C_{\sup} \vect{1},
	\end{equation}
	where $\rho(z) := |\langle \im \m(z) \rangle|$. Moreover, the equation \eqref{eq:VDEz} and the lower bound in \eqref{eq:m_lower} imply that $\rho(z) \gtrsim |\im z|/(1+|z|^2)$. Hence by definition of $\mathcal{D}$ in \eqref{eq:calD_def}, \eqref{eq:bulk_bounded}, and the H\"{o}lder-regularity of $\m$ in \eqref{eq:m_holder}, we have the bound $\rho(z) \sim 1$  for all $z \in \mathcal{D}$.
	
	Therefore, taking the imaginary part of \eqref{eq:flow_explicit}  and using $(\im z )(\im \m(z)) > 0$,  we conclude that
	\begin{equation} \label{eq:im_z_comp}
		(\sign \im z) \im \mathfrak{f}^t(z) \sim \bigl(\mathrm{e}^{(T-t)/2}|\im z| + \sinh\bigl(\tfrac{T-t}{2}\bigr)\rho(z)\bigr) \vect{1} \sim \bigl(|\im z| + (T-t)\bigr)\vect{1},
	\end{equation}
	for all $z \in \mathcal{D}$ and all $t\in [0,T]$. Taking the average of \eqref{eq:im_z_comp} implies both \eqref{eq:imz_flat} and \eqref{eq:eta_asymp}. The integration rules in \eqref{eq:int_rules} follow immediately from \eqref{eq:eta_asymp}. This concludes the proof of Lemma \ref{lemma:imz}.
\end{proof}

\subsection{Stability Operator. Proof of Lemmas \ref{lemma:stab_lemma}, \ref{lemma:Mbounds}, \ref{lemma:Pi}, and \ref{lemma:stab_flow}} \label{sec:stab_section}
Recall the definition of the stability operator $\stab_{z_1,z_2} := 1 - M(z_1)M(z_2)S$ from \eqref{eq:stab_def}, and observe that
\begin{equation} \label{eq:stab_identity}
	\stab_{z_1,z_2} =  |M(z_1)M(z_2)|^{1/2}U_{z_1,z_2}\bigl(U_{z_1,z_2}^* - F_{z_1,z_2}\bigr)|M(z_1)M(z_2)|^{-1/2},
\end{equation}
where $U_{z_1,z_2} := M(z_1)M(z_2)/|M(z_1)M(z_2)|$, and the \textit{saturated self-energy operator} $F_{z_1,z_2}$ is defined as
\begin{equation} \label{eq:F_op_def}
	F_{z_1,z_2} := |M(z_1)M(z_2)|^{1/2}S|M(z_1)M(z_2)|^{1/2}.
\end{equation}
In the following lemma, we summarize several important properties of the operator $F_{z_1,z_2}$ that were proved in \cite{Ajanki2019QVE} and \cite{Landon2021Wignertype}.
\begin{lemma}[Properties of the Saturated Self-Energy Operator] \label{lemma:F_op}
	Provided that Assumptions \eqref{as:S_flat} and \eqref{as:m_bound} hold, the operator $F_{z_1,z_2}$ defined in \eqref{eq:F_op_def} satisfies
	\begin{itemize}
		\item [(i)] (Lemma 4.4 in \cite{Landon2021Wignertype}) For all $z_1,z_2 \in \mathbb{C}\backslash\mathbb{R}$,
		\begin{equation} \label{eq:2bodyF_bound}
			\norm{F_{z_1,z_2}} \le \frac{1}{2}\norm{F_{z_1,z_1}} + \frac{1}{2}\norm{F_{z_2,z_2}}.
		\end{equation}
		\item [(ii)] (Lemma 5.6 in \cite{Ajanki2019QVE}) For all $z_1, z_2\in \mathbb{H}$ satisfying $|z_1|,|z_2| \lesssim 1$, the operator $F_{z_1,z_2}$ has a simple eigenvalue $\norm{F_{z_1,z_2}}$ and an order one \textit{spectral gap}, that is
		\begin{equation} \label{eq:gapF_bound}
			\Gap{F_{z_1,z_2}} \sim 1,
		\end{equation}
		where $\Gap{T}$ is the difference between the two largest eigenvalues of $|T| = \sqrt{T^*T}$ ($\Gap{T} = 0$ if the largest eigenvalue of $|T|$ is degenerate). Moreover, there exists a unique $\ell^2$-normalized eigenvector $\vect{v}(z_1,z_2)$ satisfying
		\begin{equation} \label{eq:v_vect}
			F_{z_1,z_2}\vect{v}(z_1,z_2) = \norm{F_{z_1,z_2}}\vect{v}(z_1,z_2), \quad \vect{v}(z_1,z_2) \sim N^{-1/2}\vect{1}.
		\end{equation}
		\item [(iii)] (Lemma 5.5 in \cite{Ajanki2019QVE}) For all $z \in \mathbb{C}\backslash\mathbb{R}$, the norm of the one-body saturated self-energy operator $F_{z,z}$ satisfies
		\begin{equation} \label{eq:F_bound}
			\norm{F_{z,z}} = 1 - |\im z|\frac{\bigl\langle \vect{v}(z,z), |\m(z)| \bigr\rangle}{\bigl\langle \vect{v}(z,z), |\m(z)|^{-1}|\im \m(z)| \bigr\rangle},
		\end{equation}
		where $\vect{v}(z,z)$ is the principal eigenvector of $F_{z,z}$ defined in \eqref{eq:v_vect}.
	\end{itemize}
\end{lemma}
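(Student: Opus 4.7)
The three parts of Lemma \ref{lemma:F_op} are essentially independent and I would treat them separately.

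For part (i), my plan is an entrywise AM--GM argument. Since $M(z_j)=\diag{\m(z_j)}$ is diagonal, setting $D_j := |M(z_j)|^{1/2}$ gives the positive diagonal factorization $|M(z_1)M(z_2)|^{1/2}=D_1 D_2$ and hence $F_{z_1,z_2}=D_1 D_2\, S\, D_1 D_2$. For unit vectors $u,v$, expanding in the standard basis produces
\begin{equation*}
\langle u, F_{z_1,z_2} v\rangle = \sum_{j,k} S_{jk}\,(D_1)_j(D_2)_j\, u_j\, (D_1)_k(D_2)_k\, v_k.
\end{equation*}
The entrywise inequality $(D_1)_j(D_1)_k(D_2)_j(D_2)_k\le \tfrac12\bigl((D_1)_j^2(D_1)_k^2+(D_2)_j^2(D_2)_k^2\bigr)$ (AM--GM in disguise), combined with $S_{jk}\ge 0$ from Definition \ref{def:WT}, bounds the right-hand side by $\tfrac12\langle |u|,F_{z_1,z_1}|v|\rangle+\tfrac12\langle |u|,F_{z_2,z_2}|v|\rangle$; optimizing over $u,v$ yields \eqref{eq:2bodyF_bound}.

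For part (ii), I would invoke Perron--Frobenius for the symmetric non-negative operator $F_{z_1,z_2}$. The uniform primitivity bound $(S^L)_{jk}\ge c_{\inf}/N$ from Assumption \eqref{as:S_flat}, together with the pointwise lower bound $|\m(z_j)|\sim (1+|z_j|)^{-1}\vect{1}$ from \eqref{eq:m_lower} and the a priori spectrum bound \eqref{eq:bulk_bounded}, makes every entry of $F_{z_1,z_2}^L$ strictly positive and of order $N^{-1}$. Classical Perron--Frobenius then delivers a simple top eigenvalue and a positive eigenvector $\vect{v}(z_1,z_2)$. A quantitative spectral gap $\Gap{F_{z_1,z_2}}\sim 1$ follows from comparing the smallest and largest entries of $F_{z_1,z_2}^L$ (both of order $N^{-1}$), e.g.\ via Birkhoff's contraction coefficient on the positive cone. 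The comparison $\vect{v}(z_1,z_2)\sim N^{-1/2}\vect{1}$ is read off the iterated eigenvalue equation $\vect{v}=\norm{F_{z_1,z_2}}^{-L} F_{z_1,z_2}^L\vect{v}$: the flat two-sided bounds on the entries of $F_{z_1,z_2}^L$ force every coordinate of $\vect{v}$ to be of the same order as $\langle \vect{1},\vect{v}\rangle/N$, and $\ell^2$-normalization fixes this to $N^{-1/2}$.

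For part (iii), I would extract the identity directly from the vector Dyson equation \eqref{eq:VDEz}. Taking imaginary parts yields $\im\m/|\m|^2=(\im z)\vect{1}+S[\im\m]$. Multiplying entrywise by $|\m|$ and introducing $\vect{u}:=|\m|^{-1}\im\m$, this rearranges to
\begin{equation*}
(1-F_{z,z})\vect{u} = (\im z)|\m|, \qquad F_{z,z}\vect{x}=|\m|\, S[|\m|\vect{x}].
\end{equation*}
Pairing with the top eigenvector $\vect{v}(z,z)$ of the self-adjoint operator $F_{z,z}$ and using $\langle \vect{v},F_{z,z}\vect{u}\rangle=\norm{F_{z,z}}\langle \vect{v},\vect{u}\rangle$ produces $(1-\norm{F_{z,z}})\langle \vect{v},\vect{u}\rangle=\im z\,\langle \vect{v},|\m|\rangle$, which is precisely \eqref{eq:F_bound} after substituting back $\vect{u}=|\m|^{-1}\im\m$. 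The case $\im z<0$ is identical after flipping signs, which is why the statement carries absolute values.

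The only genuinely non-routine step is the quantitative spectral gap in part (ii): the main obstacle is converting a crude entrywise lower bound on $F_{z_1,z_2}^L$ into a spectral separation that is uniform in $N$ and in the spectral parameters $z_1,z_2$ with $|z_j|\lesssim 1$. The cleanest route I know is through the Hilbert projective (Birkhoff) metric, which converts primitivity plus flat entries into an explicit geometric contraction rate on the positive cone; this contraction rate is exactly the gap between the Perron eigenvalue and the rest of the spectrum.
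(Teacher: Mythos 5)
The paper itself does not prove Lemma \ref{lemma:F_op}; it simply cites Lemma 4.4 of \cite{Landon2021Wignertype} for part (i) and Lemmas 5.5 and 5.6 of \cite{Ajanki2019QVE} for parts (ii) and (iii). Your proposal supplies genuine, self-contained proofs, so there is nothing in the paper to compare against directly; I have instead checked your arguments on their own merits, and they are sound.

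Part (i): the AM--GM reduction $\langle |u|, F_{z_1,z_2}|v|\rangle \le \tfrac12\langle |u|,F_{z_1,z_1}|v|\rangle + \tfrac12\langle |u|,F_{z_2,z_2}|v|\rangle$ is exactly the right move. One small point worth making explicit: since $F_{z_1,z_2}$ is symmetric with nonnegative entries, its operator norm is its Perron eigenvalue and is attained on a nonnegative vector, so replacing $u,v$ by $|u|,|v|$ and then taking $u=v$ loses nothing. Part (ii): the chain $(F^L)_{jk} = D_j\bigl((SD^2)^{L-1}S\bigr)_{jk}D_k$ with $D := |M(z_1)M(z_2)|^{1/2} \sim 1$ entrywise (from \eqref{eq:m_lower}, \eqref{as:m_bound}, and $|z_j|\lesssim 1$) does give $(F^L)_{jk}\sim N^{-1}$ uniformly. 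From there, Birkhoff contraction on the positive cone yields $|\lambda_2(F)|/\lambda_1(F) \le 1-c$ with $c\sim 1$, and the eigenvector comparison follows from $\vect{v}=\norm{F}^{-L}F^L\vect{v}$ together with $\ell^2$-normalization; you also need $\lambda_1(F)\sim 1$, which follows by testing $F^L$ against $\vect{v}$ (giving $\lambda_1^L \sim 1$), so the additive gap is indeed of order one. Part (iii): pairing $(1-F_{z,z})\bigl[|\m|^{-1}\im\m\bigr]=(\im z)|\m|$ with the top eigenvector and using self-adjointness of $F_{z,z}$ is exactly the derivation, and the absolute values in \eqref{eq:F_bound} account for both half-planes via $\m(\bar z)=\overline{\m(z)}$.

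The net effect is that your write-up is actually stronger than what the paper records: the paper leans on two external sources (one of which, \cite{Landon2021Wignertype}, works under a strictly stronger flatness hypothesis, as the paper itself concedes in the footnote to Lemma \ref{lemma:stab_lemma}), whereas your proofs use only Assumptions \eqref{as:S_flat} and \eqref{as:m_bound} together with \eqref{eq:m_lower} directly. If you were to include this in the appendix, the only thing I would ask you to spell out is the short computation showing $\lambda_1(F_{z_1,z_2})\sim 1$ before invoking the Birkhoff contraction, since without it the multiplicative gap does not immediately convert into the additive gap $\Gap{F_{z_1,z_2}}\sim 1$ that the lemma asserts.
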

Equipped with Lemma \ref{lemma:F_op}, we are ready to prove Lemma \ref{lemma:stab_lemma}.
\begin{proof}[Proof of Lemma \ref{lemma:stab_lemma}]
	We begin by proving the estimate \eqref{eq:far_stability} in the long-range regime $|z_1 - z_2| \ge c$ for some $c\sim 1$ and $z_1, z_2 \in \bulk\cap\mathbb{H}$.  Note that it suffices to prove the $\ell^2\to\ell^2$ bound. Indeed, the $\ell^\infty\to\ell^\infty$ bound then follows by from the upper bound in Assumptions \eqref{as:S_flat}, \eqref{as:m_bound} and the identity $(1-\stab)^{-1} = 1 + \stab + \stab(1-\stab)^{-1}\stab$.	
	We abbreviate $\stab := \stab_{\bar z_1,z_2}$, $F := F_{\bar z_1,z_2}$, and $U := U_{\bar z_1,z_2}$. Using the identity \eqref{eq:stab_identity}, the bounds \eqref{eq:m_lower}, and the rotatation-inversion lemma (Lemma 5.10 in \cite{Ajanki2019QVE}), we conclude that
	\begin{equation} \label{eq:rotation_inversion}
		\norm{\stab^{-1}} \lesssim \frac{1}{\Gap {F} \bigl\lvert 1 - \norm{F }\langle \vect{v}, U \vect{v} \rangle \bigr\rvert} \lesssim \frac{1}{1-\norm{F} + \bigl\lvert \langle \vect{v}, (1-U)\vect{v} \rangle\bigr\rvert} \lesssim \frac{1}{1-\norm{F} + \norm{(1-U)\vect{v}}_2^2},
	\end{equation}
	where $\vect{v} := \vect{v}(z_1,z_2)$ is the principal eigenvector of $F$ defined in \eqref{eq:v_vect}. Here, to obtain the second estimate we used \eqref{eq:gapF_bound}, and the bound $\norm{F}\le 1$ that follows from \eqref{eq:2bodyF_bound}, \eqref{eq:F_bound}. Next, recall the identity \eqref{eq:m_diff_identity} (with $\zeta_j := z_j$) that implies via \eqref{eq:stab_identity}
	\begin{equation} \label{eq:m-m_F_identity}
		(U^*-F)\biggl[\frac{\Delta\m}{|\m_1\m_2|^{1/2}}\biggr] = U^*|M_1M_2|^{-1/2}\stab [\Delta\m] = (\bar z_1 - z_2) |\m_1\m_2|^{1/2}, \quad \Delta\m := \overline \m_1-\m_2.
	\end{equation}
	Multiplying the identity \eqref{eq:m-m_F_identity} by the eigenvector $\vect{v}$ of $F$, we obtain
	\begin{equation} \label{eq:m-m_v_idenity}
		\bigl\langle (U-1)\vect{v}, |\m_1\m_2|^{-1/2}\Delta\m \bigr\rangle + (1-\norm{F})\bigl\langle \vect{v}, |\m_1\m_2|^{-1/2}\Delta\m \bigr\rangle = (\bar z_1 - z_2) \langle \vect{v}, |\m_1\m_2|^{1/2}\rangle.
	\end{equation}
	Therefore, it follows from \eqref{eq:m_lower}, \eqref{eq:v_vect} and \eqref{eq:m-m_v_idenity} that
	Since $\norm{\vect{\Delta\m}}_2 \lesssim 1$ by \eqref{eq:m_lower}, the left-hand side of \eqref{eq:m-m_v_idenity} satisfies
	\begin{equation} \label{eq:m-m_lower}
		\norm{(I-U)\vect{v}}_2 + 1 - \norm{F} \gtrsim |\bar{z}_1-z_2|\langle \vect{v}, |\m_1\m_2|^{1/2}\rangle \gtrsim |\bar{z}_1 - z_2|,
	\end{equation}
	for all $z_1, z_2 \in \mathbb{H}$ with $|z_1|, |z_2| \lesssim 1$.
	The estimate \eqref{eq:rotation_inversion} together with \eqref{eq:m-m_lower} yields \eqref{eq:far_stability} for $z_1, z_2 \in \bulk\cap\mathbb{H}$ satisfying $|z_1-z_2| \ge c \sim 1$.
	
	For the remainder of the proof, we consider the complementary regime, $z_1,z_2 \in \bulk \cap\mathbb{H}$ with $|z_1 - z_2| \le c$ for some $c\sim 1$. Claims 6.4 -- 6.7 in \cite{R2023bulk} guarantee that there exists a threshold $\other{\delta} \sim 1$ such that \eqref{eq:stab_gap}--\eqref{eq:Pi_separation} hold for $z_1,z_2 \in \bulk \cap\mathbb{H}$ with $|z_1-z_2|\le \other{\delta}$. Therefore, using analytic perturbation theory for a simple isolated eigenvalue of the operator $\stab_{\bar{z}_1, z_1}$, we conclude that
	\begin{equation}
		\eigB_{\bar{z}_1,z_2} = \eigB_{\bar{z}_1,z_1} + (z_2-z_1)\bigl\langle |\m_1|^{-1}\vv, \bigl(\partial_{z_2}\stab_{\bar{z}_1, z_2}\bigr)\lvert_{z_2 = z_1}|\m_1|\vv  \bigr\rangle + \mathcal{O}(|z_1-z_2|^2).
	\end{equation}
	Here, we used the identity $\partial_z^2\m(z) = 2\stab_{z,z}^{-1}[\partial_z\m(z)/\m(z)]$ and the first bound in \eqref{eq:scalar_stab_bounds} to estimate the second derivative of $\eigB$. Similar computations are performed in \cite{Landon2021Wignertype} for the operator $F_{z_1,z_2}$, see, e.g., Proposition 6.5 in \cite{Landon2021Wignertype}.
	Moreover, taking the imaginary part of \eqref{eq:VDEz} yields
	\begin{equation}
		|\m(z)|^{-1}\im \m(z) = (\im z) (1-F_{z,z})^{-1} \bigl[|\m(z)|\bigr].
	\end{equation}
	Hence, it follows from \eqref{eq:m_lower}, \eqref{eq:im_m_flat}, and \eqref{eq:gapF_bound} that for all $z \in\bulk \cap\mathbb{H}$,
	\begin{equation} \label{eq:v_f_bound}
		\norm{\vect{v}(z,z)- \vect{f}(z)}_2 \lesssim |\im z|, \quad \vect{f}(z) := |\m(z)|^{-1}\im\m(z) \norm{|\m(z)|^{-1}\im\m(z)}_2^{-1}.
	\end{equation}
	Using the identities $\partial_z \m(z) = \m(z)^2(1-S\m(z)^2)^{-1}[\vect{1}]$, $|\m(z)|^2S[\im \m(z)] = \im\m(z) -  (\im z) |\m(z)|^2$, and $2\m \im\m = \I(|\m|^2 - \m^2)$, we compute
	\begin{equation} \label{eq:deriv_iden}
		\begin{split}
			\bigl\langle |\m_1|^{-1}\vv, \bigl(\partial_{z_2}\stab_{\bar{z}_1, z_2}\bigr)\lvert_{z_2 = z_1}|\m_1|\vv  \bigr\rangle &=  -\frac{\bigl\langle S[\im\m_1], \im\m_1  \m_1\bigl(1-S\m_1^2\bigr)^{-1}[\vect{1}]\bigr\rangle}{\norm{|\m_1|^{-1}\im\m_1}_2^2} + \mathcal{O}\bigl(|\im z_1|\bigr) \\
			&= -\I\frac{\langle \im\m_1, \vect{1}\rangle}{2\norm{|\m_1|^{-1}\im\m_1}_2^2}  + \mathcal{O}\bigl(|\im z_1|\bigr).
		\end{split}
	\end{equation}
	Therefore, using \eqref{eq:v_vect}, \eqref{eq:F_bound}, \eqref{eq:v_f_bound} and \eqref{eq:deriv_iden}, we find the asymptotic expansion
	\begin{equation}\label{eq:beta_expan_proof}
		\eigB_{\bar z_1,z_2} = \I\frac{\bar z_1 - z_2}{\kappa(z_1)} + \mathcal{O}(|\bar{z}_1 - z_2|^2), \quad \kappa(z):=\frac{2}{\langle\im \m(z), \vect{1}\rangle }\norm{ \frac{\im\m(z)}{|\m(z)|}}_2^{2} \sim 1, \quad z \in \mathcal{D}.
	\end{equation}
	In particular, in the regime $z_1,z_2 \in \bulk\cap\mathbb{H}$ with $|z_1 - z_2| \le \other{\delta}$, the estimate \eqref{eq:far_stability} follows from \eqref{eq:beta_expan_proof}, the decomposition $\stab_{\bar{z}_1,z_2}^{-1} = \eigB_{\bar{z}_1,z_2}^{-1}\Pi_{\bar{z}_1,z_2} + \stab_{\bar{z}_1,z_2}^{-1}(1-\Pi_{\bar{z}_1,z_2})$, and \eqref{eq:stab_gap}, \eqref{eq:Pi_int}. This concludes the proof of \eqref{eq:far_stability}.
	
	Next, we prove \eqref{eq:Pi_bound}. Fix  $z_1,z_2 \in \bulk \cap\mathbb{H}$ with $|z_1 - z_2| \le \other{\delta}$, and let $\Pi := \Pi_{\bar z_1,z_2}$. First, we observe that the contour-integral representation \eqref{eq:Pi_int} and the second estimate in \eqref{eq:Pi_separation} imply that
	\begin{equation}
		\norm{\Pi}_{\ell^\infty \to \ell^\infty} + \norm{\Pi^\mathfrak{t}}_{\ell^\infty \to \ell^\infty} \lesssim 1. 
	\end{equation}
	Furthermore, it follows from \eqref{eq:Pi_int}, \eqref{eq:Pi_separation} and \eqref{eq:m_continuity}, that for all $z_1,z_2,z_3 \in \bulk\cap\mathbb{H}$ satisfying $|z_1 - z_2|, |z_1-z_3| \le \other{\delta}$
	\begin{equation} \label{eq:Pi_inf_continuity}
		\norm{\Pi_{\bar z_1, z_2}-\Pi_{\bar z_1, z_3}}_{\ell^\infty \to \ell^\infty} + \norm{\Pi_{\bar z_1, z_2}^\mathfrak{t}-\Pi_{\bar z_1, z_3}^\mathfrak{t}}_{\ell^\infty \to \ell^\infty} \lesssim |z_2-z_3|. 
	\end{equation}
	In particular, since $\Pi_{\bar{z}_1, z_1} = |\m_1|\vv_{\bar z_1, z_1} (|\m_1|^{-1}\vv_{\bar z_1, z_1})^*$ and $\vv_{\bar z_1,z_1} \sim N^{-1/2}$ by \eqref{eq:v_vect}, there exists a threshold $1 \lesssim \delta \le \other{\delta}$, such that if $|z_1-z_2| \le \delta$, the vectors $\vect{r} := \vect{r}_{\bar{z}_1,z_2} := \Pi[|\m_1|\vv_{\bar{z}_1,z_1}]$ and $\bm{\ell} := \bm{\ell}_{\bar{z}_1,z_2} :=\Pi^* [|\m_1|^{-1}\vv_{\bar{z}_1,z_1} ]$ satisfy
	\begin{equation} \label{eq:rl_props}
		\lvert \vect{r} \rvert \sim N^{-1/2}\vect{1}, \quad \lvert \bm{\ell} \rvert \sim N^{-1/2}\vect{1}, \quad 
		\lvert \langle\vect{r},\bm{\ell}\rangle \rvert \sim 1,
	\end{equation}
	where we used \eqref{eq:Pi_inf_continuity} with $z_3 := z_1$.
	It follows from \eqref{eq:rl_props} and the fact that $\mathrm{rank}\,\Pi = 1$ that
	\begin{equation}
		\Pi = \frac{\vect{r}\,\bm{\ell}^*}{\langle \bm{\ell}, \vect{r} \rangle}, \quad \norm{\Pi}_{\ell^\infty\to\ell^\infty}  = \norm{\Pi^\mathfrak{t}}_{\ell^\infty\to\ell^\infty} = \frac{\norm{\vect{r}}_\infty \norm{\bm{\ell}}_\infty}{|\langle \bm{\ell},\vect{r}\rangle|} \lesssim N^{-1},
	\end{equation}
	hence \eqref{eq:Pi_bound} is established.
%
	
	Finally, we prove \eqref{eq:Pi_continuity}. It follows form \eqref{eq:Pi_inf_continuity}, that for all $z_1,z_2,z_3 \in \bulk\cap\mathbb{H}$ satisfying $|z_1 - z_2|, |z_1-z_3| \le \delta$,
	\begin{equation} \label{eq:rl_continuity}
		\norm{\vect{r}_{\bar{z}_1,z_2}-\vect{r}_{\bar{z}_1,z_3}}_\infty + \norm{\bm{\ell}_{\bar{z}_1,z_2} - \bm{\ell}_{\bar{z}_1,z_3}}_\infty \lesssim |z_2-z_3|.
	\end{equation}
	By \eqref{eq:rl_props}, the estimates \eqref{eq:rl_continuity} imply \eqref{eq:Pi_continuity} immediately. This concludes the proof of Lemma \ref{lemma:stab_lemma}.
\end{proof}
\begin{proof}[Proof of Lemma \ref{lemma:Pi}]
	The asymptotic expansion \eqref{eq:beta_expan} is proved in \eqref{eq:beta_expan_proof} above.
\end{proof}

\begin{proof}[Proof of Lemma \ref{lemma:stab_flow}]
By analogy with \eqref{eq:stab_def}, let $\stab_{\vect{z}_{1,t}, \vect{z}_{2,t}}$ denote the time-dependent stability operator
\begin{equation}
	\stab_{\vect{z}_{1,t}, \vect{z}_{2,t}} := 1 - M_{1,t}M_{2,t}S,
\end{equation}
where we recall  $M_{j,t} := \diag{\m(\vect{z}_{j,t})}$, $\vect{z}_{j,t} := \mathfrak{f}^t(z_j)$, where the flow map is defined in \eqref{eq:flow_map}.
It follows from \eqref{eq:m_evol} that for all $t\in[0,T]$, the stability operator $\stab_{\vect{z}_{1,t}, \vect{z}_{2,t}}$ satisfies the identity
\begin{equation} \label{eq:stab_t_identity}
	\stab_{\vect{z}_{1,t}, \vect{z}_{2,t}} =  1 - \mathrm{e}^{t-T}\M{z_1}\M{z_2}S = \bigl(1 - \mathrm{e}^{t-T}\bigr) + \mathrm{e}^{t-T}\stab_{z_1,z_2}.
\end{equation}
The estimates \eqref{eq:2bodyF_bound}, \eqref{eq:v_vect}, \eqref{eq:F_bound} imply the operator inequality for all $z_1,z_2 \in \bulk$,
\begin{equation} \label{eq:1-UF_pos_def}
	1 - \re\bigl[U_{z_1,z_2}F_{z_1,z_2}\bigr] \ge 1 - \norm{F_{z_1,z_2}} \gtrsim \bigl(|\im z_1| + |\im z_2|\bigr),
\end{equation}
hence, using the identity \eqref{eq:stab_identity}, we find $\re \stab_{z_1,z_2} \gtrsim |\im z_1| + |\im z_2|$. Therefore, since the operator $\stab_{z_1,z_2}$ is invertible and $1- \mathrm{e}^{t-T} \sim T-t > 0$ for all $t \in [0,T]$ with $T\lesssim 1$, we deduce from \eqref{eq:stab_t_identity} that
\begin{equation}
	\norm{\stab_{\vect{z}_{1,t}, \vect{z}_{2,t}}^{-1}} \lesssim \frac{1}{|\im z_1| + |\im z_2| + T-t} + \norm{\stab_{\bar z_1,z_2}}^{-1}.
\end{equation} 
Therefore, \eqref{eq:stab_t_bounds} follows from the first estimate in \eqref{eq:scalar_stab_bounds} and \eqref{eq:far_stability} immediately.

Using the definition of the projector $\Pi_{z_1,z_2}$ in \eqref{eq:Pi_int} together with the identity \eqref{eq:stab_t_identity} and the bound \eqref{eq:stab_gap} yields \eqref{eq:Pi_stab_t_commute} and the first equality in \eqref{eq:eigB_t}. The comparison relation in \eqref{eq:eigB_t} follows from \eqref{eq:beta_expan} and the fact that $\re \eigB_{\bar{z}_1,z_2} > 0$ by \eqref{eq:1-UF_pos_def}. This concludes the proof of Lemma \ref{lemma:stab_flow}
\end{proof}

\begin{proof}[Proof of Lemma \ref{lemma:Mbounds}] 
	The bound \eqref{eq:m_upper} follows trivially from Assumption \eqref{as:m_bound} and the evolution equation for $\m(\vect{z}_t) $ in \eqref{eq:m_evol}.
	Next, we prove the evolution equation \eqref{eq:M_evol}. It follows from \eqref{eq:m_evol} that
	\begin{equation}
		\partial_t(1-M_{1,t}M_{2,t}\mathscr{S})^{-1} = (1-M_{1,t}M_{2,t}\mathscr{S})^{-1}(M_{1,t}M_{2,t}\mathscr{S})(1-M_{1,t}M_{2,t}\mathscr{S})^{-1},
	\end{equation}
	where $M_{j,t}$ are defined in \eqref{eq:Mt_def}. Therefore, by definition of $M_{[1,2],t} := \M{\vect{z}_{1,t},B_1,\vect{z}_{2,t}}$ in \eqref{eq:M_t_def}, we have the identity
	\begin{equation}
		\partial_t M_{[1,2],t}  = M_{[1,2],t}  +  M_{1,t}M_{2,t}(1-\mathscr{S}M_{1,t}M_{2,t})^{-1}\mathscr{S}(1-M_{1,t}M_{2,t}\mathscr{S})^{-1}\bigl[M_{1,t}B_1M_{2,t}\bigr],
	\end{equation}
	which yields \eqref{eq:M_evol} when tested against a matrix $B_2$. 
	
	Next, we prove the estimate \eqref{eq:regM_bound} for regular observable $A$. In view of the expression \eqref{eq:M_offdiag} and the trivial estimates $\norm{A^{\mathrm{od}}} \le 2\norm{A}$ and $\langle |A^{\mathrm{od}}|^2\rangle \le 2\langle |A|^2 \rangle$, it suffices to consider only the diagonal part $\mathrm{diag}(\vect{a}^\mathrm{diag})$ of the observable $A$. Note that
	\begin{equation} \label{eq:a_diag_bounds}
		\bigl\lVert\vect{a}^\mathrm{diag}\bigr\rVert_1 \le \sqrt{N}\bigl\lVert\vect{a}^\mathrm{diag}\bigr\rVert_2 \le N \langle |A|^2\rangle^{1/2} \le N \norm{A}, \quad \bigl\lVert \vect{a}^\mathrm{diag} \bigr\rVert_\infty \lesssim \norm{A}.
	\end{equation}
	On the other hand, for any $\vect{x} \in \mathbb{C}^N$,
	\begin{equation} \label{eq:diag_norm_bounds}
		\norm{\diag{\vect{x}}} \lesssim \norm{\vect{x}}_\infty,\quad \langle |\diag{\vect{x}}|^2\rangle^{1/2} \lesssim N^{-1/2}\norm{\vect{x}}_2 \lesssim \norm{\vect{x}}_\infty.
	\end{equation}
	Hence both estimates in \eqref{eq:regM_bound} follow from \eqref{eq:stab_t_bounds} immediately in the regime $(\im z_1)(\im z_2) > 0$ or $\min\{|z_1-z_2|, |\bar z_1 -z_2|\} > \tfrac{1}{2}\delta$. In the complementary regime, we employ the decomposition
	\begin{equation} \label{eq:stab_a_decomp}
		\stab_t^{-1}\bigl[\m_{1,t}\m_{2,t}\vect{a}^\mathrm{diag}\bigr] = \eigB_t^{-1} \Pi\bigl[\m_{1,t}\m_{2,t}\vect{a}^\mathrm{diag}\bigr] + \stab_t^{-1}(1-\Pi)\bigl[\m_{1,t}\m_{2,t}\vect{a}^\mathrm{diag}\bigr],
	\end{equation}
	where we abbreviate $\stab_t:= \stab_{\vect{z}_{1,t}, \vect{z}_{2,t}}$, $\eigB_t := \eigB_{\vect{z}_{1,t},\vect{z}_{2,t}}$, $\Pi := \Pi_{z_1,z_2}$, and used the commutation relations in \eqref{eq:Pi_stab_t_commute}.
	The estimate in \eqref{eq:Pi_stab_t_commute} implies that
	\begin{equation} \label{eq:a_leftovers}
		\norm{\stab_{\vect{z}_{1,t}, \vect{z}_{2,t}}^{-1}(1-\Pi_{z_1,z_2})\bigl[\vect{a}^\mathrm{diag}\bigr]}_\infty \lesssim \norm{A}, \quad N^{-1/2}\norm{\stab_{\vect{z}_{1,t}, \vect{z}_{2,t}}^{-1}(1-\Pi_{z_1,z_2})\bigl[\vect{a}^\mathrm{diag}\bigr]}_2 \lesssim \langle |A|^2 \rangle^{1/2}.
	\end{equation}
	By \eqref{eq:m_evol}, $\m_{j,t} = \mathrm{e}^{(T-t)/2}\m(z_j)$, hence, it remain to estimate the first term on the right-hand side of \eqref{eq:stab_a_decomp}. The estimates \eqref{eq:Pi_bound}, \eqref{eq:Pi_continuity}, \eqref{eq:m_lower}, and \eqref{eq:m_continuity} imply that
	\begin{equation} \label{eq:regularity_Pi_correction}
		\norm{\Pi\bigl[\m_{1,t}\m_{2,t}\vect{a}^\mathrm{diag}\bigr] - \mathrm{e}^{T-t}\Pi_{z_1^-,z_2^+}\bigl[\m(z_1^-)\m(z_2^+)\vect{a}^\mathrm{diag}\bigr]}_\infty \lesssim N^{-1}|\re z_1 - \re z_2|\bigl\lVert\vect{a}^\mathrm{diag}\bigr\rVert_1.
	\end{equation}
	By Definition \ref{def:reg_A}, since $A$ is $(z_1,z_2)$-regular, $\Pi_{z_1^-,z_2^+}\bigl[\m(z_1^-)\m(z_2^+)\vect{a}^\mathrm{diag}\bigr] = 0$. Hence, we conclude from \eqref{eq:eigB_t}, \eqref{eq:a_diag_bounds}, and  \eqref{eq:regularity_Pi_correction}, that
	\begin{equation} \label{eq:a_Pi_bound}
		|\eigB_t|^{-1} \norm{\Pi\bigl[\m_{1,t}\m_{2,t}\vect{a}^\mathrm{diag}\bigr]}_\infty \lesssim \frac{\langle |A|^2 \rangle^{1/2}|\re z_1 -\re z_2|}{\eta_{1,t}+\eta_{2,t} + |z_1 - z_2|}\lesssim \langle |A|^2 \rangle^{1/2}.
	\end{equation}
	Combining \eqref{eq:diag_norm_bounds}, \eqref{eq:stab_a_decomp}, \eqref{eq:a_leftovers}, and \eqref{eq:a_Pi_bound} concludes the proof of \eqref{eq:regM_bound}.
	
	The proof of \eqref{eq:M_bound} follows from analogously, except absent the regularity of $A$, \eqref{eq:Pi_bound} loses the $|\re z_1 - \re z_2|$ factor in the numerator. This concludes the proof of Lemma \ref{lemma:Mbounds}.
\end{proof}

\subsection{Global Laws} \label{sec:global_laws}
We closely follow the procedure established in Appendix B of \cite{Cipolloni2022Optimal} and Appendix A of \cite{Cipolloni2022RankUnif}. However, to initialize the induction in the length of the chain that lies at the core of the argument, we need the following global laws for a single generalized resolvent, that we prove at the end of Section \ref{sec:global_laws}.
\begin{lemma} [A-Priori Single-Resolvent Global Laws] \label{lemma:1G_global_old}
	Let $T \sim 1$ be a fixed terminal time, and let $\mathfrak{f}^t \equiv \mathfrak{f}^t_T$  be the flow map defined in \eqref{eq:flow_map}. Then for $\vect{z}_0 := \mathfrak{f}^0(z)$, the following global laws
	\begin{equation} \label{eq:1G_global_old}
		\bigl\lvert \bigl\langle \bigl(G(H, \vect{z}_{0}) - M(\vect{z}_{0})\bigr)\diag{\vect{w}} \bigr\rangle \bigr\rvert \prec \frac{\norm{\vect{w}}_\infty}{N}, \quad \bigl\lvert \bigl\langle \vect{x}, \bigl(G(H, \vect{z}_{0}) - M(\vect{z}_{0})\bigr)\vect{y}\bigr\rangle \bigr\rvert \prec \frac{\norm{\vect{x}}_2\norm{\vect{y}}_2}{\sqrt{N}},
	\end{equation}
	uniformly in $z\in\mathcal{D}$ and deterministic vectors $\vect{x}, \vect{y}, \vect{w}$.
\end{lemma}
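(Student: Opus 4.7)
The plan is to establish \eqref{eq:1G_global_old} by a direct cumulant expansion argument at the global scale. The first step is to verify that at $t=0$ the problem is genuinely global: since the terminal time $T\sim 1$, Lemma \ref{lemma:imz} yields $\eta_0 = |\langle \im \vect{z}_0\rangle| \sim |\im z| + T \sim 1$, and the entrywise comparison \eqref{eq:imz_flat} then implies $|\im \vect{z}_{0,j}| \sim 1$ uniformly in $j$. Together with the rescaling argument in \eqref{eq:G_rescale}, this gives the deterministic norm bound $\norm{G(H,\vect{z}_0)}\lesssim 1$. Moreover, combining Lemmas \ref{lemma:F_op} and \ref{lemma:stab_flow} (applied with $z_1 = z_2 = z$ at $t=0$), the stability operator $1 - M_0M_0^*\mathcal{S}$ (and the associated matrix Dyson stability operators arising in the expansion) is uniformly invertible in both $\ell^2\to\ell^2$ and $\ell^\infty\to\ell^\infty$ norms, with inverse of order $1$ on the relevant regime $z\in\mathcal{D}$.

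Next I would establish the high-moment bounds. For the averaged law, expand $\E{\bigl\lvert\bigl\langle (G - M_0)\diag{\vect{w}}\bigr\rangle\bigr\rvert^{2p}}$ by the standard cumulant expansion applied to $\langle H \,G\, \diag{\vect{w}}\rangle = \langle \diag{\vect{z}_0}G\diag{\vect{w}}\rangle + \langle \diag{\vect{w}}\rangle$; cancelling the deterministic self-consistent contribution against the second-order term (which generates exactly the renormalization equation defining $M_0$) reduces the estimate to controlling higher-order cumulant terms. Each such term is of the schematic form $N^{-1-k/2}\sum S^{(\cdot)}_{\cdot\cdot}\langle (\partial G)^k D\rangle$ and is bounded using $\norm{G}\lesssim 1$, the moment Assumption \eqref{as:moments}, and the invertibility of the stability operator to estimate the resulting sub-expressions; this yields $\E{|\cdots|^{2p}}^{1/(2p)}\lesssim \norm{\vect{w}}_\infty/N$, and Markov's inequality together with Definition \ref{def:prec} gives the first bound in \eqref{eq:1G_global_old}. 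The isotropic law is handled identically after replacing $\diag{\vect{w}}$ with $\vect{y}\vect{x}^*$ in the cumulant expansion, producing the $\norm{\vect{x}}_2\norm{\vect{y}}_2/\sqrt{N}$ bound. Uniformity in $z\in\mathcal{D}$ then follows from a standard grid argument, combined with the Lipschitz continuity \eqref{eq:flow_continuity} of $\mathfrak{f}^0$ and the continuity of the involved resolvents.

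The only real novelty relative to the scalar-spectral-parameter analysis of \cite{Erdos2018CuspUF} is the replacement of the scalar variable $z$ by the vector $\vect{z}_0$ throughout the computation; this affects the explicit form of $M_0 = \diag{\m(\vect{z}_0)}$ and the generalized stability operator but leaves the algebraic structure of the cumulant expansion untouched. Accordingly, the main (and essentially only) obstacle is to verify the uniform bound $\norm{F_{\vect{z}_0,\vect{z}_0}}\le 1-c$ for some $c\sim 1$ in the vector-valued setting. This is straightforward: the proof of Lemma \ref{lemma:F_op}(iii) extends verbatim once one observes that \eqref{eq:F_bound} generalizes with $|\im z|$ replaced by $\min_j|\im \vect{z}_{0,j}|$, which is $\gtrsim 1$ by the flatness \eqref{eq:imz_flat} and $\eta_0\sim 1$. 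Everything else in the proof is a routine adaptation of the standard global law machinery, and no new ideas beyond those already developed in Sections \ref{seq:prior_laws}--\ref{sec:stab_section} are required.
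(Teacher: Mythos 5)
Your proposal is correct in spirit but takes a genuinely different route from the paper's proof. The paper dispenses with the cumulant expansion entirely and instead exploits the rescaling identity \eqref{eq:G_rescale}: writing $G(H,\vect{z}_0) = \bm{\nu}\,\widetilde G(\I\eta_0\sign z)\,\bm{\nu}$ with $\bm{\nu} := \diag{(\eta_0^{-1}|\im\vect{z}_0|)^{-1/2}}\sim 1$, it reduces the generalized-resolvent global law to the \emph{scalar} spectral parameter $\I\eta_0\sign z$ for the auxiliary Wigner-type matrix $\widetilde H := \bm{\nu}(H - \re\vect{z}_0)\bm{\nu}$, and then invokes the already-established global law of Proposition 3.1 in \cite{Ajanki2016Univ} (checking only that $\widetilde H$ inherits Assumptions (A), (B), and that the trivial bound $\lVert\mathfrak{m}(\zeta)\rVert_\infty\le|\im\zeta|^{-1}$ supplies Assumption (C) at the global scale). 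You use the same rescaling observation but only to derive the operator-norm bound $\norm{G(H,\vect{z}_0)}\lesssim 1$, and then redo the full high-moment cumulant expansion with the vector-valued parameter $\vect{z}_0$, including a fresh verification of stability via a vector-valued extension of \eqref{eq:F_bound}. What the paper's approach buys is brevity and avoidance of repeating the self-consistent cancellation analysis: all the vector-valuedness is absorbed into the single conjugation by $\bm{\nu}$, after which the entire argument is off-the-shelf. What your approach buys is self-containedness and a more uniform treatment that could in principle extend below the global scale (though that is not needed here). Two small caveats in your sketch: the one-body stability operator that actually appears in the differentiated Dyson equation is $1 - M_0^2\mathscr{S}$ (and on the diagonal $1-\m_0^2 S = \stab_{\vect{z}_0,\vect{z}_0}$), not $1-M_0M_0^*\mathscr{S}$; and for the lower bound $1-\norm{F_{\vect{z}_0,\vect{z}_0}}\gtrsim \min_j|\im\vect{z}_{0,j}|$ one should use the identity $(1-\norm{F})\langle\vect{v},|\m|^{-1}\im\m\rangle = \langle\vect{v},|\m|\im\vect{z}_0\rangle$ obtained by pairing the imaginary part of the VDE with the Perron eigenvector, which is indeed the verbatim generalization of \eqref{eq:F_bound} you claim. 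Both approaches are sound; the paper's is substantially shorter once one notices the reduction.
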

\begin{proof} [Proof of Proposition \ref{prop:global_laws} and the Global Law for Proposition \ref{prop:2iso}] 
	We only present the proof in the case $H$ is complex Hermitian 
	and $\mathscr{T} = 0$, the easy modifications in the case
	 $H$ is real symmetric are left to the reader (see also Section \ref{sec:realT} below). 
	First, we prove the local laws \eqref{eq:Global1av}.
	Using the definition of the generalized resolvent in \eqref{eq:Gt} and the vector Dyson equation \eqref{eq:VDE}, we deduce that for $G:=G_{1,0}$, $M := M_{1,0}$, and $\vect{z} := \vect{z}_{1,0}$, we have
	\begin{equation} \label{eq:G_underline}
		G - M = M \,\mathscr{S}[G - M] - M\, \underline{W_0 G(H_0, \vect{z}) },
	\end{equation}
	where $W_0 := H_0 - \bm{\mathfrak{a}}$, and for a functions $f(W)$, $g(W)$ of a random matrix $W$, we use the notation
	\begin{equation}
		\underline{f(W)W g(W)} := f(W)Wg(W) - \Expv_{\other{W}}\bigl[(\partial_{\other{W}}f)(W) \other{W} g(W) + f(W)\other{W} (\partial_{\other{W}}g)(W) \bigr].
	\end{equation}
	Here $\partial_{\other{W}}$ denotes the derivative in the direction of the random matrix $\other{W} = \other{W}^*$ with independent centered Gaussian entries with matrix of variances $S$.
	
	Note that \eqref{eq:m_upper}, \eqref{eq:1G_global_old} and \eqref{eq:G_underline} imply
	\begin{equation} \label{eq:1Gav_underline}
		\bigl\langle (G  - M ) B \bigr\rangle = - \bigl\langle BM\, \underline{W_0 G(H_0, \vect{z}) }\bigr\rangle + \mathcal{O}_\prec\bigl(N^{-1}\langle |B|^2 \rangle^{1/2}\bigr)
	\end{equation}
	Using \eqref{eq:1Gav_underline}, the cumulant expansion (see Eq. (4.14) in \cite{Cipolloni2022Optimal}), we deduce that, denoting $\mathcal{Q}_1 := \bigl\langle  BM \, \underline{W_0 G(H_0, \vect{z}) } \bigr\rangle$,
	\begin{equation}\label{eq:1G_expan}
		\Expv\bigl[|\mathcal{Q}_1|^{2p}\bigr] \lesssim \Expv\bigl[\other{\Xi}_1|\mathcal{Q}_1|^{2p-2}\bigr] + \sum_{R\ge \vect{l} + \sum J\cup J^* \ge 2}\Expv\bigl[\Xi_1(\vect{l},J,J^*) |\mathcal{Q}_1|^{2p-1-|J\cup J^*|}\bigr] + \mathcal{O}\bigl(N^{-2p}\langle|B|\rangle^p\bigr),
	\end{equation}
	where $R := R(p) \in \mathbb{N}$, the summation runs over $\vect{l} \in \mathbb{Z}_{\ge 0}^2$ and $J,J^* \subset \mathbb{Z}_{\ge 0}^2\backslash \{(0,0)\}$ such that $|J\cup J^*| \le 2p-1$, and $\sum J\cup J^* := \sum_{\vect{j}\in J\cup J^*} |\vect{j}|$. Here, the main Gaussian term $\other{\Xi}_1$ is given by
	\begin{equation} \label{eq:gaussian_bound}
		\other{\Xi}_1 := \frac{1}{N^2}\biggl\lvert\sum_{a,b}S_{ab}(GBM)_{ba}(GBG)_{ba}\biggr\rvert + \frac{1}{N^2}\biggl\lvert \sum_{a,b}S_{ab}(GBM)_{ba}(G^*B^*G^*)_{ba}\biggr\rvert \lesssim \frac{1}{N^2}\langle |B|^2 \rangle,
	\end{equation}
	where to obtain the estimate we used \eqref{eq:m_upper} and the bound $\norm{G}\lesssim 1$ by \eqref{eq:G_norm}.
	The quantities $\Xi_1(\vect{l},J,J^*)$ are defined as
	\begin{equation} \label{eq:Xi_1}
		\Xi_1(\vect{l},J,J^*) := N^{-(|\vect{l}| + \sum J\cup J^* + 3)/2} \sum_{a,b} \bigl\lvert\partial_{ab}^{\vect{l}}(GBM)_{ba}\bigr\rvert \prod_{\vect{j}\in J} \bigl\lvert \partial_{ab}^{\vect{j}}\mathcal{Q}_1 \bigr\rvert \prod_{\vect{j}\in J^*} \bigl\lvert \partial_{ab}^{\vect{j}}\overline{\mathcal{Q}_1}\bigr\rvert,
	\end{equation}
	where for $\vect{j} = (j_1,j_2) \in \mathbb{Z}_{\ge0}^2$, we denote $\partial_{ab}^\vect{j} := \partial_{ab}^{j_1}\partial_{ba}^{j_2}$, and $\partial_{ab}$ denotes the complex partial derivative in the direction of the matrix element $H_{ab,0}$. Using \eqref{eq:m_upper} and \eqref{eq:G_norm}, we deduce the bounds
	\begin{equation} \label{eq:deriv_bounds}
		N^{-3/2}\sum_{a,b} \bigl\lvert(GBM)_{ba}\bigr\rvert+ N^{-2}\sum_{a,b} \bigl\lvert \partial_{ab}^{\vect{j}}(GBM)_{ba}\bigr\rvert  + N^{1/2}
		\bigl\lvert \partial_{ab}^{\vect{j}}\mathcal{Q}_1 \bigr\rvert \lesssim \langle |B|^2\rangle^{1/2}, \quad |\vect{j}| > 0.
	\end{equation}
	Plugging  the estimates \eqref{eq:gaussian_bound} and \eqref{eq:deriv_bounds} into \eqref{eq:1G_expan}, and using Young's inequality, we conclude that $|\mathcal{Q}_1| \prec N^{-1}\langle |B|^2\rangle^{1/2}$, hence establishing \eqref{eq:Global1av}.

	Next, we address the laws \eqref{eq:Global2av}, \eqref{eq:Global1iso}, and the global law version of \eqref{eq:3G_iso_t}.
	We adopt the notation $G_{[j,k]} := \prod_{p=j}^{k-1}(G_{p,0}B_p)G_{k,0}$ introduced in \cite{Cipolloni2023Edge}, and let $M_{[j,k]} := M_{[j,k],0}$ denote the corresponding deterministic approximations, as defined in \eqref{eq:M_def}, \eqref{eq:M2iso_def}, and given by $\diag{\m(\vect{z}_{j,0})}$ for $j=k$.
	Note that it suffices to prove the averaged law \eqref{eq:Global2av} and its three-resolvent analog,
	\begin{equation} \label{eq:Global3G_av}
		\bigl\lvert\bigl\langle (G_{[1,3]} - M_{[1,3]}) B_3 \bigr\rangle\bigr\rvert \prec N^{-1}\norm{B_1}\norm{B_2}\langle |B_3|^3 \rangle^{1/2},
	\end{equation}
	since setting $B_k := N\vect{y}\vect{x}^*$, with $\langle |B_k|^2 \rangle = \sqrt{N}\norm{x}_2\norm{y}_2$, the immediately imply the isotropic laws \eqref{eq:Global1iso} and $\bigl\lvert\bigl\langle \vect{x}, (G_{[1,3]} - M_{[1,3]}) \vect{y} \bigr\rangle\bigr\rvert \prec N^{-1/2}\norm{B_1}\norm{B_2}$.
	For the normalized trace of a chain containing $k$ resolvents, the analog of \eqref{eq:1Gav_underline} reads
	\begin{equation}
		\begin{split}
			\bigl\langle (G_{[1,k]} - M_{[1,k]}) B_k \bigr\rangle =&~ - \bigl\langle \underline{W_0 G_{[1,k]}}B_k' \bigr\rangle 
			+ \biggl\langle \bigl(G_{[2,k]}-M_{[2,k]}\bigr) B_k' \biggl(B_1 + \sum_{j=2}^{k-1} \mathscr{S}\bigl[M_{[1,j]}\bigr]\biggr) \biggr\rangle\\
			&+\sum_{j=1}^{k-1}\bigl\langle \mathscr{S}\bigl[G_{[1,j]}-M_{[1,j]}\bigr]G_{[j,k]}B_k' \bigr\rangle 
			+ \bigl\langle \mathscr{S}\bigl[(G_{[k]}-M_{[k]})B_k'\bigr]G_{[1,k]} \bigr\rangle,
		\end{split}
	\end{equation}
	where $B_k' := M_k^{-1}(1-M_1M_k\mathscr{S})^{-1}[M_k B_kM_1]$. For $k \in \{2,3\}$ assuming that the averaged local law is proved for chains of length $j \in \{1,\dots, k-1\}$, we once again conclude that all the terms except the underline are stochastically dominated by $N^{-1}\prod_{j=1}^{k-1}\norm{B_j}\langle |B_k|^2\rangle^{1/2}$ using the Hilbert-Schmidt bound in  \eqref{eq:scalar_stab_bounds} for $B_k'$  and the norm bound \eqref{eq:G_norm} for $G_j$. Then \eqref{eq:1G_expan} holds for $\mathcal{Q}_k := \langle (G_{[1,k]} - M_{[1,k]}) B_k \rangle$, with $\other{\Xi}_1$ and $\Xi_1$ replaced by $\other{\Xi}_k$ and, respectively, $\Xi_k$. The Gaussian terms $\other{\Xi}_k$ is given by 
	\begin{equation}
		\other{\Xi}_2 := \frac{1}{N^2}\biggl\lvert \sum_{a,b} S_{ab} (G_{[1,k]}B_k')_{ba} (G_{[1,k]}G_1)_{ba} \biggr\rvert  + (\dots)  \lesssim \frac{1}{N^2}  \prod_{j=1}^{k-1}\norm{B_j}^2\langle |B_k|^2\rangle,
	\end{equation}
	where $(\dots)$ are structurally identical terms that differ only in replacing $G_j$ and $B_j$ by their respective adjoints or cyclically permuting the indices $\{1,\dots, k\}$ in the factor $(G_{[1,k]}G_1)_{ba})$. Here the bound follows from \eqref{eq:scalar_stab_bounds}, \eqref{eq:m_upper} and \eqref{eq:G_norm}. The higher-order derivatives $\Xi_k$ are defined analogously to \eqref{eq:Xi_1} with $(GBM)$ replaced with $(G_{[1,k]}B_k')$ and $\mathcal{Q}_1$ with $\mathcal{Q}_k$. Similarly, the bounds \eqref{eq:deriv_bounds} also hold with $(GBM)$ replaced with $(G_{[1,k]}B_k')$, yielding sequentially the global law \eqref{eq:Global2av} and \eqref{eq:Global3G_av}. This concludes the proof of Proposition \ref{prop:global_laws} and the global law for Proposition \ref{prop:2iso}.
\end{proof}
We close the section by proving Lemma \ref{lemma:1G_global_old}.
\begin{proof}[Proof of Lemma \ref{lemma:1G_global_old}]
	To see that \eqref{eq:1G_global_old} indeed hold, we recall the resolvent rescaling \eqref{eq:G_rescale} that implies
	\begin{equation} \label{eq:globabl_G_rescale}
		G_{0} = \bigl(H-\vect{z}_0\bigr)^{-1} = \bm{\nu}\bigl(\bm{\nu}(H - \re\vect{z}_0)\bm{\nu}-\I (\sign z) \eta_0\bigr)^{-1} \bm{\nu} =: \bm{\nu}\other{G}(\I\eta_0 \sign z ) \bm{\nu},
	\end{equation}
	where $\bm{\nu} := \diag{(\eta_0^{-1}|\im\vect{z}_0|)^{-1/2}} \sim 1$ by \eqref{eq:imz_flat}, and $\eta_0 := |\langle \im\vect{z}_0 \rangle| \sim 1$ by \eqref{eq:eta_asymp} with $T\sim 1$. Therefore, the global laws \eqref{eq:1G_global_old} with  follow immediately from the corresponding global laws for the standard resolvent $\other{G}(\zeta)$ of a different Wigner-type random matrix $\other{H} := \bm{\nu} (H - \re\vect{z}_0)\bm{\nu}$. Indeed, since $\bm{\nu} \sim 1$ and $\norm{\re \vect{z}_0}_\infty \lesssim 1$ by \eqref{eq:flow_explicit} and \eqref{eq:m_lower}, the random matrix $\other{H}$ satisfies the assumptions Assumption \eqref{as:S_flat}, \eqref{as:moments}, and the solution $\mathfrak{m}$ to the corresponding vector Dyson equation
	\begin{equation} \label{eq:VDE_rescale}
		-\frac{1}{\mathfrak{m}(\zeta)} = \zeta - \bm{\nu}^2 (\bm{\mathfrak{a}} - \re \vect{z}_0) + \bm{\nu}^2 S \bm{\nu}^2 [\mathfrak{m}(\zeta)], \quad (\im \zeta) (\im \mathfrak{m}(\zeta)) >0
	\end{equation}
	satisfies the trivial bound $\lVert\mathfrak{m}(\zeta)\rVert_\infty \le |\im \zeta|^{-1}$, hence the Assumption \eqref{as:m_bound} with some constant $\other{c}_{\mathrm{m}} \lesssim 1$ also holds in the domain $\{|\im \zeta| \ge c\}$ for any $c \sim 1$. By examining the proof of Proposition 3.1 in \cite{Ajanki2016Univ}, we observe that this is indeed enough to establish the global laws
	\begin{equation} \label{eq:1G_global_rescale}
		\bigl\lvert \bigl\langle \bigl(\other{G}(\zeta_0) - \mathfrak{m}(\zeta_0)\bigr)\bm{\nu}^2\diag{\vect{w}} \bigr\rangle \bigr\rvert \prec \frac{\norm{\vect{w}}_\infty}{N}, \quad \bigl\lvert \bigl\langle \bm\nu\vect{x},\bigl(\other{G}(\zeta_{0}) - \mathfrak{m}(\zeta_{0})\bigr)\bigr\rangle \bigr\rvert \prec \frac{\norm{\vect{x}}_2\norm{\vect{y}}_2}{\sqrt{N}},
	\end{equation}
	uniformly in $\vect{x}, \vect{y}, \vect{w}$, at $\zeta_{0} := \I\eta_0 \sign z$. Here we identified $\mathfrak{m}$ with $\diag{\mathfrak{m}}$ for brevity. Since $\bm{\nu}^2\diag{\mathfrak{m}(\zeta_{0})} = M(\vect{z}_0)$ by \eqref{eq:VDE_rescale} and the uniqueness of the solution, the global laws \eqref{eq:1G_global_old} follow from \eqref{eq:1G_global_rescale}, with uniformity in $z \in \mathcal{D}$ obtained via a simple grid argument.
\end{proof}

\subsection{The Case of Self-Energy Operators with  Off-Diagonal Part} \label{sec:realT}

In this section, we present the minor modifications necessary to lift the condition $\mathscr{T} = 0$, 
where $\mathscr{T}$,  defined in \eqref{eq:superT_def}, is the off-diagonal components of the self-energy operator. 
To preserve $\Expv[(H_{jk,t})^2]$ along the flow, we replace the standard Brownian motion on the right-hand side of \eqref{eq:OUflow} by  a Hermitian random matrix $\widehat{\Brwn}_t$  with independent (up to symmetry) centered Gaussian entries satisfying $\Expv[|\widehat{\Brwn}_{jk,t}|^2] = t$ and  $\Expv[(\widehat{\Brwn}_{jk,t})^2] = t \, \mathcal{T}_{jk}/S_{jk}\mathds{1}_{S_{jk}\neq 0}$, where $\mathcal{T}$ is defined in \eqref{eq:superT_def}. Note that the entries of $\mathcal{T}$ satisfy the trivial bound 
\begin{equation} \label{eq:Tjk_bound}
	|\mathcal{T}_{jk}| \le S_{jk}.
\end{equation}

Recall the notation $G_{[j,k],t} := \prod_{p=j}^{k-1} (G_{p,t}A_p)G_{k,t}$ for resolvent chains and $M_{[j,k],t}$ for their respective deterministic approximations.
By It\^{o}'s formula, the evolution equation of $\langle (G_{[1,k],t} - M_{[1,k],t})A_k \rangle$ 
 (e.g., \eqref{eq:(G-m)A_evol} for $k=1$ or \eqref{eq:(GAG-M)A_evol} for $k=2$)  under the flow \eqref{eq:OUflow} with the modified stochastic term $\mathrm{d}\widehat{\Brwn}_t$, contains two new terms, 
\begin{equation} \label{eq:realT_new_terms}
	\frac{1}{2}\sum_{a,b} \partial_{ab}\bigl\langle G_{[1,k],t}A_k \bigr\rangle\mathrm{d}\widehat{\Brwn}_{ab,t}, \quad \text{and}\quad \sum_{1\le p \le q \le k} \bigl\langle \mathscr{T}\bigl[G_{[p,q],t}\bigr]G_{[q,k],t} A_k G_{[1,p],t}   \bigr\rangle.
\end{equation}
It suffices to consider $k \in \{1,2\}$ for the proof of Proposition \ref{prop:masters} and $k=3$ for Proposition \ref{prop:2iso}.

Note that by \eqref{eq:Tjk_bound} and the Cauchy-Schwarz inequality, the quadratic variation of the martingale term in \eqref{eq:realT_new_terms} admits the bound
\begin{equation}
	\biggl[\int_0^\cdot \sum_{a,b}\partial_{ab}\bigl\langle G_{[1,k],s}A_k \bigr\rangle\mathrm{d}\widehat{\Brwn}_{ab,s}\biggr]_t \lesssim 
	\int_{0}^t\sum_{a,b} S_{ab} \bigl\lvert \partial_{ab} \bigl\langle G_{[1,k],s}A_k \bigr\rangle \bigr\rvert^2 \mathrm{d}s,
\end{equation}
and therefore admits the same estimate as its counterpart in the case $\mathscr{T} = 0$.

To estimate the second term in \eqref{eq:realT_new_terms}, we observe that for any
 $N\times N$ matrices $X$ and $Y$, the upper bound in \eqref{as:S_flat} and \eqref{eq:Tjk_bound} imply
\begin{equation} \label{eq:T_quadform}
	\bigl\lvert \bigl\langle \mathscr{T}[X] Y\bigr\rangle \bigr\rvert \lesssim \frac{1}{N}\langle |X|^2\rangle^{1/2} \langle |Y|^2\rangle^{1/2}.
\end{equation}
The $N^{-1}$ prefactor makes the simple estimate \eqref{eq:T_quadform} effective.  Namely, the operator inequality \eqref{eq:ImG_ineq} reduces $\langle |G_{[p,q],t}|^2\rangle$ and $\langle |G_{[q,k],t} A_k G_{[1,p],t}|^2\rangle$ to $\eta_{p,t}^{-1}\eta_{q,t}^{-1}$ times an averaged traces of alternating resolvent chains of even lengths between $2$ and $2k$; the only exception being the $p=q$ case: $\langle |G_{[p,q],t}|^2\rangle \lesssim \eta_{p}^{-1} \langle \im G_{p,t} \rangle$. All such terms have been estimated in the $\mathscr{T} = 0$ case.
We exemplify this by considering the case $k = 2$ with  regular observables $A_1$, $A_2$ as in the proof of the master inequality \eqref{eq:Master2av_HS}. 
For $p=q=1$, using the reduction inequality \eqref{eq:Red4av_HS} and the integration rule \eqref{eq:int_rules}, we obtain 
\begin{equation} \label{eq:T_term1}
	\begin{split}
		\int_0^t \bigl\lvert \bigl\langle \mathscr{T}\bigl[G_{1,s}\bigr]G_{[1,2],s} A_2 G_{1,s}   \bigr\rangle \bigr\rvert \mathrm{d}s 
		&\prec \int_0^t \frac{1}{N\eta_{1,s}^{3/2}}\bigl\lvert \bigl\langle \im G_{1,s}A_1G_{2,s} A_2 \im G_{1,s} A_2^* G_{2,s}^*A_1^*   \bigr\rangle \bigr\rvert^{1/2}\mathrm{d}s \\ 
		&\prec \frac{\langle |A_1|^2 \rangle^{1/2}\langle |A_2|^2 \rangle^{1/2}}{\sqrt{N\eta_t}}\biggl(1 + \frac{\phi_2^{\mathrm{hs}}}{\sqrt{N\eta_t}}\biggr).
	\end{split}
\end{equation}
Analogous estimate holds for $p=q=2$. For $p=1, q=2$, we do not apply the reduction inequality and retain the full $N^{-1}$ prefactor, 
\begin{equation} \label{eq:T_term2}
	\begin{split}
		\int_0^t \bigl\lvert \bigl\langle \mathscr{T}\bigl[G_{[1,2],s}\bigr]G_{[2,1],s}\bigr\rangle \bigr\rvert \mathrm{d}s 
		\prec&~ \int_0^t  \bigl\lvert\bigl\langle \im G_{2,s} A_2 \im G_{1,s} A_2^*   \bigr\rangle \bigr\rvert^{1/2}\bigl\lvert\bigl\langle \im G_{1,s} A_1 \im G_{2,s} A_1^*   \bigr\rangle \bigr\rvert^{1/2}\frac{\mathrm{d}s}{N\eta_{s}^2}\\
		\prec&~ \frac{\langle |A_1|^2 \rangle^{1/2}\langle |A_2|^2 \rangle^{1/2}}{N\eta_t}\biggl(1 + \frac{\phi_2^{\mathrm{hs}}}{\sqrt{N\eta_t}}\biggr).
	\end{split}
\end{equation}
Note that the right-hand sides of both \eqref{eq:T_term1} and \eqref{eq:T_term2} can be incorporated into the bound \eqref{eq:2av_mart_final}.

By similar considerations, the contribution from the second term in \eqref{eq:realT_new_terms} admits the same bound as the quadratic variation of the martingale in the corresponding evolution equation for $\langle (G_{[1,k],t} - M_{[1,k],t})A_k \rangle$.
Therefore, the re-introduction of non-trivial $\mathscr{T}$ does not alter the estimates that appear in the Sections \ref{sec:master_proofs}, \ref{sec:GFT} and \ref{seq:prior_laws}.

\vspace{5pt}
{\bf Acknowledgment.} The authors thank Joscha Henheik and Oleksii Kolupaiev for many helpful discussions.

\vspace{5pt}
{\bf Data availability.} There is no data associated to this work.

\vspace{5pt}
{\bf Competing Interests.} The authors have no  conflict of interest to disclose.

\printbibliography
\end{document}